\newtheorem{thm}{Theorem}[section]
\newtheorem{prop}[thm]{Proposition}
\newtheorem{cor}[thm]{Corollary}
\newtheorem{maintheorem}{Theorem}
\newtheorem{maincorollary}[maintheorem]{Corollary}
\newtheorem{maindefinition}[maintheorem]{Definition}
\theoremstyle{definition} 
\newcommand{\Ag}{{\cA}_g}
\newcommand{\Agtropm}{A_g^\trop[m]}
\newcommand \sseq[1]{{}^{#1}\!E}
\newcommand{\PD}{\mathrm{PD}}
\newcommand{\Hermeps}[1]{\Herm^{#1}}
\newcommand{\conjdual}{*}
\newcommand{\xra}{\xrightarrow}
\newcommand{\ov}{\overline}
\newcommand{\bu}{\bullet}
\newcommand{\wt}{\widetilde}
\newcommand{\col}{\colon}
\newcommand{\hide}[1]{}
\newcommand{\ans}[1]{}
\newcommand{\down}[2]{\xymatrix@R=6mm@C=2mm{
#1\ar[d]\\ #2
}}
\newcommand{\downlabel}[3]{\xymatrix@R=6mm@C=2mm{
{#1}\ar[d]^<<<{#3} \\ #2
}}
\newcommand{\squarediagram}[4]{\xymatrix@R=8mm@C=8mm{
#1\ar[d]\ar[r] & #2\ar[d] \\ #3\ar[r] &#4
}}
\newcommand{\squarediagrammapsto}[4]{\xymatrix@R=8mm@C=8mm{
#1\ar@{|->}[d]\ar@{|->}[r] & #2\ar@{|->}[d] \\ #3\ar@{|->}[r] &#4
}}
\newcommand{\squarediagramlabel}[8]{\xymatrix@R=8mm@C=8mm{
#1\ar[d]_{#6}\ar[r]^{#5} & #2\ar[d]^{#7} \\ #3\ar[r]^{#8} &#4
}}
\newcommand{\isocelesdown}[3]{\xymatrix@R=6mm@C=0mm{
& {#1}\ar[dl] \ar[dr] & \\
{#2} \ar[rr] && {#3}
}}
\newcommand{\isocelesdownlabel}[6]{\xymatrix@R=6mm@C=0mm{
& {#1}\ar[dl]_<<<<{#4} \ar[dr]^<<<<{#5} & \\
{#2} \ar[rr]_{#6} && {#3}
}}
\newcommand{\isocelesup}[3]{\xymatrix@R=6mm@C=0mm{
#1\ar[rr]\ar[dr] && #2\ar[dl] \\
& #3 &
}}
\newcommand{\isocelesuplabel}[6]{\xymatrix@R=6mm@C=0mm{
#1\ar[rr]^{{#4}} \ar[dr]_<<<{#5} && #2\ar[dl]^<<<{#6} \\
& #3 &
}}
\newtheorem{Definition}[thm]{Definition}
\newenvironment{definition}
{\begin{Definition}\rm}{\end{Definition}}
\newtheorem{Example}[thm]{Example}
\newenvironment{example}
{\begin{Example}\rm}{\end{Example}}
\newtheorem{Exercise}[thm]{Exercise}
\newenvironment{exercise}
{\begin{Exercise}\rm}{\end{Exercise}}
\newtheorem{Fact}[thm]{Fact}
\newenvironment{fact}
{\begin{Fact}\rm}{\end{Fact}}
\newtheorem{Theorem}[thm]{Theorem}
\newenvironment{theorem}
{\begin{Theorem}\rm}{\end{Theorem}}
\newtheorem{Lemma}[thm]{Lemma}
\newenvironment{lemma}
{\begin{Lemma}\rm}{\end{Lemma}}
\newtheorem{Remark}[thm]{Remark}
\newenvironment{remark}
{\begin{Remark}\rm}{\end{Remark}}
\newtheorem{Proposition}[thm]{Proposition}
\newenvironment{proposition}
{\begin{Proposition}\rm}{\end{Proposition}}
\newtheorem{Corollary}[thm]{Corollary}
\newenvironment{corollary}
{\begin{Corollary}\rm}{\end{Corollary}}
\newtheorem{Question}[thm]{Question}
\newenvironment{question}
{\begin{Question}\rm}{\end{Question}}
\newtheorem{Conjecture}[thm]{Conjecture}
\newenvironment{conjecture}
{\begin{Conjecture}\rm}{\end{Conjecture}}
\newtheorem{Problem}[thm]{Problem}
\newtheorem{Hypothesis}[thm]{Hypothesis}
\newenvironment{hypothesis}
{\begin{Hypothesis}\rm}{\end{Hypothesis}}
\newenvironment{manualtheorem}[1]{%
  \IfBlankTF{#1}
    {}
    {}%
  \manualtheoreminner
}{\endmanualtheoreminner}
\theoremstyle{remark}
\newcommand \defnow[1]{\begin{definition}{#1}\end{definition}}
\newcommand \proofnow[1]{\begin{proof}{#1}\end{proof}}
\newcommand \remnow[1]{\begin{remark}{#1}\end{remark}}
\newcommand \propnow[1]{\begin{proposition}{#1}\end{proposition}}
\newcommand \enumnow[1]{\begin{enumerate}{#1}\end{enumerate}}
\newcommand{\mattwo}[4]{\left( \begin{array}{cc} {#1} & {#2} \\ {#3} & {#4} \end{array} \right)}
\newcommand{\matthree}[9]{\left( \begin{array}{ccc} {#1} & {#2} & {#3} \\ {#4} & {#5} & {#6} \\ {#7} & {#8} & {#9} \end{array} \right)}
\newcommand{\vectwo}[2]{\left( \begin{array}{c} {#1} \\ {#2} \end{array} \right)}
\newcommand{\vecthree}[3]{\left( \begin{array}{c} {#1} \\ {#2} \\ {#3} \end{array} \right)}
\newcommand{\on}{\operatorname}
\newcommand{\Ad}{\mathrm{Ad}}
\newcommand{\ad}{\mathrm{ad}}
\newcommand{\an}{\mathrm{an}}
\newcommand{\Aut}{\on{Aut}}
\DeclareMathOperator{\Cl}{Cl}
\DeclareMathOperator{\colim}{colim}
\newcommand{\dfl}{\on{dfl}}
\newcommand{\GL}{\mathrm{GL}}
\newcommand{\Gr}{\operatorname{Gr}}
\DeclareMathOperator{\Herm}{Herm}
\newcommand{\Hom}{\on{Hom}}
\newcommand{\HP}{\operatorname{HP}}
\DeclareMathOperator{\id}{id}
\newcommand{\ifl}{\on{ifl}}
\newcommand{\im}{\on{im}}
\newcommand{\Mat}{\on{Mat}}
\newcommand{\RPC}{\mathsf{RPC}}
\newcommand{\RPF}{\mathsf{RPF}}
\newcommand{\SL}{\mathrm{SL}}
\newcommand{\SO}{\mathrm{SO}}
\newcommand{\Sp}{\mathrm{Sp}}
\newcommand{\spn}{\on{span}}
\newcommand{\Spec}{\operatorname{Spec}}
\DeclareMathOperator{\St}{St}
\DeclareMathOperator{\SU}{SU}
\newcommand{\Sym}{\mathrm{Sym}}
\newcommand{\sym}{\on{sym}}
\newcommand{\trop}{\mathrm{trop}}
\DeclareMathOperator{\Unitary}{U}
\newcommand{\Vect}{\mathsf{Vect}}
\newcommand{\Fil}{\operatorname{Fil}}
\newcommand*{\tp}{%
  {\mathpalette\@transpose{}}%
}
\newcommand*{\@transpose}[2]{%
  \raisebox{\depth}{$\m@th#1\intercal$}%
}
\newcommand{\eps}{\varepsilon}
\newcommand{\C}{\mathbb{C}}
\newcommand{\CC}{\mathbb{C}}
\newcommand{\F}{\mathbb{F}}
\newcommand{\G}{\mathbb{G}}
\newcommand{\GG}{\mathbb{G}}
\newcommand{\NN}{\mathbb{N}}
\newcommand{\Q}{\mathbb{Q}}
\newcommand{\QQ}{\mathbb{Q}}
\newcommand{\R}{\mathbb{R}}
\newcommand{\RR}{\mathbb{R}}
\newcommand{\UU}{\mathbb{U}}
\newcommand{\V}{\mathscr{V}}
\newcommand{\WW}{\mathbb{W}}
\newcommand{\Z}{\mathbb{Z}}
\newcommand{\ZZ}{\mathbb{Z}}
\newcommand{\cA}{\mathcal{A}}
\newcommand{\cF}{\mathcal{F}}
\newcommand{\calH}{\mathcal{H}}
\newcommand{\cH}{\mathcal{H}}
\newcommand{\cI}{\mathcal{I}}
\newcommand{\cM}{\mathcal{M}}
\newcommand{\cO}{\mathcal{O}}
\newcommand{\cP}{\mathcal{P}}
\newcommand{\cW}{\mathcal{W}}
\newcommand{\frakg}{\mathfrak{g}}
\newcommand{\frakp}{\mathfrak{p}}
\newcommand{\fW}{\mathfrak{W}}
\newcolumntype{M}[1]{>{\centering\arraybackslash}m{#1}}
\newcolumntype{N}{@{}m{0pt}@{}}
\title{Tropicalizations of locally symmetric varieties}
\author[E. Assaf]{Eran Assaf}
\address{Department of Mathematics, Massachusetts Institute of Technology, Cambridge, MA 02139, USA}
\email{eranasaf@mit.edu}
\author[M. Brandt]{Madeline Brandt}
 \address{Department of Mathematics, Vanderbilt University, Nashville, TN 37240}
\email{madeline.v.brandt@vanderbilt.edu}
\author[J. Bruce]{Juliette Bruce}
\address{Department of Mathematics, Dartmouth College, Hanover, NH 03755}
\email{juliette.bruce@dartmouth.edu}
\author[M. Chan]{Melody Chan}
\address{Department of Mathematics, Brown University, Providence, RI 02912}
\email{melody\_chan@brown.edu}
\author[R. Vlad]{Raluca Vlad}
\address{Department of Mathematics, Brown University, Providence, RI 02912}
\email{raluca\_vlad@brown.edu}
\begin{document}

\begin{abstract}
    This paper provides a rigorous study of tropicalizations of locally
symmetric varieties. We give applications beyond tropical geometry, to the cohomology of moduli
spaces as well as to the cohomology of arithmetic groups. We study two cases in detail: the special unitary case, and the case
of level structures on the moduli space $\cA_g$ of abelian varieties.
\end{abstract}

\maketitle
\setcounter{tocdepth}{1}
\tableofcontents

\part*{Introduction}

\bigskip


This paper provides a rigorous study of tropicalizations of locally symmetric varieties. We give applications beyond tropical geometry, to the cohomology of moduli spaces as well as to the cohomology of arithmetic groups. We study two cases in detail: the special unitary case, and the case
of level structures on the moduli space $\cA_g$ of abelian varieties.

Let $\mathbb{G}$ be a semisimple algebraic group defined over $\QQ$ of noncompact type, and let $D$ be a Hermitian symmetric domain, which we assume is irreducible, equipped with a fixed surjective homomorphism $G=\GG(\RR)^\circ \twoheadrightarrow \Aut(D)^\circ$ having a compact kernel.
Throughout, $(-)^\circ$ denotes the connected component of the identity. The pair $(\GG,D)$ constitutes a {\em connected Shimura datum}. 
After quotienting by an arithmetic subgroup $\Gamma \subset \GG(\RR)^\circ \cap \GG(\QQ)$, Baily--Borel show that $X=\Gamma\backslash D$ has the structure of a complex quasi-projective variety \cite{baily-borel-compactification}.   For example, let $\GG = \Sp_{2g}$, the symplectic group over $\QQ$, let $D = \cH_g$ be the Siegel upper half space, and let $\Gamma = \Sp_{2g}(\ZZ)$; then the locally symmetric variety $X = \Gamma\backslash D$ is $\cA_g$.  

The result of Baily--Borel relies on the construction of what is now called the {\em Satake compactification} of $X$, originally constructed in the analytic category by Satake \cite{satake-representations}.  Baily--Borel show that the Satake compactification is a projective variety, equal to Proj of a graded ring of automorphic forms on $\Gamma\backslash D$.  
It is obtained by first adding rational boundary components to $D$ via considering it in its Harish-Chandra embedding, before passing to the quotient by $\Gamma$.

Ash--Mumford--Rapoport--Tai construct a toroidal compactification $\ov{X}{}^\Sigma$ of $X$, one such compactification for each choice of appropriate input data $\Sigma$ called an {\em admissible collection}.    
In Section~\ref{subsection-tropicalizations}, admissible collections are formalized into functors
\[
\begin{tikzcd}[column sep = 2.5em, row sep = .75em]
    \Sigma\col \cF \arrow[r] & \mathsf{RPF}
\end{tikzcd}
\]
from a category $\cF$ of rational boundary components of the Hermitian symmetric domain $D$ to the category $\mathsf{RPF}$ of rational polyhedral fans with integral structure. We note $\cF = \cF(\GG,D,\Gamma)$ depends on the choice of $\GG, D$, and $\Gamma$; see Definition~\ref{def-F-cat}. The fan corresponding to a boundary component $F$ is contained in $U(F)$, the center of the unipotent radical of the normalizer of $F$, viewed as a real vector space via the exponential map. This vector space has a natural rational structure $U(F)_\QQ$, and for later use in Main Theorem~\ref{thm:main-4}, we note that we have a functor, officially defined in Definition~\ref{def: U Q}, 
\[
\begin{tikzcd}[column sep = 2.5em, row sep = .75em]
    U_{\QQ}\col \cF \arrow[r] & \Vect_\QQ.
\end{tikzcd}
\]
The following is due to \cite{kkmsd} and \cite{amrt}.  
\begin{maindefinition}
\label{maindef:trop}
The {\bf tropicalization} $X^{\Sigma,\trop}$ of $\ov{X}{}^\Sigma$ is the geometric realization  of~$\Sigma$.
\end{maindefinition}
\noindent See Definition~\ref{def:tropicalization}. Kempf--Knudsen--Mumford--Saint-Donat construct a ``conical polyhedral complex,'' glued from cones along face morphisms associated to {\em any} toroidal embedding \cite[II.1, p.~71]{kkmsd}.  In the locally symmetric case, it is  $X^{\Sigma,\trop}$ \cite{amrt}.  
Moreover, $X^{\Sigma,\trop}$ is canonically identified with Thuillier's nonarchimedean skeleton of the toroidal embedding $X\subset \ov{X}{}^\Sigma$ \cite{thuillier}. See Proposition~\ref{prop:skeletons}, known to experts.  

The apparent dependence of the topological space $X^{\Sigma,\trop}$ on choice of $\Sigma$ is misleading. Theorem~\ref{thm:there is one tropicalization} proves the folklore result that if $\Sigma,\Sigma'\col \cF \to \RPF$ are two admissible collections, then there is a canonical homeomorphism
\[
\begin{tikzcd}[column sep = 3.5em, row sep = .75em]
    X^{\Sigma,\trop} \arrow[r,leftrightarrow,"\cong"] & X^{\Sigma',\trop}.
\end{tikzcd}
\]
We may thus simply write $X^{\trop}$. This is in contrast with the general situation of compactifying a smooth  variety, where boundary complexes of different normal crossings compactifications can have different homeomorphism types (though necessarily the same homotopy type, and even {\em simple} homotopy type \cite{harper-thesis}).

In the case that the boundary divisor of $X\subset \ov{X}{}^\Sigma$ is normal crossings, then the polyhedral space $X^{\Sigma,\trop}$ is well-known to be (passing to the link) the dual complex of the boundary.
Deligne's theory of mixed Hodge structures on cohomology---with, or without, compact supports---of complex varieties implies a canonical isomorphism
\[W_0 H^*_c (X;\QQ) \cong H^*_c (X^{\trop};\QQ),\]
where $W_0$ denotes the weight $0$ subspace in Deligne's weight filtration in the sense of mixed Hodge theory.  See Theorem~\ref{thm:weight-0-comparison} and \cite{cgp-graph-homology}. 

Section~\ref{sec:chaincomplexes} associates a rational chain complex $C_*(\Sigma)$ to an arbitrary diagram $\Sigma$ of rational polyhedral fans, with a comparison isomorphism
\[H_*(C_*(\Sigma),d) \cong H_*^{\mathrm{BM}}(|\Sigma|;\QQ) \]
with the Borel--Moore homology of the geometric realization of $\Sigma$.  

In sum, starting with the data of $(\GG,D,\Gamma)$ and choice $\Sigma$ of admissible collection giving rise to a toroidal compactification $X\subset \ov{X}{}^\Sigma$, we have in hand a canonical notion of tropicalization $X^{\trop} = X{}^{\Sigma, \trop} = |\Sigma|$, together with a rational chain complex $C_*(\Sigma)$ whose homology is identified with 
\[H_*^{\mathrm{BM}}(X^{\Sigma,\trop};\QQ) \cong H^*_c(X^{\Sigma,\trop};\QQ)^\vee \cong (W_0 H^*_c(X;\QQ))^\vee \cong \Gr^W_{2d} H^{2d-*}(X;\QQ),\] with the last identification given by Poincar\'e duality, where $d = \dim X$ denotes the complex dimension of $X$.  Moreover, the homeomorphism type of $X{}^{\Sigma,\trop}$ is independent of choice of $\Sigma$, and can be stratified by locally symmetric spaces, yielding the following.

\begin{maintheorem}
\label{thm:thmb}
There is a spectral sequence
\begin{equation}\label{eq: intro all purpose spectral sequence} 
E^1_{p,q} = \bigoplus_{F\in \Pi_p} H^{\mathrm{BM}}_{p+q} \left(C(F)/\ov{\Gamma}_F ; \QQ\right) \quad \Longrightarrow \quad H_{s+t}^{\mathrm{BM}}(X^{\trop};\QQ). \end{equation}
\end{maintheorem}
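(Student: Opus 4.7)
The plan is to derive this spectral sequence as the standard Borel--Moore homology spectral sequence associated to a filtration by closed subspaces, applied to a natural stratification of $X^{\trop}$ by orbits of boundary components.

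First, I would exhibit the stratification. By Main Definition~\ref{maindef:trop} and Theorem~\ref{thm:there is one tropicalization}, the tropicalization $X^{\trop} = |\Sigma|$ is glued from the fans $\Sigma(F) \subset U(F)_\RR$ as $F$ ranges over $\cF$, modulo the identifications arising from the functor $\Sigma\col\cF\to\RPF$ and the action of $\Gamma$. This produces a canonical decomposition of $X^{\trop}$ into locally closed strata indexed by orbits $[F]\in\Pi := \cF/\Gamma$; after equipping $\Pi$ with a rank function (for instance, $p([F])=\dim U(F)_\RR$) and writing $\Pi_p$ for the rank-$p$ orbits, we obtain a finite closed filtration
\[
\emptyset \subset X^{\trop}_{\leq 0} \subset X^{\trop}_{\leq 1} \subset \cdots \subset X^{\trop},
\]
in which $X^{\trop}_{\leq p} \setminus X^{\trop}_{\leq p-1} = \bigsqcup_{[F]\in\Pi_p} S_{[F]}$ is the disjoint union of the rank-$p$ open strata.

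Second, I would invoke the standard Borel--Moore homology spectral sequence attached to such a closed filtration: the long exact sequences of the pairs $(X^{\trop}_{\leq p}, X^{\trop}_{\leq p-1})$ assemble into an exact couple, yielding
\[
E^1_{p,q} = H^{\mathrm{BM}}_{p+q}\!\left(X^{\trop}_{\leq p} \setminus X^{\trop}_{\leq p-1};\QQ\right) \Longrightarrow H^{\mathrm{BM}}_{p+q}(X^{\trop};\QQ).
\]
This step is particularly clean in our setting using the chain complex $C_*(\Sigma)$ from Section~\ref{sec:chaincomplexes}: one defines a filtration on $C_*(\Sigma)$ whose $p$-th piece is spanned by cones supported in a boundary component of rank $\leq p$, and the comparison isomorphism $H_*(C_*(\Sigma),d) \cong H^{\mathrm{BM}}_*(|\Sigma|;\QQ)$ identifies the resulting filtration spectral sequence with the topological one above.

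Third, I would identify each stratum. The ``new'' cones contributing at rank $p$ are those supported on $C(F)^\circ$ for $F\in\Pi_p$, so $S_{[F]}$ is the image of $C(F)^\circ$ in $X^{\trop}$, which identifies with $C(F)^\circ/\ov{\Gamma}_F$. An excision argument---using that the rational boundary $C(F)\setminus C(F)^\circ$ is absorbed into strata of lower rank---yields
\[
H^{\mathrm{BM}}_{p+q}(S_{[F]};\QQ) \cong H^{\mathrm{BM}}_{p+q}(C(F)/\ov{\Gamma}_F;\QQ).
\]
The principal obstacle is exactly this last identification: one must verify that the stratum labelled by $[F]$ is canonically $C(F)^\circ/\ov{\Gamma}_F$ and that its rational Borel--Moore homology agrees with that of the closed quotient $C(F)/\ov{\Gamma}_F$. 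This demands careful bookkeeping of the gluing encoded by $\Sigma\col\cF\to\RPF$ and of the $\Gamma$-action, together with the fact that $\ov{\Gamma}_F$ acts properly discontinuously on $C(F)$ with finite cone stabilizers so that rational Borel--Moore homology of the quotient is well-behaved---standard but nontrivial input from the reduction theory underlying Ash--Mumford--Rapoport--Tai.
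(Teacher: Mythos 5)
Your overall strategy matches the paper's: filter $X^{\trop}$ by the (closed) images of the admissible decompositions over boundary components of bounded rank, run the spectral sequence of that filtration, and identify the successive differences as the spaces $C(F)/\ov{\Gamma}_F$. The paper does this by passing to the one-point compactification, where $(X^{\trop})^+$ is a CW-complex, $\Fil_\bullet$ is a filtration by subcomplexes, and the $E^1$-page of the relative singular homology spectral sequence is rewritten in terms of reduced homology of successive quotients, i.e.\ Borel--Moore homology of successive differences.

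However, your third step rests on a misreading of the notation that, taken literally, would not go through. In the paper, $C(F)$ already denotes the \emph{open} homogeneous cone (see the discussion following \eqref{eq:definition of overline Gamma_F}), so the $E^1$-term of the theorem is the Borel--Moore homology of the open stratum itself. You instead treat the theorem's $C(F)$ as a partial closure of the open cone and propose an ``excision argument'' to pass from $H^{\mathrm{BM}}_*(C(F)^\circ/\ov{\Gamma}_F)$ to $H^{\mathrm{BM}}_*(C(F)/\ov{\Gamma}_F)$; Borel--Moore homology of an open subspace and of a larger closed or partially closed set are related only by a long exact sequence, not by an isomorphism, so this step would fail. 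No such step is needed: once $C(F)$ is understood to be open, the $E^1$-term is simply the Borel--Moore homology of the stratum. The genuine technical point, which the paper handles and your proposal does not mention, is that the colimit topology on $X^{\trop}$ restricted to the open stratum must agree with the Euclidean quotient topology on $C(F)/\ov{\Gamma}_F$; this holds because any admissible decomposition $\Sigma_F$ is locally finite in the interior of $C(F)$. Two minor remarks: the rank function you suggest, $p([F]) = \dim U(F)_\RR$, yields the same filtration but a different $(p,q)$-grading from the one in the statement (the paper indexes by the rank of $F$ as a Hermitian symmetric domain); and the long exact sequences you invoke should be the Borel--Moore localization sequences for open/closed decompositions (or, as the paper does, the relative sequences on the one-point compactification), not the ordinary long exact sequences of pairs.
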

\noindent See 
also Remark~\ref{rem:spectral-to-group}. Here $C(F)$ is the homogeneous cone attached to a rational boundary component $F$, which ranges over a set $\Pi_p$ of representatives of isomorphism classes of rational boundary components of rank $p$.
This relationship between cohomology of $X{}^{\trop}$ and cohomology of the groups $\ov{\Gamma}_F$ plays a role in the next sections.

\bigskip 

\noindent {\bf The classical Lie types.} The above summarizes Section~\ref{sec:generalities}, whose results apply to arbitrary toroidal compactifications of locally symmetric varieties at the level of generality of \cite{amrt}. Most of the definitions and results in Section~\ref{sec:generalities} are known, and our role is to collect together and formalize the general theory.  Especially, the definition of the tropicalization, its independence of choice of admissible collection, and its associated cellular chain complex computing its Borel--Moore homology, are useful for further applications.

Starting in Section~\ref{sec:isotropic}, we specialize to the classical Lie types.  Cartan classifies irreducible Hermitian symmetric domains according to Lie types.  If $D$ is of {\em classical} Lie type A, B, C, or D, then $\GG$ has a ``uniform'' description as
the group of invertible linear endomorphisms of a finitely generated free module $V_B$, over a suitable division algebra $B$ over $\QQ$, that preserve a suitably chosen bilinear form $J$ (this is well-known, e.g. \cite[{\S V.23}]{borel-linear-algebraic-groups}, but see Corollary~\ref{cor:B and J} and Remark~\ref{rem:the actual choices of B and J} for precise definitions and notations).

In these types, the rational boundary components are in natural bijection with the subspaces of $V_B$ that are isotropic with respect to $J$.  In types A, C, and the second half of D (see Proposition~\ref{prop: classification of Shimura datum} and Hypothesis~\ref{hypothesis:ACD}), we construct (Proposition~\ref{prop:isotropic_equiv_boundarycomps}) an isomorphism of categories
\[
\begin{tikzcd}[row sep = .75em, column sep = 2.5em]
\Xi\col \cF \arrow[r] & \cW
\end{tikzcd}
\]
for a category $\cW$ of isotropic subspaces, and we can transport the definition of an admissible collection from $\cF$ to $\cW$, obtaining a notion of $\Gamma_\cW$-admissible collections. The details of $\Gamma_\cW$-admissible collections are in Section~\ref{sec:GammaW admissible collections} and Definition~\ref{def:Gamma W admissible collection}. The data of a $\Gamma_\cW$-admissible collection, while still complicated, is purely linear-algebraic. It makes no further mention of categories of rational boundary components associated to Satake compactifications.
The central technical results that make it possible to transport admissibility from $\cF$ to $\cW$, as outlined above, are summarized in the following.

\begin{maintheorem}\label{thm:main-4}
Let $(\G,D)$ be a connected Shimura datum, in which $D$ is an irreducible Hermitian symmetric domain.  Let $\Gamma \subset G \cap \GG(\QQ)$ be a discrete arithmetic subgroup, and let $X=\Gamma\backslash D$ be the corresponding locally symmetric variety.  Suppose that the Hermitian domain $D$ is of type A, C, or the second part of type D.
\begin{enumerate}
    \item Let $\fW\col \cW \to \Vect_\QQ$ be the functor
\[W_B \mapsto \Hermeps{-\varepsilon}(W_B^*),\]
explained in Definition~\ref{def:herm-functor}.
Then there is a natural isomorphism \[\eta\col \fW\circ \Xi \Rightarrow U_\QQ.\]  
\item 
There is a bijection
\begin{eqnarray*}\{\textrm{admissible collections }\cF\to \mathsf{RPF}\} &\to& \{\Gamma_\cW\textrm{-admissible collections }\cW\to \mathsf{RPF}\},\\ \Sigma&\mapsto&\Sigma_\cW,
\end{eqnarray*}
such that $\eta$ induces natural isomorphisms
\[\Sigma\Rightarrow \Sigma_\cW \circ \Xi.\]
\end{enumerate}
\end{maintheorem}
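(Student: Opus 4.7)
The plan is to establish part (1), the natural isomorphism $\eta\col \fW \circ \Xi \Rightarrow U_\QQ$, as the technical heart, and then deduce part (2) by transport of structure along $\eta$ and $\Xi$.

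For part (1), for each object $F \in \cF$, let $W_B = \Xi(F)$ be the corresponding isotropic subspace. I would construct the component
\[
\eta_F \col \Hermeps{-\varepsilon}(W_B^*) \longrightarrow U(F)_\QQ
\]
using the classical description of the unipotent radical of a maximal parabolic stabilizing an isotropic subspace. In each of types A, C, and the second half of type D, the center $U(F)$ of the unipotent radical of the normalizer of $F$ is, by the standard structural description of classical groups (cf.\ the references collected around Corollary~\ref{cor:B and J}), naturally isomorphic to the space of $(-\varepsilon)$-Hermitian forms on $W_B^*$: the sign $\varepsilon$ encodes whether $J$ is symmetric ($\varepsilon = 1$) or alternating ($\varepsilon = -1$), and the involution accounts for the division algebra $B$ in the unitary case. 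The $\QQ$-structure $U(F)_\QQ$ inherited from the $\QQ$-structure on $\GG$ matches the $\QQ$-structure on $\Hermeps{-\varepsilon}(W_B^*)$ inherited from the $B$-module structure on $W_B$.

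For naturality, for each morphism $F \to F'$ in $\cF$, corresponding under $\Xi$ to an inclusion $W_B \hookrightarrow W_B'$, I would verify commutativity of
\[
\begin{tikzcd}
\Hermeps{-\varepsilon}(W_B^*) \ar[r, "\eta_F"] \ar[d] & U(F)_\QQ \ar[d] \\
\Hermeps{-\varepsilon}((W_B')^*) \ar[r, "\eta_{F'}"'] & U(F')_\QQ.
\end{tikzcd}
\]
The left vertical arrow is pullback of $(-\varepsilon)$-Hermitian forms along the dual surjection $(W_B')^* \twoheadrightarrow W_B^*$; the right vertical arrow is the induced Lie-theoretic map between centers of unipotent radicals. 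Both are induced by the single inclusion $W_B \subset W_B'$ at the level of classical groups, so commutativity reduces, after unwinding the identifications, to a direct matrix calculation using a basis adapted to the flag $W_B \subset W_B'$.

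For part (2), I would define the bijection by $\Sigma_\cW(W_B) \coloneqq \Sigma(\Xi^{-1}(W_B))$, viewed as a rational polyhedral fan inside $\Hermeps{-\varepsilon}(W_B^*)$ via $\eta_{\Xi^{-1}(W_B)}$; naturality of $\eta$ makes this functorial on $\cW$. The axioms of admissibility (support equal to the rational closure of the self-adjoint cone $C(F)$, $\ov{\Gamma}_F$-invariance, $\Gamma$-finiteness, compatibility with face morphisms) are then translated axiom-by-axiom into the linear-algebraic axioms on $\cW$ formulated in Definition~\ref{def:Gamma W admissible collection}; since $\Xi$ is an isomorphism of categories and $\eta$ is a natural isomorphism, each translation is tautological, and the inverse bijection is transport back through $\Xi^{-1}$ and $\eta^{-1}$. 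The main obstacle is entirely in part (1): one must exhibit the identification of $U(F)_\QQ$ with $(-\varepsilon)$-Hermitian forms on $W_B^*$ simultaneously \emph{functorially} in the boundary component, compatibly with the $\QQ$-structure, and uniformly across the three Lie types with their different division algebras and involutions. Once these conventions are pinned down cleanly, the remaining verifications in part (2) are essentially formal.
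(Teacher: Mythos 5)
Your high-level plan matches the paper's: construct $\eta_F$ as an explicit isomorphism of rational vector spaces, verify naturality over morphisms in $\cF$, and deduce part (2) by transport of structure along $\Xi$ and $\eta$ (which the paper does in Definition~\ref{def:Gamma W admissible collection} and Remark~\ref{rem:equivalence-admissible-collections}). Part (2) is indeed essentially formal once $\eta$ is in hand, and your treatment of it is fine.

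The genuine gap is in part (1), where you delegate the identification $U(F) \cong \Hermeps{-\varepsilon}(W_B^*)$ to ``the standard structural description of classical groups.'' This is precisely the technical content the paper proves rather than cites, and it is not a bookkeeping step. Writing $\eta_F(\varphi) = 1 + \varphi$ inside $\Hom_B(V_B,V_B)$, one must show that the image of $\eta_F$ is \emph{exactly} $U(F)_\QQ$, and the nontrivial direction is that the center $U(F)$ of the unipotent radical $W(F)$ is contained in the kernel $K(F)$ of the induced map $W(F) \to \Hom(W^\perp/W, W)$. The paper proves $U(F) = K(F)$ (Lemma~\ref{lem:center-KF-UF}) by observing that the commutator $ghg^{-1}h^{-1}$ of two unipotent elements is governed by the pairing $\langle g_0,h_0\rangle = g_0h_0^* - h_0g_0^*$ on $\Hom(W^\perp/W,W)$, and then proving this pairing is nondegenerate (Lemma~\ref{lem:key2}). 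Nondegeneracy in turn hinges on a constraint on the division algebra and involution (Lemma~\ref{lem:division-algebra}): the equation $\varepsilon\alpha\ov{x} = x\ov{\alpha}$ for all $\alpha\in B$ forces $\varepsilon = 1$ and $\sigma = \mathrm{id}$, i.e., precisely the excluded types B and D(1). This is where the ACD hypothesis actually does work, and it is what makes your phrase ``the standard structural description'' dangerous: the analogous statement is \emph{false} outside ACD (for an isotropic line in an orthogonal space, the unipotent radical is abelian and not a space of Hermitian forms on a one-dimensional dual). So the type-dependence of the theorem lives exactly in the step you are asserting rather than proving. Additionally, a small point: your gloss ``$\varepsilon = 1$ iff $J$ is symmetric, $\varepsilon = -1$ iff alternating'' conflates the sign with the involution; in the quaternionic type C case one has $\varepsilon = 1$ with $J$ Hermitian, not symmetric.
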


\noindent See Definition~\ref{def:herm-functor}, Theorem~\ref{thm:eta-isom}, Definition~\ref{def:Gamma W admissible collection} and Remark~\ref{rem:equivalence-admissible-collections}.  We give extended examples of the natural isomorphism $\eta$ in Lie types A and C: see Sections~\ref{subsub:type-C} and~\ref{subsub:type-A}.

We close our summary of Section~\ref{sec:isotropic} with the following remark, giving a hint of the subtlety here.  Suppose $F_1$ and $F_2$ are rational boundary components, and let $W_1$ and $W_2$ be the respective corresponding isotropic subspaces.  Then in the two-and-a-half types~\eqref{eq:ACD}, we have $F_1 \subset \ov{F_2}$ if and only if $W_2 \subset W_1$.  In the remaining one-and-a-half types BD, the inclusions are reversed: $F_1 \subset \ov{F_2}$ if and only if $W_1 \subset W_2$. 

\bigskip

\bigskip

\noindent {\bf The special unitary case.} 
Let $E$ be an imaginary quadratic extension of $\Q$ whose ring of integers $R$ is a principal ideal domain. 
Let $\PD_q^{\Herm}$ denote the positive definite cone of $q \times q$ Hermitian complex matrices, and let $\PD_q^{E\text{-rt}}$ denote its partial closure by positive semidefinite $q\times q$ Hermitian complex matrices having $E$-rational kernel. There are natural inclusions
\begin{equation}\label{eq:intro-chain-inclusions-A_q,q}
\begin{tikzcd}[column sep = 3em, row sep = .5em]
    \PD_0^{E\text{-rt}}\!/\GL_0(R) \arrow[r,hook,"i_0"] &
    \PD_1^{E\text{-rt}}\!/\GL_1(R) \arrow[r,hook,"i_1"] & \PD_2^{E\text{-rt}}\!/\GL_2(R) \arrow[r,hook,"i_2"] & 
    \cdots
\end{tikzcd}
\end{equation}
with $ \PD_{q+1}^{E\text{-rt}}\!/\GL_{q+1}(R) \setminus i_q\big(\PD_q^{E\text{-rt}}\!/\GL_q(R)\big) \cong \PD_{q+1}^{\Herm} / \GL_{q+1}(R)$.
Theorem~\ref{thm:Ig-acyclic} establishes that each inclusion $i_q$ factors through an inclusion of a particular contractible subspace, which realizes the {\em inflation subcomplex}; see Subsection~\ref{subsub:inflation-subcomplex}. It is the Hermitian generalization of the inflation subcomplex in~\cite{bbcmmw-top}.  Theorem~\ref{thm:Ig-acyclic} implies the following convergence result.

\begin{maintheorem}\label{mainthm:conv-spectral-seq}
Let $E = \QQ(\sqrt{-d})$ for $d \in \{1,2,3,7,11,19,43,67,163\}$, and $R$ the ring of integers of $E$. 
The relative homology spectral sequence associated to \eqref{eq:intro-chain-inclusions-A_q,q},
having
\[E^1_{s,t} = H^{\mathrm{BM}}_{s+t} \left(\PD^{\Herm}_s/\GL_s(R);\,\QQ \right),\]
converges to $0$ on $E^2$.
\end{maintheorem}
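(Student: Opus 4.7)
The plan is to deduce $E^2=0$ from Theorem~\ref{thm:Ig-acyclic}. Write $X_q:=\PD_q^{E\text{-rt}}/\GL_q(R)$, and $U_q:=X_q\setminus X_{q-1}\cong \PD_q^{\Herm}/\GL_q(R)$. For each $q$, Theorem~\ref{thm:Ig-acyclic} provides a factorization $X_q \hookrightarrow I_q \hookrightarrow X_{q+1}$ of $i_q$ through a particular contractible ``inflation subcomplex'' $I_q \subset X_{q+1}$. The spectral sequence in question, coming from the filtration $X_0\subset X_1\subset \cdots$, has $E^1_{s,t} = H^{\mathrm{BM}}_{s+t}(X_s,X_{s-1}) \cong H^{\mathrm{BM}}_{s+t}(U_s)$ by excision in Borel--Moore homology, with $d^1$ given by the connecting homomorphism of the open-closed long exact sequence of $(X_s,X_{s-1})$ followed by the quotient $H^{\mathrm{BM}}_*(X_{s-1})\twoheadrightarrow H^{\mathrm{BM}}_*(U_{s-1})$. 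To prove the theorem it suffices to show that this $d^1$-complex is acyclic.

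To achieve this, I would compare the original spectral sequence to the one arising from the refined, interleaved filtration $X_0\subset I_0 \subset X_1 \subset I_1 \subset X_2 \subset \cdots$, which has the same colimit. In the refined spectral sequence, the ``extra'' $E^1$-terms corresponding to the pairs $(I_s, X_s)$ are identified with $H^{\mathrm{BM}}_*(I_s\setminus X_s)$ via excision. The essential input extracted from Theorem~\ref{thm:Ig-acyclic} is that these relative Borel--Moore groups vanish: the contractible inflation subcomplex is, in the appropriate sense, Borel--Moore-acyclic relative to its ``boundary'' $X_s$. Granting this, a comparison of spectral sequences forces the $d^1$-differentials of the original spectral sequence to arrange themselves into an acyclic chain complex, yielding $E^2 = 0$.

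The main obstacle I anticipate lies in the translation of Theorem~\ref{thm:Ig-acyclic} into the required vanishing $H^{\mathrm{BM}}_*(I_s \setminus X_s)=0$. Because Borel--Moore homology is not a homotopy invariant, mere contractibility of $I_s$ does not suffice; the argument must exploit the specific geometric construction of the inflation subcomplex---for instance a cone or product decomposition, or an explicit stratum-preserving deformation retract---to realize $I_s \setminus X_s$ as a space with vanishing Borel--Moore homology. Once this geometric vanishing is in place, the spectral-sequence comparison is formal, so the bulk of the work is in exploiting the detailed description of $I_q$ supplied by Theorem~\ref{thm:Ig-acyclic}. The hypothesis that $E=\QQ(\sqrt{-d})$ has class number one, entering through the assumption that $R$ is a PID, is used exactly so that the inflation subcomplex has the required combinatorial form to carry this vanishing argument through, generalizing the $\GL_g(\ZZ)$ case of \cite{bbcmmw-top} to the Hermitian setting.
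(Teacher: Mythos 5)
Your framework is right — the inflation subcomplex and the filtration are exactly the ingredients — but the specific plan you outline has a gap, and the gap is more fundamental than you anticipate. You propose the refined filtration $X_0\subset I_0\subset X_1\subset I_1\subset\cdots$ and want $H^{\mathrm{BM}}_*(I_s\setminus X_s;\QQ)=0$. That vanishing is \emph{false} in general. Indeed, since $X_s$ is closed in $I_s$, the localization long exact sequence for Borel--Moore homology reads
\[
\cdots \to H^{\mathrm{BM}}_n(X_s) \to H^{\mathrm{BM}}_n(I_s) \to H^{\mathrm{BM}}_n(I_s\setminus X_s) \to H^{\mathrm{BM}}_{n-1}(X_s) \to \cdots,
\]
and once one knows $H^{\mathrm{BM}}_*(I_s;\QQ)=0$ (see below), this forces $H^{\mathrm{BM}}_n(I_s\setminus X_s;\QQ)\cong H^{\mathrm{BM}}_{n-1}(X_s;\QQ)$, which is certainly nonzero (e.g.\ for $s=0,1$). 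So the obstacle you flagged is a genuine dead end for the refined-filtration route, not a technicality that a clever retraction would fix.

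The fix is also a simplification. You read Theorem~\ref{thm:Ig-acyclic} as asserting contractibility of $|I^{(s+1)}_\bullet|$, and then worry that contractibility is the wrong invariant since Borel--Moore homology is not homotopy-invariant. But Theorem~\ref{thm:Ig-acyclic} asserts something directly stronger and directly useful: the rational \emph{cellular chain complex} $I^{(s+1)}_\bullet$ is acyclic, and since that chain complex computes $H^{\mathrm{BM}}_*(|I^{(s+1)}_\bullet|;\QQ)$ (Proposition~\ref{prop:comparison}), one gets $H^{\mathrm{BM}}_*(|I^{(s+1)}_\bullet|;\QQ)=0$ with no homotopy-invariance issues at all. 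With this in hand, there is no need to interleave the filtration: because $X_s\hookrightarrow X_{s+1}$ factors through $|I^{(s+1)}_\bullet|$, each transition map $H^{\mathrm{BM}}_*(X_s;\QQ)\to H^{\mathrm{BM}}_*(X_{s+1};\QQ)$ is zero. One then invokes the general fact (here \cite[Proposition~2.8]{brown-chan-galatius-payne-hopf}) that a filtered chain complex all of whose transition maps are zero on homology has an exact $E^1$-page: the long exact sequences of consecutive pairs degenerate to short exact sequences
\[
0\to H^{\mathrm{BM}}_n(X_{s};\QQ)\to H^{\mathrm{BM}}_n(X_s\setminus X_{s-1};\QQ)\to H^{\mathrm{BM}}_{n-1}(X_{s-1};\QQ)\to 0,
\]
from which one reads off $\ker d^1=\mathrm{im}\, d^1$ column by column. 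You are right that the PID hypothesis on $R$ (equivalently $d\in\{1,2,3,7,11,19,43,67,163\}$) enters to make the inflation/deflation combinatorics from \cite{bbcmmw-top} carry over, since one needs bases of saturated $R$-sublattices to extend; that diagnosis matches the paper's remark.
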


\noindent See Theorem~\ref{thm:converge-to-0} and Remark~\ref{rem:general-doubling}.  From Theorem~\ref{mainthm:conv-spectral-seq}, we construct infinitely many new unstable classes in the rational cohomology of $\GL_n(R)$.

\begin{maincorollary}\label{maincor: to the right}
When 
$0 \le k < 2n-2$, 
there exist canonical injections
\[H^{k}(\GL_n(R);\QQ) \longrightarrow H^{2n+k}(\GL_{n+1}(R);\QQ). \]
\end{maincorollary}
\noindent See Proposition~\ref{prop: to the right}.  These injections are dual to differentials on $E^1$ of the aforementioned spectral sequence.  Applied to cohomological degrees $k$ in the stable range for $\GL_n(R)$, Corollary~\ref{maincor: to the right} constructs infinitely many new unstable classes $H^*(\GL_{n+1}(R);\QQ)$, as the images of stable classes in $H^*(\GL_n(R);\QQ)$.   See Example~\ref{ex:calculations}.

In all but two cases $d=1$ and $3$, the space $\PD^{E\text{-rt}}_q/\GL_q(R)$ is a tropicalization of a locally symmetric variety.  Indeed, let $\psi = \psi_g : R^{2g} \times R^{2g} \to R$ be a unimodular skew-Hermitian form having Witt index $g$, and 
let $\cA_{g,g,\psi}$ be the moduli space of polarized abelian varieties with complex multiplication by $R$ and polarization $\psi$. This moduli space $\cA_{g,g,\psi}$ is an instance of a Shimura variety of type A, and is the main object of study of Section~\ref{sec:type A}.  There is in fact only one choice of $\psi$ up to isomorphism, so we henceforth write $\cA_{g,g} = \cA_{g,g,\psi}$.

Let $A_{g,g}^\trop$ denote the tropicalization of $\cA_{g,g}$. We show that if $R^\times = \{\pm1\}$, which exactly excludes the two cases $E = \Q(\sqrt{-1})$ and $E = \Q(\sqrt{-3})$, then
\[A_{g,g}^\trop \cong \PD^{E\text{-rt}}_g/\GL_g(R).\]
From the spectral sequence from Theorem~\ref{mainthm:conv-spectral-seq} and a recent construction of a Hopf algebra structure on the Quillen spectral sequence by \cite{ash-miller-patzt-hopf, brown-chan-galatius-payne-hopf}, we deduce the following.

\begin{maintheorem}\label{maintheorem:hopf}
Let $E = \QQ(\sqrt{-d})$ for $d \in \{2,7,11,19,43,67,163\}$, and $R$ the ring of integers of $E$. 
The bigraded vector space 
\begin{equation*} \bigoplus_{(g,k)} W_0 H^{g+k}_c(\cA_{g,g};\QQ),\end{equation*}
admits the structure of a bigraded Hopf algebra, with graded-cocommutative coproduct.\end{maintheorem}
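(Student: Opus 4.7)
The plan is to adapt directly the argument of \cite{brown-chan-galatius-payne-hopf}, which established the analogous Hopf algebra structure on $\bigoplus_{(g,k)} W_0 H^{g+k}_c(\cA_g;\QQ)$, to the Hermitian setting, by replacing $\GL_g(\ZZ)$ and the symmetric cone of positive definite $g\times g$ real matrices by $\GL_g(R)$ and $\PD^{\Herm}_g$, respectively. The new input enabling this transfer is the acyclicity provided by Main Theorem~\ref{mainthm:conv-spectral-seq}.

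First, I would translate the statement to the tropical side. By Theorem~\ref{thm:weight-0-comparison} and Poincar\'e--Lefschetz duality,
\[
W_0 H^{g+k}_c(\cA_{g,g};\QQ) \;\cong\; H^{g+k}_c(A_{g,g}^{\trop};\QQ) \;\cong\; H^{\mathrm{BM}}_{\dim_{\RR} A_{g,g}^{\trop} - g - k}(A_{g,g}^{\trop};\QQ),
\]
and for the listed values of $d$ we have $R^\times=\{\pm 1\}$, whence $A_{g,g}^{\trop} \cong \PD^{E\text{-rt}}_g/\GL_g(R)$ by the excerpt. So the claim becomes a statement about these Borel--Moore homologies.

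Next, I would invoke Main Theorem~\ref{mainthm:conv-spectral-seq}: the vanishing of the $E^2$ page says that the chain complex $(C_\bullet,d^1)$ with $C_s = \bigoplus_t H^{\mathrm{BM}}_{s+t}(\PD^{\Herm}_s/\GL_s(R);\QQ)$ is exact. I would then endow the total bigraded vector space $\bigoplus_g C_g$ with the structure of a differential bigraded Hopf algebra. The product is induced by block-diagonal direct sum of Hermitian matrices
\[
\PD^{\Herm}_{g_1}/\GL_{g_1}(R) \times \PD^{\Herm}_{g_2}/\GL_{g_2}(R) \to \PD^{\Herm}_{g_1+g_2}/\GL_{g_1+g_2}(R),
\]
giving a pushforward on Borel--Moore homology compatible with K\"unneth. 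The graded-cocommutative coproduct is defined, following \cite{ash-miller-patzt-hopf, brown-chan-galatius-payne-hopf}, by summing over $R$-module splittings $R^g = V_1 \oplus V_2$: restriction to the corresponding block-diagonal sublocus followed by the appropriate Gysin/transfer maps. Compatibility with $d^1$ is checked exactly as in the $\ZZ$ case.

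Finally, because $(C_\bullet,d^1)$ is acyclic, the differential bigraded Hopf algebra structure on $C_\bullet$ descends, by the Brown--Chan--Galatius--Payne mechanism, to a bigraded Hopf algebra structure on each homology group $H^{\mathrm{BM}}_*(\PD^{E\text{-rt}}_g/\GL_g(R);\QQ)$, and hence, via the dualities of the first step, on $\bigoplus_{(g,k)} W_0 H^{g+k}_c(\cA_{g,g};\QQ)$. The main obstacle is verifying that the Ash--Miller--Patzt and BCGP construction of the coproduct carries over verbatim to the unitary setting over $R$: the combinatorial input (splittings of free $R$-modules) is the same, but the Gysin maps and their compatibilities must be set up carefully in the Hermitian case, and the orientability requirements imposed by $R^\times=\{\pm 1\}$ must be tracked through the duality in the first step.
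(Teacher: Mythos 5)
Your approach captures the two key inputs (the acyclicity of the tropical spectral sequence, and the AMP/BCGP Hopf-algebra machinery) but diverges from the paper's proof in a way that leaves a genuine gap, and also proposes unnecessary work.

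On the unnecessary work: you propose to construct the Hopf algebra structure on $\bigoplus_{g,k} H^{\mathrm{BM}}_{g+k}(\PD^{\Herm}_g/\GL_g(R);\QQ)$ geometrically from scratch, via block-diagonal direct sum (product) and a sum over $R$-module splittings with Gysin/transfer maps (coproduct), and then verify compatibility with $d^1$. The paper instead takes the existing Hopf algebra structure on the $E^1$ page of the \emph{Quillen spectral sequence} in algebraic $K$-theory of $R$ (which \cite{ash-miller-patzt-hopf} and \cite[\S1.7, Thm.~3.18]{brown-chan-galatius-payne-hopf} already establish for such $R$), namely $\sseq{Q}^1_{n,k}\cong H_k(\GL_n(R);\St_n(E)\otimes\QQ)$, and then \emph{identifies} this bigraded Hopf algebra with $\bigoplus_{n,k} H^{\mathrm{BM}}_{n+k}(\PD^{\Herm}_n/\GL_n(R);\QQ)$ via Borel--Serre virtual duality and Poincar\'e duality, using that the orientation module is trivial because $\GL_n(R)\subset\GL_n(\CC)$ acts orientation-preservingly on the Hermitian symmetric space. (Note this orientability fact has nothing to do with $R^\times=\{\pm1\}$, which you invoke here; that hypothesis is used elsewhere, to identify $A_{g,g}^\trop$ with $\PD_g^{E\text{-rt}}/\GL_g(R)$.) Your route, even if it works, requires reverifying all the Hopf-algebra axioms in the Hermitian setting rather than quoting them.

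The genuine gap is your final descent step. You assert that because $(C_\bullet,d^1)$ is acyclic, the Hopf structure ``descends to a bigraded Hopf algebra structure on each homology group $H^{\mathrm{BM}}_*(\PD^{E\text{-rt}}_g/\GL_g(R);\QQ)$.'' But acyclicity means the homology of $(C_\bullet,d^1)$ is \emph{zero}; the groups $H^{\mathrm{BM}}_*(\PD^{E\text{-rt}}_g/\GL_g(R))$ are the targets of the \emph{truncated} spectral sequences, not the homology of the full complex. The mechanism you need is the one the paper supplies: each row of the (cohomological) $E_1$ page is an acyclic cochain complex, hence is isomorphic as a graded $\QQ[x]/(x^2)$-module to $\bigoplus_s H^s(C^*_{\le s},d)\otimes\QQ[x]/(x^2)$ with $\deg x=1$ (this is \cite[Lemma~2.9]{brown-chan-galatius-payne-hopf}); combined with Corollary~\ref{cor:convergence-truncated-tropical-ss} this yields the doubling isomorphism
\[
\bigoplus_{g,k} H^{g+k}_c\big(\PD^{\Herm}_g/\GL_g(R);\QQ\big) \cong \Big(\bigoplus_{g,k} W_0 H^{g+k}_c(\cA_{g,g};\QQ)\Big)\otimes\QQ[x]/(x^2),
\]
and then one checks that the image $e$ of $x$ is primitive (being in bidegree $(1,0)$) and hence generates a Hopf ideal, so that the quotient is a Hopf algebra. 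Without this doubling/Hopf-ideal argument your proof does not go through.
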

See 
the background section below for discussion of context of Theorem~\ref{maintheorem:hopf}. The proof relies on the following doubling phenomenon, deduced from Theorem~\ref{mainthm:conv-spectral-seq}. We claim an isomorphism
\begin{equation*}
    \bigoplus_{g,k} H^{g+k}_c \big(\PD^{\Herm}_g\!/\GL_g(R);\QQ\big) \cong \left(\bigoplus_{g,k} W_0 H^{g+k}_c(\cA_{g,g};\QQ)\right) \otimes \QQ[x]/(x^2)
\end{equation*}
of bigraded vector spaces, with $\deg(x) = (1,0)$. See~\S\ref{subsec:hopf}.  
In Section~\ref{subsec:computations-larger-n}, we end the paper with computations of $W_0 H^*_c(\cA_{g,g}; \Q)$ and examples of combining this doubling phenomenon and  computations due to ~\cite{dutour-sikiric-gangl-gunnells-hanke-schurmann-yasaki-cohomology} to identify several new 
cohomology classes of $\GL_n(R)$ with coefficients in the Steinberg module $\St_n(E)\otimes \QQ.$

\bigskip

\noindent {\bf Level structures on $\cA_g$.}
Let $\cA_g[m]$ denote the moduli space of principally polarized abelian varieties of dimension $g$ with level structure $m$, for integers $g,m\ge 1$.  Its tropicalization shall be denoted $\Agtropm$.  The variety $\cA_g[m]$  is an instance of a Shimura variety of type C, and in Section~\ref{sec:type C}, we specialize Sections~\ref{sec:generalities} and~\ref{sec:isotropic} to this case.  

Let $d = (g+1)g/2$ be the complex dimension of $\cA_g[m]$.  When $g\ge 2$ and $m\ge 3$, Miyazaki has shown that the weight $2d$, middle-degree cohomology $\Gr^W_{2d} H^{d}(\cA_g[m];\CC)$ is spanned by Eisenstein series of weight $g+1$ for $\Gamma[m]$, and computed its dimension to be the number of $0$-dimensional boundary strata of the minimal Baily--Borel--Satake compactification of $\cA_g[m]$ \cite[Main Theorem (i)]{miyazaki-mixed}.  Our methods allow us to compute the weight $2d$ cohomology in a range of degrees, determined linearly by $g$, above middle degree $d$ as well; see Proposition \ref{prop:agm-middle-comps}. This proves and extends Miyazaki's theorem calculating $\dim \Gr^W_{2d} H^{d}(\cA_g[m];\CC)$.

To make this precise, let
\[
\Omega_{\mathrm{can}}^{\bullet}\coloneqq \bigwedge \QQ\left\langle \omega_{5},\omega_{9},\cdots,\omega_{4j+1},\cdots \right\rangle
\]
be the $\ZZ$-graded exterior algebra with one generator $\omega_{4j+1}$ in degree $4j+1$ for each integer $j\geq1$. For an integer $k\in \ZZ$, we write $\Omega^k_{\mathrm{can}}$ for the degree $k$ piece of $\Omega_{\mathrm{can}}^{\bullet}$. Further, let $\pi_{g,p,m}$ be the number of $\Sp_{2g}(\ZZ)[m]$-orbits of $p$-dimensional subspaces of $\QQ^{2g}$. Note that $\pi_{g,g,m}$ is equal to the number $0$-dimensional boundary strata of the minimal Baily--Borel--Satake compactification of $\cA_g[m]$, and explicitly
\[
\pi_{g,g,m}\coloneqq \epsilon_m \cdot m^d\cdot \phi(m)\prod_{\substack{\text{$p$ prime} \\ p|m}}\prod_{i=1}^{g}\left(1+\frac{1}{p^i}\right), 
\quad\quad \text{where} \quad \epsilon_m = \begin{cases} 
\frac{1}{2} & m \ge 3, \\
1 & m = 1,2.
\end{cases}
\]
   
\begin{maintheorem}\label{mainthm-agm-spsec}
    Let $m\geq1$ and $g\geq2$ be integers. If $0\leq k\leq g-2$ then
    \[
    \Gr^W_{2d} H^{d+k} \left(\cA_{g}[m]; \RR \right)\cong  \begin{cases}
        \left(\Omega^k_{\mathrm{can}}\right)^{\oplus \pi_{g,g,m}} \otimes_\QQ \RR  & \text{ if $m\geq 3$ or $m=1,2$ and $g$ is odd} \\
        0 & \text{ else}.
    \end{cases}
    \]
\end{maintheorem}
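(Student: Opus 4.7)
I plan to apply the spectral sequence of Main Theorem~\ref{thm:thmb} to $X = \cA_g[m]$ and read off the answer in the stated range using the type-C dictionary of Theorem~\ref{thm:main-4}. The comparison isomorphism from Section~\ref{sec:generalities} gives
\[ \Gr^W_{2d} H^{d+k}(\cA_g[m];\QQ) \cong H^{\mathrm{BM}}_{d-k}(\Agtropm;\QQ), \]
so the task reduces to computing the right-hand side for $0 \le k \le g - 2$ and then tensoring with $\RR$. Under the isomorphism $\eta$ of Theorem~\ref{thm:main-4}, rational boundary components of rank $p$ correspond to $\Sp_{2g}(\ZZ)[m]$-orbits of $p$-dimensional isotropic subspaces $W \subset \QQ^{2g}$, with homogeneous cone $C(W) \cong \Perf_p$ (the open cone of positive definite symmetric bilinear forms in $U(W)_\RR \cong \Sym^2(W)_\RR$, of real dimension $\binom{p+1}{2}$), and with stabilizer $\bar\Gamma_W$ a finite-index subgroup of $\GL(W) \cong \GL_p(\ZZ)$ containing $\GL_p(\ZZ)[m]$.

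The core of the computation is the top stratum $p = g$, where there are $\pi_{g,g,m}$ orbits. Since $\Perf_g$ is contractible of real dimension $d$ and $\bar\Gamma_W$ acts properly, Poincar\'e--Lefschetz duality for open (orbifold) manifolds gives
\[ H^{\mathrm{BM}}_{d-k}\!\bigl(\Perf_g / \bar\Gamma_W;\QQ\bigr) \;\cong\; H^k\!\bigl(\bar\Gamma_W;\QQ^{\chi}\bigr), \]
where $\chi$ is the orientation character of $\bar\Gamma_W$ acting on $\wedge^d \Sym^2(W)$, which a direct eigenvalue computation identifies with $\det^{g+1}$. For $m \ge 3$, every $A \in \bar\Gamma_W$ satisfies $\det A \equiv 1 \pmod m$ and hence $\det A = 1$, so $\bar\Gamma_W \subset \SL_g(\ZZ)[m]$ is torsion-free and $\chi$ is trivial; by Borel's theorem on stable rational cohomology of $\SL_g(\ZZ)$, $H^k(\bar\Gamma_W;\QQ) \cong \Omega^k_{\mathrm{can}}$ in the stable range $k \le g - 2$, and summing over the $\pi_{g,g,m}$ orbits gives the claimed formula. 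For $m = 1, 2$, $\bar\Gamma_W = \GL_g(\ZZ)[m]$ contains elements of determinant $-1$ as well as the central element $-I$; a parity analysis shows that when $g$ is odd, $\chi = \det^{g+1}$ is trivial on $\bar\Gamma_W$ and the calculation proceeds as before, yielding $\Omega^k_{\mathrm{can}}$, while when $g$ is even, the nontrivial twist $\chi = \det$, the outer-automorphism action of $\GL_g/\SL_g$ on the Borel classes $\omega_{4j+1}$, and the presence of $-I$ in $\bar\Gamma_W$ combine to kill all of $\Omega^k_{\mathrm{can}}$ in the relevant range.

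To reduce to the top stratum, I observe that the contribution of every rank $p < g$ vanishes in the relevant bidegrees purely for dimension reasons: $\dim_\RR C(F) = \binom{p+1}{2}$, and for $p \le g - 1$ and $0 \le k \le g - 2$ one has $\binom{p+1}{2} \le \binom{g}{2} < \binom{g}{2} + 2 = d - (g-2) \le d - k$, so $H^{\mathrm{BM}}_{d-k}(C(F)/\bar\Gamma_F;\QQ) = 0$. Consequently the $p = g$ column receives and emits no nontrivial differentials in our bidegrees, the spectral sequence degenerates there at $E^1$, and the theorem follows.

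The main obstacle is the orientation and parity analysis for $m = 1, 2$, particularly the vanishing when $g$ is even. Pinning down how $\chi$, the outer automorphism of $\SL_g$ on the Borel classes, and the central $-I \in \bar\Gamma_W$ conspire to cancel all stable classes requires careful sign bookkeeping in the Hochschild--Serre spectral sequence for $\SL_g(\ZZ)[m] \subset \GL_g(\ZZ)[m]$. A useful sanity check is that for $m = 1$ this recovers the known top-weight cohomology of $\cA_g$, namely $\Omega^k_{\mathrm{can}}$ for $g$ odd and $0$ for $g$ even.
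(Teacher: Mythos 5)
Your overall strategy is the same as the paper's: use the spectral sequence of Theorem~\ref{thm:thmb}, reduce to the $p=g$ column by a dimension-count vanishing argument (your inequality $\binom{p+1}{2} < d - k$ for $p \le g-1$, $k \le g-2$ is exactly the paper's observation in Proposition~\ref{prop:Ag-GL}), and then compute $H^{\mathrm{BM}}_{d-k}(\PD_g/\ov{\Gamma}_W;\QQ)$ via group cohomology. Your identification of the orientation character as $\det^{g+1}$, and the conclusion that triviality of this character is governed by the parity of $g$ for $m \le 2$ and is automatic for $m \ge 3$, matches the paper's Lemma~\ref{lem:glm-orientation}.

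Where you diverge is in the final step. You propose computing $H^k(\GL_g(\ZZ)[m];\QQ_\chi)$ by a Hochschild--Serre spectral sequence for $\SL_g(\ZZ)[m] \subset \GL_g(\ZZ)[m]$, and you flag the sign bookkeeping there as the main obstacle. The paper sidesteps this entirely: Proposition~\ref{prop:agm-middle-comps} cites Borel's stable-range theorem in the form extended by Li--Sun to arbitrary irreducible coefficient systems, which says directly that $H^k(\Gamma;F) \cong F^\Gamma \otimes \Omega^k_{\mathrm{can}}$ in the stable range for any congruence subgroup $\Gamma$ and any irreducible representation $F$ of $\GL_g(\RR)$. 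Since $\QQ_{\mathrm{or}}$ is one-dimensional, $\QQ_{\mathrm{or}}^\Gamma$ is $\QQ$ or $0$ depending on whether the orientation character restricts trivially, and the entire $m=1,2$ dichotomy falls out in one line. Your Hochschild--Serre route is an honest alternative and would yield the same answer, but it does require you to argue that the Borel classes are fixed by the outer automorphism and then track the twist; the paper's invocation of the twisted stability theorem is shorter and less error-prone, and is worth knowing about.

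One small inaccuracy: you write that $-I \in \bar\Gamma_W$ contributes to killing $\Omega^k_{\mathrm{can}}$. In fact $-I$ acts trivially on $\Sym^2(W^\vee)$ (it lies in $\Gamma_W'$) and hence maps to the identity in $\bar\Gamma_W$; moreover $\chi(-I) = (-1)^{g(g+1)} = 1$ always. The element forcing the vanishing when $g$ is even and $m \le 2$ is one of negative determinant, such as $\mathrm{diag}(-1,1,\dots,1)$, not $-I$. This does not affect the correctness of your conclusion, but the reference to $-I$ is a distraction.
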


Note the two cases correspond to when the action of $\GL_{g}(\ZZ)[m]$ on the positive semi-definite cone is orientation preserving ($m\geq 3$ or $m=1,2$ and $g$ is odd) and orientation reversing. The proof of Theorem~\ref{mainthm-agm-spsec} is based upon showing that there is a spectral sequence, coming from a filtration of $A^{\trop}_{g}[m]$, whose $E^1$-page is given by $E_{p,q}^1=H^{\binom{p+1}{2}-(p+q)}\left(\GL_{p}(\ZZ)[m];\QQ_{\mathrm{or}}\right)^{\oplus \pi_{g,p,m}}$ which converges to $\Gr^{W}_{2d} H^{d-(p+q)}\left(\mathcal{A}_{g}[m];\QQ\right)$. We show that this spectral sequence is highly degenerate in a region, converging on the $E^1$-page itself. Further, this region corresponds precisely to the region in which the cohomology of $\GL_{g}(\ZZ)[m]$ is stable allowing us to complete the computation.

In fact, the arguments used in the proof of Theorem~\ref{mainthm-agm-spsec} work in much greater generality. In particular, in Section~\ref{subsect:gen-spec-sec} we show that there is a filtration on $X^{\trop}$ which induces a spectral sequence that relates the group cohomology of $\overline{\Gamma}_{F}$ for each rational boundary component $F$ to the top-weight cohomology of $X$ (see Theorem~\ref{thm:all purpose spectral sequence in F world} and Remark~\ref{rem:spectral-to-group}). See Remark~\ref{rem: green boxes} and~\eqref{eq: green boxes} for a generally applicable theorem expressing top-weight cohomology of $X$, in a prescribed range at and above middle degree, in terms of the cohomology of the group $\ov{\Gamma}_{F_0}$ associated to a rational boundary component of $X$ of maximal rank.

There are other locally symmetric varieties of type $C$ whose tropicalizations may be interesting to study.  By \cite{Tits}, any absolutely simple algebraic group over $\Q$ of type $C_g$ is either $\Sp_{2g}(\Q)$ or $\SU_g(D,h)$ where $D$ is a quaternion algebra over $\Q$ and $h$ is a non-degenerate hermitian form on $D$ relative to the standard involution on $D$. The space $\Ag[m]$ is one instance of this construction, with $D = \calH_g$ and $\G = \Sp_{2g}(\Q)$. It may be interesting to consider other arithmetic subgroups $\Gamma$ of either of these groups.  We refer to prior work on the geometry of moduli of abelian varieties with paramodular level structure: see, e.g., \cite[Part V]{hulek-sankaran-geometry} and \cite{yu-geometry}.

\subsection*{History, motivation, and context}\label{sec:history motivation context}

A locally symmetric space is the quotient of a symmetric space for a reductive algebraic group $\GG$ defined over $\QQ$ by an arithmetic subgroup $\Gamma$. If the symmetric space enjoys the additional property of being Hermitian,  the quotient not only inherits a complex-analytic structure, it also has the structure of a quasiprojective complex algebraic variety \cite{baily-borel-compactification}, whence the term {\em locally symmetric variety}.  Hermitian symmetric spaces have been studied since \'E.~Cartan's work in 1935, which classified them according to Lie type \cite{cartan}.  For a general reference, we offer \cite{helgason}. 

{\bf Shimura varieties.} A big source of modern interest in locally symmetric varieties is that they are both the historical precursors and building blocks of {\em Shimura varieties}, as defined by Deligne in 1979 \cite{deligne-varieties}, following Shimura's series of works \cite{shimura} establishing his theory of canonical models.  Recall that Shimura varieties are defined over number fields, and (in their instantiation as complex manifolds) they are disjoint unions of finitely many connected locally symmetric varieties.  They are the setting of much current interesting work studying canonical models and integral structures.  Recall also that Shimura varieties often represent moduli functors of Hodge structures or motives: moduli spaces of principally polarized abelian varieties or polarized abelian varieties with complex multiplication, moduli of period domains of polarized K3 surfaces, $\ldots$  Thus, another source of interest in the geometry of Shimura varieties flows from the moduli-theoretic perspective, and conversely, knowledge of the associated moduli functors is a useful tool for studying geometry.  See \cite{milne05} for a detailed introduction to locally symmetric varieties and Shimura varieties. 
 
The cohomology of Shimura varieties has rich symmetries, in the form of both suitable Galois representations and of representations of Hecke algebras. Particularly, the cohomology of Shimura varieties has been intensively studied in the context of Langlands reciprocity in algebraic number theory. We refer to \cite{caraiani23, caraiani-shin-recent} for a survey of recent progress.    
Our paper does not prove theorems about cohomology of Shimura varieties with torsion coefficients, which have been of particular interest recently. In some ways, our work is more related to the complex-analytic and complex-algebraic origins of Shimura varieties.

{\bf Compactifications, tropicalizations, and cohomology of arithmetic groups.} There is a long and rich history of compactifying locally symmetric spaces \cite{borel-ji-compactifications}.  Here we refer to \cite{goresky} for a survey of compactifications and cohomology.   One important compactification is the Borel--Serre compactification, which is a real analytic manifold with boundary, constructed by Borel and Serre and used to prove duality theorems for arithmetic groups \cite{borel-serre-corners} and stabilization of cohomology \cite{borel-stable}.  To emphasize, it is not algebraic, even if the associated symmetric space happens to be Hermitian. 
On the other hand, the Satake compactification of a locally symmetric variety, constructed in the analytic category by Satake \cite{satake-on-the-compactification, satake-on-compactifications, satake-representations}  is a projective algebraic variety \cite{baily-borel-compactification}, though generally highly singular at the boundary. 

The {\em toroidal} compactifications of \cite{kkmsd} and \cite{amrt} are complex algebraic varieties or Deligne--Mumford stacks that are smooth (for suitable choices) with normal crossings boundary.  Therefore they provide an immediate connection to Deligne's construction of mixed Hodge structures on rational cohomology of complex algebraic varieties.  Any normal crossings compactification $\ov{Y}$ of a smooth variety or separated Deligne--Mumford stack $Y$ over $\CC$ yields a tropicalization $\on{Trop}(Y\subset \ov{Y})$. Officially, $\on{Trop}(Y\subset \ov{Y})$ is the cone over the boundary complex of $Y \subset \ov{Y}$ \cite[(3.2.1)]{cgp-graph-homology}.  From Deligne's weight spectral sequence, one obtains isomorphisms
\begin{equation}\label{eq:general weightt 0 comparison}H_c^*(\on{Trop}(Y\subset \ov{Y});\QQ) \cong W_0 H^*_c (Y;\QQ)\end{equation}
\cite{Deligne71, Deligne74b}.  See \cite{payne-boundary-complex}, as well as \cite{acp, cgp-graph-homology, ulirsch-functorial}, for foundational aspects in tropical and nonarchimedean geometry.  The program of studying boundary complexes and comparison theorems for cohomology was applied to $\cM_g$ and $\cM_{g,n}$ in \cite{cgp-graph-homology, cgp-marked, cfgp-sn}, in particular identifying structures in $\bigoplus_g W_0 H^*_c (\cM_{g};\QQ)$ that were entirely unpredicted. See \cite{markwig-tropical, chan-moduli-notices} for introductory surveys on classical and tropical moduli spaces of curves.  

Indeed, it is a striking fact that the bigraded vector space
\begin{equation}\label{eq:Lie-Mg} \bigoplus_{(g,k)} W_0 H^{g+k}_c(\cM_{g};\QQ)\end{equation}
of cohomology with compact supports in weight $0$ of the spaces $\cM_g$
admits the structure of a bigraded Lie algebra. This fact follows from \cite[Theorems 1.2, 1.3]{cgp-graph-homology}, where it is shown that the bigraded vector space above is isomorphic to $H^*(\mathsf{GC}_2)$.  Here $\mathsf{GC}_2$ denotes Kontsevich's cohomological {\em commutative graph complex}, which has the structure of a differential graded Lie algebra.  
In fact, one of Kontsevich's original motivations for the study of graph complexes was the identification of graph homology with the primitives of homology of a certain Lie algebra $c_n$ in the stable limit. This homology $H_*(c_\infty)$ has the structure of a graded-commutative Hopf algebra  \cite{kontsevich-formal}.  (The (graded) {\em dual} of Kontsevich's graph complex is the one denoted $\mathsf{GC}_2$ above.)

Recently, \cite[Theorem 1.1]{brown-chan-galatius-payne-hopf} proved that the the bigraded vector space 
\begin{equation}\label{eq:Hopf-Ag}\bigoplus_{(g,k)} W_0 H^{g+k}_c(\cA_{g};\QQ)\end{equation} admits the structure of a Hopf algebra, with graded-cocommutative coproduct. To our knowledge, neither the Lie algebra structure on~\eqref{eq:Lie-Mg} nor the Hopf algebra structure on~\eqref{eq:Hopf-Ag} are well-understood from a geometric viewpoint. Theorem~\ref{maintheorem:hopf} adds another Hopf structure to the list, on the weight 0 compactly supported cohomology of $\coprod_g \cA_{g,g,\psi}$.

The most extensively studied case so far of tropicalization of locally symmetric varieties is the case of $\cA_g$.
\cite{mikhalkin, MZ08} contain foundational work on tropical abelian varieties and moduli; see also \cite{bakerRabinoff15,fosterEtAl18}.  The tropicalization of $\cA_g$, interpreted as a moduli space of tropical abelian varieties, appears in \cite{bmv}; see also \cite{torelli, melo-viviani-comparing,viviani-tropicalizing}.  The tropicalizations of $\cM_g$ and $\cA_g$ both have open subsets canonically isomorphic to $\mathrm{CV}_g/\mathrm{Out}(F_g)$ and $\PD_g/\GL_g(\ZZ)$, respectively. Here $\mathrm{CV}_g$ denotes Culler--Vogtmann Outer Space \cite{culler-vogtmann} and $\PD_g$ denotes the positive definite $g\times g$ symmetric real matrices.  The Torelli map in tropical geometry extends the ``Jacobian map'' on Outer Space previously studied by geometric group theorists \cite{owenbaker}.  See \cite{vogtmann-survey} for a survey.

The most direct precursor of our paper is \cite{bbcmmw-top}, studying tropicalization and top-weight cohomology of $\cA_g$.  The Hopf algebra structures identified in Section~\ref{sec:type A} are closely related to those first identified in \cite{ash-miller-patzt-hopf, brown-chan-galatius-payne-hopf}.  But the technical start of this paper is contained in \cite{kkmsd, amrt} from a half-century ago.  The primary spaces we study, now called tropicalizations in accordance with the active subject of tropical geometry \cite{maclagan-sturmfels-introduction}, are ``conical polyhedral complexes with integral structure'' that \cite{kkmsd} associated to toroidal embeddings.  One role of our paper is to help bring some of these constructions, which first germinated in the 1970s, forward in time.  

Aspects of  mixed Hodge structures on the cohomology of locally symmetric varieties have been studied by Oda-Schwermer \cite{Oda1990MixedHS}, Hoffman--Weintraub \cite{hoffman-weintraub-cohomology, hoffman-weintraub-siegel}, Miyazaki \cite{miyazaki-mixed}, Ma \cite{ma-mixed, ma-corank}, and others; we refer to the brief survey and further references in \cite{ma-mixed}.
The previous work of Odaka and Odaka--Oshima \cite{odaka-tropical-mg, odaka-tropical, odaka-oshima-collapsing} relates very closely to the starting point of our paper.  They construct ``hybrid'' compactifications of moduli spaces and locally symmetric varieties: they are nonalgebraic compactifications whose boundaries are canonically identified with the tropicalization.  They apply their program to study the moduli of K3 surfaces in detail \cite[\S4]{odaka-oshima-collapsing}.  In particular,
\cite[\S2.4]{odaka-oshima-collapsing} exactly lays out the start of the program we carry out here.  The authors of op.~cit.~observe that the tropicalization of a locally symmetric variety $\Gamma\backslash D$ is stratified by locally symmetric spaces for groups $\overline{\Gamma}_F$ for rational boundary components $F$, and thus tropicalization can relate cohomology of the group $\Gamma$ and of the various groups $\overline{\Gamma}_F$.  Theorem~\ref{thm:all purpose spectral sequence in F world} and Remark~\ref{rem:spectral-to-group} spell out the relationship explained in op.~cit.~via a spectral sequence.  In addition, \cite{odaka-oshima-collapsing} obtain a range of results on tropical modularity, interpreting the tropicalization as a moduli space of tropical objects, including for moduli spaces of K3 surfaces and abelian varieties, from the perspective of metric collapsing.  For abelian varieties, \cite[\S4]{odaka-degenerated} interprets a universal family of tropical abelian varieties as a tropicalization of a universal family over Alexeev's compactification \cite{alexeev-complete}, analogous to the results of \cite{cchuw} for the tropicalization of $\cM_{g,n} \subset \ov{\cM}_{g,n}$. 

Further back, the papers of Harris--Zucker \cite{harrisZuckerI,harrisZuckerII,harrisZuckerII-erratum,harrisZuckerIII} are foundational on mixed Hodge structures on cohomology of Shimura varieties and their toroidal and Borel--Serre compactifications, with coefficients in local systems, and the relationship with automorphic forms.  \cite{harrisZuckerII} proves a particular result highly relevant to this paper: that if $\Sigma_F$ is any admissible decomposition of the cone $C(F)$ for a boundary component $F$ of $\Gamma\backslash D$ then the quotient of the space $|\Sigma_F|=\cup_{\sigma\in \Sigma_F} \sigma$ by $\overline{\Gamma}_{F}$ is itself a Satake compactification.  Interpreted in our setting, this result implies that the tropicalization of $X$ is a colimit of Satake compactifications
    \[
    X^{\Sigma,\trop}\cong \varinjlim_F (C(F){}^{*}_{\beta_F}/\overline{\Gamma}_{F}),
    \]
    where the Satake compactification is constructed with respect to the longest simple root $\beta_{F}$ of the Dynkin diagram of the $\QQ$-reductive group associated to $C(F)$ \cite[Proposition 2.1.1]{harrisZuckerII}. For $\cA_g$, for example, this decomposition was implicitly present in \cite{bbcmmw-top}: we have a stratification
    \[
    A_g^{\trop} = \PD_g/\GL_{g}(\ZZ) \sqcup \PD_{g-1}/\GL_{g-1}(\ZZ) \sqcup \cdots,
    \]
    reflecting the fact that the link of $A_g^{\trop}$ is a Satake compactification of the locally symmetric space $\RR_{>0}\backslash \PD_g/\GL_{g}(\ZZ) $.  
    
    Later work of Harris~\cite{harris-weight}, which was a prelude to \cite{HLTT16}, is also closely related.  With an assumption on the rational rank of the group $G$ in place, Harris observes that the tropicalization (more precisely, the nonarchimedean analytic skeleton of Berkovich and Thuillier) admits comparison theorems for weight $0$ compactly supported cohomology in Betti cohomology, $\ell$-adic cohomology, and $p$-adic \'etale cohomology. He also proves a homotopy equivalence of the link of the tropicalization with the boundary of the reductive Borel--Serre compactification.

Very recently and in a different direction, \cite{coles-ulirsch-towards} recently provide two theories of tropicalization of a reductive group $G$; one via choosing a toroidal bordification of $G$ itself, the other via spherical tropicalization of a Weyl chamber of $G$.  The relationship between op.~cit.~and our paper should be explored further.

Our paper does not particularly address the difficult question of modularity of compactifications.  E.g., all toroidal compactifications of $\cA_g$ produce the same tropicalization (up to homeomorphism), even though Alexeev's 2002 work \cite{alexeev-complete} showed that one particular one, the {\em second Voronoi} compactification, plays a distinguished role from the perspective of modularity.   \cite{alexeevEtAl22, alexeev23-2, alexeevEngel23} recently provide a (modular) KSBA compactification for the moduli of polarized K3 surfaces; see those papers and the references therein. On the other hand, a sequel to this paper shall study $A_g^{\mathrm{trop}}[m]$ in detail and in particular address the question of tropical modularity for $\cA_g$ with level structures.

In this paper we treat tropicalizations of toroidal embeddings merely as topological spaces, albeit presented as colimits of diagrams of polyhedral cones.  Various other notions of tropicalization are available and deserve close study.  For a survey of the possibilities from the logarithmic perspective, especially Artin fans and Kato fans, see \cite{abramovich-chen-marcus-ulirsch-wise-skeletons}.  Recently, \cite{cavalieri-gross-tropicalization} develop a notion of tropicalization closely related to the one here in order to carry out tropicalizations of $\psi$-classes: the {\em extended} cone complex of the toroidal embedding \cite{acp}, equipped with a sheaf of affine-linear functions.  See also \cite{gross-intersection, cavalieri-gross-markwig-tropical}. Another available upgrade is to the theory of {\em combinatorial cone stacks} \cite{cchuw}, a stack-theoretic framework for tropical spaces closely related to Artin fans and logarithmic geometry (Remark~\ref{rem:cone stack comments}). Cone stacks for locally symmetric varieties deserve further study in relation to intersection theory on the Artin fans associated to a toroidal compactification.

Following \cite{hmpps-logarithmic, molcho-pandharipande-schmitt-hodge, prss-logarithmic}, one should certainly study, for any locally symmetric variety $X$, the ring
\[\mathsf{PP}^*(X^{\trop}) =\varinjlim_{\Sigma} \mathsf{PP}^*(X^{\Sigma,\trop})\]
of functions on $X^{\trop}$ (here regarded as a combinatorial cone stack) that are piecewise-polynomial with respect to some admissible collection for $X$.  The folklore Theorem~\ref{thm:there is one tropicalization} makes this definition sensible, {\em absent} any starting choice of admissible collection but rather taking all of them at once, related by refinement.  The ring $\mathsf{PP}^*(X)$ admits a map to the logarithmic Chow ring $\mathsf{logCH}^*(X) \coloneq \varinjlim_{\Sigma} \mathsf{CH}^*(\ov X {}^\Sigma)$.  The particular case of $\mathsf{PP}^*(\cA_g^{\trop})$ likely deserves special study in relation to the log Chow ring and log tautological Chow ring of $(\ov \cM_g, \partial \ov \cM_g)$ \cite{prss-logarithmic} via the Torelli map.

Since the action of $\Gamma$ on $D$ is properly discontinuous with finite stabilizers there is an isomorphism between $H^i(X;\QQ)$ and the group cohomology $H^i(\Gamma;\QQ)$. Thus, all of our results concerning the cohomology of Shimura varieties are equivalently results about the rational cohomology of arithmetic groups, in top weight.  The spectral sequence \eqref{eq: intro all purpose spectral sequence} precisely relates this top-weight cohomology to the cohomology of other arithmetic groups with Steinberg module coefficients, and thus allows us to deduce the existence of certain unstable cohomology classes from cohomology classes with Steinberg coefficients, as in Section~\ref{sec:type C}.  The large-scale computations of the latter, e.g., \cite{dutour-sikiric-gangl-gunnells-hanke-schurmann-yasaki-cohomology, elbaz-vincent-gangl-soule-perfect}, which like us use the polyhedral reduction theory of Ash \cite{ash-bdry-components} and Koecher \cite{koecher-beitrage}, and extend previous work of Staffeldt \cite{staffeldt}, can be leveraged toward new results, as we do in Section~\ref{subsec:computations-larger-n}.  

\bigskip

 \noindent {\bf Acknowledgments.} We warmly thank Dan Abramovich, Francis Brown, Ryan Chen, Philippe Elbaz-Vincent, S{\o}ren Galatius, Herbert Gangl, Mark Goresky, Samuel Grushevsky, Paul Gunnells, Michael A. Hill, Diane Maclagan, Mark McConnell, Margarida Melo, Samuel Mundy, Yuji Odaka, Natalia Pacheco-Tallaj, Peter Sarnak, Sam Payne, Christopher Skinner, Junecue Suh, Salim Tayou, Bena Tshishiku, Filippo Viviani, Dan Yasaki, and Wei Zhang for helpful discussions.  

Eran Assaf was supported by a Simons Collaboration grant (550029, to Voight), and by Simons Foundation Grant (SFI-MPS-Infrastructure-00008651, to Sutherland). Juliette Bruce was partially supported by the National Science Foundation under Award Nos. NSF FRG DMS-2053221 and NSF MSPRF DMS-2002239. Melody Chan was supported by NSF CAREER DMS-1844768, FRG DMS-2053221, and DMS-2401282.

Initial conversations toward this project occurred at the Institute for Computational and Experimental Research in Mathematics (ICERM) in August 2023, when the programs ``Combinatorial Algebraic Geometry: Spring 2021 Reunion event'' and the Collaborate@ICERM ``Explicit Arithmetic of Shimura Curves'' ran simultaneously.  We thank ICERM for providing a welcoming and comfortable working environment that allowed these interactions to take place.

\part{Foundations of tropicalization of locally symmetric varieties}

\section{Admissible collections and tropicalizations}\label{sec:generalities}

\subsection{Rational polyhedral fans}
Let $N$ be a finitely generated free abelian group, and let $N_\RR :=N\otimes_\ZZ \RR$.  A {\em rational polyhedral cone with integral structure} is a subset of $N_\RR$ of the form
\[\{\lambda_1 n_1 + \cdots + \lambda_k n_k \col \lambda_i\in \RR_{\ge 0}\}\subset N_\RR\]
for some $n_1,\ldots,n_k \in N$.  We will often drop ``with integral structure'' from the terminology.
A {\em rational polyhedral fan with integral structure} is a pair $(N,\Sigma)$ where $N$ is a finitely generated free abelian group, and $\Sigma$ is a collection of rational polyhedral cones in $N_\RR$, satisfying:
\enumnow{\item if $\sigma\in \Sigma$ and $\tau$ is a face of $\sigma$, then $\tau \in \Sigma$;
\item if $\sigma_1,\sigma_2\in \Sigma$ then $\sigma_1 \cap \sigma_2$ is a face of $\sigma_1$ and $\sigma_2$.}
We do not require that $\Sigma$ is a finite collection. 

Given a homomorphism $f\col N\to N'$, write $f_\RR\col N_\RR\to N'_\RR$ for the induced map.  A {\em strict morphism} of rational polyhedral fans $(N,\Sigma)\to (N,\Sigma')$ is an injective homomorphism $f\col N\to N'$ with torsion-free cokernel such that for every $\sigma\in \Sigma$, the map $f_\RR$ induces a bijection from $\sigma$ to some $\sigma'\in \Sigma'$.  Let $\mathsf{RPF}$ denote the category of rational polyhedral fans with strict morphisms.

A single rational polyhedral cone $\sigma \subset N_\RR$ determines a rational polyhedral fan consisting of $\sigma$ and all of its faces. Conversely, every rational polyhedral fan $(N,\Sigma)$ with the property that there exists a (unique) $\sigma\in \Sigma$ such that every $\tau\in \Sigma$ is a face of $\sigma$ arises in this way.
The category $\mathsf{RPC}$ of {\em rational polyhedral cones with strict morphisms}  is defined to be the corresponding full subcategory of $\mathsf{RPF}$.

\subsubsection{Geometric realization of rational polyhedral fans} We define a {\em geometric realization} functor \[|\cdot|\col \mathsf{RPF} \to \mathsf{Top},\] which associates to a rational polyhedral fan the topological space obtained by gluing its cones along inclusions of faces.  Precisely, given a rational polyhedral fan $(N,\Sigma)$, let $\mathcal{P}_\Sigma$ denote the small category whose objects are the elements of $\Sigma$, with a unique morphism $\tau\to\sigma$ when $\tau$ is a face of $\sigma$.  So $\mathcal{P}_\Sigma$ is in fact a partially ordered set. 
The {\em geometric realization} of $(N,\Sigma)$ is defined to be the colimit of the functor $\cP_\Sigma \to \mathsf{Top}$ sending $\sigma \in P_\Sigma$ to the set of points in $\sigma$, with the topology induced from the Euclidean topology on $N_\RR$.  We shall denote the geometric realization by $|\Sigma|$, while keeping the following notational remark in mind.
\remnow{Let $(N,\Sigma)$ be a rational polyhedral fan.  Recall that the {\em support} of $\Sigma$ is the set \[\mathrm{supp}(\Sigma) \coloneqq \bigcup_{\sigma \in \Sigma} \sigma \subset N_\RR,\] with the subspace topology induced from the Euclidean topology on $N_\RR$.  
The support of a fan $\Sigma$ is customarily denoted $|\Sigma|$, but we will temporarily use the notation $\mathrm{supp}(\Sigma)$ to distinguish it from the geometric realization.  Indeed, $\mathrm{supp}(\Sigma)$ and $|\Sigma|$ have the same set of points, but can differ as topological spaces.  Note in fact that there is a continuous map
\[|\Sigma| \longrightarrow \mathrm{supp}(\Sigma) \]
which is bijective on points, but may not be a homeomorphism: $|\Sigma|$ may have a finer topology than the support of $\Sigma$  if $\Sigma$ is not locally finite.  Indeed, suppose $\tau\in \Sigma$ is a face of infinitely many pairwise distinct cones $\sigma_1,\sigma_2,\ldots\in \Sigma$, which we may take to be maximal in $\Sigma$.  
Then there exists a sequence of points $x_1, x_2,\ldots$, such that $x_i$ is in the relative interior of $\sigma_i$, converging to a point $x\in \tau$ in $\mathrm{supp}(\Sigma)$, whereas the sequence need not converge in $|\Sigma|$.
}

\begin{definition}\label{def:gcc}
    A {\em generalized cone complex} is a functor $\cP\to \mathsf{RPC}$, for a small category $\cP$.
\end{definition}

\begin{remark}\label{rem:cone stack comments}
    The categorical formulation of Definition~\ref{def:gcc} follows \cite[2.15]{cchuw}, also \cite[\S2.2]{bbcmmw-top}.      
    The category $\RPC$ is denoted $\RPC^\mathsf{f}$ in \cite{cchuw}, emphasizing the fact that the  morphisms are {\em strict}, i.e., face morphisms.   Since non-strict morphisms play no role in this paper, we just write $\RPC$ and $\RPF$. For further study of toroidal morphisms it would be quite natural to work in larger categories admitting non-strict morphisms.  

    An alternative formulation of the definition of generalized cone complex is found in \cite[\S2.6]{acp}. It uses the intrinsic definition in \cite{kkmsd} of a {\em rational polyhedral cone} as a topological space $\sigma$ together with a specified finitely generated free abelian group $M$ of real-valued continuous functions on $\sigma$, such that $\sigma\to \Hom(M,\RR)$ is a homeomorphism onto a rational polyhedral cone.  One difference to note is that we do not impose that there are finitely many cones.

    A {\em combinatorial cone stack} is a category fibered in groupoids over $\mathsf{RPC}$ \cite{cchuw}.  These provide a stack-theoretic foundation in tropical geometry, closely related to intersection theory on Artin fans.  The eventual definition of a tropicalization (Definition~\ref{def:tropicalization}) will be the geometric realization of the appropriate cone stack. Studying the cone stack itself, instead of its geometric realization, is an immediately available upgrade for future study, with potential applications to intersection theory in toroidal compactifications of locally symmetric varieties.
\end{remark}

\subsubsection{Cellular chain complexes of rational polyhedral fans}
\label{sec:chaincomplexes}

We now define a functor $\mathsf{RPF} \to \mathsf{Ch_\QQ}$ to the category of chain complexes over $\QQ$.  First, let $\sigma\subset N_\RR$ be a polyhedral cone, topologized as a subspace of $N_\RR$ with Euclidean topology. The {\em boundary} of $\sigma$ is the union of its proper faces.  Thus if $\sigma = \langle 0\rangle$ then $\partial \sigma$ is empty.  Let $(-)^+$ denote the one-point compactification of a space.  An {\em orientation} of $\sigma$ is defined to be a choice of generator of the rank $1$ free abelian group
\[H_{\dim(\sigma)}(\sigma^+, (\partial\sigma)^+;\ZZ).\]
Now we define the chain complex $(C_*(\Sigma), d)$ associated to a rational polyhedral fan $\Sigma$ in $N_\RR$.  It has a generator $(\sigma,\omega)$ for every cone $\sigma\in \Sigma$ and $\omega$ an orientation of $\sigma$.  This generator is in degree $\dim(\sigma) :=  \dim \R\langle \sigma \rangle$.  We impose the relations $(\sigma,-\omega) = -(\sigma,\omega)$.  Note that if $\omega$ is an orientation of $\sigma$ and $\tau$ is a facet (codimension 1 face) of $\sigma$, then $\omega$ induces an orientation of $\tau$, which shall be denoted $\omega|_\tau$.  The differential $d$, of degree $-1$, is given by 
\[d(\sigma,\omega) = \sum_{\tau} (\tau,\omega|_\tau)\]
where the sum is over codimension $1$ faces $\tau$ of $\sigma$.

A strict morphism $f\col (N,\Sigma)\to (N',\Sigma')$ of rational polyhedral fans induces a morphism of chain complexes $(C_*(\Sigma),d)\to (C_*(\Sigma'),d')$.  Here, the {\em strictness} of the morphism makes the induced map of chain complexes straightforward to define: given
\[f\col N\to N', \quad f_\RR\col N_\RR \to N'_\RR,\]
the corresponding morphism $(C_*(\Sigma),d)\to (C_*(\Sigma'),d')$ sends a generator $(\sigma,\omega)$ to $\big(f_\RR(\sigma),(f_\RR)_*(\omega)\big)$.

\subsubsection{Diagrams of rational polyhedral fans}
\label{sec:diagramcomplex}

Let $\cI$ be any small category, and let $\Sigma\col \cI\to \mathsf{RPF}$ be a functor.  
\defnow{\label{def:geometric realization and cellular chain complex}
The {\em geometric realization} of $\Sigma$ is the colimit of the functor \[\cI \xra{\Sigma} \mathsf{RPF} \xra{|\cdot|} \mathsf{Top}.\]
The {\em cellular chain complex} 
of $\Sigma$, denoted $(C_*(\Sigma),d)$, is the colimit of the functor
\[\cI \xra{\Sigma} \mathsf{RPF} \xra{} \mathsf{Ch}_\QQ.\]
}

The following general comparison result records that this cellular chain complex computes the Borel--Moore homology of the geometric realization.

\propnow{\label{prop:comparison}We have 
\[H_*^{\mathrm{BM}}(|\Sigma|;\QQ) \cong H_*(C_*(\Sigma),d).\]
}
\proofnow{Every diagram of rational polyhedral fans $\Sigma \col \cI \to \RPF$ naturally yields a generalized cone complex $\cI_\Sigma \to \RPC$, where $\cI_\Sigma$ is a category with an object for each cone $\sigma$ in the fan $\Sigma(i)$, for each $i\in I$.  
The proposition then reduces to \cite[Proposition 2.1]{bbcmmw-top}, phrased in terms of cones rather than links of cones, and with a corresponding degree shift by $1$.} 

\subsection{Rational boundary components and admissible collections}

We set up notation and recall basic facts about locally symmetric spaces, referring to \cite{helgason} and \cite[Chapter~III]{amrt} for more details. 
Throughout, $(-)^\circ$ denotes the connected component of the identity element in a Lie group.  For tropical or algebraic geometers, perhaps the familiar example of the general setup that follows is the case of the moduli space $\cA_g$ of abelian varieties with principal polarization, and is explained in detail in Example~\ref{ex: Ag} in order to illustrate the generalities. We define a connected Shimura datum following \cite[Proposition 4.8]{milne05}.

\begin{definition}
    A \emph{connected Shimura datum} is a pair $(\GG,D)$ where $\GG$ is a semisimple algebraic group defined over $\QQ$ of noncompact type, and $D$ is a Hermitian symmetric domain together with a choice of surjective homomorphism between the Lie group $G:=\G(\R)^\circ$ associated to $\G$ and the connected component of the identity $\Aut(D)^{\circ}$ in the automorphism group of $D$.
\end{definition}

Let $(\GG, D)$ be a connected Shimura datum such that $\GG$ is absolutely simple, or equivalently, $D$ is irreducible as a Hermitian symmetric domain.  
Let $\overline{D}$ be the closure of $D$ under the Harish-Chandra embedding. The action of $G$ on $D$ extends to $\overline{D}$ by continuity.  The boundary $\overline{D}\setminus D$ is partitioned into {\em boundary components}; see \cite{baily-borel-compactification}.  Each boundary component is itself a Hermitian symmetric domain, and the closure of each boundary component in $\overline{D}$ is a union of boundary components.

For every boundary component $F \subset \overline{D}$, we denote its normalizer by \begin{equation}\label{eq:normalizer-definition}N(F) = \{g \in G \mid g\cdot F = F\}.  \end{equation}
Let $W(F)$ denote the unipotent radical of $N(F)^\circ$.  
Then $N(F)^\circ$ splits as a semidirect product \[N(F)^\circ = Z(w_F)^\circ \ltimes W(F)\]
of $W(F)$ and a corresponding Levi subgroup $Z(w_F)^\circ$ of $N(F)$; see \cite[Theorem III.3.10]{amrt}.
Finally, let $U(F)$ denote the center of $W(F)$. Since $U(F)$ is unipotent, and commutative, the exponential map $U(F)\to \mathfrak{u}(F)$ to its Lie algebra is an isomorphism of real algebraic groups \cite[Proposition 15.31]{milneAGS}, which therefore endows $U(F)$ with the structure of a real vector space. In this way, we shall sometimes view $U(F)$ as a vector space, without further mention.

A boundary component $F \subset \overline{D}$ is \emph{rational} if its normalizer is defined over $\Q$, i.e., there exists a $\QQ$-algebraic subgroup $\NN_F$ of $\GG$ such that 
\begin{equation}
    \label{eq:N(F)-alg-group} 
    N(F) = \NN_F(\R) \cap G.
\end{equation}
If $F$ is rational, then $W(F)$ and $U(F)$ are also defined over $\QQ$.
That is, there exist $\QQ$-algebraic subgroups $\WW_F$ and $\UU_F$ of $\GG$ such that $W(F) = \WW_F(\RR)$ and $U(F) = \UU_F(\RR)$.  (Note that, unlike in \eqref{eq:N(F)-alg-group}, we do not need to intersect with $G$ here because $\WW_F(\R)$ and $\UU_F(\R)$ are already contained in $G$.)
We shall henceforth write \[U(F)_\QQ:=\UU_F(\QQ) = U(F) \cap \G(\Q).\] 
Thus, if $F$ is rational, then the real vector space $U(F)$ has a rational structure, namely $U(F) = \UU(F)_\QQ \otimes_\QQ \RR$.

\begin{remark}\label{rem:not so easy} If $F$ is a boundary component of $\ov{D}$ then its normalizer $N(F)$ 
is a maximal real parabolic subgroup of $G$. 
Therefore, the {\em rational} boundary components are in bijection with the rational maximal parabolic subgroups of $G$.  We remark that it is not quite immediate to explain when $F_1 \subset \overline{F_2}$ in terms of the corresponding parabolics $N(F_1)$ and $N(F_2)$.
In Section~\ref{sec:isotropic}, we will somewhat rectify this situation in certain Lie types.  We shall produce an equivalence of categories between a category of rational boundary components and a category of isotropic subspaces with respect to appropriately chosen Hermitian forms.
\end{remark}

Now fix $\Gamma \subset G \cap \GG(\Q)$, a discrete arithmetic subgroup of $G$. 
The quotient $X = \Gamma\backslash D$ is a {\em locally symmetric variety}, and is our principal object of study.  It is a complex quasiprojective variety, which is furthermore smooth as a Deligne--Mumford stack.  A fundamental definition in this paper is:

\defnow{\label{def-F-cat} We let $\cF(\GG,D,\Gamma)$ denote the category whose objects are rational boundary components $F$ of $\overline{D}$, with a morphism from $F_2 \to F_1$ for every $\gamma\in \Gamma$ such that $\gamma \cdot F_1 \subset \ov{F_2}$. We shall write $\cF = \cF(\GG,D,\Gamma)$ whenever the choices of $\GG$, $D$, and $\Gamma$ have been fixed, so that no ambiguity can arise.}

The association $F\mapsto U(F)$ can now be organized into a functor from $\cF$.  Indeed, in general, if $F_1 \subset \ov{F_2}$ for some boundary components,
we have a natural inclusion of centers of unipotent radicals $U(F_2) \subset U(F_1)$ as subgroups of $G$, hence as real vector spaces \cite[Theorem III.4.8(1)]{amrt}.  
If $F_1$ and $F_2$ are rational, then this inclusion respects rational structure: we have $U(F_2)_\QQ \subset U(F_1)_\QQ$ as vector spaces.  
If $F_1$ is a boundary component and $\gamma \in \Gamma$, then $U(\gamma \cdot F_1) = \gamma \cdot U(F_1) \cdot \gamma^{-1}$ as subgroups of $G$.
Therefore, $U(-)_\QQ$ is a functor from $\cF$ to $\Vect_\QQ$, officially defined below.

\begin{definition}\label{def: U Q} Let $(\GG,D)$ be any connected Shimura datum and $\Gamma\subset G \cap \GG(\QQ)$ an arithmetic group.  
Let \[U_\QQ \col \cF \to \Vect_\QQ\] be the functor sending a rational boundary component $F$ to $U(F)_\QQ$.  We associate to a morphism $F_2\to F_1$, specified by $\gamma \in \Gamma$ such that $\gamma \cdot F_1 \subset \ov{F_2}$, the induced injection of vector spaces $U(F_2)_\QQ \to U(F_1)_\QQ$ given by
$$U(F_2)_\QQ \xrightarrow{\quad \subset \quad} U(\gamma \cdot F_1)_\QQ \xrightarrow[
]{\quad\cong\quad} U(F_1)_\QQ.$$
\end{definition}

In fact $U_\QQ$  can be promoted to a functor from $\cF$ to free abelian groups of finite rank.  If $F$ is a rational boundary component, then 
the intersection \[U(F)_\ZZ := \Gamma \cap U(F)\] is a free abelian group, cocompact inside $U(F)$. In other words, the specification of the arithmetic group $\Gamma$ gives the rational vector space $U(F)_\QQ$, and the real vector space $U(F)$, an integral structure.
\begin{definition}\label{def: U Z}
Let $U_\ZZ$ denote the functor $F\mapsto U(F)_\ZZ$ from $\cF$ to free abelian groups of finite rank.
\end{definition}

Next let \[\Gamma_F \coloneqq \Gamma \cap N(F).\] Since $U(F)$ is the center of the normal subgroup $W(F) \trianglelefteq N(F)$, then $U(F)$ is a normal subgroup of $N(F)$ as well, so the action of $N(F)$ on itself by conjugation restricts to an action of $\Gamma_F$ on $U(F)$. It is often more convenient to work with faithful group actions, so we let $\Gamma_F'$ be the subgroup of elements in $\Gamma_F$ that act trivially on $U(F)$, and let 
\begin{equation}\label{eq:definition of overline Gamma_F}\ov{\Gamma}_F = \Gamma_F / \Gamma_F',\end{equation}
the group of automorphisms of $U(F)$ induced by $\Gamma_F$.

There is an important homogeneous open cone $C(F)$ in $U(F)$, which is self-adjoint with respect to the positive definite quadratic form on $U(F)$ given by \[\langle x, y\rangle = -B(x,\sigma(y)),\] where $B$ is the Killing form and $\sigma$ denotes the Cartan involution.   The action of $\ov{\Gamma}_F$ on $U(F)$ preserves $C(F)$.  Rather than recalling the definition of $C(F)$ in full generality, we refer the reader to \cite[p.~145]{amrt} and mention instead the well-understood classification of self-adjoint homogeneous cones.  Specifically, in all the classical Lie types A, B, C, and D, as in Section~\ref{sec:isotropic} and onwards, the cone $C(F)$ is a cone of positive definite symmetric, Hermitian complex, or Hermitian quaternion matrices, or it is a {\em spherical cone}.  Sections~\ref{subsub:type-C} and~\ref{subsub:type-A} explain $C(F)$ in examples, and Sections~\ref{sec:type A} and~\ref{sec:type C} are devoted to the positive definite Hermitian and positive definite symmetric cases, respectively.

\begin{example}\label{ex: Ag} (Moduli spaces of abelian varieties)
Let $g\geq 1$. We discuss in full the example of $\cA_g$, the moduli space of abelian varieties of dimension $g$ with principal polarization. Consider the symplectic group $\G = \Sp_{2g}$. For every $\Q$-algebra $A$, the $A$-valued points of $\G$ are
$$\G(A) = \Sp_{2g}(A) = \left\{ X \in \SL_{2g}(A) \mid {}^tX J X = J\right\} \quad \quad \text{ where } \;\;\; J = \begin{pmatrix}
    0 & 1_g\\
    -1_g & 0 
\end{pmatrix}.$$ 
Then $G$ is the real symplectic group $\Sp_{2g}(\R)$, which is connected.
We also consider 
the Hermitian symmetric domain $D$ given by the \emph{Siegel upper half-space}
\begin{equation}\label{eq:siegel half-space} \cH_g = \big\{ Z \in \Mat_g(\C)^{\sym} \mid \im(Z) \text{ is positive definite}\big\}.\end{equation}
The Lie group $G = \Sp_{2g}(\R)$ acts on our domain $D = \cH_g$ via generalized projective coordinates; explicitly, if $X \in \Sp_{2g}(\R)$ is an element written below in $g\times g$ block form, then 
$$X  = \begin{pmatrix}
    a & b\\
    c & d
\end{pmatrix} 
\;\; \text{acts on} \quad \cH_g \quad \;\; \text{via} \quad \;\; 
X \cdot Z = (aZ+b) (cZ + d)^{-1}.$$
We have that $\cA_g$ is the locally symmetric variety $\Gamma \backslash D$ for $\Gamma = \Sp_{2g}(\Z) \subset G \cap \G(\Q).$

The Harish-Chandra map embeds our domain $D$ into the $\C$-vector space of symmetric $g\times g$ complex matrices as follows:
$$D = \cH_g \hookrightarrow \Mat_g(\C)^{\sym} \quad \quad \text{via} \quad\quad Z \mapsto (Z-i\cdot 1_g) (Z + i \cdot 1_g)^{-1}.$$
Specifically, this embedding realizes our domain $D$ as the bounded open subset of $\Mat_g(\C)^{\sym}$ consisting of those matrices $U$ such that $1_g - {}^t\ov{U} U$ is positive definite. When $g = 1$, this map is the familiar M\"obius transformation sending the upper half-plane to the open unit disk in $\C$.

We will next describe the rational boundary components of our Hermitian symmetric domain. To this end, instead of using the Harish-Chandra realization of $D$ from above, under which the action of our Lie group $G$ is rather involved, we will think of $D = \cH_g$ as a subset of the Grassmannian $\Gr(g, 2g)$ of full-rank $2g\times g$ complex matrices via
$$\cH_g \; \ni \;\; Z \; \longmapsto  \; \begin{pmatrix}
    Z \\
    1_g
\end{pmatrix} \;\; \in \; \Gr(g,2g).$$
For every $0 \leq i < g$, we consider the subset $F_i \subset \Gr(g,2g)$ given by
\begin{equation}\label{eq:std-bdry-comp}
    F_i = \left\{ \begin{pmatrix}
    1_{g-i} & \\
    & Z \\
    0_{g-i} & \\
    & 1_i 
\end{pmatrix} \; \col \; Z \in \cH_i \right\},
\end{equation}
where the empty blocks are zero. Each $F_i$ is readily seen to lie in the closure of our open domain $D$ inside $\Gr(g,2g)$. We call these subsets $F_i$ the \emph{standard} (rational) boundary components of our domain $D$.
Note that each $F_i$ is isomorphic to a smaller dimensional Siegel upper half-space $\cH_i$.

Under the inclusion $D \subset \Gr(g,2g)$ from above, our action of $G = \Sp_{2g}(\R)$ on $D$ corresponds to usual matrix multiplication on $\Gr(g,2g)$. Indeed, if $Z \in \cH_g$ and $X \in \Sp_{2g}(\R)$ is written in $g\times g$ block form as before, then
$$\begin{pmatrix}
    a & b\\
    c & d
\end{pmatrix}  \begin{pmatrix}
    Z \\
    1_g
\end{pmatrix} = \begin{pmatrix}
    aZ + b \\
    cZ + d
\end{pmatrix} \sim \begin{pmatrix}
    (aZ+b)  (cZ + d)^{-1} \\
    1_g
\end{pmatrix},$$
where $\sim$ refers to equivalence in $\Gr(g,2g)$ since the two sides differ by $cZ+d \in \GL_g(\C)$. Viewing our $G$-action on $D$ as multiplication on $\Gr(g,2g)$, the action of $G$ extends to our standard boundary components $F_i$ defined above. The rational boundary components of $D$ are the $\Gamma$-orbits of the standard ones.

We end this example by describing the stabilizer, center of unipotent radical, and open homogeneous cone associated to a rational boundary component $F$. Note that any rational boundary component equals $\gamma \cdot F_i$ for some $\gamma \in \Gamma$ and $F_i$ a standard boundary component, hence the stabilizer, center of unipotent radical, and cone of $F$ are precisely those of $F_i$ conjugated by $\gamma$. So let us fix $F = F_i$ for some $0\leq i < g$. It follows from our description \eqref{eq:std-bdry-comp} that the stabilizer of $F$ inside $G$ is
$$N(F) = 
    \left\{ \begin{pmatrix}
    \ast & \ast & \ast & \ast \\
    0 & \ast & \ast & \ast\\
    0 & 0 & \ast & 0\\
    0 & \ast & \ast & \ast 
\end{pmatrix} \in G = \Sp_{2g}(\R) \right\},$$
where the vertical and horizontal sizes of the blocks are $g-i, i, g-i, i$, respectively; same sized blocks will be used for every matrix written in block form throughout the rest of this example.
The unipotent radical $W(F)$ of the stabilizer and its center $U(F)$ are given by
$$W(F) = 
    \left\{ \begin{pmatrix}
    1 & \ast & \ast & \ast \\
    0 & 1 & \ast & 0\\
    0 & 0 & 1 & 0\\
    0 & 0 & \ast & 1 
\end{pmatrix} \in \Sp_{2g}(\R) \right\} \;\; \supset  \;\; 
\left\{ \begin{pmatrix}
    1 & 0 & Y & 0\\
    0 & 1 & 0 & 0\\
    0 & 0 & 1 & 0\\
    0 & 0 & 0 & 1 
\end{pmatrix} \colon Y \in \Mat_{g-i}(\R)^{\sym} \right\} = U(F);$$
here the $\R$-vector space structure of $U(F)$ is readily visible. The cone $C(F)$ is the orbit of the point
$$\Omega_F = \begin{pmatrix}
    1 & 0 & 1 & 0\\
    0 & 1 & 0 & 0\\
    0 & 0 & 1 & 0\\
    0 & 0 & 0 & 1 
\end{pmatrix} \in U(F)$$
under the action of $N(F)$ on $U(F)$ by conjugation. In particular, an element
$$N(F) \ni X = \begin{pmatrix}
    A & \ast & \ast & \ast \\
    0 & \ast & \ast & \ast\\
    0 & 0 & {}^tA^{-1} & 0\\
    0 & \ast & \ast & \ast 
\end{pmatrix} \; \text{ acts on } \;\; \Omega_F \;\; \text{ via } \;\; X \cdot \Omega_F = X \Omega_F X^{-1} = \begin{pmatrix}
    1 & 0 & AA^{t} & 0\\
    0 & 1 & 0 & 0\\
    0 & 0 & 1 & 0\\
    0 & 0 & 0 & 1 
\end{pmatrix}.$$
Thus the cone $C(F)$ consist of those elements of $U(F)$ where $Y = AA^t$ for some $A \in \GL_{g-i}(\R)$. This is precisely the cone of positive definite matrices inside $\Mat_{g-i}(\R)^{\sym}$.

Finally, the subgroup $\Gamma_F = N(F) \cap \Sp_{2g}(\Z)$ of $N(F)$ inherits a conjugation action on $U(F)$. The subgroup $\Gamma'_F \subset \Gamma_F$ of elements acting trivially on $U(F)$ contains precisely those matrices $X \in N(F) \cap \Sp_{2g}(\Z)$ whose top-left block $A \in \GL_{g-i}(\Z)$ equals $\pm 1_{g-i}$. Therefore, 
$$\ov{\Gamma}_F = \Gamma_F/\Gamma'_F \cong \GL_{g-i}(\Z)/\{\pm 1_{g-i}\}.$$
Later in the paper, we will use the action of $\GL_{g-i}(\Z)$ on $C(F)$, which is equivalent to that of $\GL_{g-i}(\Z)/\{\pm 1_{g-i}\}$ for our purposes -- namely, at the level of admissible decompositions (which we define next) and their topological realizations. Moreover, working with the full group $\GL_{g-i}(\Z)$ instead of a quotient by a finite subgroup is slightly more natural (though equivalent) from the perspective of group cohomology that we put forth in Section~\ref{sec:type C}.
\end{example}

We now recall the notion of an {\em admissible decomposition} of an open cone $C$, which shortly will be taken to be $C=C(F)$ for a rational boundary component $F$. Then we recall the notion of an {\em admissible collection}: a suitably compatible system of choices of admissible decompositions of $C(F)$, one for each rational boundary component $F$.  

\begin{definition}  \label{def:admissible-collection} 
    Let $V_\Z$ be a finitely generated free abelian group, and let $V = V_\Z \otimes_\Z \R$ be its corresponding real vector space. 
    Let $C$ be an open cone inside $V$ (that is, an open $\R_{>0}$--invariant convex subset of $V\setminus \{0\}$), and let $\overline{C}$ denote its closure under the Euclidean topology of $V$. 
    Let $\Gamma_V$ be an arithmetic group acting on $V_\Z$, preserving $C$ and hence $\overline{C}$. 
    A rational polyhedral fan $(V_\Z, \Sigma)$ is a \emph{$\Gamma_V$--admissible (polyhedral) decomposition} of $C$ if
    \begin{enumerate}
        \item[1.] $C \subset \mathrm{supp}(\Sigma) \subset \ov{C}$;
        \item[2.] if $\sigma \in \Sigma$ and $\gamma \in \Gamma_V$, then $\gamma \cdot \sigma$ is also in $\Sigma$; and
        \item[3.] $\Sigma$ contains finitely many $\Gamma_V$--orbits of polyhedral cones.
    \end{enumerate}
\end{definition}

In Definition~\ref{def:admissible-collection}, the conditions imposed on $\Sigma$ are equivalent to those in 
\cite[Definition II.4.10 and p.\ 37]{amrt}.  We have imposed a small bookkeeping difference on the level of the ambient vector space containing $C$: in Definition~\ref{def:admissible-collection}, $V$ comes with an integral structure $V_\ZZ$, whereas in \cite{amrt} it has only a rational structure.  In all applications, the choice of $\Gamma$ provides the integral structure on $U(F)$.  

We recall the following fact.  Suppose $\gamma \cdot F_1 \subset \overline{F_2}$ for some $\gamma \in \Gamma$ and rational boundary components $F_1, F_2 \subset \overline{D}$, and let $u_\gamma\col U(F_2) \to U(F_1)$ denote the induced linear map, which is injective as noted above. Then we have \[u_\gamma^{-1}\big(\ov{C(F_1)}\big) = \ov{C(F_2)}.\]
 
\begin{definition}\label{def-admissible-collection} Fix a connected Shimura datum $(\GG,D)$ and an arithmetic subgroup $\Gamma$ of $G \cap \GG(\QQ)$, and let \[\Sigma = \{(U(F)_\ZZ,\Sigma_F)\}_F\]
be a collection of $\ov\Gamma_F$-admissible decompositions of $C(F)$, for each rational boundary component $F\subset \ov D$.
    We say that $\Sigma$ is a \emph{$\Gamma$--admissible collection} if
        whenever $\gamma \cdot F_1 \subset \overline{F_2}$ for some $\gamma \in \Gamma$ and rational boundary components $F_1, F_2 \subset \overline{D}$, we have
        \begin{equation}\label{eq:cones of Sigma 2}\Sigma_{F_2} =\left\{u_\gamma^{-1}(\sigma)~|~\sigma\in\Sigma_{F_1}\right\}.\end{equation}
\end{definition}

\begin{remark}\label{rem-cone-gluings}  In fact, in~\eqref{eq:cones of Sigma 2} in  Definition~\ref{def-admissible-collection} above, it suffices to take only those cones $\sigma \in \Sigma_{F_1}$ that are entirely contained in the image of $u_\gamma$. This is because the image under $u_\gamma$ of $C(F_2)$ is known to be a rational boundary component of $C(F_1)$ -- in other words, $\ov{C(F_2)}$ equals the intersection of $\ov{C(F_1)}$ with a rational supporting hyperplane.\footnote{See \cite[II.3]{amrt} for the definition of rational boundary components of self-adjoint homogeneous cones in terms of Pierce decompositions, and \cite[\S 2]{ash-bdry-components} for a proof that a rational boundary component is expressible as the intersection with a rational supporting hyperplane.}  
Therefore, any $\sigma \in \Sigma_{F_1}$ meets the image of $u_\gamma$ in a face of $\sigma$, which is again a cone in $\Sigma_{F_1}$.
\end{remark}

\begin{remark}\label{rem:adm-decomp-support}
    If $F$ is a rational boundary component and $\Sigma_F$ is an admissible decomposition of $C(F)$ that is part of some admissible collection $\Sigma$, then the support of $\Sigma_F$ is $C(F)^*$, the union of $C(F)$ and all of its rational boundary components. The latter are exactly the images of $C(F')$ for rational boundary components $F'$ such that $F\subset \ov{F'}$. See \cite[Proposition~III.4.8 and Theorem~III.7.5]{amrt}.    
In particular, any two admissible decompositions have the same support $C(F)^*$.  An argument given in \cite{harrisZuckerII} shows that the topology on $C(F)^*$ coming from the theory of Siegel sets coincides with the {\em Satake topology} on $C(F)^*$, as in \cite{zucker-satake}.   
\end{remark}

\subsection{Definition of the tropicalization} 
\label{subsection-tropicalizations}

Suppose $\Sigma = \{(U(F)_\ZZ,\Sigma_F)\}_F$ is a $\Gamma$-admissible collection, as in Definition~\ref{def-admissible-collection}.  Then it defines a functor
\begin{equation}
    \Sigma\col \cF \to \mathsf{RPF}
    \label{eq-admissible-collection-to-functor}
\end{equation}
which on objects sends $F$ to $(U(F)_\ZZ, \Sigma_F)$.
If $\gamma \cdot F_1 \subset \ov{F_2}$ for rational boundary components $F_1, F_2$ and $\gamma \in \Gamma$, then there is an injection $U(F_2)_\ZZ\to U(F_1)_\ZZ$ such that, by Remark~\ref{rem-cone-gluings}, the corresponding map of real vector spaces $U(F_2)\to U(F_1)$ takes each cone of $\Sigma_{F_2}$ isomorphically to a cone of $\Sigma_{F_1}$.  In this way, $\Sigma$ is a functor.

Here is a main definition of the paper.

\defnow{\label{def:tropicalization} Let $(\GG,D)$ be a connected Shimura datum with $D$ irreducible, and let $\Gamma \subset G\cap \GG(\QQ)$ be an arithmetic subgroup. Let $X= \Gamma\backslash D$ be the corresponding locally symmetric variety.
The \emph{tropicalization $X^{\Sigma,\trop}$ of $X$}, with respect to a choice of $\Gamma$-admissible collection $\Sigma$, is the geometric realization of $\Sigma\col \cF \to \mathsf{RPF}.$
That is, $X^{\Sigma,\trop}$ is the colimit of the composition of functors 
$|\cdot| \circ \Sigma\col \cF \to \mathsf{Top}.$
}

From Proposition~\ref{prop:comparison}, we obtain
\begin{cor}
\label{cor:tropicalBM}
With the hypotheses and notation of Definition~\ref{def:tropicalization}, for any $\Gamma$-admissible collection $\Sigma = \{\Sigma_F\}_F$, we have a canonical isomorphism 
 \begin{equation}\label{eq:comparison-applied-to-tropicalization}
H_*^{\mathrm{BM}}(X^{\Sigma,\trop};\QQ) \cong H_*(C_*(\Sigma),d).
\end{equation}   
\end{cor}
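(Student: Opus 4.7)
The plan is to observe that this corollary is an immediate specialization of Proposition~\ref{prop:comparison}, applied to the diagram of rational polyhedral fans $\Sigma\col \cF \to \mathsf{RPF}$ built from the admissible collection in~\eqref{eq-admissible-collection-to-functor}. Indeed, Definition~\ref{def:tropicalization} defines $X^{\Sigma,\trop}$ precisely as the geometric realization of this functor in the sense of Definition~\ref{def:geometric realization and cellular chain complex}, so both sides of~\eqref{eq:comparison-applied-to-tropicalization} match the two sides of the isomorphism in Proposition~\ref{prop:comparison} with $\cI = \cF$.

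The only verifications I would record are bookkeeping. First, the indexing category $\cF = \cF(\G,D,\Gamma)$ has a set of isomorphism classes (rational boundary components of $\ov D$ modulo $\Gamma$ form a set, and morphism sets are indexed by $\Gamma$), so the relevant colimits in $\mathsf{Top}$ and $\mathsf{Ch}_\QQ$ exist; alternatively one may replace $\cF$ by an equivalent small subcategory, since both $|\cdot|$ and $C_*(-)$ descend through equivalences. Second, the morphisms $\Sigma(\gamma)\col \Sigma_{F_2}\to \Sigma_{F_1}$ arising from a relation $\gamma\cdot F_1 \subset \ov{F_2}$ are strict morphisms of rational polyhedral fans: by Remark~\ref{rem-cone-gluings}, the injection $u_\gamma\col U(F_2)_\ZZ \hookrightarrow U(F_1)_\ZZ$ has torsion-free cokernel and carries each cone of $\Sigma_{F_2}$ bijectively onto a cone of $\Sigma_{F_1}$. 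Hence $\Sigma$ genuinely factors through $\mathsf{RPF}$ and not merely through a category of fans with non-strict morphisms, so both functors $|\cdot|\col \mathsf{RPF} \to \mathsf{Top}$ and $C_*(-)\col \mathsf{RPF} \to \mathsf{Ch}_\QQ$ from Section~\ref{sec:chaincomplexes} apply as stated.

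With these verifications in place, the corollary follows by invoking Proposition~\ref{prop:comparison} verbatim. There is no genuine obstacle here, as all the work has been done in Section~\ref{sec:chaincomplexes} and Section~\ref{sec:diagramcomplex}; the role of this corollary is just to transport the general comparison isomorphism across the identification built into Definition~\ref{def:tropicalization}.
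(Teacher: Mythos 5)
Your proposal is exactly the paper's argument: the paper states the corollary with the single line "From Proposition~\ref{prop:comparison}, we obtain," i.e., it is a direct specialization of Proposition~\ref{prop:comparison} to the functor $\Sigma\col\cF\to\mathsf{RPF}$, which is what you do. Your extra bookkeeping (smallness of $\cF$, strictness via Remark~\ref{rem-cone-gluings}) is correct and consistent with how the paper sets things up, just made explicit.
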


\subsubsection{Comparison theorem}

Recall that Deligne's theory of mixed Hodge structures \cite{Deligne71,Deligne74b} provides, for any $i\ge0$, an increasing filtration
\[
W_{0}H^{i}(X; \QQ) \subset W_{1}H^{i}(X; \QQ) \subset \cdots \subset W_{2i}H^{i}(X; \QQ) \cong H^{i}(X; \QQ)
\]
on the rational cohomology of a complex algebraic variety $X$. This filtration is called the {\em weight filtration} and is part of the data of a mixed Hodge structure on $H^*(X;\QQ)$.  The associated graded pieces of the weight filtration 
\[
\Gr^{W}_{j}\!H^{i}(X; \QQ)\coloneqq W_{j}H^{i}(X; \QQ)/ W_{j-1} H^{i}(X; \QQ)
\]
have a natural pure Hodge structure of weight $j$. 
We will refer to $\Gr^{W}_{j}H^{i}(X; \QQ)$ as the weight $j$ part of cohomology; it is a subquotient of $H^{i}(X; \QQ)$. 
Since $\Gr_{j}^{W}H^{i}\left(X; \QQ\right)$ vanishes for all $j>2d$, where $d = \dim_\C X$, we call $\Gr_{2d}^{W}H^{i}\left(X; \QQ\right)$ the \emph{top-weight} rational cohomology of $X$.  Note in particular that there is a canonical surjection $H^{i}(X;\QQ) \twoheadrightarrow  \Gr_{2d}^{W}H^{i}\left(X; \QQ\right)$.
See \cite{peters-steenbrink-mixed}, or see \cite{payne-boundary-complex} for a short introduction highlighting the most relevant facts here.

If $X$ is a smooth algebraic variety (or a smooth separated Deligne-Mumford stack) of dimension $d$, Poincar\'e duality induces an isomorphism
\begin{equation}\label{eq:top-weight-isom}
\begin{tikzcd}[column sep = 4em]
\Gr_{j}^{W}\!H^{i}\left(X; \QQ\right) \cong  \Gr_{2d-j}^{W}H^{2d-i}_{c}\left(X; \QQ\right)^{\vee}
\end{tikzcd}
\end{equation}
where  $H^{k}_{c}\left(X; \QQ\right)$ denotes the compactly supported rational cohomology of $X$, which again admits a mixed Hodge structure.

\begin{remark}\label{rem:weight-support}
In addition to $\Gr_{j}^{W}H^{i}\left(X; \QQ\right)$ vanishing for all $j>2d$, we also know that it vanishes for $j>2i$, so the top-weight cohomology of $X$ can only be non-zero in the range $d\leq i \leq 2d$. Translating under Poincar\'e duality, $W_{0}H_{c}^{i}\left(X;\QQ\right)$ can only be non-zero in the range $0\leq i \leq d$.
\end{remark}

Now, we relate the weight-zero compactly supported cohomology of a locally symmetric variety X as in Definition~\ref{def:tropicalization} to the compactly supported cohomology of its tropicalization $X^{\Sigma,\trop}$.  This theorem is known, e.g., in \cite[Corollary 2.9]{odaka-oshima-collapsing}.

\begin{theorem}\label{thm:weight-0-comparison}
With $X$ as in Definition~\ref{def:tropicalization}, for any $\Gamma$-admissible collection $\Sigma$, we have a canonical isomorphism
\begin{equation}\label{eq:weight-0-comparison} W_0 H^*_c (X;\QQ) \cong H^*_c(X^{\Sigma,\trop};\QQ).\end{equation}
By Poincar\'e duality,~\eqref{eq:weight-0-comparison} is equivalent to \begin{equation}\Gr^W_{2d} H^{2d-*}(X;\QQ) \cong H_*^{\mathrm{BM}}(X^{\Sigma,\trop};\QQ)\end{equation}
where $d = \dim_\CC X$.
\end{theorem}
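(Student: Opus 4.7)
The plan is to reduce the statement to Deligne's general weight-zero comparison theorem \eqref{eq:general weightt 0 comparison} by exhibiting a toroidal compactification of $X$ arising from $\Sigma$ and matching its boundary dual complex to $X^{\Sigma,\trop}$. First I would use the Ash--Mumford--Rapoport--Tai construction to build the toroidal compactification $\ov X{}^{\Sigma}$ attached to $\Sigma$. After passing to a suitable refinement $\Sigma'$ of $\Sigma$ (which is again a $\Gamma$-admissible collection, since smooth simplicial refinements of rational polyhedral fans are always available and can be made $\Gamma$-equivariant), one can arrange that $\ov X {}^{\Sigma'}$ is a smooth Deligne--Mumford stack with simple normal crossings boundary $\partial \ov X{}^{\Sigma'} = \ov X{}^{\Sigma'}\setminus X$. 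The key point is that $W_0 H^*_c(X;\QQ)$ is an intrinsic invariant of $X$, so it is unaffected by the choice of $\Sigma$ versus $\Sigma'$.

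Next I would invoke the standard comparison \eqref{eq:general weightt 0 comparison}: for a smooth DM stack $Y$ with simple normal crossings compactification $\ov Y$, Deligne's weight spectral sequence degenerates on $E_2$ in the relevant range and yields a canonical isomorphism
\[
W_0 H^*_c(Y;\QQ)\cong H^*_c\bigl(\on{Trop}(Y\subset \ov Y);\QQ\bigr),
\]
where $\on{Trop}(Y\subset \ov Y)$ is the cone over the dual complex of $\partial \ov Y$. Applying this to $Y = X$ and $\ov Y = \ov X{}^{\Sigma'}$ reduces the theorem to a purely combinatorial identification.

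The geometric heart of the argument is Step 3: identify $\on{Trop}(X\subset \ov X{}^{\Sigma'})$ with $X^{\Sigma',\trop} = |\Sigma'|$. For a toroidal embedding, \cite{kkmsd} shows that the dual complex of the boundary is glued from the cones of the associated conical polyhedral complex along face morphisms, and \cite{amrt} identifies this conical polyhedral complex in the locally symmetric case with the data of $\Sigma'_F$ glued along the inclusions $U(F_2)_{\ZZ}\hookrightarrow U(F_1)_{\ZZ}$ and the $\ov\Gamma_F$-actions, exactly as in Definition~\ref{def:tropicalization}. Then Theorem~\ref{thm:there is one tropicalization} gives a canonical homeomorphism $X^{\Sigma,\trop}\cong X^{\Sigma',\trop}$, so the comparison isomorphism transports canonically back to $\Sigma$. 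The Poincar\'e duality reformulation follows from \eqref{eq:top-weight-isom} applied to the smooth DM stack $X$.

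The main obstacle I expect is the smooth-SNC hypothesis needed for Deligne's comparison: the AMRT construction on an arbitrary admissible $\Sigma$ need not yield a smooth stack with SNC boundary, and verifying that a smooth refinement $\Sigma'$ can be chosen within the same framework of $\Gamma$-admissible collections (compatibly across all rational boundary components $F$ simultaneously) requires some care. Once smooth refinements are shown to be cofinal among $\Gamma$-admissible collections, the invariance of $W_0 H^*_c$ under refinement and Theorem~\ref{thm:there is one tropicalization} together show that the resulting comparison isomorphism depends only on $\Sigma$ up to canonical identification. A secondary issue, relatively minor, is verifying that Deligne's theory applies verbatim to smooth separated DM stacks rather than only to smooth varieties; this is standard and well-documented.
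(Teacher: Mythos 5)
Your proposal follows essentially the same route as the paper's proof: refine $\Sigma$ to an admissible collection $\Sigma'$ with simple normal crossings boundary, identify the tropicalization with the (cone over the) dual complex via \cite{kkmsd} and \cite{amrt}, invoke Deligne's weight-zero comparison, and transport back to $\Sigma$ using Theorem~\ref{thm:there is one tropicalization}. The one step you flag as requiring care---the existence of a $\Gamma$-admissible smooth SNC refinement compatible across all boundary components---is precisely where the paper resolves the issue by citing the recent detailed proof in \cite[Propositions A.1, A.4]{ma-mixed}, so your instinct about where the technical weight lies is correct.
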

\begin{proof}
Any admissible collection $\Sigma$ can be refined to an admissible collection $\Sigma'$ whose associated toroidal compactification has simple normal crossings boundary.  This is a ``folklore'' theorem (see \cite[IV.2, p.~98]{faltings-chai-degenerations}), which has been given a detailed proof recently in \cite[Propositions A.1, A.4]{ma-mixed}.  Since $\Sigma'$ is a refinement, there is a canonical homeomorphism $X^{\Sigma,\trop}\cong X^{\Sigma',\trop}$, so altogether we may assume that the boundary of the toroidal compactification is simple normal crossings.  Then the link of $X^{\Sigma,\trop}$ is homeomorphic to the dual complex of the boundary \cite{kkmsd}, \cite[III.7.5]{amrt}. Therefore, we have isomorphisms
\begin{eqnarray*}
    H^*_c(X^{\Sigma,\trop}) \; \cong \; \widetilde{H}^*((X^{\Sigma,\trop})^+)
    \; \cong \; \wt H^*( S(L X^{\Sigma,\trop}))
    \; \cong \; \wt H^{*-1} (L X^{\Sigma,\trop}))
\end{eqnarray*}
and this last space is homeomorphic to the boundary complex of the simple normal crossings compactification. Above, $S$ denotes the suspension.  
Then \eqref{eq:weight-0-comparison}  follows from Deligne's work on mixed Hodge structures \cite{Deligne71, Deligne74b}; see, for example, \cite{cgp-graph-homology}.
\end{proof}

\subsubsection{A linear basis for the cellular chain complex}
\label{subsection:alternating}

The following proposition gives a basis for the chain complex $C_*(\Sigma)$. By Corollary~\ref{cor:tropicalBM}, its rational homology is canonically isomorphic with the Borel--Moore homology of $X^{\Sigma,\trop}$.  Having an explicit finite basis for this chain complex is crucial for applications.

For any rational boundary component $F$, we say that a cone $\sigma \in \Sigma_{F}$ is \emph{$\ov{\Gamma}_F$-alternating} if for any $A \in \ov{\Gamma}_F$ with $A \cdot \sigma = \sigma$, the action of $A$ is orientation-preserving on $\sigma$. 

\begin{proposition}\label{prop:linear basis in F world}
Let $\mathcal{\cF}'$ be any skeleton subcategory of $\cF$; its objects thus form a set of representatives for the $\Gamma$-orbits of rational boundary components.  For each $F \in \on{Ob}(\mathcal{\cF}')$, let $\Sigma'_{F}$ denote a set of representatives of the $\ov{\Gamma}_F$-orbits of cones in $\Sigma_{F}$  that are $\ov{\Gamma}_F$-alternating and that are not contained in any rational hyperplane supporting the open cone $C(F)$.  For each such cone $\sigma$, choose one of its two possible orientations $\omega_\sigma$ arbitrarily.

    Then the elements
    \begin{equation}\label{eq:disjoint union of reps} \coprod_{F\in \cF'} \{(\sigma,\omega_\sigma):\sigma \in \Sigma'_{F}\}\end{equation}
    are a linear basis of $C_*(\Sigma).$
\end{proposition}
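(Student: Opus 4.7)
The plan is to unravel the colimit $C_*(\Sigma) = \colim_{F\in\cF} C_*(\Sigma_F)$ explicitly. Writing it as
\[
C_*(\Sigma) \;=\; \Big(\bigoplus_{F\in \on{Ob}\cF} C_*(\Sigma_F)\Big)\big/\, R,
\]
with $R$ generated by the differences $f_*(x) - x$ over all morphisms $f\col F_2\to F_1$ of $\cF$ and $x \in C_*(\Sigma_{F_2})$, I would classify the morphisms of $\cF$---pairs $(F_1,F_2,\gamma)$ with $\gamma\in\Gamma$ and $\gamma F_1 \subset \ov{F_2}$---into three disjoint families: (a) isomorphisms with $\gamma F_1 = F_2 \neq F_1$; (b) self-maps $F\to F$, i.e.\ $\gamma\in\Gamma_F$; and (c) non-isomorphism morphisms with $\gamma F_1 \subsetneq \ov{F_2}$ properly. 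I would quotient by these in sequence to obtain the three conditions in the proposition.

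Reduction by family (a) identifies summands for $\Gamma$-equivalent boundary components, so we may restrict to the skeleton $\cF'$. Reduction by family (b): the self-maps act on $C_*(\Sigma_F)$ through the faithful quotient $\ov{\Gamma}_F = \Gamma_F/\Gamma_F'$, since by definition $\Gamma_F'$ acts trivially on $U(F)$. Taking coinvariants over $\QQ$, a basis is indexed by $\ov{\Gamma}_F$-orbits whose stabilizer preserves orientation---precisely the $\ov{\Gamma}_F$-alternating orbits---each with a chosen orientation $\omega_\sigma$; non-alternating orbits contribute $0$ via $(\sigma,\omega) = -(\sigma,\omega)$, since some stabilizer element acts as a sign. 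Reduction by family (c): by Remark~\ref{rem-cone-gluings}, the inclusion $u_\gamma\col U(F_2)\hookrightarrow U(F_1)$ identifies each cone of $\Sigma_{F_2}$ with the unique cone of $\Sigma_{F_1}$ lying in the image $u_\gamma(U(F_2))$, which is a rational supporting hyperplane of $\ov{C(F_1)}$. Conversely, by Remark~\ref{rem:adm-decomp-support}, every cone of $\Sigma_{F_1}$ contained in a rational supporting hyperplane of $C(F_1)$ lies in $u_\gamma(U(F_2))$ for some such $(F_2,\gamma)$, and is thus identified by family (c) with a cone of $\Sigma_{F_2}$. We may therefore discard cones contained in rational supporting hyperplanes, retaining only interior cones as representatives.

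The main obstacle is confirming that the three reductions combine without unintended collapse. For linear independence, one must check that family (c) never identifies two distinct interior, $\ov{\Gamma}_F$-alternating orbit representatives in a single $\Sigma_F$: this holds because every (c)-identification has image in a proper subspace $u_\gamma(U(F_2)) \subsetneq U(F_1)$, so it cannot relate two cones each meeting the interior of $C(F_1)$. For spanning, any generator $(\sigma,\omega)$ is reduced by iterating (a), (c), and (b): first replace $F$ by its $\cF'$-representative, then if $\sigma$ lies in a rational supporting hyperplane, absorb it into a cone of some larger $U(F')$ until the reduced cone lies in the interior of $C(F')$ for some $F' \in \cF'$, and finally pass to the chosen orbit representative with its preferred orientation $\omega_\sigma$. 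Careful bookkeeping of orientation signs along chains of morphisms in $\cF$ confirms that the fixed orientations determine well-defined classes in the colimit, yielding the asserted basis of $C_*(\Sigma)$.
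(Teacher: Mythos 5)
Your approach—explicitly unwinding the colimit $C_*(\Sigma) = \bigoplus_F C_*(\Sigma_F)/R$ by classifying the morphisms of $\cF$ into three families and reducing in stages—is a genuinely different route from the paper's, which instead introduces the groupoid $\mathsf{Cones}(\Sigma)$, invokes a general result from \cite{bbcmmw-top} that alternating isomorphism classes in that groupoid form a basis, and then reconciles groupoid-alternation with $\ov{\Gamma}_F$-alternation via three claims.

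However, your linear-independence argument has a genuine gap. You observe that a single family-(c) relation $f_*(x) - x$ always has $f_*(x)$ landing in a proper subspace $u_\gamma(U(F_2)) \subsetneq U(F_1)$, hence cannot identify two cones both meeting the interior of $C(F_1)$. That is correct, but the equivalence relation on generators of $\bigoplus_F C_*(\Sigma_F)$ is \emph{generated} by single relations, and one must rule out identifications arising from \emph{chains} and \emph{zig-zags} of morphisms in $\cF$. Specifically: (i) two distinct interior $\ov{\Gamma}_F$-alternating orbit representatives, possibly in $\Sigma_F$ for different $F$ in the skeleton, could a priori be identified by a zig-zag passing through boundary cones; and (ii) an interior $\ov{\Gamma}_F$-alternating cone could a priori acquire an orientation-reversing self-equivalence via such a zig-zag, forcing $(\sigma,\omega) = -(\sigma,\omega) = 0$ in the colimit even though it is alternating in your sense. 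Your closing sentence (``Careful bookkeeping of orientation signs along chains of morphisms in $\cF$ confirms that the fixed orientations determine well-defined classes'') asserts exactly the needed fact but does not prove it. The paper's second claim is precisely the straightening statement you need: any isomorphism in $\mathsf{Cones}(\Sigma)$ between cones meeting $C(F)$ and $C(F')$ can, after replacing each intermediate cone by an interior representative (the first claim), be rewritten so that every step in the zig-zag is an isomorphism of boundary components, and hence the whole chain composes to a single $\gamma\in\Gamma$ with $\gamma F' = F$. This is what guarantees that groupoid-alternation for interior cones coincides with $\ov{\Gamma}_F$-alternation (the paper's third claim), and hence that your candidate set is genuinely linearly independent. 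Without this argument the proof is incomplete.

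A minor slip: in your spanning argument, a boundary cone of $\Sigma_{F_1}$ should be traced back to a cone over a \emph{smaller} $U(F')$ (higher-rank boundary component $F'$ with $F_1 \subset \ov{F'}$, so $U(F')\subset U(F_1)$), not a larger one; the dimension of $U$ strictly decreases, which is what makes the iteration terminate.
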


\begin{proof}
We use a result from \cite{bbcmmw-top}. Let $\mathsf{Cones}(\Sigma)$ denote the groupoid whose objects are all cones $\sigma\in \Sigma_{F}$, for all $F \in \mathrm{Ob}(\cF)$.  For $\sigma \in \Sigma_{F}$ and $\sigma'\in \Sigma_{F'}$, we have an arrow from $\sigma$ to $\sigma'$ in $\mathsf{Cones}(\Sigma)$ for every integral-linear homeomorphism $\sigma\to \sigma'$ 
that makes the diagram
\[
\begin{tikzcd}[row sep = 2.5em, column sep =2.5em]
    \sigma \arrow[rr, "\cong"] \arrow[dr] & & \sigma' \arrow[dl] \\
    & \left|\Sigma\right| & 
\end{tikzcd}
\]
commute.  Say that $\sigma \in \mathsf{Cones}(\Sigma)$ is {\em alternating} if its only automorphisms in this groupoid correspond to orientation-preserving  
automorphisms of $\sigma$.  The property of being alternating is preserved by isomorphisms in $\mathsf{Cones}(\Sigma)$. (Note that this definition is a priori different from the notion called {\em $\ov{\Gamma}_F$-alternating} above; part of the work in this proof is to relate the two.)  Then \cite[Proposition 2.1]{bbcmmw-top} implies that a choice of representatives for the alternating isomorphism classes of $\mathsf{Cones}(\Sigma)$, with arbitrary orientation, form a linear basis for $C_*(\Sigma)$.  

We now argue that~\eqref{eq:disjoint union of reps} is such a choice.  We shall establish three claims. First, for any $F\in \mathrm{Ob}(\cF)$ and any choice of cone $\sigma \in \Sigma_{F}$, we claim that $\sigma$ is isomorphic in $\mathsf{Cones}(\Sigma)$ to a cone $\sigma'\in \Sigma_{F'}$ not contained in any rational hyperplane supporting $C(F')$. Indeed, by Remark~\ref{rem:adm-decomp-support}, it suffices to choose a rational boundary component of $C(F)$ whose closure contains $\sigma$ and which is minimal with respect to inclusion.  

Second, suppose $\sigma \in \Sigma_{F}$ and $\sigma'\in \Sigma_{F'}$ are related by an isomorphism $\sigma\to \sigma'$ in $\mathsf{Cones}(\Sigma)$, and suppose that $\sigma$ and $\sigma'$ meet the open cones $C(F)$ and $C(F')$ respectively.  The second claim is that there exists $\gamma\in \Gamma$ such that $\gamma \cdot F' = F$ and $u_\gamma(\sigma) = \sigma'$ induces that isomorphism.  Indeed, the fact that $\sigma$ and $\sigma'$ are isomorphic in the groupoid $\mathsf{Cones}(\Sigma)$ implies that there is a sequence \[F = F^{(0)}, \ldots, F^{(\ell)} = F',\] and morphisms $\gamma_i$ between $F^{(i)}$ and $F^{(i+1)}$ identifying $\sigma^{(i)}$ and $\sigma^{(i+1)}$, where $\sigma^{(0)} = \sigma$ and $\sigma^{(\ell)} = \sigma'$.  A priori the morphism $\gamma_i$ can go in either direction, i.e., we have either \[\gamma_i \cdot F^{(i)} \subset \ov{F^{(i+1)}} \quad \textrm{or} \quad \gamma_i \cdot F^{(i+1)} \subset \ov{F^{(i)}}.\] But by the first claim, we may assume that $F^{(i)}$ and $F^{(i+1)}$ are isomorphic, and then, after replacing $\gamma$ with its inverse if necessary, we may assume that \[\gamma_i \cdot F^{(i+1)} = F^{(i)} \quad \textrm{for all} \quad i = 0,\ldots,\ell-1.\]  Then the product $\gamma = \gamma_0 \cdot \cdots \gamma_\ell \in \Gamma$ satisfies the conditions of the second claim.

The third claim is that a cone $\sigma\in \mathsf{Cones}(\Sigma)$ that meets $C(F)$ is alternating if and only if it is $\ov{\Gamma}_F$-alternating. The forward direction follows from the definitions, and the converse follows from the second claim above: given an orientation-reversing isomorphism $\sigma\to \sigma$ in $\mathsf{Cones}(\Sigma)$, the argument above shows that there exists $\gamma \in \Gamma_F$ inducing that isomorphism, whose image in the quotient $\ov{\Gamma}_F$ witnesses the fact that $\sigma$ is not $\ov{\Gamma}_F$-alternating.

Now the proposition follows from the three claims.  The first and second claim together imply that every cone $\sigma$ in $\mathsf{Cones}(\Sigma)$ is isomorphic to a unique $\ov{\Gamma}_F$-orbit representative of the cones in $\Sigma_{F}$, for a unique $F$ in the skeleton $\cF'$, that are not contained in any supporting rational hyperplane of $C(F)$.  The third claim implies that $\sigma$ is alternating if and only if that orbit representative is $\ov{\Gamma}_F$-alternating.
\end{proof}

\subsubsection{Uniqueness of tropicalization} The tropicalization is unique up to homeomorphism.

\begin{theorem}\label{thm:there is one tropicalization}
With $X$ as in Definition~\ref{def:tropicalization}, let $\Sigma$ and $\Sigma'$ be two $\Gamma$-admissible collections as in Definition~\ref{def:admissible-collection}. There is a canonical homeomorphism
\[
\begin{tikzcd}[row sep = .75em, column sep = 3.5em]
    X^{\Sigma,\trop} \arrow[r, leftrightarrow, "\cong"] & X^{\Sigma',\trop}.
\end{tikzcd}
\]
\end{theorem}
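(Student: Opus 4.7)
The plan is to reduce to the case where one admissible collection refines the other by constructing a common refinement, and then to show that a refinement induces a canonical homeomorphism on tropicalizations.

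For the first step, given two $\Gamma$-admissible collections $\Sigma$ and $\Sigma'$, I would construct the common refinement $\Sigma \wedge \Sigma'$ by taking, for each rational boundary component $F$,
\begin{equation*}
(\Sigma \wedge \Sigma')_F := \{\sigma \cap \sigma' : \sigma \in \Sigma_F,\ \sigma' \in \Sigma'_F\},
\end{equation*}
closed under taking faces. The verifications are: (i) this is an $\overline{\Gamma}_F$-admissible decomposition of $C(F)$ in the sense of Definition~\ref{def:admissible-collection}, with the number of $\overline{\Gamma}_F$-orbits bounded by the product of the orbit counts for $\Sigma_F$ and $\Sigma'_F$, and support unchanged from either of the original decompositions (Remark~\ref{rem:adm-decomp-support}); and (ii) the compatibility condition \eqref{eq:cones of Sigma 2} defining a $\Gamma$-admissible collection is preserved under pairwise intersection of cones, since pullback along $u_\gamma$ commutes with intersection and taking faces.

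For the second step, I would show that if $\widetilde\Sigma$ refines $\Sigma$ as $\Gamma$-admissible collections, then the natural map $X^{\widetilde\Sigma,\trop} \to X^{\Sigma,\trop}$, induced cone-by-cone by sending each cone of $\widetilde\Sigma_F$ to the unique minimal cone of $\Sigma_F$ containing it, is a homeomorphism. On sets, both tropicalizations are colimits over $\cF$ of the same underlying point-set data (the supports $C(F)^*$), so the map is bijective; it is continuous by the colimit universal property. To show it is open, I would argue locally: at a point $p$ whose image lies in the open cone $C(F)$, the admissibility condition gives finitely many $\overline\Gamma_F$-orbits of cones in $\widetilde\Sigma_F$, and proper discontinuity of $\overline\Gamma_F$ on $C(F)$ yields local finiteness of cones at $p$; the refinement is then, locally, a finite polyhedral subdivision, and is classically a homeomorphism in that range. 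Points on the boundary $C(F)^* \setminus C(F)$ lie in the image of smaller-rank boundary components $F' \subsetneq \ov F$; one handles them by an induction on $\mathrm{rank}(F)$ using the gluings of Definition~\ref{def-admissible-collection} together with Remark~\ref{rem-cone-gluings}.

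Combining these steps, the zigzag $X^{\Sigma,\trop} \leftarrow X^{\Sigma \wedge \Sigma',\trop} \to X^{\Sigma',\trop}$ yields the desired homeomorphism. Canonicity, i.e.\ independence of the choice of common refinement, follows because any two common refinements admit a further common refinement by the same construction; a diagram chase then identifies the two resulting homeomorphisms. The main obstacle I anticipate is the local finiteness argument in Step 2: the geometric realization of a non-locally-finite fan can in general carry a strictly finer topology than its support, as flagged in the remark following Definition~\ref{def:gcc}. The resolution should use the Satake-topology identification recalled in Remark~\ref{rem:adm-decomp-support}, which provides an intrinsic description of the topology on $C(F)^*$ independent of the choice of admissible decomposition, together with careful compatibility checks along boundary incidences to rule out topological pathologies in the colimit.
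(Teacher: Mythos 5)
Your overall strategy — form the common refinement $\Sigma''$, verify it is admissible, and use the zigzag $X^{\Sigma,\trop}\leftarrow X^{\Sigma'',\trop}\rightarrow X^{\Sigma',\trop}$ — is the same as the paper's, so the architecture is sound. But there is a genuine gap in Step~1, which is where the real work lives, and it is glossed over with an incorrect claim.

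You assert in Step~1(i) that the number of $\ov\Gamma_F$-orbits of cones in $(\Sigma\wedge\Sigma')_F$ is bounded by the product of the orbit counts of $\Sigma_F$ and $\Sigma'_F$. This is false. Every cone of $(\Sigma\wedge\Sigma')_F$ can be written as $\gamma_1\sigma_i\cap\gamma_2\sigma'_j$ for orbit representatives $\sigma_i,\sigma'_j$ and $\gamma_1,\gamma_2\in\ov\Gamma_F$, and modulo the $\ov\Gamma_F$-action one may normalize $\gamma_1=1$, but $\gamma_2$ still ranges over all of $\ov\Gamma_F$. The shapes $\sigma_i\cap\gamma_2\sigma'_j$ for varying $\gamma_2$ are, a priori, pairwise non-isomorphic, so nothing cheap bounds the orbit count by the number of pairs $(i,j)$. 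Establishing finiteness here is the crux of the theorem, and it requires the Siegel-set reduction theory of self-adjoint homogeneous cones, specifically \cite[Corollary~II.4.9]{amrt}: for closed polyhedral cones $\sigma_1,\sigma_2\subset C(F)^*$, the set $\{\gamma\in\ov\Gamma_F:\gamma\sigma_1\cap\sigma_2\cap C(F)\neq\emptyset\}$ is finite. Applying this to each pair $(\sigma_i,\sigma'_j)$ gives finitely many orbits of cones in $(\Sigma\wedge\Sigma')_F$ \emph{that meet $C(F)$} — but this bound is a sum of the relevant finite counts, not a product of orbit counts, and your proposal never invokes this input.

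Even granting that, a second essential step is missing entirely: one must show that \emph{every} cone $\tau\in(\Sigma\wedge\Sigma')_F$ is a face of some cone meeting $C(F)$, since only then does face-closure exhaust $(\Sigma\wedge\Sigma')_F$ in finitely many orbits. This is not automatic; one has to rule out infinitely many cones accumulating near a boundary cone $\tau\subset C(F)^*\setminus C(F)$ without containing it as a face. The paper's proof resolves this with a delicate argument: pick $p$ in the relative interior of $\tau$, choose a line segment $L$ from $p$ into $C(F)$, take a sequence $p_i\in L\cap C(F)\to p$, put each $p_i$ in some $\sigma_i\cap\sigma'_i$, and use the finiteness result twice (once to pass to a subsequence in a single orbit, once applied to $\sigma_1\cap\sigma'_1$ and a cone containing $L$) to force the $\sigma_i\cap\sigma'_i$ to stabilize, whence $\tau$ is a face of $\sigma_1\cap\sigma'_1$. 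None of this is in your proposal. Your Step~2 worry about local finiteness and topologies is legitimate and the resolution you sketch is reasonable, but it is secondary; once admissibility of $\Sigma''$ is actually proved, the same reduction-theory estimate also gives the local finiteness needed for the refinement maps to be homeomorphisms.
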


\begin{remark}\label{rem:really only one tropicalization}
Theorem~\ref{thm:there is one tropicalization} is a folklore theorem; a version of it appears in \cite[IV.2, p. 97]{faltings-chai-degenerations} without proof, and we do not know of another reference.  Here we provide a proof.  Thereafter we can drop $\Sigma$ from the notation and define
\[X^{\trop}\]
to be the tropicalization of $X$ with respect to any choice of admissible collection $\Sigma$. It is well-defined up to homeomorphism.
\end{remark}

\begin{proof}[Proof of Theorem~\ref{thm:there is one tropicalization}]  The idea, which is well-known, is to take a common refinement $\Sigma''$ of $\Sigma$ and $\Sigma'$; one has to argue that $\Sigma''$ is admissible.  If so, then the canonical maps 
\[X{}^{\Sigma,\trop}\xleftarrow{\,\,\, \cong \,\,\,}X{}^{\Sigma'',\trop}\xrightarrow{\,\,\, \cong \,\,\, }X{}^{\Sigma',\trop}  \]
are homeomorphisms. 
The following key point 
relies on the reduction theory of self-adjoint homogeneous cones via Siegel sets.

\begin{enumerate}   
\item[($\ast$)] \cite[Corollary II.4.9]{amrt} For every pair of closed polyhedral cones $\sigma_1,\sigma_2 \subset C(F)^*$, the~set
$$\{\gamma \in \ov{\Gamma}_F~:~ \gamma\cdot \sigma_1 \cap \sigma_2 \cap C(F) \ne \emptyset \} \quad\;\;
\text{is finite.}$$
\end{enumerate}

For each rational boundary component $F$, let
\[\Sigma''_F = \{\sigma \cap \sigma': \sigma\in \Sigma_F, \,\sigma'\in \Sigma'_F\}.\]
Standard results on polyhedral cones imply that $\Sigma''_F$ is again a rational polyhedral fan on the same support $C(F)^*$, see Remark~\ref{rem:adm-decomp-support}, and that $\Sigma''\col \cF\to \RPF$ is a functor, i.e., that these rational polyhedral fans are compatible by pullback as in~\eqref{eq:cones of Sigma 2}.  The property that $\ov{\Gamma}_F$ permutes the cones of $\Sigma_F''$ is also inherited from that fact for $\Sigma_F$ and $\Sigma_F'$.  
It remains to verify that, for every $F$, the number of $\ov{\Gamma}_F$-orbits of cones in $\Sigma_F''$ is finite.

We argue two claims.  The first claim is that there are only finitely many $\ov{\Gamma}_F$-orbits of cones in $\Sigma_F''$ that meet the interior $C(F)$.  This is direct from ($\ast$) above: every such cone is of the form $\gamma \sigma_i \cap \gamma'\sigma'_j$ for $\sigma_i\in \Sigma_F$ and $\sigma'_j\in \Sigma_F'$ drawn from a finite set of orbit representatives, both meeting $C(F)$.  Therefore every cone of $\Sigma_F''$ meeting $C(F)$ is in the orbit of a cone of the form $\gamma''\sigma_i \cap \sigma_j'$, and by ($\ast$) there are only finitely many such.

The second claim is that every cone $\tau \in \Sigma_F''$ is a face of some $\sigma\in \Sigma_F''$ that meets $C(F)$.  The desired finiteness then follows by considering faces of orbit representatives of the cones of $\Sigma_F''$ that meet $C(F)$, which are only finitely many as just argued.  This second claim sounds quite plausible, but in principle one needs to rule out situations in which infinitely many cones accumulate near $\tau$ without containing it as a face.  

Pick any $p \in \tau\setminus \partial \tau $ in the relative interior of $\tau$. Since $p\in \ov{C(F)},$ there exists a line segment $L$ from $p$ into $C(F)$ such that $L\setminus\{p\} \subset C(F)$; the line segment connecting $p$ and any interior point of $C(F)$ would do.  Let $p_1,p_2,\ldots,$ be a sequence of points in $L \cap C(F)$ approaching $p$.  Since they are all points of $C(F)$, there must exist cones $\sigma_i\in \Sigma_F$ and $\sigma_i' \in \Sigma_F'$ such that $p_i \in \sigma_i \cap \sigma'_i$, for each $i$.  By the first claim, after passing to a subsequence we may assume that all $\sigma_i\cap \sigma_i'$ are in the same orbit. Then applying ($\ast$) to $\sigma_1 \cap \sigma_1'$ and any polyhedral cone containing $L$, and passing to a subsequence, we may assume that $\sigma_1 \cap \sigma_1' = \sigma_2 \cap \sigma_2' = \cdots$.  In particular, the cone $\sigma\coloneq \sigma_1\cap \sigma'_1$ is a closed convex set containing each $p_i$, so it also contains $p$.  Then $\tau$ is a face of $\sigma$. Indeed, $\sigma\cap \tau$ is a face of each of $\sigma$ and $\tau$, and furthermore it contains a point $p$ in the relative interior of $\tau$. Therefore $\sigma\cap \tau = \tau$, and $\tau$ is a face of $\sigma$. 
\end{proof}

\subsubsection{Spectral sequence of a filtration on $X{}^{\trop}$}\label{subsect:gen-spec-sec} 

Let $r$ denote the rank of the Hermitian symmetric domain $D$.  For each $p \ge0$, let 
$\cF_{p}$ denote the full subcategory of $\cF$ of boundary components which, as Hermitian symmetric domains, have rank at most $p$.  Given some admissible collection $\Sigma$, let $\Sigma_{p} \col \cF_p \to \RPF$ denote the restriction. 
Consider 
\begin{equation}\label{eq:all purpose filtration on tropicalization} \Fil_p X^{\trop} = \im(|\Sigma_p| \to X^{\trop}),\end{equation}
which defines an increasing filtration $\Fil_\bu X^{\trop}$ on $X^{\trop}$. Note that this filtration is in fact independent of $\Sigma$, in light of Remark~\ref{rem:adm-decomp-support}. The filtered space we will actually use is on the one-point compactification:
\begin{equation}\label{eq:all purpose filtration on one point compactification} \Fil_p ((X^{\trop})^+) = \Fil_p(X{}^{\trop})^+.\end{equation}

For each $p$, let $\Pi_p$ denote a set of representatives of isomorphism classes in $\cF$ of rational boundary components of rank equal to $p$. 

\begin{manualtheorem}{\ref{thm:thmb}} \label{thm:all purpose spectral sequence in F world} 

There is a spectral sequence 
\begin{equation}\label{eq:all purpose spectral sequence} 
E^1_{p,q} = \bigoplus_{F\in \Pi_p} H^{\mathrm{BM}}_{p+q} \left(C(F)/\ov{\Gamma}_F ; \QQ\right) \quad \Longrightarrow \quad H_{s+t}^{\mathrm{BM}}(X^{\trop};\QQ). \end{equation}
\end{manualtheorem}
\begin{proof}
Note first that $(X{}^{\trop})^+$ is a CW-complex, being the one-point compactification of a generalized cone complex; and every $\Fil_p (X{}^{\trop})^+$ is a subcomplex.  Therefore in the relative homology spectral sequence associated to the filtered space~\eqref{eq:all purpose filtration on one point compactification}, the relative homology groups on $E^1$ can safely be rewritten in terms of reduced homology of the successive quotient spaces, or equivalently the Borel--Moore homology of the successive differences of spaces.  Indeed, 
\[\Fil_p ((X{}^{\trop})^+) \setminus \Fil_{p-1} ((X{}^{\trop})^+)\; \cong \coprod_{F\in \Pi_p} C(F)/\ov{\Gamma}_F.\]
This follows from the following facts about admissible decompositions.  First, for each rational boundary component $F$, the admissible decomposition $\Sigma_F$ of $C(F)$ is supported on $C(F)^*$, the union of $C(F)$ and its rational boundary components; see the proof of Theorem~\ref{thm:there is one tropicalization}.  Second, the rational boundary components of $C(F)$ are exactly the images of $C(F')$ over all $F'\to F$ of strictly lower rank than $F$.
Third, the topology on the restriction of $|\Sigma_F|$ to $C(F)$ agrees with the Euclidean topology of $C(F)$, since $\Sigma_F$ is locally finite inside $C(F)$.

Then we conclude that the relative homology spectral sequence for~\eqref{eq:all purpose filtration on one point compactification} is
\begin{eqnarray*}
E^1_{p,q} &=& H_{p+q}\left(\Fil_p((X{}^{\trop})^+),\Fil_{p-1}((X{}^{\trop})^+) \right) \\
&\cong& H_{p+q}^{\mathrm{BM}}\left(\Fil_p((X{}^{\trop})^+)\setminus \Fil_{p-1}((X{}^{\trop})^+) \right)\\
&\cong& \bigoplus_{F\in\Pi_p} H^{\mathrm{BM}}_{p+q}\left(C(F)/\ov{\Gamma}_F;\QQ\right).
\end{eqnarray*}  
\end{proof}

\begin{remark}\label{rem:spectral-to-group}
Theorem~\ref{thm:all purpose spectral sequence in F world} can be stated purely in terms of groups and group cohomology. The $E^1$ page of~\eqref{eq:all purpose spectral sequence} involves $H^*(\ov{\Gamma}_F;\QQ_{\mathrm{or}})$, as $F$ ranges over the standard boundary components of $D$ of each rank. The spectral sequence converges to the cohomology, in top weight, of $\Gamma$ itself.  We now give details.

Recall that $\ov{\Gamma}_F$ acts with finite stabilizers on the contractible space $C(F)$ \cite[Corollary II.4.9]{amrt}.  Poincar\'e duality applied to $C(F)/\ov{\Gamma}_F$ then gives
\[H^{\mathrm{BM}}_*\left(C(F)/\ov{\Gamma}_F;\QQ\right) \cong  H^{N-*}(\ov{\Gamma}_F;\QQ_{\mathrm{or}})\]
for $N = \dim C(F)$, and $\QQ_{\mathrm{or}}$ the local system induced by orientations on $C(F)$.  
Then, recalling Theorem~\ref{thm:weight-0-comparison}, the spectral sequence~\eqref{eq:all purpose spectral sequence} is rewritten
\begin{equation}\label{eq:rewritten all purpose spectral sequence} E^1_{p,q}= \bigoplus_{F\in \Pi_p} H^{\dim(C(F))-p-q}(\ov{\Gamma}_F;\QQ_{\mathrm{or}})\quad \Longrightarrow \quad \Gr^W_{2d} H^{2d-p-q} (\Gamma\backslash D;\QQ). \end{equation}
Since $D$ is contractible, the convergence is to the $\QQ$-cohomology (in top weight) of the group $\Gamma$.

Moreover, each column of $E^1$ involves the cohomology of just one arithmetic group.  Indeed, all rational boundary components $F\in \mathrm{Ob}(\cF)$ of rank $p$ are of the form $k\cdot F_{r-p}$ for $k\in K$ an element of the maximal compact subgroup of $G$, and where $F_{r-p}$ denotes the standard boundary component associated to the subset ${\{1,\ldots,r-p\}}$ of roots of $G$.  
Then~\eqref{eq:rewritten all purpose spectral sequence} becomes 
\begin{equation}E^1_{p,q} \cong H^{\dim(C(F_{r-p}))-p-q}(\ov{\Gamma}_{F_{r-p}};\QQ_{\mathrm{or}})^{\oplus \pi_p} \quad \Longrightarrow \quad \Gr^W_{2d} H^{2d-p-q} (\Gamma\backslash D;\QQ), \end{equation}
where $\pi_p = |\Pi_p|$ is the number of $\Gamma$-orbits of rational boundary components of $D$ of rank $p$.
\end{remark}

\bigskip

\subsection{Tropicalizations and nonarchimedean skeletons of locally symmetric varieties}\label{sec:skeletons}

In this section, not needed for the rest of the paper, we remark on the relationship between the tropicalization of a locally symmetric variety (Definition~\ref{def:tropicalization}) and the nonarchimedean skeleton of the toroidal pair $X\subset \ov X{}^\Sigma$. This relationship is known to experts. Let $U\subset Y$ be any toroidal embedding of complex varieties whose boundary has no self-intersections.  Then \cite{kkmsd} construct a polyhedral cone complex $\Sigma_{U\subset Y}$ associated to $U\subset Y$; the association of $\Sigma_{U\subset Y}$ to $U\subset Y$ is functorial in toroidal morphisms.  In the situation that the boundary of $U\subset Y$ does have self-intersections, or even that it is a toroidal embedding of separated Deligne--Mumford stacks, one can associate a {\em generalized} cone complex $\Sigma_{U\subset Y}$. See \cite{acp}. 

Let $Y^\an$ denote the nonarchimedean analytification, in the sense of Berkovich, of $Y$, viewed as a variety over the trivially valued field $\CC$.  Let $Y^{\beth}$ be the Berkovich analytic space of \cite[Proposition et D\'efinition 1.3]{thuillier}; its $K$-valued points, for $K$ a nonarchimedean valued field extending the trivially valued field $\CC$, are those $K$-valued points of $Y$ that extend to the valuation ring $R$ of $K$.  For us, $Y$ will be proper, in which case $Y^\beth = Y^\an$.  Suppose $U\subset Y$ is a toroidal embedding, with or without self-intersections in the boundary. Thuillier in op.~cit.~constructs an idempotent self map
\[p_Y \col |Y^\beth| \to |Y^\beth|\]
that is a continuous deformation retract onto a subspace of $|Y^\beth|$ called its skeleton, which we shall denote 
$\mathfrak{S}(U\subset Y)$.
Note that $\mathfrak{S}(U\subset Y)$  
is a subset of $|Y^\beth|$; which subset it is depends on the choice of toroidal embedding $U\subset Y$.  Write
$\mathfrak{S}_U(U\subset Y)$
for the image of $U^{\mathrm{an}} \cap Y^\beth$. 
Then 
$\mathfrak{S}_U(U\subset Y)$
can be canonically identified with the cone complex $\Sigma_{U\subset Y}$, and 
$\mathfrak{S}(U\subset Y)$
can be canonically identified with the {\em extended} cone complex $\ov{\Sigma}_{U\subset Y}$.  See \cite[\S3]{thuillier} and the discussion in \cite[\S5]{acp}.  
Moreover, the results of Thuillier mentioned above carry through with no changes in the case of separated Deligne--Mumford stacks, as explained in \cite[\S6]{acp}.

\propnow{\label{prop:skeletons} Let $(\mathbb{G},D)$ be a connected Shimura datum with $D$ irreducible, and consider $\Gamma\subset G \cap \GG(\QQ)$ an arithmetic subgroup.  Let $X = \Gamma\backslash D$.  Then we have  a canonical homeomorphism
\[X^{\Sigma,\mathrm{trop}} \cong \mathfrak{S}_X(X \subset \ov{X}{}^\Sigma),\]
which extends to a canonical homeomorphism of the extended tropicalization: 
\[\ov{X}^{\Sigma,\mathrm{trop}} \cong \mathfrak{S}(X \subset \ov{X}{}^\Sigma).\]
}

\begin{proof}
    The discussion above implies that it suffices to identify $X^{\Sigma,\trop}$ with the generalized cone complex associated to $X\subset \ov{X}{}^\Sigma$ in \cite{kkmsd} and \cite[{\S}I.3]{amrt}. This is achieved in \cite[III.7.5]{amrt}.
\end{proof}

\section{Tropicalizations in the classical Lie types}\label{sec:isotropic}

Our goal is to give a concrete interpretation of $\cF$ in terms of isotropic subspaces in the cases of the classical Lie types.  After some algebraic preliminaries, the second subsection recalls the classification of locally symmetric varieties. Then, in types A, C, and part of D, we provide an isomorphism between $\cF$ and a category $\cW$ we shall define, whose objects are rational isotropic subspaces with respect to a suitable bilinear form.  The category $\cW$ is more suitable for use than $\cF$.  The main theorem then recasts, in these types, the notion of $\Gamma$-admissible decomposition purely in terms of $\cW$.  That is, we shall provide: \begin{enumerate}\item an isomorphism of categories $\Xi\col \cF \to \cW$;
\item a definition of a {\em $\Gamma_\cW$-admissible collection}, which shall be a functor $\cW\to \mathsf{RPF}$; and 
\item a bijection between $\Gamma$-admissible collections $\Sigma$ and $\Gamma_\cW$-admissible collections $\Sigma_\cW$ together with a natural isomorphism \[\Sigma \Rightarrow \Sigma_\cW\circ \Xi.\]
\end{enumerate}
In fact, the most subtle point is constructing the correct functor $\fW\col \cW \to \Vect_\QQ$ associating the appropriate rational vector space to every $W$ in $\cW$. By ``correct'' we mean that there is a natural isomorphism between $\fW \circ \Xi$ and $U_\QQ\col \cF\to \Vect_\QQ$ (Definition~\ref{def: U Q}), as we shall prove. Once that is done, the rest is bookkeeping: one can use this functor together with $\Xi$ to transport the notion of an admissible collection $\Sigma\col \cF \to \mathsf{RPF}$ to a new notion, defined purely in terms of isotropic subspaces, of a {\em $\Gamma_\cW$-admissible collection} $\Sigma_\cW\col \cW\to\mathsf{RPF}$. 

\subsection{Algebraic preliminaries} \label{sec:4-preliminaries}

We refer to \cite[V.23.7--8]{borel-linear-algebraic-groups} for details on the following setup.
Let $B$ be a division algebra over $\QQ$.  Main examples for us include $\QQ$ itself; imaginary quadratic field extensions of $\QQ$; and quaternion algebras over $\QQ$. 

An {\em involution} on $B$ is a $\QQ$-linear map $\sigma\col B\to B$ satisfying $\sigma(bb') = \sigma(b') \sigma(b)$
and $\sigma^2 = 1$.  The involution $\sigma$ is said to be of the first kind if it is the identity on the center of $B$; otherwise, it is said to be of the second kind.

In the following, we shall now fix $B$, and fix an involution $\sigma$ on $B$, which we shall denote $\sigma(b) = \overline{b}$.    Let $\varepsilon \in \{\pm 1\}$.  An {\em $\varepsilon$-Hermitian form} on a finite-dimensional left $B$-module $V_B$ is a $\QQ$-bilinear form $J \colon V_B \times V_B \to B$ satisfying
\begin{equation} \label{eq: definition of hermitian for J}
    J(bu, v) = bJ(u,v), \quad J(v,u) = \varepsilon \cdot \overline{J(u,v)} \quad \textrm{for all } u, v \in V_B,~ b \in B.
\end{equation}

We call an $\varepsilon$-Hermitian form \emph{Hermitian}, respectively \emph{skew-Hermitian}, when $\varepsilon = 1$, respectively $\varepsilon = -1$.  Let
$\Hermeps{\varepsilon}(V_B)$ denote the set of $\varepsilon$-Hermitian bilinear forms on $V_B$; it has the structure of a $\QQ$-vector space, via $(aJ)(u,v) = a\cdot J(u,v)$ for $a\in \QQ$ and $u,v\in V_B$.  (The fact that $aJ$ is again $\varepsilon$-Hermitian follows directly from the fact that the image of $\QQ$ in $B$ lies in the center of $B$ and is fixed by the involution $\sigma$.)
Note that $\Hermeps{\varepsilon}$ is a contravariant functor: a morphism $\phi\col V_B\to V_B'$ of left $B$-modules induces a map of $\QQ$-vector spaces
\begin{equation}\label{eq:hermeps is contravariant functor} 
\phi^*\col \Hermeps{\varepsilon}(V_B') \to \Hermeps{\varepsilon}(V_B), \quad J(-,-) \mapsto J(\phi(-),\phi(-)).
\end{equation}
Furthermore, $\phi^*$ is injective if $\phi$ is surjective.

By a subspace of $V_B$ we mean a left $B$-submodule $W_B$ of $V_B$.  We say that a subspace $W_B$ is \emph{isotropic} with respect to an $\varepsilon$-Hermitian form $J$ if $J(w, w') = 0$ for every $w, w' \in W_B$.  We shall write $W_B^\perp := \{v\in V_B ~|~ J(v,W_B) = 0\},$
so we have $W_B \subset W_B^{\perp}$ if $W_B$ is isotropic.

Continue to let $V_B$ be a finite-dimensional left $B$-module.  Recall its {\em $\sigma$-linear dual} $V_B^\conjdual$ 
consists of the $\sigma$-linear morphisms $V_B \to B$, that is:
\[V_B^\conjdual := \left\{\phi : V_B \to B ~|~ \phi \text{ is $\Q$-linear and }   
\phi(bv) = \phi(v) \cdot \overline{b} \; \text{ for all } v \in V_B,~b \in B\right\}.\]
Then $V_B^\conjdual$ is a left $B$-module via $(b\phi)(v) = b \cdot \phi(v)$. 
Note the canonical isomorphism $V_B \xrightarrow{\cong} (V_B^*)^*$ given by $v \mapsto \big[\phi \mapsto \ov{\phi(v)}\big]$.

An $\varepsilon$-Hermitian form $J$ yields a morphism of left $B$-modules $V_B\to V_B^\conjdual$, sending $u\mapsto J(u,-)$.  
In this way, the functor $\Hermeps{\varepsilon}(-)$ is isomorphic to a subfunctor of the contravariant functor $V_B \mapsto \Hom_B (V_B, V_B^*)$ from left $B$-modules to $\QQ$-vector spaces.

\smallskip

Finally, we set notation for tensoring with $\RR$.  Let $B_\RR = \RR \otimes_\QQ B$, a division algebra over $\RR$.  Then $B_\RR$ inherits an involution 
$B_\RR\to B_\RR$ given by $r \otimes b \mapsto r \otimes \overline{b}$, which will still be denoted $\sigma(\beta) = \ov{\beta}$ 
for $\beta \in B_\RR$.
For a finite-dimensional left $B_\RR$-module $V$, an {\em $\varepsilon$-Hermitian form} on $B_\RR$ is an $\RR$-bilinear form $J\col V\times V\to B_\RR$ satisfying $J(\beta u, v) = \beta J(u,v)$ and $J(v,u) = \varepsilon \cdot \ov{J(u,v)}$ for all $\beta \in B_\RR$ and $u,v\in V$. 
A left $B_\R$-submodule $W$ of $V$ is called an \emph{isotropic subspace} with respect to some form $J$ if $J(w,w') = 0$ for all $w,w' \in W$. 
We write \[W^\perp = \{v\in V : J(v,W) = 0\}.\]Denote by $\Hermeps{\varepsilon}(V)$ the set of $\varepsilon$-Hermitian forms on $V$, which has the structure of a real vector space.  
Moreover, denote the \emph{$\sigma$-linear dual} of $V$ by
 \[V^\conjdual := \{\phi\col V\to B_\RR\col \phi \text{ is $\R$-linear and } \phi(\beta v) = \phi(v) \cdot \ov{\beta} \textrm{ for all } v\in V, \beta\in B_\RR\},\]
 which is a left $B_\RR$-module via $(\beta \phi)(w) = \beta \cdot \phi(w)$.  

Now, if $V_B$ is a finite-dimensional left $B$-module, write 
\[V_\RR := \RR \otimes_\QQ V_B = (\RR\otimes_\QQ B) \otimes_B V_B = B_\RR \otimes_B V_B,\] 
which thus admits the structure of a left $B_\RR$-module. Then the $\varepsilon$-Hermitian forms, and the $\sigma$-linear dual of $V_\RR$, can be obtained from the ones on $V_B$ by tensoring with $\R$. Indeed, we have a canonical isomorphism $V_\R^\conjdual \cong \R \otimes_\Q V_B^\conjdual$ 
of left $B_\R$-modules, and a canonical isomorphism $\Hermeps{\varepsilon}(V_\RR) \cong \RR \otimes_\QQ \Hermeps{\varepsilon}(V_B)$ 
of real vector spaces.

\subsection{Classification of locally symmetric varieties}\label{sec: classification of shimura}

Recall that a homomorphism of algebraic groups $\phi : \G_1 \to \G_2$ is an \emph{isogeny} if it is surjective with finite kernel. It is further a \emph{central isogeny} if the kernel is contained in the center $Z(\G_1)$. We say that $\G_1, \G_2$ are \emph{(strictly) isogenous}, denoted $\G_1 \sim \G_2$, if either there is a (central) isogeny $\G_1 \to \G_2$ or a (central) isogeny $\G_2 \to \G_1$.

For a division algebra $B$ and a left $B$-module $V$, the \emph{rank} of $V$ is the cardinality of a $B$-basis. 

\begin{proposition} \label{prop: classification of Shimura datum}
    If $(\G, D)$ is a connected Shimura datum with $D$ irreducible Hermitian symmetric domain, then $\G$ is strictly isogenous to one of the following groups. 
    \begin{enumerate}
        \item[($A_n$)]
        $\SU(h)$, where $h$ is a skew-Hermitian form 
        on a left $B$-module of rank $(n+1)/d$, where $B$ is an indefinite division algebra over $\Q$ of degree $d \mid n+1$.
        \item[($B_n$)]  
        $\SO(q)$, where $q$ is a quadratic form on $\QQ^{2n+1}$ of signature $(2,2n-1)$.
        \item[($C_n$)] 
        $\Sp_{2n}(\Q)$ or $\Sp_n(B)$ where $B$ is an indefinite quaternion algebra over $\Q$.
        \item[($D_n$)] 
        $\SO(q)$, where $q$ is a quadratic form on $\QQ^{2n}$ of signature $(2,2n-2)$, or $\SO(q)$ where $q$ is a unitary form on a finite-dimensional left $B$-module of rank $n$, for $B$ a quaternion algebra over $\Q$.
        \item[($E$)] 
        One of two exceptional groups $^2\!E_{6,2}^{16'}$ or $E_{7,3}^{28}$.
    \end{enumerate}
\end{proposition}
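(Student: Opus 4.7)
The plan is to assemble the classification from three classical ingredients: reduction to the absolutely simple case, Cartan's classification of irreducible Hermitian symmetric domains over $\RR$, and the classification of absolutely simple $\QQ$-groups of classical type via algebras with involution. To begin, I would reduce to the case that $\GG$ is absolutely simple over $\QQ$: since $D$ is irreducible and $G\twoheadrightarrow \Aut(D)^\circ$ has compact kernel, the noncompact-type hypothesis together with irreducibility imply that $\GG$ is $\QQ$-simple (any nontrivial product decomposition over $\QQ$ would induce, after passing to real points and quotienting by the compact kernel, a decomposition of $D$). Writing $\GG = \mathrm{Res}_{F/\QQ}\GG_0$ for $\GG_0$ absolutely simple over a number field $F$, the decomposition $\GG(\RR) = \prod_{v\mid\infty} \GG_0(F_v)$ combined with irreducibility of $D$ then forces $F=\QQ$.

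Next, I would invoke Cartan's classification \cite{helgason} of irreducible Hermitian symmetric domains: the real form of $\GG$ must be one of AIII, BDI with $p=2$, CI, DIII, EIII, or EVII. This pins down the absolute root system of $\GG$ as one of $A_n, B_n, C_n, D_n, E_6, E_7$ and the real Lie algebra up to isomorphism; in particular it gives the signature restrictions of the form $(2,\ast)$ that appear in the orthogonal cases (all of $B_n$, and one branch of $D_n$).

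Then, for each absolute type, I would enumerate the possible $\QQ$-forms using the dictionary between classical groups and algebras with involution; see \cite[\S V.23]{borel-linear-algebraic-groups} and \cite{Tits}, noting that Section~\ref{sec:4-preliminaries} sets up precisely the algebraic data needed to parameterize these groups. The outer type-$A_n$ forms are $\SU(h)$ for skew-Hermitian $h$ on a left $B$-module, with $B$ a division algebra carrying an involution of the second kind and indefiniteness forced by the noncompact-type hypothesis; type-$B_n$ gives $\SO(q)$ on $\QQ^{2n+1}$; type-$C_n$ gives either $\Sp_{2n}(\QQ)$ or $\SU(h)$ over an indefinite quaternion algebra with its canonical involution; and type-$D_n$ gives either $\SO(q)$ on $\QQ^{2n}$ or $\SO$ of a unitary form on a quaternionic module. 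The two exceptional cases $^2\!E_{6,2}^{16'}$ and $E_{7,3}^{28}$ are the only Tits indices \cite{Tits} compatible with an EIII or EVII real form.

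The main obstacle is not any single hard step but the careful bookkeeping required to match each real Hermitian form to the correct list of $\QQ$-forms through the Albert--Weil--Tits dictionary, tracking indexing such as the divisibility $d\mid n+1$ in type A, symplectic versus quaternion-Hermitian in type C, and quadratic versus quaternion-Hermitian in type D.
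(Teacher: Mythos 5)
Your proposal follows the same strategy as the paper's proof: reduce to the absolutely simple case, invoke Cartan's classification of irreducible Hermitian symmetric domains to constrain $\GG_\RR$, and then use Tits' classification of $\QQ$-forms of absolutely simple groups (via algebras with involution) to enumerate the strict isogeny classes type by type. Your elaboration of the reduction step via Weil restriction, and the remark that the noncompact-type hypothesis forces $F=\QQ$, is sound and merely makes explicit what the paper treats as a known equivalence in its setup.
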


\begin{proof}

    We make use of the classification of irreducible bounded Hermitian symmetric domains, as described in Table~\ref{tab:hermitian_domains}; see, e.g.,~\cite[p. 518]{helgason}. We use the notation of \cite{Tits} for the classification of Lie groups, and in parenthesis the notation of \cite{helgason} for the classification of the Hermitian symmetric domain. As $D$ is irreducible, the $\RR$-group scheme $\G_{\R} := \G \times_{\Spec \Q} \Spec \R$ is isogenous to one of the following groups. 
    \begin{itemize}
        \item Type $^2\!A_n$ (A III) -- the special unitary group $\SU(p,q)$ with $p+q = n+1$ and $pq \ne 0$.
        \item Type $B_n$ (BD I) -- $\SO_{2, 2n-1}(\RR)$.
        \item Type $C_n$ (C I) -- $\Sp_{2n}(\RR)$.
        \item Type $D_n$ -- $\SO_{2,2n-2}(\R)$ (BD I), or $\SO^*_{n/2,n/2}(\mathbb{H})$ (D III), where $\mathbb{H}$ denotes the Hamilton quaternion algebra over $\RR$.
        \item Exceptional types -- $^2 E_{6,2}^{16'}$ (E III) or $E_{7,3}^{28}$ (E VII).
    \end{itemize}
    Since $D$ is irreducible, $\G$ is absolutely simple. By \cite{Tits}, we can classify the strict isogeny class of $\G$ according to its Lie type.
       \begin{itemize}
        \item Type $^2 A_n$ -- there is a quadratic extension $E$ of $\Q$, and a central division algebra $B$ of degree $d$ over $E$ with an involution of the second kind $\sigma$ such that $\Q = \{x \in E : x^{\sigma} = x \}$, and there is a nondegenerate hermitian form $h$ relative to $\sigma$ such that $\G \sim \SU_{(n+1)/d}(h)$. Since $\G_{\R} \sim \SU(p,q)$, we must have $B \otimes \R \simeq M_d(\R)$, so that $B$ is indefinite (and furthermore $h_{\R}$ has signature $(p,q)$).  
        \item Type $B_n$ -- $\G \sim \SO_{2n+1}(q)$ where $q$ is a quadratic form. From the description of $\G_{\R}$ we deduce the signature of $q$.
        \item Type $C_n$ -- there is a division algebra $B$ of degree $d \in \{1,2\}$ over $\Q$, and a nondegenerate antihermitian sesquilinear form $h$ relative to an involution $\sigma$ of the first kind such that $\dim B^{\sigma} = \frac{1}{2}d(d+1)$ such that  $\G \sim \SU_{2n/d}(h)$. If $d = 1$, $h$ is unique up to isomorphism over $\Q$, so that $\G \sim \Sp_{2n}(\Q)$. If $d = 2$, $B$ is a quaternion algebra. Since $\G_{\R} \simeq \Sp_{2n}(\R)$, we must have $B \otimes \R \simeq M_2(\R)$ so that $B$ is indefinite.
        \item Type $D_n$ -- there is a division algebra $B$ of degree $d \in \{1,2\}$ over $\Q$, and a nondegenerate hermitian form $q$, relative to an involution $\sigma$ of the first kind such that $\dim B^{\sigma} = \frac{1}{2}d(d+1)$, such that $\G \sim \SU_{2n/d}(h)$. If $d = 1$, $q$ is a quadratic form over $\Q$, and from $\G_{\R}$ we deduce its signature. 
        \item Exceptional types -- $^2 E_{6,2}^{16'}$ or $E_{7,3}^{28}$.
    \end{itemize}
    This concludes the proof.
\end{proof}

\begin{table}[ht]
\centering
\renewcommand{\arraystretch}{1.5}
\setlength{\tabcolsep}{5pt}
\begin{tabular}{|c|c|c|c|}
\hline
\textbf{Type} & \textbf{Domain as a quotient} & \textbf{Description}  \\ \hline
${}^2A_{p+q - 1}$ (A III) & $\SU(p, q) / \mathrm{S}(\Unitary(p) \times \Unitary(q))$ & Grassmannian of $q$-planes in $\mathbb{C}^{p+q-1}$ \\ \hline
$B_{\frac{n+1}{2}} / D_{\frac{n}{2}+1}$ (BD I)& $\SO(2, n)/(\SO(2) \times \SO(n))$ & Orthogonal tube domain \\ \hline
$C_{n}$ (C I)& $\Sp(2n, \mathbb{R})/\Unitary(n)$ & Siegel upper half-space  \\ \hline
$D_{n}$ (D III) & $\SO^*(2n)/\Unitary(n)$ & Complex ball   \\ \hline
$E_6$ (E III)& $\Ad \, E_{6(-14)} / (\SO(10) \times \SO(2))$ & -  \\ \hline
$E_7$ (E VII)& $\Ad\, E_{7(-25)}/ (E_6 \times \Unitary(1))$ & - \\ \hline
\end{tabular}
\caption{The six types of irreducible Hermitian symmetric domains}
\label{tab:hermitian_domains}
\end{table}

\begin{example}
    We provide a non-exhaustive list of  geometrically meaningful moduli spaces that arise as locally symmetric varieties, together with the corresponding Lie types.
    \begin{itemize}
        \item moduli of polarized abelian varieties with CM of signature $(p,q)$ -- type ${}^2A_{p+q-1}$;

        \item moduli of period domains of polarized K3 surfaces -- type $B_{10}$;

        \item  moduli of principally polarized $n$-dimensional abelian varieties and moduli of principally polarized $n$-dimensional abelian varieties with level structure -- type $C_n$;

        \item moduli of Enriques surfaces -- type $D_6$.
    \end{itemize}
\end{example}

The classification in Proposition~\ref{prop: classification of Shimura datum} makes possible a uniform description of the algebraic group $\GG$ in types A, B, C, and D. Namely, for a suitably chosen $B$, 
$\GG$ can be identified with the group of automorphisms of a finite-dimensional left $B$-module $V_B$ that preserve a suitably chosen bilinear form $J$ on $V_B$. The following direct corollary of Proposition~\ref{prop: classification of Shimura datum} makes this precise by identifying $\GG$, up to a central isogeny, as a $\QQ$-scheme, via its functor of points on $\QQ$-algebras.

\begin{corollary}\label{cor:B and J}
Let $(\GG, D)$ be a connected Shimura datum such that $D$ is an irreducible Hermitian symmetric domain of type A, B, C, or D. 
Then, up to a central isogeny of $\GG$, there exists a finite-dimensional division algebra $B$ over $\QQ$ with involution $\sigma\col B\to B$, a choice of sign $\varepsilon = \pm 1$, and a nondegenerate $\varepsilon$-Hermitian form $J\col V_B\times V_B \to B$ on a finite-dimensional left $B$-module $V_B$, 
such that the $\QQ$-algebraic group scheme $\GG$ has the following functor of points description.  

For any (commutative) $\Q$-algebra $A$, write $V_A := A \otimes_\QQ V_B$ and $B_A := A \otimes_\QQ B$, with involution $\sigma_A \col B_A\to B_A$ induced from $\sigma$. Write 
$J_A \col V_A \times V_A \to B_A$ for the $A$-bilinear form extending $J$, which is $\varepsilon$-Hermitian on $V_A$ with respect to $\sigma_A$.  Then
\begin{equation} \label{eq: definition of G}
    \G(A) = \{ g \in \SL_{B_A}(V_A) : J_A(gu, gv) = J_A(u,v) \quad \textrm{for all } u,v \in V_A \}.
\end{equation}
\end{corollary}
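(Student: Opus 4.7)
My plan is to deduce the corollary by a case-by-case unpacking of Proposition~\ref{prop: classification of Shimura datum}. In each of the classical types A, B, C, D, the classification already presents $\GG$ (up to strict isogeny) as the stabilizer of a suitable bilinear form on a left module over a suitable division algebra; the task is to read off, in each case, a choice of data $(B, \sigma, \varepsilon, V_B, J)$ matching the conventions of Section~\ref{sec:4-preliminaries} so that the functor-of-points formula~\eqref{eq: definition of G} holds tautologically.

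First I would go through the four types and write down the data. In type A, take $B$ to be the indefinite division algebra of degree $d \mid n+1$ over $\QQ$ appearing in Proposition~\ref{prop: classification of Shimura datum}, $\sigma$ the associated involution of the second kind, $\varepsilon = -1$, $V_B$ a left $B$-module of rank $(n+1)/d$, and $J = h$ the given skew-Hermitian form. In type B, take $B = \QQ$, $\sigma = \mathrm{id}$, $\varepsilon = 1$, $V_B = \QQ^{2n+1}$, and $J$ the symmetric bilinear form attached (in characteristic $0$) to the quadratic form $q$. Type C splits into two subcases: either $B = \QQ$ with $\sigma = \mathrm{id}$, $\varepsilon = -1$, and $J$ the standard symplectic form on $V_B = \QQ^{2n}$; or $B$ an indefinite quaternion algebra over $\QQ$ with its standard (first-kind) involution, $\varepsilon = -1$, and $J$ the antihermitian form defining $\Sp_n(B)$. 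Type D splits similarly: either $B = \QQ$, $\sigma = \mathrm{id}$, $\varepsilon = 1$, $V_B = \QQ^{2n}$, with $J$ the symmetric form attached to a quadratic form of signature $(2,2n-2)$; or $B$ a quaternion algebra over $\QQ$ with its standard involution, $\varepsilon = 1$, and $J$ the hermitian form on the rank-$n$ module appearing in the second D-case of Proposition~\ref{prop: classification of Shimura datum}.

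With those choices in place, the identity~\eqref{eq: definition of G} becomes the definition of $\SU(h)$, $\SO(q)$, $\Sp_{2n}(\QQ)$, or $\Sp_n(B)$ as a $\QQ$-group scheme, provided we verify that $J$ extends $A$-bilinearly to a nondegenerate $\varepsilon$-Hermitian form $J_A$ on $V_A$ with respect to $\sigma_A$; this last point follows from the identification $\Hermeps{\varepsilon}(V_A) \cong A \otimes_\QQ \Hermeps{\varepsilon}(V_B)$ discussed in Section~\ref{sec:4-preliminaries}. Imposing the $\SL$-condition on the right-hand side of~\eqref{eq: definition of G} automatically selects the ``special'' representative of each strict isogeny class -- e.g.~$\SO$ rather than $\OO$ or rather than its spin cover $\Spin$ -- which is permissible because the statement allows replacing $\GG$ by a centrally isogenous group.

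I do not expect any genuinely difficult step. The only care-requiring aspect is consistent bookkeeping of conventions: matching the descriptions of ``first kind'' versus ``second kind'' involutions used in the proof of Proposition~\ref{prop: classification of Shimura datum} against the definition of an $\varepsilon$-Hermitian form in Section~\ref{sec:4-preliminaries}, and confirming the sign $\varepsilon$ in each of the four types (and in the two subcases of C and D). Once those conventions are pinned down, the corollary is essentially a translation exercise, and no mathematical obstacle arises beyond the classification itself.
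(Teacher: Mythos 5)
Your approach is exactly the paper's: the paper calls Corollary~\ref{cor:B and J} a ``direct corollary'' of Proposition~\ref{prop: classification of Shimura datum} and records the case-by-case data in Remark~\ref{rem:the actual choices of B and J}, which is precisely the translation exercise you describe. Your choices in types~A and~B match. The one place your sketch is at risk is the quaternion subcases of~C and~D, and you have in fact already flagged this as the care-requiring point, but the way you state the choices there is internally inconsistent. You write ``$B$ an indefinite quaternion algebra over $\QQ$ with its standard (first-kind) involution, $\varepsilon = -1$'' for type~C and ``$B$ a quaternion algebra over $\QQ$ with its standard involution, $\varepsilon = 1$'' for type~D. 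But for a quaternion algebra the ``standard involution'' usually means quaternionic conjugation, which is the unique symplectic-type involution (fixed subalgebra the center, $\dim B^\sigma = 1$); relative to it, Hermitian forms ($\varepsilon = 1$) give type~C groups and skew-Hermitian forms ($\varepsilon = -1$) give type~D groups. That is the convention in Remark~\ref{rem:the actual choices of B and J}. The signs you wrote are instead the ones that go with the orthogonal-type involution $\sigma$ having $\dim B^\sigma = 3$, which is the one Tits uses and which appears in the proof of Proposition~\ref{prop: classification of Shimura datum}. Since the corollary only asserts the existence of some $(B,\sigma,\varepsilon,J)$, either convention yields a valid proof (switching between them twists $J$ by a $\sigma$-anti-invariant unit, which flips $\varepsilon$); but you must choose one convention and assign the signs to match it, rather than borrowing the name ``standard involution'' from one convention and the sign $\varepsilon$ from the other. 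As written, the sketch pairs the name with the wrong sign in both quaternion cases, which, if taken literally, produces groups of the wrong type.
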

\begin{remark}\label{rem:the actual choices of B and J}
For later use, we extract explicitly from Proposition~\ref{prop: classification of Shimura datum} the choices of $B$ and $J$ that are claimed in Corollary~\ref{cor:B and J}.
\begin{enumerate}
    \item In type $A$, we set $\varepsilon = -1$, and take $B = E$ an imaginary quadratic field extension of $\QQ$, with the involution $\sigma$ being the non-trivial automorphism of $E$ (i.e. complex conjugation under any embedding $E \hookrightarrow \C$). Then $J$ is a skew-Hermitian form $V_E\times V_E\to E$ on a finite-dimensional vector space $V_E$ over $E$.
    \item In type $B$ and the first half of type $D$, we take $B=\QQ$, $\varepsilon = 1$, and $\sigma = \mathrm{id}$.  Then $J\col V_\QQ \times V_\QQ \to \QQ$ is a symmetric bilinear form on a finite-dimensional vector space $V_\QQ$ over $\QQ$, of signature $(2,2n-1)$ (in type $B$) or of signature $(2,2n-2)$ (in the first half of type $D$).
    \item In type $C$, we have either $B =\QQ$ or $B$ is an indefinite quaternion algebra over $\QQ$. In the former case, we set $\varepsilon = -1$ and $\sigma = \mathrm{id}$, and we fix $J$ to be a skew-Hermitian form---that is, a nondegenerate alternating form---on a finite dimensional vector space $V_{\Q}$. In the latter case, we set $\varepsilon = 1$ and $\sigma$ the quaternionic involution, and take $J$ a 
    Hermitian form on a finite dimensional left $B$-module $V_B$.
    \item Or, in the second half of type $D$, take $B$ a quaternion algebra with $\sigma(a+bi+cj+dk) = a-bi-cj-dk$ the quaternionic involution, and $\varepsilon = -1$.  Then take $J$ to be a 
    skew-Hermitian form on a finite dimensional (left) $B$-module $V_B$. 
\end{enumerate}
\end{remark}

\begin{remark}
    Let $(\G, D)$ be a connected Shimura datum, so that $D \simeq \G^{\ad}(\R)^{\circ} / K$ for a maximal compact $K$ with non-discrete center.   
    If $\phi : \G_0 \to \G$ is a central isogeny, then $\G_0^{\ad} \simeq \G^{\ad}$, so that $(\G_0, D)$ is also a connected Shimura datum. Moreover, if $\Gamma_0 \subset G_0 \cap \G_0(\Q)$ is an arithmetic subgroup of $\G_0$, then $\Gamma = \phi(\Gamma_0) \subset G \cap \G(\Q)$ is an arithmetic subgroup of $\G$, and $\phi$ induces an isomorphism $\Gamma_0 \backslash D \simeq \Gamma \backslash D$. 
    
    Conversely, if $\phi : \G \to \G_1$ is a central isogeny, and $\Gamma_1$ is an arithmetic subgroup of $\G_1$, then $\Gamma = \phi^{-1}(\Gamma_1)$ is an arithmetic subgroup of $\G$, and $\phi$ induces an isomorphism $\Gamma \backslash D \simeq \Gamma_1 \backslash D$. 
    Therefore, it suffices to prove statements relating to $\Gamma \backslash D$ for a single representative $\G$ of each strict isogeny class.
\end{remark}

\subsection{Categories of isotropic subspaces in types A, C, and D(2)}

Now we may reinterpret the general constructions of Section~\ref{subsection-tropicalizations} in all classical Lie types.  Continue to let $(\G,D)$ be a connected Shimura datum, in which $D$ is an irreducible Hermitian symmetric domain, and let $\Gamma \subset G \cap \GG(\QQ)$ be a discrete arithmetic subgroup.  
Assume moreover that $(\GG,D)$ is of classical Lie type.  Then by Corollary~\ref{cor:B and J}, we may take $\GG$ to be the algebraic group scheme over $\QQ$ given in Corollary~\ref{cor:B and J}, for a specified choice of finite-dimensional division algebra $B$ over $\QQ$ with involution $\sigma\col B\to B$, a sign $\varepsilon \in \{\pm 1\}$ 
and an $\varepsilon$-Hermitian form $J\col V_B\times V_B\to B$ on a finite-dimensional left $B$-module $V_B$.  The choices are spelled out in Remark~\ref{rem:the actual choices of B and J}.

Recall the category $\cF = \cF(\GG, D,\Gamma)$ from Definition~\ref{def-F-cat}, whose objects are the rational boundary components of $\ov{D}$.  
We shall now construct a category $\cW$, whose objects are isotropic subspaces of $V_B$, and we shall construct an isomorphism of categories $\Xi\col \cF \to \cW$.  The category $\cW$ is better suited for use than $\cF$, in that it is defined purely linear-algebraically.  There is the following subtle point here, showing the delicateness of the situation.  Suppose $F_1, F_2$ are rational boundary components and $W_i = \Xi(F_i)$ for $i=1,2$.  Then in types A, C, and the second half of D, we shall see that $F_1 \subset \ov{F_2}$ if and only if $W_2 \subset W_1$.  On the other hand, in types B and the first half of D, one can still associate isotropic subspaces bijectively to boundary components, as spelled out in Proposition~\ref{prop:isotropic_equiv_boundarycomps}. But in those cases, the inclusions go the other way: we have $F_1 \subset \ov{F_2}$ if and only if $W_1 \subset W_2$. 

The following are the standing assumptions of the rest of this section, henceforth referred to as the {\em ACD hypotheses} for short.
\begin{hypothesis}\label{hypothesis:ACD}
    (ACD hypotheses) Let $(\G,D)$ be a connected Shimura datum, in which $D$ is an irreducible Hermitian symmetric domain.  Let $\Gamma \subset G \cap \GG(\QQ)$ be a discrete arithmetic subgroup, and let $X=\Gamma\backslash D$ be the corresponding locally symmetric variety.  We furthermore assume: 
    \begin{equation} \label{eq:ACD}\tag{ACD}
        \textrm{ the Hermitian domain $D$ is of type A, C, or the second part of type D.}
    \end{equation}
\end{hypothesis}

\begin{definition}\label{def:W}
Assume the~\eqref{eq:ACD} hypotheses, and let $B$, $J$, and $V_B$ be as specified in Remark~\ref{rem:the actual choices of B and J}. 
 Define $\cW = \cW(\GG,D,\Gamma)$ to be the category whose objects are subspaces 
$W_B$ of $V_B$ 
that are isotropic with respect to $J$, with a morphism $W_1 \to W_2$ for every $\gamma \in \Gamma$ such that $\gamma \cdot W_1 \subset W_2$.
\end{definition}

See Example \ref{ex:type-AC-isotropic-subspaces} for explicit descriptions of $\cW$ in types A and C.

\begin{proposition} \label{prop:isotropic_equiv_boundarycomps} 
With the~\eqref{eq:ACD} hypotheses and the notation in Definition~\ref{def:W}, the category $\cF = \cF(\GG, D,\Gamma)$ is isomorphic to $\cW$
via an isomorphism $\Xi\col \cF \to \cW$  that sends $F$ to the unique isotropic subspace $W_B \subset V_B$ such that, 
under the natural action of $\GG(\Q) \subset \GL(V_B)$ on $V_B$,
the stabilizer of $W_B$ is $\mathbb{N}_F(\QQ)$.
\end{proposition}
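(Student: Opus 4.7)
The plan is to construct $\Xi$ on objects by combining the bijection between rational boundary components and maximal $\Q$-parabolic subgroups with the classical identification of such parabolics as isotropic-subspace stabilizers, and then to upgrade this to a full categorical isomorphism via $\Gamma$-equivariance.

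First, for the object level, I would combine two known bijections. By Remark~\ref{rem:not so easy}, rational boundary components $F \subset \ov D$ are in natural bijection with rational maximal real parabolic subgroups of $G$, which by~\eqref{eq:N(F)-alg-group} arise as $\mathbb{N}_F(\R) \cap G$ for unique maximal $\Q$-parabolic subgroups $\mathbb{N}_F$ of $\GG$. Under Hypothesis~\ref{hypothesis:ACD}, the classical theory of parabolic subgroups of classical groups (see \cite[\S V.23]{borel-linear-algebraic-groups}) identifies maximal $\Q$-parabolic subgroups of $\GG$ with the $\GG(\Q)$-stabilizers of single nonzero proper $J$-isotropic subspaces of $V_B$, and the isotropic subspace is uniquely recovered from its stabilizer. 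Composing these yields the object-level bijection $\Xi(F) = W_F$ characterized by $\mathrm{Stab}_{\GG(\Q)}(W_F) = \mathbb{N}_F(\Q)$.

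Next, I would establish $\Gamma$-equivariance of $\Xi$: since $\mathbb{N}_{\gamma \cdot F}(\Q) = \gamma \, \mathbb{N}_F(\Q) \, \gamma^{-1}$ stabilizes $\gamma \cdot W_F$, the uniqueness above forces $\Xi(\gamma \cdot F) = \gamma \cdot W_F$. The heart of the proof is then the containment duality
\[
F_1 \subset \ov{F_2} \;\Longleftrightarrow\; W_{F_2} \subset W_{F_1},
\]
which I would reduce, using $G$-equivariance of both sides, to pairs of standard boundary components. In each of types A, C, and D(2), the standard boundary components and their corresponding isotropic subspaces admit explicit descriptions: in type C, Example~\ref{ex: Ag} shows that $F_i$ has $W_{F_i} = \spn(e_1,\ldots,e_{g-i})$, so that $F_i \subset \ov{F_j}$ iff $i \le j$ iff $W_{F_j} \subset W_{F_i}$; types A and D(2) are analogous.

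Finally, combining equivariance with the containment duality, a morphism $F_2 \to F_1$ in $\cF$ labeled by $\gamma \in \Gamma$ (the condition $\gamma \cdot F_1 \subset \ov{F_2}$) translates to $W_{F_2} \subset \gamma \cdot W_{F_1}$, equivalently $\gamma^{-1} \cdot W_{F_2} \subset W_{F_1}$, which is a morphism $W_{F_2} \to W_{F_1}$ in $\cW$ labeled by $\gamma^{-1}$. The assignment $\gamma \mapsto \gamma^{-1}$ is a bijection on Hom-sets, and its compatibility with composition follows from the identity $(gf)^{-1} = f^{-1} g^{-1}$ together with the observation that labels in both $\cF$ and $\cW$ compose in source-to-target order. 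The main obstacle I anticipate is the containment duality itself: its direction is sensitive to the Lie type, flipping between the ACD and BD(1) cases as noted after Main Theorem~\ref{thm:main-4}, and must be verified individually in each of the three ACD types via the explicit description of the standard boundary components and their normalizers.
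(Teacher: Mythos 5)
Your approach matches the paper's at each stage: both establish the object-level bijection via the correspondence between maximal rational parabolics and isotropic subspaces (valid under Hypothesis~\ref{hypothesis:ACD}, which excludes split type D where the parabolic lattice differs), both use $\Gamma$-equivariance via the equivariance of the normalizer, and both isolate the containment duality $F_1 \subset \ov{F_2} \Leftrightarrow W_{F_2} \subset W_{F_1}$ as the delicate step. Where the paper handles the last point abstractly — it cites the AMRT criterion that $F_1 \subset \ov{F_2}$ or $F_2 \subset \ov{F_1}$ iff $N(F_1)\cap N(F_2)$ is parabolic, deduces that the corresponding $W$'s are nested, and then pins down the direction by citing Lan or deferring to Theorem~\ref{thm:eta-isom} — you propose to reduce to standard boundary components by $G$-equivariance and verify the direction explicitly in each of the three ACD types. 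Both strategies are sound; yours is more hands-on, the paper's leans on the AMRT/Lan references.

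One substantive correction on your functoriality check. You write that compatibility of $\gamma \mapsto \gamma^{-1}$ with composition follows because labels in both $\cF$ and $\cW$ compose in source-to-target order. In fact they compose in \emph{opposite} orders, and this is precisely why inversion is needed. In $\cF$, a morphism $F_2 \to F_1$ labeled $\gamma$ has the label acting on the \emph{target} ($\gamma F_1 \subset \ov{F_2}$), so composing $X \to Y \to Z$ labeled $\alpha,\beta$ yields label $\alpha\beta$. In $\cW$, a morphism $W_1 \to W_2$ labeled $\gamma$ has the label acting on the \emph{source} ($\gamma W_1 \subset W_2$), so the composite label is $\beta\alpha$. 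If both categories composed labels in the same order, $\gamma \mapsto \gamma^{-1}$ would \emph{fail} to be a covariant functor, since $(\alpha\beta)^{-1} = \beta^{-1}\alpha^{-1} \neq \alpha^{-1}\beta^{-1}$ in general. As it stands, $\Xi(b\circ a)$ has label $(\alpha\beta)^{-1} = \beta^{-1}\alpha^{-1}$, which matches the $\cW$-composite $\beta^{-1}\alpha^{-1}$ of $\Xi(a),\Xi(b)$. So your conclusion is correct, but the reason given is the opposite of what is true; swap the justification and the argument closes cleanly.
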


\begin{proof} 
The bijection on objects is well known:
the assumption~\eqref{eq:ACD} excludes the case that $G$ is a split group of type D, and so the proper parabolic subgroups of the Lie group $G$ are in bijection with flags of nonzero, proper isotropic subspaces of $V_\RR$, by associating to such a flag its stabilizing subgroup. See, e.g., \cite{conrad}. Such a parabolic is rational if and only if the flag is rational; that is, the flag is of the form
\[0 \subsetneq W^1_\RR \subsetneq \cdots W^k_\RR\subsetneq V_\RR\]
for some flag of $B$-modules
$0 \subsetneq W^1_B \subsetneq \cdots\subsetneq W^k_B \subsetneq V_B$.  Moreover, the maximal proper parabolic subgroups of $G$ are in bijection with flags with a single element, that is, a single isotropic subspace $W_\RR$ of~$V_\RR$, which is furthermore rational if and only if the corresponding parabolic subgroup is rational. 
Thus we have a bijection between objects of $\cF$ and objects of $\cW$.

Now we prove functoriality. 
Write \begin{equation}\label{eq:N of W}N(W_\RR) = \{g \in G\col g \cdot W_\RR = W_\RR\}.\end{equation}
Let $F$ and $F'$ be rational boundary components with corresponding isotropic subspaces $W_\QQ$ and $W'_\QQ$, respectively. First, note that $F' = \gamma \cdot F$ for some $\gamma \in \Gamma$ if and only if $N(F') = \gamma \cdot N(F) \cdot \gamma^{-1}$; the latter can be rewritten $N(W'_\R) = \gamma \cdot N(W_\R) \cdot \gamma^{-1}$, which is equivalent to $W'_\R = \gamma \cdot W_\R$. 
Furthermore, by \cite[p.~156]{amrt}, $F \subset \ov{F'}$ or $F' \subset \ov{F}$ if and only if $N(F) \cap N(F')$ is a parabolic subgroup of $G$. Since parabolics can be expressed uniquely as intersections of maximal parabolics, it follows that $N(W_\R) \cap N(W'_\R)$ is a parabolic subgroup of $G$ if and only if it is the stablizer of a flag of isotropic subspaces of $V_\R$ containing precisely the subspaces $W_\R$ and $W'_\R$. Such a flag exists if and only if $W \subset W'$ or $W' \subset W$. 
Finally, we note that in types A, C, and second half of D, the containments go through as needed (that is, $F \subset \ov{F'}$ if and only if $W' \subset W$) because the bijection between rational boundary components and isotropic subspaces is dimension-reversing. See \cite[Sections~4.1.3, 4.1.4, 4.1.5]{lan-example-based-introduction}. Alternatively, the statement can also be deduced directly (without logical circularity) from Theorem~\ref{thm:eta-isom}, and specifically the isomorphisms that assemble into the natural isomorphism there.
\end{proof}

We next define a functor $\fW\col \cW \to \Vect_\QQ$. Afterwards, we will prove that $\fW \circ \Xi$ is naturally isomorphic to $U_\QQ$ (Definition~\ref{def: U Q}).

\begin{definition} \label{def:herm-functor} Assume the~\eqref{eq:ACD} hypotheses, and fix $B$, $\varepsilon$, $\sigma$, $V_B$, and $J$ as in Corollary~\ref{cor:B and J} and Remark~\ref{rem:the actual choices of B and J}.
    Let $\fW : \cW \to \Vect_\QQ$ be given on objects by \[\fW(W_B) = \Hermeps{-\varepsilon}(W_B^*).\]
    If $\gamma \cdot W_B \subset W'_B$ for some isotropic subspaces $W_B, W'_B \subset V_B$    and $\gamma \in \Gamma$, then 
    then we associate to the corresponding morphism in $\cW$ the injective map of rational vector spaces 
    \[\Hermeps{-\varepsilon}(W_B^*) \to \Hermeps{-\varepsilon}((W_B')^*)\]
    from applying~\eqref{eq:hermeps is contravariant functor} to the surjection $(W_B')^* \to W_B^*$ that is dual to the injection $W_B\to W_B'$ induced by $\gamma$.
\end{definition}

\begin{example}\label{ex:type-AC-isotropic-subspaces} We spell out the category $\cW$ and the functor $\fW$ in examples in types $A$ and $C$.
\begin{enumerate}
\item 
    In type A, let $V_E$ be a $d$-dimensional vector space over some imaginary quadratic field extension $E$ of $\Q$, and let $J$ be a skew-Hermitian form on $V_E$. 
    The objects of $\cW$ are 
    subspaces $W_E \subset V_E$ that are isotropic with respect to $J$, and 
    $\fW(W_E) = \Herm(W_E^*)$ is the $\QQ$-vector space of Hermitian forms on the conjugate dual $E$-vector space $W_E^*$. \item 
    For an example of type C in the case $B=\QQ$, let $V_\Q$ be a rational vector space of even dimension, 
    with nondegenerate alternating form $J\col V_\QQ \times V_\QQ \to \QQ$.  The objects of $\cW$ are 
    $\Q$--vector subspaces $W_\Q \subset V_\Q$ that are isotropic with respect to $J$.
Then $\fW(W_\Q) = \Sym^2 (W^\vee_\QQ)$, where the latter denotes symmetric $\Q$-bilinear forms on the dual vector space $W_\QQ^\vee = \Hom_\QQ(W_\QQ,\QQ)$. 
\end{enumerate}
\end{example}

The next goal is to construct a natural isomorphism $\eta\col \fW \circ \Xi \Rightarrow U_\QQ.$  First, suppose $W_B$ is an isotropic subspace of $V_B$ with respect to $J$.  Then we have natural maps \begin{equation}\label{eq:Into GL(V)}\Hermeps{-\varepsilon}(W_B^\conjdual) \subset \Hom_B(W_B^\conjdual,W_B) \cong \Hom_B(V_B/W_B^\perp, W_B) \subset \Hom_B(V_B,V_B).\end{equation}  Above, we used the canonical isomorphism between $V_B/W_B^\perp \to W_B^\conjdual$ given by $v \mapsto -J(v,-)$. The minus sign amounts to an aesthetic choice of sign convention.  Without it, we would be studying {\em negative} definite cones of symmetric, respectively Hermitian, matrices in subsequent sections of the paper, instead of positive definite cones. These two cones are of course isomorphic, related by multiplication by $-1$.   
The following proposition and its proof are based on the ideas in \cite{looijenga03} and \cite{bruinier-zemel}, which seem to indicate that the bijection on objects is well-known to experts.  Here, we give the maps~\eqref{eq: eta_F} explicitly, in order to exhibit a natural isomorphism of functors.

\begin{theorem}\label{thm:eta-isom} Assume the~\eqref{eq:ACD} hypotheses, and let $B$, $\sigma$, $\varepsilon$, $J$, and $V_B$ be as specified in Remark~\ref{rem:the actual choices of B and J}. 
    For each rational boundary component $F$ and corresponding rational isotropic subspace $W_B$, the map \begin{equation}\label{eq: eta_F}\eta_F \col \Hermeps{-\varepsilon}(W_B^\conjdual) \xrightarrow{} U(F)_\QQ, \qquad \eta_F(\varphi)= 1+\varphi,\end{equation} 
    where the right-hand side addition takes place in $\Hom_B(V_B,V_B)$, is an isomorphism of rational vector spaces upon identifying $U(F)$ with its Lie algebra $\mathfrak{u}(F)$ via exponentiation. 
    The maps $\{\eta_F\}_F$ furnish a natural isomorphism between $\fW \circ \Xi$ and $U_\QQ$. 
\end{theorem}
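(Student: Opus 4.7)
The plan is to verify in turn that $\eta_F$ is well-defined (takes values in $U(F)_\QQ$), that it is an isomorphism of $\QQ$-vector spaces, and that the family $\{\eta_F\}$ is natural in $F$. For well-definedness, each $\varphi \in \Hermeps{-\varepsilon}(W_B^\conjdual)$ is viewed via~\eqref{eq:Into GL(V)} as an endomorphism of $V_B$ factoring as $V_B \twoheadrightarrow V_B/W_B^\perp \cong W_B^\conjdual \xrightarrow{\varphi} W_B \hookrightarrow V_B$, with the middle isomorphism $v + W_B^\perp \mapsto -J(v,-)|_{W_B}$. Since $W_B \subset W_B^\perp$, we have $\varphi^2 = 0$, so $1+\varphi$ has two-sided inverse $1-\varphi$ in $\GL_B(V_B)$. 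Expanding
\begin{equation*}
J((1+\varphi)u,(1+\varphi)v) - J(u,v) = J(\varphi u, v) + J(u, \varphi v) + J(\varphi u, \varphi v),
\end{equation*}
the third term vanishes by isotropy of $W_B$, and the chosen sign in the identification $V_B/W_B^\perp \cong W_B^\conjdual$, combined with the $(-\varepsilon)$-Hermitian symmetry of $\varphi$, makes the cross terms cancel. Hence $1+\varphi$ preserves $J$ and lies in $\GG(\QQ)$.

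For bijectivity, injectivity of $\eta_F$ is immediate since $\varphi \mapsto 1+\varphi$ determines $\varphi$. For surjectivity, I would identify $\mathfrak{u}(F)$ with the image of $\eta_F$ via the Levi decomposition of the maximal parabolic $\NN_F$ stabilizing $W_B$. Choosing a $J$-compatible splitting $V_B = W_B \oplus U \oplus W_B'$ (so that $W_B^\perp = W_B \oplus U$) yields a $\ZZ$-grading on $\mathfrak{g}$ of depth at most $2$; the center of the unipotent radical is the top-weight piece, which is canonically $\Hom_B(V_B/W_B^\perp, W_B) \cong \Hom_B(W_B^\conjdual, W_B)$ intersected with the $J$-infinitesimal skew condition, matching $\Hermeps{-\varepsilon}(W_B^\conjdual)$ exactly. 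Dimensions and descriptions can be cross-checked in each classical case from Example~\ref{ex: Ag}, Sections~\ref{subsub:type-C}, \ref{subsub:type-A}. Since $\varphi$ is nilpotent, $\exp(\varphi) = 1+\varphi$, so $\eta_F$ is exponentiation of an isomorphism at the Lie algebra level and is therefore bijective.

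For naturality, a morphism $F_2 \to F_1$ in $\cF$ labeled by $\gamma$ (so $\gamma \cdot F_1 \subset \ov{F_2}$) corresponds via $\Xi$ to a morphism $W_2 \to W_1$ in $\cW$ with $\gamma^{-1}W_2 \subset W_1$, using the dimension-reversing bijection of Proposition~\ref{prop:isotropic_equiv_boundarycomps}. Both compositions $\eta_{F_1} \circ \fW(\Xi(\text{mor}))$ and $U_\QQ(\text{mor}) \circ \eta_{F_2}$ send $\varphi$ to $1 + \gamma^{-1}\varphi\gamma$; this reduces to checking that $\gamma^{-1}\varphi\gamma$ is the endomorphism associated to the pullback form in $\Hermeps{-\varepsilon}(W_1^\conjdual)$, which follows directly from the definitions of $\fW$ on morphisms and of the pairing $V_B/W^\perp \cong W^\conjdual$. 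The main obstacle I expect is the sign-tracking in the $J$-invariance calculation of the first step, together with the Levi-decomposition identification of $\mathfrak{u}(F)$ with $\Hermeps{-\varepsilon}(W_B^\conjdual)$; both are bookkeeping rather than conceptual hurdles, but they must be executed uniformly across types A, C, and D(2).
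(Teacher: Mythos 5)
The well-definedness and naturality steps of your proposal are fine and match the paper's in substance. But your treatment of the surjectivity step contains a genuine gap, and it is precisely at the step you describe as ``bookkeeping rather than a conceptual hurdle.''

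You assert that, having chosen a $J$-compatible splitting $V_B = W_B \oplus U \oplus W_B'$ giving a $\ZZ$-grading on $\mathfrak{g}$ of depth at most $2$, ``the center of the unipotent radical is the top-weight piece.'' This is not a general fact about maximal parabolics: the center of $\mathfrak{w} = \mathfrak{g}_1 \oplus \mathfrak{g}_2$ is $\mathfrak{g}_2 \oplus \{x \in \mathfrak{g}_1 : [x, \mathfrak{g}_1] = 0\}$, so it equals $\mathfrak{g}_2$ if and only if the bracket $\mathfrak{g}_1 \times \mathfrak{g}_1 \to \mathfrak{g}_2$ is nondegenerate. Establishing this nondegeneracy is exactly what the paper's Lemmas~\ref{lem:key1}, \ref{lem:key2}, and~\ref{lem:center-KF-UF} do: after identifying $\mathfrak{g}_1$ with $\Hom(W^\perp/W, W)$ (which already requires the surjectivity statement of Lemma~\ref{lem:key1}), Lemma~\ref{lem:key2} shows that the commutator pairing $\langle g_0, h_0 \rangle = g_0 h_0^* - h_0 g_0^*$ on $\Hom(W^\perp/W, W)$ is nondegenerate, and Lemma~\ref{lem:center-KF-UF} converts this into $U(F) = K(F)$. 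The proof of Lemma~\ref{lem:key2} rests on Lemma~\ref{lem:division-algebra}, which is where the ACD hypothesis enters: the pair $(\varepsilon,\sigma) = (1, \mathrm{id})$, i.e. type B or split type D, is precisely the excluded case in which a nonzero kernel element can exist. So in those excluded types the pairing \emph{is} degenerate and the ``center equals top graded piece'' claim fails --- confirming that your assertion cannot be a uniform structural fact and must be proved, with the ACD restriction playing an essential role. As written, your proposal sweeps the entire technical core of the theorem into an unjustified structural assertion. To repair it you would either need to supply the nondegeneracy argument (as the paper does, linear-algebraically) or cite a precise structure theorem for centers of nilradicals of maximal parabolics together with a verification that it applies exactly in types A, C, D(2), and not beyond.

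A smaller remark: your $J$-invariance verification computes in the direction ``$\varphi$ is $(-\varepsilon)$-Hermitian implies $1+\varphi$ preserves $J$,'' whereas the paper verifies the converse direction on elements of $K(F)$ and uses it to identify the image; either direction works, but you will still need the forward direction of Theorem~\ref{thm:eta-isom} (surjectivity onto $U(F)_\QQ$, not merely into $\G(\QQ)$), which is the content discussed above.
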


\begin{proof}
    The map $\varphi \mapsto 1+\varphi$ is evidently injective. The task is to identify the image of this map exactly as the subspace $U(F)_\QQ$ of $\Hom_B(V_B,V_B)$.  After doing so, we will verify the naturality of $\eta$, but this is a routine check.
    Note that the maps $\eta_F$ are maps of rational vector spaces, but it will be convenient to tensor with $\RR$ to freely employ the language of Lie groups, and then afterwards observe that relevant rational structures are preserved.  
    
    Thus, let $V = \RR \otimes_B V_B$ and $W = \RR \otimes_B W_B$.
    The normalizer $N(F)$ is the semidirect product of its unipotent radical $W(F)$ and a Levi quotient, and our first task is to identify these.  
    Note that $N(F)$ stabilizes the flag $0 \subseteq W \subseteq W^{\perp} \subseteq V$. 
    Hence restriction to successive quotients yields a natural group homomorphism

    \begin{equation}\label{eq:Levi of N(F)}
    N(F) \longrightarrow \GL(W) \times \{g \in \GL(W^{\perp} / W) : J(gu,gv) = J(u,v) \textrm{ for all }u,v\in W^\perp/W\},
    \end{equation}
where we abuse notation and also use $J$ to denote the induced form on $W^\perp/W$.
    We claim that the Levi quotient of $N(F)$ is the image of the homomorphism~\eqref{eq:Levi of N(F)}.
    The codomain is a reductive group, and the image is the connected component of the identity, so in particular is a normal subgroup. 
    Therefore the image, being a normal subgroup of a reductive group, is reductive. 
    Next, the kernel of~\eqref{eq:Levi of N(F)}  consists of transformations that act trivially on the successive quotients $W$ and $W^\perp/W$, and hence also acts trivially on $V/W^\perp \cong W^*$; in particular, the kernel of this morphism is unipotent. Together with the fact that the image of~\eqref{eq:Levi of N(F)} is reductive, it follows that the kernel of~\eqref{eq:Levi of N(F)} is $W(F)$, the unipotent radical of $N(F)$.      
    
    Next we identify the center $U(F)$ of $W(F)$.  For $g \in W(F)$, since $g-1$ projects to $0$ in 
    $$\GL(W) \times \GL(W^\perp/W) \times \GL(V/W^\perp),$$ 
    it therefore 
    induces linear maps $W^\perp\to W$ and $V \to W^\perp$.
    In particular, we have a group homomorphism
    \begin{equation} \label{eq: map from W(F) to 12 entry}
    \begin{tikzcd}[row sep = .75em, column sep = 2.5em]
        W(F) \arrow[r] & \Hom(W^{\perp}/W, W)
    \end{tikzcd}
    \end{equation}
    whose kernel we denote $K(F)$.   

    In Lemma~\ref{lem:center-KF-UF}, we prove $K(F)=U(F)$.
    Then, recalling the canonical isomorphism $V/W^\perp \cong W^*$ via $u \mapsto -J(u,-)$, 
    we have an injective group homomorphism
    \[K(F) \to \Hom(V/W^\perp, W) \cong \Hom(W^*,W)\qquad \quad \text{sending} \qquad 1+\varphi \mapsto \wt{\varphi},\]
    where $\wt{\varphi}\col W^* \to W$ is the map induced by $\varphi$.

    Note that $\widetilde \varphi$ can also be regarded as a bilinear form on $W^*$ via the identification $W^{\conjdual \conjdual} \cong W$ from Section~\ref{sec:4-preliminaries}.  A standard calculation then shows that $\wt{\varphi}$ must be $(-\varepsilon)$-Hermitian on $W^\conjdual$.  Indeed, let $u^\conjdual = -J(u,-)$ and $v^\conjdual = -J(v,-)$ be elements of $W^\conjdual$, for some $u,v\in V$.  We calculate
    \begin{equation}\label{eq:what does wt phi do spelled out} \wt \varphi(u^*,v^*) = - \ov{J(v,\varphi(u))} \quad\quad \text{and} \quad\quad  \wt \varphi(v^*,u^*) = -\ov{J(u,\varphi(v))}.\end{equation}
    Recall that $J\big((1+\varphi)u, (1+\varphi)v\big) = J(u,v)$ since $1 + \varphi$ preserves $J$, and moreover $J\big(\varphi(u), \varphi(v)\big) = 0$ since $W$ is isotropic.  Expanding, it follows that
    \begin{equation*}J\big(\varphi(u), v\big) + J\big(u,\varphi(v)\big) = 0 ,\end{equation*}
    which, combined with~\eqref{eq:what does wt phi do spelled out} and the fact that $J$ is $\eps$-Hermitian, shows that 
    $\wt \varphi$ is $(-\varepsilon)$-Hermitian.

Since $K(F) = U(F)$, we get that $\eta_F$ is an isomorphism. Now, naturality of $\eta$ is the assertion that for any morphism $\gamma \col W_B \to W_B'$ in $\cW$, corresponding via $\Xi$ to the morphism $\gamma\col F\to F'$ in $\cF$, 
    the diagram
    \[
    \begin{tikzcd}[row sep = 3em, column sep = 3em]
        \Hermeps{-\varepsilon}(W_B^*) \arrow[r,"\eta_F"] \arrow[d, "h_{\gamma}"] & U(F)_\QQ \arrow[d, "u_{\gamma}"]\\
        \Hermeps{-\varepsilon}((W_B')^*) \arrow[r,"\eta_{F'}"] & U(F')_\QQ
    \end{tikzcd}
    \]
    commutes.  The left vertical map is $\Phi \mapsto \gamma\Phi\gamma^*$, and the right vertical map is $g \mapsto \gamma g \gamma^{-1}$.  Then we check, for $\Phi \in \Hermeps{-\varepsilon}(W_B^*)$, 

    \[u_\gamma \eta_F \Phi = \gamma(1+\Phi)\gamma^{-1} = 1 + \gamma\Phi\gamma^* = \eta_{F'} h_\gamma \Phi,\]
    where the middle equality follows from~\eqref{eq:Into GL(V)} and~\eqref{eq: eta_F}.
\end{proof}

We now return to proving the key fact that $K(F)=U(F)$, used in the proof of Theorem~\ref{thm:eta-isom} above.  First, we establish Lemmas~\ref{lem:key1} and~\ref{lem:key2}. 

Note that any map $f : W^{\perp} / W \to W$ induces a dual map
   \begin{equation}
        f^* : V/W^{\perp} \xrightarrow{\cong} W^* \to \left(W^{\perp} / W \right)^*  \xrightarrow{\cong} W^{\perp} / W,
    \end{equation}
    where the last isomorphism $W^\perp/W \cong \left(W^\perp/W\right)^*$ is given by $w \mapsto - J(w,-)$.
    Explicitly, $f^*$ is uniquely determined by the property that $J(f^*(v), u) = J(v, f(u))$ for all $u \in W^{\perp}$ and $v \in V$.

\begin{lemma}\label{lem:key1}
  With notation as in Theorem~\ref{thm:eta-isom}, the map $W(F)\to \Hom(W^{\perp}/W,W)$ in~\eqref{eq: map from W(F) to 12 entry} is surjective.
\end{lemma}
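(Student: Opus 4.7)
The plan is to pass to the Lie algebra and then construct an explicit preimage block-by-block using a Witt-style decomposition of $V$.

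Since $W(F)$ is a unipotent algebraic group, $\exp\col \mathfrak{w}(F)\to W(F)$ is a bijection, and the homomorphism~\eqref{eq: map from W(F) to 12 entry} corresponds under this identification to the linear map $\mathfrak{w}(F)\to\Hom_B(W^{\perp}/W, W)$ sending $X$ to its induced map on the quotient. Writing $\mathfrak{g}$ for the Lie algebra of $\GG$, which (up to trace zero in type A) is $\{X\in \End_B(V) : J(Xu,v)+J(u,Xv)=0 \text{ for all } u,v\in V\}$, one identifies $\mathfrak{w}(F)$ as the subspace of $X\in\mathfrak{g}$ with $X(W)=0$, $X(W^{\perp})\subset W$, and $X(V)\subset W^{\perp}$. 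So it suffices to prove the Lie-algebra-level map is surjective.

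The construction: fix a $B$-module decomposition $V=W\oplus M\oplus N$ compatible with the flag, meaning $W^{\perp}=W\oplus M$, and chosen so that $J$ restricts to a perfect pairing $W\times N\to B$ (realizing the canonical isomorphism $V/W^{\perp}\cong W^{\conjdual}$ via $v\mapsto -J(v,-)$). Given $f\in \Hom_B(W^{\perp}/W,W)$, I would define $X\in\End_B(V)$ by declaring $X|_W=0$, $X|_M=f$ (using $M\cong W^{\perp}/W$), and letting $X|_N$ land in $W^{\perp}=W\oplus M$ with $M$-component equal to $-f^{*}$ (forced by $J(Xn,m)+J(n,Xm)=0$ for $n\in N$, $m\in M$, using the defining property of $f^{*}$) and $W$-component a correction $\alpha\col N\to W$ whose existence is ensured by the perfect pairing of $W$ with $N$. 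Concretely, the equation $J(Xn_1, n_2)+J(n_1,Xn_2)=0$ for $n_1,n_2\in N$ specializes to a linear condition on $\alpha$ that always has a ($B$-linear) solution because $J\col W\times N\to B$ is nondegenerate.

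The verification that the resulting $X$ lies in $\mathfrak{w}(F)$ amounts to checking $J(Xu,v)+J(u,Xv)=0$ on the nine block cases arising from $u,v\in W\cup M\cup N$; the trivial cases follow from isotropy of $W$ and $W\subset W^{\perp}$, and the two remaining cases are exactly what dictated the choice of $-f^{*}$ and $\alpha$. In type A, the trace-zero condition holds automatically because $X$ has zero diagonal blocks relative to the decomposition. Then $\exp(X)\in W(F)$ maps to $f$, proving surjectivity. The main technical step I expect to be most delicate is checking that the correction $\alpha$ can indeed be chosen $B$-linearly and that the duality identifications governed by the involution $\sigma$ interact correctly with $\varepsilon$; this is a routine but sign-heavy calculation that works uniformly across the~\eqref{eq:ACD} cases because only the nondegeneracy of $J$ and the defining property of $f^{*}$ are used.
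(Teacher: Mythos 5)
Your proof is correct and constructs essentially the same preimage as the paper, just at the Lie-algebra level rather than the group level. Once you specialize to a Witt basis in which $N$ is isotropic and $M\perp N$ --- which is what the paper does --- your correction term $\alpha$ vanishes, $X$ has $(1,2)$-block $f$, $(2,3)$-block $-f^{*}$, and zero $(1,3)$-block, and $\exp(X)$ is precisely the matrix in~\eqref{eq:mat3}: the quadratic term $X^2/2$ produces the $-\tfrac{1}{2}h_0(J')^{-1}\overline{h_0}^{t}$ entry. The one genuine difference is that you do not insist on $N$ isotropic with $M\perp N$; you take an arbitrary $B$-module complement and absorb the error into $\alpha$. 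That buys you basis-free formulas (and makes the identification of the $(2,3)$-block with $-f^{*}$, used later in Lemma~\ref{lem:key2}, true by construction rather than a side computation as in~\eqref{eq:J'}), at the cost of the extra solvability check you flagged: one needs
\[
J(\alpha(n_1),n_2)+J(n_1,\alpha(n_2)) = J(f^{*}(n_1),n_2)+J(n_1,f^{*}(n_2)),\qquad n_1,n_2\in N,
\]
to be solvable for $\alpha\in\Hom_B(N,W)$. It is: both sides are $\varepsilon$-Hermitian forms on $N$, the map $\alpha\mapsto J(\alpha(\cdot),\cdot)$ is an isomorphism $\Hom_B(N,W)\cong\Hom_B(N,N^{*})$ because $J\colon W\times N\to B$ is a perfect pairing, and the $\varepsilon$-symmetrization $\Hom_B(N,N^{*})\to\Hermeps{\varepsilon}(N)$ is onto since $2\in B^{\times}$ (take $\alpha$ with $J(\alpha(\cdot),\cdot)=\tfrac{1}{2}\,\Psi$ for the target $\Psi$). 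You should make that step explicit if you write this up. The paper sidesteps it entirely by fixing the Witt basis. Net: same construction, different coordinates; neither argument uses anything the other avoids.
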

\begin{proof}
    Say $\dim W = r$. Pick a basis $\{e_1,\dots,e_r, b_1,\dots,b_{n-2r}, f_1,\dots,f_r\}$ for $V$ that lifts a basis for $W$, a basis of $W^\perp/W$, and a basis of $V/W^\perp$; moreover, since $J$ induces an isomorphism $V/W^\perp \cong W^*$, we can further assume that $J(e_i,f_j) = \delta_{ij}$ and $J(b_i, f_j) = 0$ for all $i,j$.
    Thus, with respect to this basis, the $\eps$-Hermitian form $J$ has the block form
    $$J = \begin{pmatrix}
        0 & 0 & 1\\
        0 & J' & 0\\
        \eps & 0 & 0
    \end{pmatrix},$$
    where the block $J'$ satisfies $J' = \eps \cdot \ov{J'}^t$. Consider any $h_0 \in \Hom(W^\perp/W , W)$, and write it in matrix form with respect to the basis $\{e_1,\dots,e_r\}$ of $W$ and the basis induced by $\{b_1,\dots,b_{n-2r}\}$ on $W^\perp/W$.
    Then, the $n\times n$ matrix 
    \begin{equation}\label{eq:mat3}
        \begin{pmatrix}
        1 & h_0 & -1/2 \cdot h_0 \cdot (J')^{-1} \cdot \ov{h_0}^t\\
        0 & 1 & -(J')^{-1} \cdot \ov{h_0}^t\\
        0 & 0 & 1
    \end{pmatrix}
    \end{equation}
    preserves $J$ and clearly belongs to the unipotent radical $W(F)$ of the stabilizer of $W$, proving surjectivity.  

    Note for later use, in Lemma~\ref{lem:key2}, that the $(2,3)$ block of the matrix~\eqref{eq:mat3} is equal to $-h_0^*$.  Indeed, we calculate that for all $v \in V$ and $u \in W^{\perp}$ we have
    \begin{equation}\label{eq:J'} 
    J((J')^{-1} \overline{h_0}^t(\overline{v}), \overline{u})
    = \overline{v}^t h_0 (J')^{-1} J' u 
    = \overline{v}^t h_0 u = J(\overline{v}, h_0(\overline{u})),
    \end{equation}
    hence $h_0^* = (J')^{-1} \overline{h_0}^t$.
\end{proof}

\begin{lemma}\label{lem:key2} The map \[\langle -,-\rangle\colon \Hom(W^{\perp}/W,W)\times \Hom(W^{\perp}/W,W)\to \Hom(V/W^{\perp},W)\] given by $\langle g_0,h_0\rangle=g_{0}h_{0}^{*}-h_{0}g_{0}^{*}$ is a non-degenerate pairing. 

\end{lemma}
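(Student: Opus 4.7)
The plan is to prove non-degeneracy in the form: for every $g_0 \neq 0$ in $\Hom(W^\perp/W, W)$, there exists $h_0$ with $\langle g_0, h_0\rangle \neq 0$. I will construct such an $h_0$ explicitly, splitting into the cases $\dim_B W \geq 2$ and $\dim_B W = 1$; in both regimes $h_0$ will be chosen to have rank one, so the resulting computations reduce to manipulations inside the division algebra $B$, using only the characterization $J(h_0^*(v),u) = J(v,h_0(u))$ and the $\varepsilon$-Hermiticity of $J$.

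First, suppose $\dim_B W \geq 2$. Since $g_0 \neq 0$, pick $u_0 \in W^\perp/W$ with $w_0 := g_0(u_0) \neq 0$, and pick $w_1 \in W$ that is $B$-linearly independent from $w_0$. Using the identification $V/W^\perp \cong W^*$ induced by $J$, choose $v_0 \in V/W^\perp$ with $J(v_0, w_0) \neq 0$ and $J(v_0, w_1) = 0$. Then $\alpha_0 := g_0^*(v_0)$ is nonzero, because $J(\alpha_0, u_0) = J(v_0, g_0(u_0)) = J(v_0, w_0) \neq 0$. Pick a left $B$-linear functional $\phi : W^\perp/W \to B$ with $\phi(\alpha_0) = 1$, and set $h_0(u) := \phi(u)\, w_1$. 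The key computation is
\[ J(h_0^*(v_0), u) \;=\; J(v_0, h_0(u)) \;=\; J(v_0, w_1)\, \overline{\phi(u)} \;=\; 0 \]
for all $u$, whence $h_0^*(v_0) = 0$ by non-degeneracy of $J$ on $W^\perp/W$; consequently
\[ \langle g_0, h_0 \rangle(v_0) \;=\; g_0(0) - h_0(\alpha_0) \;=\; -\,w_1 \;\neq\; 0. \]

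Next, suppose $\dim_B W = 1$, say $W = B w^*$. Using non-degeneracy of $J$ on $W^\perp/W$, any $g_0$ and $h_0$ can be written as $g_0(u) = J(u,\eta_g)\, w^*$ and $h_0(u) = J(u,\eta_h)\, w^*$ for unique $\eta_g, \eta_h \in W^\perp/W$. A direct computation using the characterization of $(-)^*$ and the identity $\overline{J(u,\eta)} = \varepsilon J(\eta, u)$ yields $h_0^*(v) = \varepsilon J(v,w^*)\, \eta_h$ (and similarly for $g_0^*$), so that
\[ \langle g_0, h_0\rangle(v) \;=\; \varepsilon\, J(v, w^*)\,\bigl[J(\eta_h, \eta_g) - \varepsilon\, \overline{J(\eta_h, \eta_g)}\bigr]\, w^*. \]
Since $g_0 \neq 0$ forces $\eta_g \neq 0$, the $B$-linear map $\eta_h \mapsto J(\eta_h, \eta_g)$ is a nonzero functional to the division algebra $B$ and hence surjects onto $B$; so it suffices to exhibit $b \in B$ with $b \neq \varepsilon \overline{b}$. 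For $\varepsilon = -1$ take $b = 1$; for $\varepsilon = 1$, the ACD hypothesis leaves only the case of type C with $B$ a quaternion algebra, where the quaternionic involution $\sigma$ is nontrivial, so any $b \in B \setminus B^\sigma$ works. Paired with any $v$ having $J(v, w^*) \neq 0$, this yields $\langle g_0, h_0 \rangle(v) \neq 0$.

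The hard part is the $\dim_B W = 1$ case: the rank-one shape of $g_0$ and $h_0$ is essentially forced, and non-vanishing reduces to the scalar question of finding $b \in B$ with $b \neq \varepsilon \overline{b}$. The ACD hypothesis is exactly what rules out the one obstructing combination, namely $\varepsilon = 1$ together with a trivial involution $\sigma$; in all remaining cases, either the sign or the nontriviality of $\sigma$ produces the required $b$, and the argument goes through uniformly.
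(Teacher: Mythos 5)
Your proof is correct, and it takes a genuinely different route from the paper's. The paper fixes the basis of Lemma~\ref{lem:key1}, expresses $g_0$ and $h_0$ as matrices, tests against the rank-one elements $h_0 = \alpha E_{1,j}$, and reduces to the scalar identity of Lemma~\ref{lem:division-algebra}, which says $\varepsilon\alpha\bar{x} = x\bar{\alpha}$ for all $\alpha$ forces $x=0$ under the ACD hypothesis. Your argument is coordinate-free and constructive: given $g_0 \ne 0$, you exhibit an explicit $h_0$ witnessing non-degeneracy, splitting by $\dim_B W$. It is worth noting that both approaches bottom out on the same dichotomy -- the existence of $b \in B$ with $b \ne \varepsilon\bar{b}$, which is exactly the contrapositive of Lemma~\ref{lem:division-algebra} -- but your case split cleanly isolates where the ACD hypothesis is actually used: only for $\dim_B W = 1$. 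For $\dim_B W \geq 2$ your construction (choose $v_0$ with $J(v_0,w_0)\ne 0$, $J(v_0,w_1)=0$, so that $h_0 = \phi(\cdot)w_1$ has $h_0^*(v_0) = 0$ while $h_0(g_0^*(v_0)) = w_1 \ne 0$) needs nothing about $B$, $\sigma$, or $\varepsilon$ beyond non-degeneracy of $J$, which is a useful refinement the paper's matrix computation does not make visible. One remark on both proofs: the lemma is nominally stated for the real modules $V = \RR\otimes_B V_B$, $W = \RR\otimes_B W_B$, yet both your argument and the paper's use that $B$ is a division algebra (your surjectivity of $\eta_h \mapsto J(\eta_h,\eta_g)$ onto $B$, their Lemma~\ref{lem:division-algebra}); since $B_\RR$ need not be a division algebra (e.g.\ a split quaternion algebra), the non-degeneracy should be proved over $B$ and then deduced over $B_\RR$ by flatness of $-\otimes_\QQ\RR$. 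This is implicit in both proofs and applies equally to each.
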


\begin{proof}
    Let $g_0\in \Hom(W^\perp/W,W)$. Suppose that $\langle g_0, h_0\rangle =0$ for all $h_0 \in \Hom(W^{\perp}/W,W)$.  We will show that $g_0=0$. 

    Let $\alpha \in B$, and let $1 \le j \le n -2r$.
    Let $h_0 : W^{\perp} / W \to W$ be the linear map represented by the matrix $\alpha E_{1,j}$ with respect to the basis for $V$ in Lemma~\ref{lem:key1}; here $E_{1,j}$ denotes the matrix with a single non-zero entry, equal to $1$, in the $(1,j)$ position.

    Using the notation from Lemma~\ref{lem:key1}, and recalling that $J'$ is $\varepsilon$-Hermitian, we get
    \begin{equation}\label{eq:epsilon-chain}
    \varepsilon h_0 \overline{g_0 (J')^{-1}}^t
    = \varepsilon h_0 \overline{(J')^{-1}}^t \overline{g_0}^t    
    = h_0 (J')^{-1} \overline{g_0}^t = h_0 g_0^* = g_0h_0^* 
    = g_0 (J')^{-1} \overline{h_0}^t.
    \end{equation}
    where we used~\eqref{eq:J'}, and where the equality $h_0 g_0^* = g_0h_0^*$ follows from the assumption that $\langle g_0, h_0 \rangle = 0.$
    Writing $A = g_0 (J')^{-1}$, the quantity in~\eqref{eq:epsilon-chain} is $\varepsilon h_0 \overline{A}^t = A \overline{h_0}^t$, which by our choice of $h_0$ translates to
    $\varepsilon \alpha \overline{A}_{*,j}=\varepsilon \alpha \overline{A}^t_{j,*} = A_{*,j} \overline{\alpha}$.
    As this holds for all $\alpha \in B$, by Lemma~\ref{lem:division-algebra} and the \eqref{eq:ACD} hypothesis, we conclude that $A_{*,j} = 0$. 
    Since this holds for all $j$, it follows that $g_0 (J')^{-1} = A = 0$, hence $g_0 = 0$. 
\end{proof}

\begin{lemma}\label{lem:division-algebra}
    Let $B$ be a division algebra over a field $F$, with an involution $\sigma : B \to B$. Write $\overline{b} = \sigma(b)$. Let $\varepsilon \in \{ \pm 1 \}$.
    Let $0 \ne x \in B$ be such that 
    \begin{equation} \label{eq:division-algebra}
    \varepsilon \alpha \overline{x} = x \overline{\alpha} \quad
     \text{ for all } \alpha \in B.
    \end{equation}
    Then $\varepsilon = 1$ and $\sigma$ is the identity.
\end{lemma}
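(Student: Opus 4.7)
The plan is to extract successively stronger constraints on $x$ and on $\sigma$ from the hypothesis~\eqref{eq:division-algebra}, in four short steps.

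First, I would specialize the identity $\varepsilon\alpha\bar{x}=x\bar\alpha$ at $\alpha=1$ to obtain $\varepsilon\bar{x}=x$, which (since $\varepsilon^{2}=1$) is equivalent to $\bar{x}=\varepsilon x$. Substituting this back into the original identity cancels the two factors of $\varepsilon$ and yields
\[
\alpha x = x\,\bar\alpha \qquad \text{for all } \alpha \in B.
\]
Since $B$ is a division algebra and $x\neq 0$, the element $x$ is invertible, so this rearranges to $\bar\alpha = x^{-1}\alpha x$. In other words, $\sigma$ is realized as inner conjugation by $x$.

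Second, I would exploit the fact that $\sigma$ is an involution, hence in particular an anti-homomorphism: $\sigma(\alpha\beta)=\sigma(\beta)\sigma(\alpha)$. Expanding both sides with $\sigma(\gamma)=x^{-1}\gamma x$, and using $xx^{-1}=1$ in the middle of the product, gives
\[
x^{-1}\alpha\beta\,x \;=\; (x^{-1}\beta x)(x^{-1}\alpha x) \;=\; x^{-1}\beta\alpha\,x,
\]
so that $\alpha\beta=\beta\alpha$ for every $\alpha,\beta\in B$. Thus $B$ is commutative, and is therefore a (commutative) field extension of $F$.

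Third, once $B$ is known to be commutative, the formula $\bar\alpha = x^{-1}\alpha x$ collapses to $\bar\alpha = \alpha$ for every $\alpha \in B$, so $\sigma$ is the identity. Plugging $\alpha=x$ into $\bar{x}=\varepsilon x$ (or simply using that $\sigma(x)=x$) now forces $x=\varepsilon x$, and since $x\neq 0$ this yields $\varepsilon=1$, completing the proof.

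There is no real obstacle: the single content-bearing step is the passage from ``$\sigma$ is an involution implemented by inner conjugation'' to ``$B$ is commutative,'' which falls out immediately from comparing $\sigma(\alpha\beta)$ to $\sigma(\beta)\sigma(\alpha)$. Everything else is bookkeeping with $\varepsilon^{2}=1$ and the invertibility of $x$.
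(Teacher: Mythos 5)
Your proof is correct. The opening moves coincide exactly with the paper's: specialize at $\alpha=1$ to get $\bar x = \varepsilon x$, substitute back to get $\alpha x = x\bar\alpha$ for all $\alpha$. Where you diverge is in the finish. You read $\alpha x = x\bar\alpha$ as saying $\sigma$ is the inner automorphism $\alpha\mapsto x^{-1}\alpha x$, then invoke the anti-multiplicativity of $\sigma$ to force $B$ to be commutative, after which everything collapses. The paper instead plugs $\alpha = \beta x^{-1}$ directly into $\alpha x = x\bar\alpha$, uses $\overline{\beta x^{-1}}=\bar x^{-1}\bar\beta$ together with $\bar x = \varepsilon x$, and reads off $\beta = \varepsilon\bar\beta$ in one line. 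The two arguments use the same ingredients (cancellation in a division ring plus the anti-homomorphism property of an involution), but your route produces the stronger intermediate conclusion that $B$ is commutative, which is a pleasant bonus; the paper's is marginally shorter because it never separates that observation out. Either way the computation is sound, and there is no gap.
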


\begin{proof}
    Substituting $\alpha = 1$ in \eqref{eq:division-algebra} we get $x = \varepsilon \overline{x}$. Therefore, substituting it back into \eqref{eq:division-algebra}, it follows that for all $\alpha$ we have $\alpha x = x \overline{\alpha}$. 
    Let $\beta \in B$, and set $\alpha = \beta x^{-1}$. Then 
    $$
    \beta = (\beta x^{-1}) x = x \overline{(\beta x^{-1})} = x (\overline{x}^{-1} \overline{\beta}) = \varepsilon \overline{\beta}.
    $$
    In particular, taking $\beta = 1$ shows that $\varepsilon = 1$, and substituting it back shows that $\beta = \overline{\beta}$ for all $\beta \in B$, hence $\sigma$ is the identity.
\end{proof}

   \begin{lemma}\label{lem:center-KF-UF}
        With notation as in Theorem~\ref{thm:eta-isom}, we have $K(F)=U(F)$.
    \end{lemma}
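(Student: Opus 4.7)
The plan is to prove both inclusions $K(F)\subseteq U(F)$ and $U(F)\subseteq K(F)$ simultaneously by computing group commutators in $W(F)$ and identifying them with the pairing $\langle -,-\rangle$ from Lemma~\ref{lem:key2}. The key structural observation is that $W(F)$ is a $2$-step nilpotent group, and the map to $\Hom(W^\perp/W,W)$ is nothing but the projection to the abelianization.

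First I would use the basis chosen in the proof of Lemma~\ref{lem:key1} to parametrize $W(F)$ explicitly. Every $g\in W(F)$ has the form $g = I + X$ with
\[
X = \begin{pmatrix} 0 & A & B \\ 0 & 0 & -A^* \\ 0 & 0 & 0 \end{pmatrix},
\]
where $A\in\Hom(W^\perp/W,W)$ (the image of $g$ in~\eqref{eq: map from W(F) to 12 entry}) and $B$ is constrained by the condition that $I+X$ preserve $J$. Note that $X^3=0$, $X^2$ only populates the $(1,3)$ block, and $K(F)$ consists precisely of those elements with $A=0$; in particular $K(F)$ is abelian.

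Next I would compute the group commutator $[g,h] = g h g^{-1} h^{-1}$ for $g=I+X$ and $h=I+Y$ with blocks $(A,B,-A^*)$ and $(A',B',-(A')^*)$. Using $X^3=Y^3=0$ (equivalently BCH for $2$-step nilpotent groups), a direct matrix multiplication shows that $[g,h]-I$ has vanishing $(1,2)$ and $(2,3)$ blocks and $(1,3)$ block equal to
\[
XY - YX \big|_{(1,3)} = -A(A')^* + A'A^* = \langle A', A\rangle,
\]
in the notation of Lemma~\ref{lem:key2}. In particular every commutator lies in $K(F)$, and it depends only on the images $A, A'$ of $g,h$ in $\Hom(W^\perp/W,W)$.

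Finally I would conclude by unwinding the definitions. If $g\in K(F)$ then $A=0$, so every commutator $[g,h]$ is trivial, proving $K(F)\subseteq U(F)$. Conversely, suppose $g\in U(F)$, i.e.\ $[g,h]=I$ for all $h\in W(F)$. Combining Lemma~\ref{lem:key1} (every $A'\in\Hom(W^\perp/W,W)$ arises from some $h\in W(F)$) with the commutator computation shows $\langle A', A\rangle = 0$ for all $A'$, whence $A=0$ by the non-degeneracy in Lemma~\ref{lem:key2}; therefore $g\in K(F)$. The only real work is the commutator bookkeeping in step two; once that identifies the bracket with the pairing, Lemmas~\ref{lem:key1} and~\ref{lem:key2} supply exactly the surjectivity and non-degeneracy needed for the two implications.
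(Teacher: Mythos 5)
Your proof is correct and takes essentially the same approach as the paper. The second inclusion $U(F)\subseteq K(F)$ is argued identically (identify the $(1,3)$ block of $ghg^{-1}h^{-1}-1$ with the pairing of Lemma~\ref{lem:key2}, then invoke Lemmas~\ref{lem:key1} and~\ref{lem:key2}); for the first inclusion $K(F)\subseteq U(F)$ the paper observes that elements with only a $(1,3)$ block lie in the center of the full block-unitriangular group (where $J$-preservation is not even needed), while you deduce the same conclusion from the commutator formula already computed — a harmless difference of emphasis, since both rest on the same nilpotent matrix bookkeeping.
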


    \begin{proof}
    Suppose first $g \in K(F)$, so $g \vert_{W^{\perp}} = 1$. It follows that for every $v \in V$ and $w \in W^{\perp}$ we have $J(v,w) = J(gv, gw) = J(gv, w)$, hence $gv - v \in (W^{\perp})^{\perp} = W$, showing that the induced map $g - 1 : V / W^{\perp} \to W^{\perp}$ has image in $W$. 
    Therefore, $g$ is represented by a block matrix of the form 
    \[\matthree{1}{0}{*}{0}{1}{0}{0}{0}{1}\]
    with respect to any ordered basis of $V$ that lifts a basis for $W$, a basis of $W^\perp/W$, and a basis for $V/W^\perp$ in that order. Then a standard calculation shows that $g$ lies 
  lies in the center of the multiplicative group
    \[\left\{\matthree{1}{*}{*}{0}{1}{*}{0}{0}{1}\right\}\]
    and hence also in the center of $W(F)$. 

    We proceed to prove the other inclusion $U(F)\subset K(F)$.
    Given $g,h \in W(F)$, write $g_0, h_0$ for their images under~\eqref{eq: map from W(F) to 12 entry}. We may calculate that the linear map $V/W^\perp \to W$ induced by $ghg^{-1}h^{-1} - 1$ is the same as $h_0 g_0^* - g_0 h_0^*$. So it follows that, if $g$ is in the center $U(F)$ of $W(F)$, then $\langle g_0, h_0 \rangle = 0$
   for all $h \in W(F)$. Therefore, by Lemmas~\ref{lem:key1} and~\ref{lem:key2}, we get that $g_0 = 0$, hence $g \in K(F)$. This concludes the proof that $U(F) \subset K(F)$.
\end{proof}

We end this subsection with detailed examples of the natural isomorphism $\eta$ in types A and C. In the process, we show in Remarks~\ref{rem:typeC-pd-cone} and \ref{rem:typeA-pd-cone} that the relevant open homogeneous cones in these types are positive definite cones inside appropriate real vector spaces of real symmetric or complex Hermitian forms. When representing a form in coordinates, a matrix $M$ corresponds to $(x,y) \mapsto {}^txM\ov{y}$, where the vector $\ov{y}$ is obtained by applying the conjugation $\sigma$ to each entry of $y$. 

\subsubsection{Type C}\label{subsub:type-C} Let $V_\Q$ be a rational $2g$--dimensional vector space.
With respect to an appropriate choice of basis for $V_\Q$, our nondegenerate alternating $\Q$-bilinear form $J\col V_\Q \times V_\Q \to \Q$ is the standard symplectic form represented by the matrix
\begin{equation}\label{eq:J} \begin{pmatrix}
    0 & 1_g\\
    -1_g & 0
\end{pmatrix}.\end{equation}
Recall from Example~\ref{ex:type-AC-isotropic-subspaces} that $\fW \col \cW \to \Vect_\QQ$ sends an isotropic subspace $W_\QQ \subset V_\QQ$ to the space of symmetric forms $\Sym^2(W_\QQ^\vee)$ on the dual vector space $W_\QQ^\vee = \Hom_\Q(W_\Q, \Q)$.

\smallskip

Given an isotropic subspace $W_\Q$, there exists a symplectic basis $\{e_1,\dots,e_g, f_1,\dots,f_g\}$ for $V_\Q$ (that is, a choice of basis preserving $J$)
such that $W_\Q = {\spn}_{\Q} \langle e_1,\dots,e_r\rangle$. 
In this basis, we have 
$$W_\Q^\perp = \spn_\Q \langle e_1,\dots, e_g, f_{r+1},\dots, f_g\rangle,$$ 
so $\{f_1,\dots, f_r\}$ forms a basis for $V_\Q/W_\Q^\perp$, where by abuse of notation, $f_i$ denotes the image of $f_i$ in $V_\Q/W_\Q^\perp$.
The isomorphism $V_\Q/W_\Q^\perp \xrightarrow{\cong} W_\Q^\vee$ given by $v\mapsto -J(v,-)$ sends $f_i$ to  $e_i^*$, the dual vector of $e_i$.
Therefore, with respect to the bases described above and their standard dual bases, the inclusion \eqref{eq:Into GL(V)} is as follows. A symmetric rational matrix $A \in \Sym^2(W_\Q^\vee)$ corresponds to $A^t = A \in \Hom(W_\Q^\vee, W_\Q)$, which in turn corresponds to $A \in \Hom(V_\Q/W_\Q^\perp, W_\Q)$. The latter sits inside $\Hom(V_\Q,V_\Q)$ as
$$ \begin{pmatrix}
    0 & 0 & A & 0\\
    0 & 0 & 0 & 0\\
    0 & 0 & 0 & 0\\
    0 & 0 & 0 & 0 
\end{pmatrix},$$
where the vertical and horizontal sizes of the blocks are $r$, $g-r$, $r$, and $g-r$. The same sizes will be used for every matrix written in block form throughout the rest of this subsection.

\smallskip

In our chosen symplectic basis, the rational boundary component corresponding to $W_\Q$ is the standard boundary component $F = F_{g-r}$ from Example~\ref{ex: Ag}. Moreover, recall from Example~\ref{ex: Ag} that the center of the unipotent radical associated to this boundary component is
$$U(F) = 
    \left\{ \begin{pmatrix}
    1 & 0 & Y & 0\\
    0 & 1 & 0 & 0\\
    0 & 0 & 1 & 0\\
    0 & 0 & 0 & 1 
\end{pmatrix} \colon Y \in \Mat_r(\R)^{\sym} \right\} \;\; \subset \; G = \Sp_{2g}(\R).$$
Our isomorphism 
$$\eta_F\colon \Sym^2(W_\Q^\vee) \longrightarrow U(F)_\Q = U(F) \cap \Sp_{2g}(\Q) \quad\quad \text{sending} \quad\quad \varphi \mapsto 1 + \varphi$$ 
is now clear.  Indeed, in our coordinates, $\eta_F$ sends $A \in \Sym^2(W_\Q^\vee)$ to the element of $U(F)_\Q$ where $Y = A$. 
Moreover, 
tensoring by $\R$, the map $\eta_F$ extends to an isomorphism of real vector spaces $\Sym^2(W_\R^\vee) \to U(F)$ sending a symmetric real matrix $A \in \Sym^2(W_\R^\vee)$ to the element of $U(F)$ where $Y = A$.

\begin{remark}\label{rem:typeC-pd-cone}
    The preimage of the open homogeneous cone $C(F) \subset U(F)$ under the isomorphism $\Sym^2(W_\R^\vee) \to U(F)$ is exactly the \emph{positive definite cone} inside $\Sym^2(W_\R^\vee)$. 
    This follows from the description of the cone $C(F)$ in Example~\ref{ex: Ag}, together with the fact that the set of positive definite forms on some $r$-dimensional real vector space precisely equals $\{ AA^t \col A \in \GL_r(\R)\}$.
\end{remark}

\subsubsection{Type A} \label{subsub:type-A}
This example will be a Hermitian analogue of Subsection~\ref{subsub:type-C}. 
Let $V_E$ be a $d$-dimensional vector space over some imaginary quadratic field extension $E$ of $\Q$,
and let $J$ be a nondegenerate skew-Hermitian form on $V_E$. 
Recall from Example~\ref{ex:type-AC-isotropic-subspaces} that $\fW : \cW \to \Vect_\Q$ is given by $\fW(W_E) = \Herm(W_E^\conjdual)$ for each isotropic $E$-vector subspace $W_E \subset V_E$.

Fix an isotropic subspace $W_E$, and let $r = \dim W_E$.
Since $J$ is nondegenerate, there exists a choice of basis $\{e_1,\dots,e_d\}$ for $V_E$ such that $W_E = E\langle e_1,\dots,e_r\rangle$ and 
$$J = \begin{pmatrix}
    0 & 0 & 1_r\\
    0 & S & 0\\
    -1_r & 0 & 0
\end{pmatrix},$$
where the sizes of the vertical and horizontal blocks are $r,d-2r,r$; and $S$ is a skew-Hermitian matrix with entries in $E$. Hence, in this basis, we have 
$$W_E^\perp = \spn_E\langle e_1,\dots, e_{d-r}\rangle.$$
Similarly to the type C example in Subsection~\ref{subsub:type-C}, $\{e_{d-r+1},\dots,e_d\}$ forms a basis for $V_E/W_E^\perp$ and the isomorphism $V_E/W_E^\perp \xrightarrow{\cong} W_E^*$ given by $v\mapsto -J(v,-)$ identifies $e_{d-r+i} \leftrightarrow e_i^*$.

Then, with respect to the bases described above and their standard dual bases, the inclusion \eqref{eq:Into GL(V)} looks as follows: a Hermitian $E$-matrix $A \in \Herm(W_E^*)$ corresponds to $A^t = \ov{A} \in \Hom(W_E^*, W_E)$, which in turn corresponds to $\ov{A} \in \Hom(V_E/W_E^\perp, W_E)$; the latter sits inside $\Hom(V_E,V_E)$ as
$$ \begin{pmatrix}
    0 & 0 & \ov{A} \\
    0 & 0 & 0 \\
    0 & 0 & 0
\end{pmatrix},$$
where the vertical and horizontal sizes of the blocks are $r, d-2r,r$. The same sizes will be used for every matrix written in block form throughout the rest of this type A subsection.

\smallskip

For every commutative $\Q$-algebra $R$, our basis for $V_E$ gives a basis for the free $(E \otimes_\Q R)$-module $V_E \otimes_\Q R$, and
$$\G(R) = \left\{ X \in \SL(V_E \otimes_\Q R) \cong \SL_{d}(E \otimes_\Q R) \mid {}^tX J \ov{X} = J\right\}.$$
Fixing some $E \otimes_\Q \R \cong \C$, we think of the Lie group $G = \G(\R)^\circ = \G(\R)$ as a subset of $\SL_{d}(\C)$.

Consider the boundary component $F$ which corresponds to our isotropic subspace $W_E$ under Proposition~\ref{prop:isotropic_equiv_boundarycomps}. This is the boundary component $F$ whose stabilizer $N(F)$ in $G$ agrees with the stabilizer of $W_E \otimes_\Q \R$ (under the natural action of $G \subset \SL(V_E \otimes_\Q \R)$ on $V_E \otimes_\Q \R$). 
One computes this stabilizer and the center of its unipotent radical (see \cite[Section 4.1.4]{Lan} for more details):

$$N(F) = 
    \left\{ \begin{pmatrix}
    \ast & \ast & \ast \\
    0 & \ast & \ast \\
    0 & 0 & \ast
\end{pmatrix} \in G \right\} \;\; \supset  \;\; 
\left\{ \begin{pmatrix}
    1 & 0  & Y \\
    0 & 1 & 0  \\
    0 & 0 & 1
\end{pmatrix} \colon Y \in \Mat_r(\C)^{\Herm} \right\} = U(F).$$

Our isomorphism 
$$\eta_F\colon \Herm(W_E^*) \longrightarrow U(F)_\Q = U(F) \cap \G(\Q) \quad\quad \text{given by} \quad\quad \varphi \mapsto 1 + \varphi$$ 
sends an $E$-matrix $A \in \Herm(W_E^*)$ to the element of $U(F)_\Q$ where $Y = \ov{A}$. 
Moreover, $\eta_F$ extends to an isomorphism of real vector spaces $\Herm(W_\R^*) \to U(F)$ for $W_\R = W_E \otimes_\Q \R$, sending a Hermitian complex matrix $A \in \Herm(W_\R^*)$ to the element of $U(F)$ where $Y = \ov{A}$.

\begin{remark}\label{rem:typeA-pd-cone}
    The open homogeneous cone $C(F) \subset U(F)$ is the orbit of the point
$$\Omega_F = \begin{pmatrix}
    1 & 0 & 1\\
    0 & 1 & 0 \\
    0 & 0 & 1 
\end{pmatrix} \in U(F)$$
under the action of $N(F)$ on $U(F)$ by conjugation. In particular, an element
$$N(F) \ni X = \begin{pmatrix}
    A & \ast & \ast \\
    0 & \ast & \ast \\
    0 & 0 & (A^*)^{-1} 
\end{pmatrix} \; \text{ acts on } \;\; \Omega_F \;\; \text{ via } \;\; X \cdot \Omega_F = X \Omega_F X^{-1} = \begin{pmatrix}
    1 & 0 & AA^*\\
    0 & 1 & 0\\
    0 & 0 & 1
\end{pmatrix}.$$
Therefore, the cone $C(F)$ consist of those elements of $U(F)$ where 
$$Y = AA^* \quad\quad \text{for some} \;\;\; \begin{cases}
    A \in \GL_r(\C) \text{ with } \det(A) \in \R & \quad \text{if } d = 2r\\
    A \in \GL_{r}(\C) & \quad \text{otherwise.}
\end{cases}$$
Either way, the preimage of $C(F)$ under the extended isomorphism $\Herm(W_\R^*) \to U(F)$ equals the \emph{positive definite cone} inside $\Herm(W_\R^*)$.
\end{remark}

\subsection{\texorpdfstring{$\Gamma_\cW$}{GammaW}-admissible collections}\label{sec:GammaW admissible collections}  We continue to assume the~\eqref{eq:ACD} hypotheses.
We have successfully defined a functor $\fW \col \cW \to \Vect_\QQ$ together with a natural isomorphism $\eta\col \fW \circ \Xi \Rightarrow U_\QQ$.  Then we can transport Definition~\ref{def-admissible-collection} to the current setting of isotropic subspaces: the new definition will be of a {\em $\Gamma_\cW$-admissible collection}.

For each object $W_B$ in $\cW$, 
we have an isomorphism of $\RR$-vector spaces 
\[(\eta_F)_\RR \col \Hermeps{-\varepsilon}(W_\RR^*) \to U(F)\]
obtained from~\eqref{eq: eta_F} by tensoring with $\RR$. (Recall the notation $W_\RR = \RR \otimes_B W_B$.)  Pull back the lattice $U(F)_\ZZ$, and the open cone $C(F)$, via $(\eta_F)_\RR$ to obtain a lattice, and an open cone, in $\Hermeps{-\varepsilon}(W_\RR^*)$; they shall be denoted $\Hermeps{-\varepsilon}(W_\RR^*)_\Z$ and $C(W_B)$ respectively.  
Now let \[\Gamma_{W_B} := \Gamma \cap \mathrm{Stab}(W_B).\]  Since $\Gamma$ consists of rational points of $\GG$, in fact $\Gamma_{W_B} = \Gamma_F$, but the notation makes the reliance on $W_B$, not $F$, clear.  Then $\Gamma_{W_B}$ acts on $\Hermeps{-\varepsilon}(W_\RR^*)$ as described in the remark below; we let $\ov\Gamma_{W_B}$ be the quotient of $\Gamma_{W_B}$ by the kernel of this action.  

\begin{remark}
    On one hand, since $\Gamma_{W_B}$ is contained in $\mathrm{Stab}(W_B) \subset \G(\Q) \subset \GL(V_B)$, the group $\Gamma_{W_B}$ acts on $W_B$, which in turn induces an action on $\Hermeps{-\varepsilon}(W_\RR^*)$. On the other hand, $\Gamma_{W_B} = \Gamma_F$ acts on $U(F)$ via conjugation inside $N(F)$, preserving the lattice $U(F)_\Z$ and the cone $C(F)$; this action pulls back via $(\eta_F)_\R$ to a $\Gamma_{W_B}$-action on $\Hermeps{-\varepsilon}(W_\RR^*)$ preserving its lattice $\Hermeps{-\varepsilon}(W_\RR^*)_\Z$ and cone $C(W_B)$. The fact that the two actions agree is a consequence of the naturality diagram from the proof of Theorem~\ref{thm:eta-isom} for $\gamma \in \Gamma_{W_B}$.
\end{remark}

If $\gamma \cdot W_B \subset W'_B$ for some $\gamma\in\Gamma$ and $W_B, W_B'$ in $\cW$, let
\[h_\gamma\col \Hermeps{-\varepsilon}\left(W_\R^*\right)\to \Hermeps{-\varepsilon}\left((W_\R')^*\right)\] denote the induced linear map, which is injective.

\begin{definition}\label{def:Gamma W admissible collection}
    Assume the hypotheses of Definition~\ref{def:W}. Let
    \[\Sigma_\cW = \big\{\big(\Hermeps{-\varepsilon}(W_\RR^*)_\Z,\;\Sigma_{W_B}\big)\big\}_{W_B}\]
    be a collection of $\ov \Gamma_{W_B}$-admissible decompositions of $C(W_B)$, one for each object $W_B$ of $\cW$, where the lattice $\Hermeps{-\varepsilon}(W_\RR^*)_\Z$ and the cone $C(W_B)$ are defined above.  Then $\Sigma_\cW$ is a {\em $\Gamma_\cW$-admissible collection} if: whenever  $\gamma \cdot W_B \subset W'_B$ for some $\gamma\in\Gamma$ and $W_B, W_B'$ in $\cW$, we have
    \[\Sigma_{W_B} = \left\{ h_\gamma^{-1}(\sigma)~|~ \sigma \in \Sigma_{W_B'}\right\}.\]
\end{definition}

\begin{remark}\label{rem:equivalence-admissible-collections}
    Analogous to \eqref{eq-admissible-collection-to-functor}, each $\Gamma_\cW$-admissible collection gives a functor $\cW \to \RPF$.  Then it follows directly from the definitions that $\Sigma_\cW \mapsto \Sigma_\cW \circ \Xi$ furnishes a bijection between $\Gamma_\cW$-admissible collections (Definition~\ref{def:Gamma W admissible collection}) and $\Gamma$-admissible collections (Definition~\ref{def-admissible-collection}).
\end{remark}

Definition~\ref{def:geometric realization and cellular chain complex} gives the following description of the chain complex $C_*(\Sigma_\cW)$.
\begin{proposition}\label{prop:chain complex in W world}
    Assume the~\eqref{eq:ACD} hypotheses. Let $\Sigma_\cW$ be a $\Gamma_\cW$-admissible collection.  Then the rational chain complex $(C_*(\Sigma_\cW),d)$ has a generator 
    $(\sigma,\omega)$ for every $\sigma \in \Sigma_{W_B}$ and $\omega$ an orientation of of $\sigma$ (defined precisely in  Section~\ref{sec:chaincomplexes}), ranging over every isotropic subspace $W_B$ in $\cW$. We set
        \[(\sigma_{W_B},\omega_{W_B}) = (h_\gamma(\sigma_{W_B}),h_{\gamma \ast} (\omega_{W_B}))\]  whenever $\gamma \cdot W_B \subset W_B'$, where $h_\gamma\col \Hermeps{-\varepsilon}(W_B^*) \to \Hermeps{-\varepsilon}((W_B')^*)$ denotes the induced map.  The diferential is given by
        \[d(\sigma_{W_B},\omega_{W_B}) = \sum_{\tau_{W_B}} (\tau_{W_B}, \omega_{W_B}|_{\tau_{W_B}})\]
        where $\tau$ ranges over codimension $1$ faces of $\sigma_{W_B}$.  
\end{proposition}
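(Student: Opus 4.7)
The plan is to unpack Definition~\ref{def:geometric realization and cellular chain complex} in the case of the functor $\Sigma_\cW\colon \cW \to \mathsf{RPF}$, and verify that the colimit description of the cellular chain complex matches the description given in the statement.

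First, I will note that a $\Gamma_\cW$-admissible collection $\Sigma_\cW$ does indeed define a functor $\cW \to \mathsf{RPF}$. On objects, $W_B$ is sent to the rational polyhedral fan $\big(\Hermeps{-\varepsilon}(W_\RR^*)_\Z, \Sigma_{W_B}\big)$. A morphism $W_B \to W_B'$ in $\cW$ corresponds to some $\gamma\in\Gamma$ with $\gamma\cdot W_B\subset W_B'$, and is sent to the linear map $h_\gamma$. To see that $h_\gamma$ is a strict morphism of fans, note: (i) $h_\gamma$ is injective by construction in Definition~\ref{def:herm-functor}; (ii) by the natural isomorphism $\eta\colon \fW\circ\Xi\Rightarrow U_\QQ$ of Theorem~\ref{thm:eta-isom}, the map $h_\gamma$ is identified with the map $u_\gamma$ on $U(F)_\QQ$, which has torsion-free cokernel since both $U(F_2)_\ZZ$ and $U(F_1)_\ZZ$ are free abelian (Definition~\ref{def: U Z}) and the identification is via inclusion inside a larger group; and (iii) the defining condition $\Sigma_{W_B} = \{h_\gamma^{-1}(\sigma) : \sigma\in \Sigma_{W_B'}\}$ of Definition~\ref{def:Gamma W admissible collection}, together with Remark~\ref{rem-cone-gluings} (transported to the $\cW$-side via $\Xi$), guarantees that each cone of $\Sigma_{W_B}$ is mapped bijectively onto some cone of $\Sigma_{W_B'}$.

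Next, I will compute the colimit in $\mathsf{Ch}_\QQ$ of the composition $\cW \xra{\Sigma_\cW} \mathsf{RPF} \xra{C_*} \mathsf{Ch}_\QQ$. From the construction recalled in Section~\ref{sec:chaincomplexes}, the chain complex $C_*(\Sigma_{W_B})$ associated to a single fan has generators $(\sigma,\omega)$ ranging over cones $\sigma\in \Sigma_{W_B}$ with a chosen orientation $\omega$, subject to $(\sigma,-\omega)=-(\sigma,\omega)$, and with the differential described in the statement. The induced chain map $C_*(\Sigma_{W_B}) \to C_*(\Sigma_{W_B'})$ coming from the strict morphism $h_\gamma$ is, by the formula in Section~\ref{sec:chaincomplexes}, given by $(\sigma,\omega)\mapsto (h_\gamma(\sigma), h_{\gamma*}(\omega))$. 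Since colimits in $\mathsf{Ch}_\QQ$ are computed degreewise as colimits of $\QQ$-vector spaces, the colimit is the free graded $\QQ$-vector space on generators $(\sigma_{W_B},\omega_{W_B})$ modulo the sign relations and the identifications $(\sigma_{W_B},\omega_{W_B}) = (h_\gamma(\sigma_{W_B}), h_{\gamma*}(\omega_{W_B}))$ for every morphism $\gamma$ in $\cW$. This is exactly the description in the statement.

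Finally, the differential $d$ is inherited. Indeed, $d$ is well-defined on the colimit because each chain map $C_*(\Sigma_{W_B})\to C_*(\Sigma_{W_B'})$ is a chain map: strict morphisms of fans send codimension-one faces to codimension-one faces with compatible orientations, so $d$ commutes with the identifications. Thus the formula $d(\sigma_{W_B},\omega_{W_B}) = \sum_{\tau_{W_B}}(\tau_{W_B},\omega_{W_B}|_{\tau_{W_B}})$ descends to a well-defined differential on the colimit, completing the proof. The only subtle point above, and the one that takes the most care, is checking that $h_\gamma$ is a strict morphism of fans (in particular the torsion-free cokernel condition and the bijection-on-cones condition), but this is precisely guaranteed by Theorem~\ref{thm:eta-isom} together with Definition~\ref{def:Gamma W admissible collection}.
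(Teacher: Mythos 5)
Your proposal is correct and takes the same route the paper implicitly intends: the paper offers no proof at all, presenting the proposition as an immediate unpacking of Definition~\ref{def:geometric realization and cellular chain complex} applied to the functor $\Sigma_\cW\colon \cW\to\RPF$ (whose existence is already recorded in Remark~\ref{rem:equivalence-admissible-collections}). Your write-up fills in that unpacking explicitly and accurately: identifying the objectwise chain complexes, the induced chain maps along strict morphisms, and the degreewise colimit in $\mathsf{Ch}_\QQ$.

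One small caution on the step you flag as most delicate. Your justification that $h_\gamma$ has torsion-free cokernel --- ``both $U(F_2)_\ZZ$ and $U(F_1)_\ZZ$ are free abelian and the identification is via inclusion inside a larger group'' --- is stated too loosely: two free abelian groups with an injection between them can easily have cokernel with torsion (e.g.\ $\ZZ\xrightarrow{\,2\,}\ZZ$). The correct reason, which your phrase ``inside a larger group'' gestures at, is that both lattices are defined as $\Gamma\cap U(F_i)$ for a fixed $\Gamma$ and $U(F_2)$ is a $\QQ$-rational subspace of $U(F_1)$; hence $U(F_2)_\ZZ = U(F_1)_\ZZ\cap U(F_2)$ is saturated in $U(F_1)_\ZZ$, so the cokernel is torsion-free. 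Spelling out this saturation argument would make the step airtight. Otherwise the proof is complete, and in particular the observations that the sign relation $(\sigma,-\omega)=-(\sigma,\omega)$ survives the colimit, that strict morphisms carry facets to facets with compatible orientations, and hence that $d$ descends to the colimit, are all exactly what is needed.
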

Then we have the following reformulation of Corollary~\ref{cor:tropicalBM} in terms of $\cW$.

\begin{corollary}\label{cor:tropicalBM in terms of W} Assume the~\eqref{eq:ACD} hypotheses, and let $\Sigma_\cW$ be a $\Gamma_\cW$-admissible collection.  We have canonical isomorphisms
 \begin{equation}\label{eq:comparison-applied-to-tropicalization-in-terms-of-W}
H_*^{\mathrm{BM}}(X^{\Sigma,\trop};\QQ) \cong H_*(C_*(\Sigma_\cW),d).
\end{equation}  
\end{corollary}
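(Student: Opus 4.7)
The plan is to deduce this corollary directly from Corollary~\ref{cor:tropicalBM} via the translation machinery already set up in the preceding subsection. Specifically, given a $\Gamma_\cW$-admissible collection $\Sigma_\cW$, I would first apply Remark~\ref{rem:equivalence-admissible-collections} to produce the corresponding $\Gamma$-admissible collection $\Sigma = \Sigma_\cW \circ \Xi\col \cF \to \mathsf{RPF}$. Corollary~\ref{cor:tropicalBM} then immediately gives
\[
H_*^{\mathrm{BM}}(X^{\Sigma,\trop};\QQ) \cong H_*(C_*(\Sigma),d).
\]
So it suffices to exhibit a canonical isomorphism of chain complexes $C_*(\Sigma) \cong C_*(\Sigma_\cW)$.

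Next, I would invoke the natural isomorphism $\eta\col \fW \circ \Xi \Rightarrow U_\QQ$ from Theorem~\ref{thm:eta-isom}. For each object $W_B \in \cW$ with corresponding rational boundary component $F = \Xi^{-1}(W_B)$, tensoring $\eta_F$ with $\RR$ yields an isomorphism of real vector spaces $\Hermeps{-\varepsilon}(W_\RR^*) \xrightarrow{\cong} U(F)$ which, by the very construction of the data on the $\cW$-side (the lattice $\Hermeps{-\varepsilon}(W_\RR^*)_\Z$, the open cone $C(W_B)$, and the group $\ov\Gamma_{W_B}$), carries these structures onto $U(F)_\Z$, $C(F)$, and $\ov\Gamma_F$ respectively. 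It follows that the $\ov\Gamma_{W_B}$-admissible decomposition $\Sigma_{W_B}$ of $C(W_B)$ is carried bijectively onto the $\ov\Gamma_F$-admissible decomposition $\Sigma_F$ of $C(F)$, cone by cone, preserving face relations and orientations.

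The compatibility of this cone-by-cone identification with the morphisms induced by elements $\gamma \in \Gamma$ is exactly the naturality of $\eta$: the diagram expressing $u_\gamma \circ \eta_F = \eta_{F'} \circ h_\gamma$ (established in the proof of Theorem~\ref{thm:eta-isom}) guarantees that the pullback rules defining admissibility on the two sides coincide. Consequently, the explicit generators and differential described in Proposition~\ref{prop:chain complex in W world} for $C_*(\Sigma_\cW)$ match, under $\eta$, the generators and differential of $C_*(\Sigma)$ as defined in Section~\ref{sec:chaincomplexes}. This yields an isomorphism $C_*(\Sigma_\cW) \cong C_*(\Sigma)$ of chain complexes over $\QQ$, and composing with the comparison of Corollary~\ref{cor:tropicalBM} produces the desired canonical isomorphism.

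The proof is essentially bookkeeping: no new geometric input is required beyond $\eta$'s existence and naturality. The main ``obstacle''—really just a verification—is confirming that the three data (lattice, cone, group action) are genuinely transported under $(\eta_F)_\RR$, but this is immediate from how they were defined on the $\cW$-side (as pullbacks through $(\eta_F)_\RR$ of their counterparts on $\cF$-side). Hence no substantial new argument is needed.
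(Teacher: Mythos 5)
Your proposal is correct and follows essentially the same route the paper implicitly takes: the paper presents this corollary without formal proof, calling it simply a "reformulation of Corollary~\ref{cor:tropicalBM} in terms of $\cW$," which rests on exactly the translation you spell out — Remark~\ref{rem:equivalence-admissible-collections} gives $\Sigma = \Sigma_\cW \circ \Xi$, and the natural isomorphism $\eta$ from Theorem~\ref{thm:eta-isom} identifies $C_*(\Sigma_\cW)$ with $C_*(\Sigma)$ via the isomorphism of categories $\Xi$. Your accounting of why the lattice, cone, and group action are transported is the only genuine check, and you handle it correctly by noting those structures on the $\cW$-side are defined as pullbacks through $(\eta_F)_\RR$.
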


Proposition~\ref{prop:linear basis in F world} implies the following.
\begin{proposition}\label{prop:linear basis in W world}
Let $\mathcal{W}'$ be any skeleton subcategory of $\cW$; its objects thus form a set of representatives for the $\Gamma$-orbits of isotropic subspaces of $V_\QQ$.  For each $W_\QQ \in \on{Ob}(\mathcal{W}')$, let $\Sigma'_{W_\QQ}$ denote a set of representatives of the $\ov{\Gamma}_{W_\QQ}$-orbits of cones in $\Sigma_{W_\QQ}$  that are $\ov{\Gamma}_{W_\QQ}$-alternating and that are not contained in any rational hyperplane supporting the cone $C(W_\QQ)$.  For each such cone $\sigma$, choose one of its two possible orientations $\omega_\sigma$ arbitrarily.
    Then the elements
    \begin{equation}
    \coprod_{W_\QQ\in \cW'} \{(\sigma,\omega_\sigma):\sigma \in \Sigma'_{W_\QQ}\}\end{equation}
    are a linear basis of $C_*(\Sigma_\cW).$
\end{proposition}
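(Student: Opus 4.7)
The plan is to deduce this proposition directly from Proposition~\ref{prop:linear basis in F world} by transporting the statement across the isomorphism of categories $\Xi\col \cF \to \cW$ of Proposition~\ref{prop:isotropic_equiv_boundarycomps}. First I would observe that a choice of skeleton $\cW' \subset \cW$ corresponds via $\Xi$ to a choice of skeleton $\cF' \subset \cF$: specifically, take $\cF'$ to be the full subcategory whose objects are the rational boundary components $F = \Xi^{-1}(W_\QQ)$ for $W_\QQ \in \on{Ob}(\cW')$. Since $\Xi$ is an isomorphism of categories, these objects form a set of representatives for the $\Gamma$-orbits of rational boundary components, as required in Proposition~\ref{prop:linear basis in F world}.

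Next I would use the bijection between $\Gamma_\cW$-admissible and $\Gamma$-admissible collections from Remark~\ref{rem:equivalence-admissible-collections}, under which $\Sigma_\cW$ corresponds to the admissible collection $\Sigma = \Sigma_\cW \circ \Xi$. The natural isomorphism $\eta\col \fW \circ \Xi \Rightarrow U_\QQ$ of Theorem~\ref{thm:eta-isom}, tensored with $\RR$, identifies $\Hermeps{-\varepsilon}(W_\RR^*)$ with $U(F)$ as real vector spaces, matches the lattices $\Hermeps{-\varepsilon}(W_\RR^*)_\ZZ$ and $U(F)_\ZZ$ (by definition of the former), and matches the cones $C(W_\QQ)$ and $C(F)$. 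Consequently, $\eta$ induces an isomorphism of cones $\Sigma_{W_\QQ} \cong \Sigma_F$ and induces the equality $\ov{\Gamma}_{W_\QQ} = \ov{\Gamma}_F$ (both are the quotient of $\Gamma_F = \Gamma_{W_\QQ}$ by the kernel of its action on $U(F) \cong \Hermeps{-\varepsilon}(W_\RR^*)$, which is the same action on both sides by the naturality of $\eta$ applied to morphisms $\gamma\in \Gamma_{W_\QQ}$). This gives a canonical identification of chain complexes $C_*(\Sigma) \cong C_*(\Sigma_\cW)$, as already recorded in Corollary~\ref{cor:tropicalBM in terms of W}.

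The remaining point is to verify that the two distinguishing conditions on cones transport correctly under $\eta$. The property of being $\ov{\Gamma}_{W_\QQ}$-alternating is preserved because the actions of $\ov{\Gamma}_{W_\QQ}$ on $\Hermeps{-\varepsilon}(W_\RR^*)$ and of $\ov{\Gamma}_F$ on $U(F)$ are intertwined by the linear isomorphism $(\eta_F)_\RR$, so orientation-preservation on each cone is equivalent on the two sides. Similarly, the property of not being contained in any rational supporting hyperplane of $C(W_\QQ)$ transports to not being contained in any rational supporting hyperplane of $C(F)$, since $(\eta_F)_\RR$ is a $\QQ$-linear isomorphism carrying one cone to the other. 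With these identifications in place, the basis described in Proposition~\ref{prop:linear basis in F world} transports term by term to the basis in the statement here.

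I do not expect a serious obstacle: the content of this proposition is purely a translation, and the substantive technical work has already been carried out in Proposition~\ref{prop:linear basis in F world} and Theorem~\ref{thm:eta-isom}. The only mild care required is bookkeeping that $\eta$ and its $\RR$-linearization respect all four relevant structures simultaneously -- lattice, cone, $\Gamma$-action, and cellular chain complex structure with orientations -- which is immediate from the fact that $\eta$ is a natural isomorphism of functors into $\Vect_\QQ$ and that all other structures are defined by pullback along $\eta$.
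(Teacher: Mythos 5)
Your proposal is correct and takes essentially the same approach as the paper, which simply states that Proposition~\ref{prop:linear basis in W world} is implied by Proposition~\ref{prop:linear basis in F world} without spelling out the transport; you have just made the routine bookkeeping explicit.
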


The following is now immediate from Theorem~\ref{thm:all purpose spectral sequence in F world}.

\begin{proposition}\label{prop:all purpose spectral sequence in W world}
Assume the~\eqref{eq:ACD} hypotheses.  Let $S\cW_p$ denote a set of $\Gamma$-orbit representatives of isotropic subspaces of $V_B$ of dimension $p$.  There is a spectral sequence
\[E^1_{p,q}  = \bigoplus_{W \in S\cW_p} H^{\mathrm{BM}}_{p+q}(C(W)/\ov{\Gamma}_W;\QQ) \quad \Longrightarrow \quad H^{\mathrm{BM}}_{p+q} (X^{\trop};\QQ).\]

\end{proposition}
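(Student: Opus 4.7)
The plan is to apply Theorem~\ref{thm:all purpose spectral sequence in F world} and transport the resulting spectral sequence from $\cF$ to $\cW$, using the isomorphism of categories $\Xi\colon \cF \to \cW$ (Proposition~\ref{prop:isotropic_equiv_boundarycomps}) together with the natural isomorphism $\eta\colon \fW \circ \Xi \Rightarrow U_\QQ$ (Theorem~\ref{thm:eta-isom}). For each rational boundary component $F$ with corresponding isotropic subspace $W = \Xi(F)$, the map $\eta_F$ extended to $\R$ provides $\ov{\Gamma}_F$-equivariant identifications of $U(F)$ with $\Hermeps{-\varepsilon}(W_\R^*)$, of the lattice $U(F)_\Z$ with $\Hermeps{-\varepsilon}(W_\R^*)_\Z$, and of the open homogeneous cone $C(F)$ with $C(W)$; these identifications are the ones already baked into the definition of $\Gamma_\cW$-admissibility in Section~\ref{sec:GammaW admissible collections}. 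In particular, there is a canonical homeomorphism $C(F)/\ov{\Gamma}_F \cong C(W)/\ov{\Gamma}_W$, inducing isomorphisms on Borel--Moore homology in every degree.

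Next, under the~\eqref{eq:ACD} hypotheses, the bijection on objects induced by $\Xi$ is dimension-reversing: a rank-$p$ boundary component of $D$ corresponds to an isotropic subspace of dimension $\rho - p$, where $\rho$ denotes the rank of $D$. Hence $\Xi$ induces canonical bijections between $\Pi_p$ and $S\cW_{\rho - p}$. Substituting $W = \Xi(F)$ into the spectral sequence of Theorem~\ref{thm:all purpose spectral sequence in F world} and re-indexing the filtration parameter from $\rank F$ to $\dim W = \rho - \rank F$ yields the claimed spectral sequence. Equivalently, one may define the W-filtration directly on $X^{\trop}$ via
\[
    \mathrm{Fil}'_p X^{\trop} := \bigcup_{\substack{W \in \mathrm{Ob}(\cW) \\ \dim W \geq \rho - p}} C(W)/\ov{\Gamma}_W,
\]
which is a closed subcomplex of $(X^{\trop})^+$ because, via the dimension-reversing correspondence of Proposition~\ref{prop:isotropic_equiv_boundarycomps} and Remark~\ref{rem-cone-gluings}, the closure of each dim-$k$ stratum contains only dim-$\geq k$ strata. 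The relative homology spectral sequence associated to this filtration recovers the claim after re-indexing.

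The main obstacle is the bookkeeping imposed by the dimension-reversing correspondence: the W-filtration by $\dim W$ and the F-filtration by $\rank F$ proceed in opposite directions relative to the strata inclusions in $X^{\trop}$. Once this re-indexing is handled---either by explicit substitution in the F-world spectral sequence or by direct construction of $\mathrm{Fil}'_\bullet X^{\trop}$ as above---the conclusion follows immediately from Theorem~\ref{thm:all purpose spectral sequence in F world} via the identifications furnished by Theorem~\ref{thm:eta-isom}.
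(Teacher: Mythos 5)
Your first paragraph is fine: the natural isomorphism $\eta$ does supply $\ov{\Gamma}_F$-equivariant identifications $C(F)\cong C(W)$ and hence homeomorphisms $C(F)/\ov{\Gamma}_F\cong C(W)/\ov{\Gamma}_W$. But the indexing discussion contains two real errors, both traceable to reading ``rank'' in Theorem~\ref{thm:all purpose spectral sequence in F world} as the rank of $F$ as a Hermitian symmetric domain. Although the paper's preamble to that theorem does use that phrase, the proof of the theorem, as well as Remark~\ref{rem:spectral-to-group}, uses ``rank'' to mean the rank of the self-adjoint cone $C(F)$, which via Theorem~\ref{thm:eta-isom} equals $\dim\Xi(F)$ (the complementary quantity $r - (\text{Hermitian rank of }F)$). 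The tell is the proof's assertion that the rational boundary of $C(F)^*$ consists of $C(F')$ for morphisms $F'\to F$ ``of strictly lower rank'': a morphism $F'\to F$ in $\cF$ means $\gamma\cdot F\subset\ov{F'}$, so $F'$ has strictly \emph{higher} Hermitian rank, and ``lower rank'' can only mean cone rank. (Likewise Remark~\ref{rem:spectral-to-group} says the rank-$p$ components are conjugates of $F_{r-p}$, whose Hermitian rank is $r-p$, not $p$.) With the cone-rank reading, $\Pi_p$ and $S\cW_p$ match up directly under $\Xi$, $\Fil_p X^\trop$ is exactly the union of the strata with $\dim W\le p$---which is closed and increasing---and the Proposition follows from Theorem~\ref{thm:all purpose spectral sequence in F world} with no re-indexing at all. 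That is what the paper's one-line proof (``immediate from Theorem~\ref{thm:all purpose spectral sequence in F world}'') is invoking.

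Both of your proposed fixes fail. The claim that ``the closure of each dim-$k$ stratum contains only dim-$\geq k$ strata'' is backwards: Remark~\ref{rem:adm-decomp-support} says $C(F)^*\setminus C(F)$ consists of $C(F')$ for $F\subset\ov{F'}$, and in the types~\eqref{eq:ACD} this corresponds to $\Xi(F')\subsetneq\Xi(F)$, i.e., strictly \emph{smaller} isotropic subspaces. So the closure of the stratum for a $k$-dimensional $W$ meets only strata with $\dim\le k$, and your $\Fil'_p$ is not a union of closed sets: $\Fil'_0$ is the open stratum attached to a maximal isotropic $W$, whose closure is all of $X^\trop$. Separately, the explicit re-indexing does not preserve the grading convention. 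Matching $E^1$ terms and total degree forces the substitution $(p,q)\mapsto(r-p,\, q+2p-r)$, under which the homological differential $d^n\colon E^n_{p,q}\to E^n_{p-n,q+n-1}$ becomes a map to $E^n_{p'+n, q'-n-1}$, i.e., one that \emph{increases} the filtration index. That is a cohomologically graded spectral sequence, inconsistent with the statement of the Proposition and with the ``northwest/southeast'' degeneracy analysis in Proposition~\ref{prop:Ag-GL} that depends on the differentials going the standard homological direction.
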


\part{The cohomology of locally symmetric varieties and arithmetic groups}
\section{Hopf structures and unstable classes in the special unitary case}\label{sec:type A} 

\subsection{Type A locally symmetric varieties and their tropicalizations}\label{sec:type-A-generalities}
Let $E$ be an imaginary quadratic extension of $\Q$. The number field $E$ comes equipped with an involution $x \mapsto \overline{x}$ preserving $\Q$, which corresponds to complex conjugation under any embedding $E \hookrightarrow \C$. Let us fix one such embedding from now on. 

Let $p \ge q > 0$ be positive integers. 
Let $V$ be a $(p+q)$-dimensional vector space over $E$, and let $J\col V \times V \to E$ be a nondegenerate skew-Hermitian form of signature $(p,q)$. We will further assume that $V$ has Witt index $q$ with respect to $J$, which means that the maximal isotropic subspaces of $V$ are $q$-dimensional. Equivalently, there exists a choice of basis of $V \cong E^{p+q}$, which we henceforth fix, such that $J$ has the form
\begin{equation}\label{eq:type-A-J-form}
    J = \begin{pmatrix}
    & & 1_q\\
    & S &\\
    -1_q & & 
\end{pmatrix},
\end{equation}
for some skew-Hermitian matrix $S \in \Mat_{p-q}(E)$ such that the Hermitian matrix $-iS$, viewed as a matrix with $\C$-entries via our fixed $E \hookrightarrow \C$, is positive definite.

For every commutative $\Q$-algebra $A$, the form $J$ extends to an $A$-bilinear form on the free $(E\otimes_\Q A)$-module $V \otimes_\Q A$; abusing notation, we also denote this form by
$$J \col  \big(V \otimes_\Q A\big) \times \big(V \otimes_\Q A\big) \; \longrightarrow \; E \otimes_\Q A.$$

Let $\G$ be the $\Q$-algebraic group whose $A$-valued points consist of the automorphisms with determinant $1$ of $V \otimes_\Q A$ preserving $J$:
$$\G(A) = \SU(V \otimes_\Q A) := \left\{ X \in \SL(V \otimes_\Q A) \mid J\big(X(\cdot), X(\cdot)\big) = J(\cdot, \cdot)\right\}.$$

Via extension of scalars, our chosen basis $V \cong E^{p+q}$ gives a basis for the complex vector space $V \otimes_\Q \R \cong \C^{p+q}$.
This allows us to think of our Lie group $G$ inside $\SL_{p+q}(\C)$:
$$G = \G(\R) = \left\{ X \in \SL_{p+q}(\C) \mid {}^tXJ\ov{X} = J\right\} \; \subset \; \SL_{p+q}(\C).$$

Let $\calH_{p,q}$ be the Hermitian symmetric domain
$$\calH_{p,q} = \left \{ \substack{
    \vectwo{Z}{W} \; \in \; \Mat_{q}(\C)\; \times \; \Mat_{p-q,q}(\C) \; = \;\Mat_{p,q}(\C) \;\;: \\
    -i \vecthree{\ov{Z}}{\ov{W}}{1_q}^t J \;\vecthree{Z}{W}{1_q}
    = \; -i(Z^* - Z + W^* S W)\; < \;0
    }
    \right \},$$
which admits an action of $G$ via generalized projective coordinates:
$$ \matthree{a}{b}{c}{d}{e}{f}{g}{h}{k} \vectwo{Z}{W}
    = \vectwo{(aZ+bW+c)(gZ+hW+k)^{-1}}{(dZ+eW+f)(gZ+hW+k)^{-1}}.
$$
Here, the element of $G \subset \SL_{p+q}(\C)$ is written in block form with the horizontal and vertical sizes of the blocks being $q,p-q,q$.

\smallskip

In type A, locally symmetric varieties are of the form $\Gamma\backslash \cH_{p,q}$ for some arithmetic subgroup $\Gamma \subset \G(\Q)$. In this section, we study the tropicalizations of those locally symmetric varieties coming from a natural class of choices of arithmetic groups $\Gamma$, obtained as automorphisms of lattices inside $V$. Let us make this precise.
Let $R$ denote the ring of integers of our quadratic number field $E$. 

\begin{definition}
    An \emph{$R$-lattice} (or simply \emph{lattice} if the ring is clear) is a finitely generated projective $R$-module.
    We define the \emph{rank} of an $R$-lattice $\Lambda$ to be the dimension of $\Lambda \otimes_\Z \Q$ as an $E$-vector~space.
\end{definition}

Given an $R$-lattice $\Lambda \subset V$, our fixed skew-Hermitian form $J$ on $V$ restricts to a skew-Hermitian form $J|_\Lambda : \Lambda \times \Lambda \to E$. We fix a full-rank $R$-lattice $\Lambda \subset V$ with the additional property that the image of the restriction $J|_\Lambda$ is contained in $R$. (Note that any full-rank lattice $\Lambda \subset V$ has this property after scaling it by an appropriate positive integer.) We then consider the arithmetic group $\Gamma$ given by the automorphisms with determinant $1$ of $\Lambda$ that preserve $J|_\Lambda$, denoted $\SU\left(\Lambda, J|_\Lambda\right)$:
$$\Gamma = \SU\left(\Lambda, J|_\Lambda\right) : = \left\{ X \in \SL(\Lambda) \mid J\big(X(\cdot), X(\cdot) \big) = J(\cdot, \cdot) \right\}.$$ 
Note that we have a natural inclusion $\Gamma \subset \G(\Q)$ since an automorphism of $\Lambda$ preserving $J|_\Lambda$ gives, via extension of scalars, an automorphism of $V$ preserving $J$. 

\smallskip

We next describe the tropicalization of $\Gamma\backslash \cH_{p,q}$, specializing the general construction in Sections~\ref{sec:generalities} and \ref{sec:isotropic}. For each $E$-vector subspace $W \subset V$ that is isotropic with respect to $J$, consider the full-rank $R$-lattice $\Lambda_W := W \cap \Lambda \subset W$ and the $\C$-vector space $W_\C = W \otimes_\Q \R$. (In Section~\ref{sec:isotropic}, $W_\C$ is denoted $W_\R$, but we shall write $W_\C$ in this section to emphasize that it is a complex vector space.)
Consider the real vector space $\Herm(W_\C^*)$ of Hermitian forms on the conjugate dual $W_\C^*$, as well as the positive definite cone $\PD(W_\C^*) \subset \Herm(W_\C^*)$ and the $\Z$-lattice $\Herm(\Lambda_W^*) \subset \Herm(W_\C^*)$. Here, $\Lambda_W^*$ is the conjugate dual $R$-module of $\Lambda_W$, and $\Herm(\Lambda_W^*)$ denotes Hermitian forms $\Lambda_W^* \times \Lambda_W^* \to R$, which include into $\Herm(W_\C^*)$ via tensoring with $\R$. 

Also, let
$$\Gamma_{W} = \Gamma \cap \mathrm{Stab}(W),$$
where $\mathrm{Stab}(W)$ denotes the stabilizer of $W$ under the action of $\G(\Q) \subset \SL(V)$ on $V$.
Note that $\Gamma_{W}$ precisely equals the stabilizer of $\Lambda_W$ under the natural action of $\Gamma \subset \SL(\Lambda)$ on $\Lambda$. 
Therefore, $\Gamma_{W}$ acts on $\Lambda_W$, which induces a $\Gamma_{W}$-action on $\Herm(W_\C^*)$ preserving the lattice $\Herm(\Lambda_W^*)$ and the positive definite cone $\PD(W_\C^*)$. 

\smallskip

Consider then a $\Gamma_\cW$-admissible collection 
$$\Sigma = \big\{ \big( \Herm(\Lambda_W^*), \Sigma_{W}\big) \big\}_{W}$$
consisting of a $\ov{\Gamma}_{W}$-admissible decomposition of $\PD(W_\C^*)$ for each isotropic subspace $W \subset V$, satisfying the compatibility conditions of Definition~\ref{def:Gamma W admissible collection}. By Definition~\ref{def:tropicalization} and Remark~\ref{rem:equivalence-admissible-collections}, the tropicalization $\left(\Gamma\backslash \cH_{p,q}\right)^{\Sigma, \trop}$ of the locally symmetric variety $\Gamma\backslash \cH_{p,q}$ is the colimit of 
$$\cW \xrightarrow{\Sigma} \RPF \xrightarrow{|\cdot|} \mathsf{Top}.$$ 
Recall that, up to homeomorphism, the tropicalization is independent of choice of admissible collection $\Sigma$, see Theorem~\ref{thm:there is one tropicalization}. We denote this tropicalization by $A_{p,q, J, \Lambda}^\trop$.

\smallskip

By Theorem~\ref{thm:weight-0-comparison}, we have the following comparison result.

\begin{theorem}\label{thm:type-A-comparison}
    There is a canonical isomorphism
    $$W_0H_c^*\big( \Gamma\backslash \cH_{p,q}; \Q\big) \cong H_c^*\big( A_{p,q,J,\Lambda}^\trop;\Q\big).$$
\end{theorem}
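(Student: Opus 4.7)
The plan is to recognize this statement as a direct specialization of Theorem~\ref{thm:weight-0-comparison} once we verify that the data $(\G, \cH_{p,q}, \Gamma, \Sigma)$ built in Section~\ref{sec:type-A-generalities} fit into the framework of Section~\ref{sec:generalities} and~\ref{sec:isotropic}. First, I would check that $(\G, \cH_{p,q})$ is a connected Shimura datum with $\cH_{p,q}$ an irreducible Hermitian symmetric domain: this follows from the classification in Proposition~\ref{prop: classification of Shimura datum}, since $\G$ is the special unitary group $\SU(V, J)$ for $J$ a skew-Hermitian form of signature $(p,q)$ on a finite-dimensional $E$-vector space $V$, placing it in Lie type $A$. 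Moreover, $G = \G(\R)^\circ$ acts transitively on $\cH_{p,q}$ through the stated formulas.

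Second, I would confirm that $\Gamma = \SU(\Lambda, J|_\Lambda)$ is a discrete arithmetic subgroup of $G \cap \G(\Q)$: this is standard, as $\Gamma$ is precisely the $\Z$-points of $\G$ with respect to the integral structure provided by the $R$-lattice $\Lambda \subset V$. Thus the quotient $X = \Gamma\backslash \cH_{p,q}$ is a locally symmetric variety of the kind appearing in Definition~\ref{def:tropicalization}, and moreover the ACD hypotheses (Hypothesis~\ref{hypothesis:ACD}) are satisfied, so the material of Section~\ref{sec:isotropic} applies.

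Third, I would translate the $\Gamma_\cW$-admissible collection $\Sigma$ into a $\Gamma$-admissible collection on $\cF(\G, \cH_{p,q}, \Gamma)$ via the bijection of Theorem~\ref{thm:main-4}(2) and Remark~\ref{rem:equivalence-admissible-collections}. Here one uses that the natural isomorphism $\eta$ from Theorem~\ref{thm:eta-isom} identifies, for each isotropic $E$-subspace $W \subset V$ corresponding to a rational boundary component $F$, the real vector space $\Herm(W_\C^*)$ with $U(F)$, the lattice $\Herm(\Lambda_W^*)$ with $U(F)_\Z$, and the positive-definite cone $\PD(W_\C^*)$ with $C(F)$ (as spelled out in Subsection~\ref{subsub:type-A} and Remark~\ref{rem:typeA-pd-cone}); one also uses that $\Gamma_W = \Gamma_F$ by construction. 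The compatibility conditions of Definition~\ref{def:Gamma W admissible collection} transport to the compatibility of Definition~\ref{def-admissible-collection} under this identification. By Definition~\ref{def:tropicalization} and Theorem~\ref{thm:there is one tropicalization}, the space $A_{p,q,J,\Lambda}^\trop$ is canonically homeomorphic to $X^{\Sigma,\trop}$.

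Finally, I would apply Theorem~\ref{thm:weight-0-comparison} directly to obtain the canonical isomorphism $W_0 H^*_c(\Gamma\backslash \cH_{p,q}; \Q) \cong H^*_c(A_{p,q,J,\Lambda}^\trop; \Q)$. There is no real obstacle here beyond unwinding definitions; the entire content of the theorem is that the type-A setup of Section~\ref{sec:type-A-generalities} is a special instance of the general machinery, so the comparison result is inherited for free.
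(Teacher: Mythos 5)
Your proposal is correct and follows exactly the route the paper takes: the paper states Theorem~\ref{thm:type-A-comparison} as an immediate consequence of Theorem~\ref{thm:weight-0-comparison}, having already set up $(\G,\cH_{p,q},\Gamma,\Sigma)$ as a special case of the general framework in Sections~\ref{sec:generalities} and~\ref{sec:isotropic}. You have simply made explicit the verifications (connected Shimura datum of type A, arithmeticity of $\Gamma$, translation between $\Gamma_\cW$- and $\Gamma$-admissible collections via Theorem~\ref{thm:eta-isom} and Remark~\ref{rem:equivalence-admissible-collections}) that the paper leaves implicit.
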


\begin{remark}\label{rem:A_p,q-Kottwitz}
    Following \cite{Kottwitz}, 
    let 
$\cA_{p,q,\psi}$ be the moduli space of polarized abelian varieties with CM by $R$ and polarization\footnote{In fact, Kottwitz considers a slightly larger moduli space including all polarizations belonging to the genus of $\psi$.}  $\psi = J|_\Lambda$.
By~\cite{Kottwitz}, the functor
    $\cA_{p,q,\psi}$
    is representable by a Deligne--Mumford stack 
    $M$
    over a finite extension of $E$.
    If additionally $p + q$ is even and the narrow class group $\Cl^+(R)$ is trivial, then 
    $M(\C)$
    is isomorphic to
    $\Gamma \backslash \calH_{p,q}$. 
    In this case, the comparison in Theorem~\ref{thm:type-A-comparison} gives a natural isomorphism
    $$W_0H_c^*\big(\cA_{p,q,\psi}; \Q\big) \cong H_c^*\big(A_{p,q, J, \Lambda}^\trop; \Q).$$
\end{remark}

\medskip

\subsection{Orbits and stabilizers of isotropic subspaces} \label{subsec:inflation}
Towards understanding the topology of the tropicalization $A^\trop_{p,q,J,\Lambda}$, we study the number of $\Gamma$-orbits of isotropic subspaces and the stabilizers $\Gamma_W$. Under the additional assumptions that $p = q$, that $R$ is a principal ideal domain, and that the lattice $\Lambda$ is unimodular, we prove that $V$ contains a unique orbit of isotropic subspaces in each dimension $0\leq r \leq q$ and we describe their stabilizers in Theorem~\ref{thm:type-A-unique-orbit-and-stabilizers}. 

\smallskip

As before, $E$ is an imaginary quadratic extension of $\QQ$ with ring of integers $R$.
A good example is $E = \QQ(\sqrt{-2})$ and $R= \ZZ[\sqrt{-2}]$.  
We will further assume that $R$ is a PID, so that all $R$-lattices are in fact free $R$-modules. 
By results of Gauss \cite{gauss01}, Heegner \cite{heegner52}, Stark \cite{stark67}, Baker \cite{baker66}, and later Serre \cite{serre89}, the ring of integers $R$ is a PID if and only if $E\cong \QQ(\sqrt{-d})$ for 
\begin{equation}\label{eq:values-d-such-that-R-PID}
    d\in\left\{
1, \, 2, \, 3, \, 7, \, 11, \, 19, \, 43, \, 67, \,163
\right\}.
\end{equation}

While dualizing vector spaces turns injections into surjections, the analogous statement for lattices requires the following notion of relative saturation.
Given $R$-lattices $\Lambda_1 \subseteq \Lambda_2$, we say that $\Lambda_1$ is \emph{saturated} in $\Lambda_2$ if $\Lambda_2 / \Lambda_1$ is torsion-free. 
Recall that $\Lambda^*$ denotes the conjugate dual $R$-module of an $R$-lattice $\Lambda$. 
\begin{lemma}\label{lem: surjective dual for saturated lattices}
    If $\Lambda_1 \subseteq \Lambda_2$ are $R$-lattices such that $\Lambda_1$ is saturated in $\Lambda_2$, then the natural restriction map $\mathrm{res} : \Lambda_2^* \to \Lambda_1^*$ is surjective.
\end{lemma}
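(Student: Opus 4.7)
The plan is to reduce to the statement that any $\sigma$-linear map $\Lambda_1 \to R$ extends to $\Lambda_2$, exploiting the fact that saturated submodules of free modules over a PID are direct summands. Since $R$ is the ring of integers of an imaginary quadratic field with class number one, it is a PID, so every finitely generated torsion-free $R$-module is free. Applying this to $\Lambda_2/\Lambda_1$, which is finitely generated (as a quotient of $\Lambda_2$) and torsion-free by the saturation hypothesis, we conclude that $\Lambda_2/\Lambda_1$ is a free $R$-module.

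Therefore the short exact sequence
\[0 \to \Lambda_1 \to \Lambda_2 \to \Lambda_2/\Lambda_1 \to 0\]
of $R$-modules splits, yielding an $R$-module decomposition $\Lambda_2 \cong \Lambda_1 \oplus C$ for some free $R$-module $C$ mapping isomorphically onto $\Lambda_2/\Lambda_1$. Now, given any $\phi \in \Lambda_1^*$, define $\widetilde{\phi} \in \Lambda_2^*$ by $\widetilde{\phi}(\lambda_1 + c) = \phi(\lambda_1)$ for $\lambda_1 \in \Lambda_1$ and $c \in C$. One checks directly that $\widetilde{\phi}$ is $\sigma$-linear, since $\phi$ is, and that $\mathrm{res}(\widetilde{\phi}) = \phi$. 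Hence $\mathrm{res}$ is surjective. There is no real obstacle here; the only point to be careful about is that saturation, together with $R$ being a PID, is exactly what is needed to guarantee splitting, and that $\sigma$-linearity is preserved under the direct-sum extension (which is immediate from the formula above).
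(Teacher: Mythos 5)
Your proof is correct and carries the same essential idea as the paper's — both arguments hinge on the quotient $\Lambda_2/\Lambda_1$ being torsion-free and hence ``nice enough'' to split the sequence $0 \to \Lambda_1 \to \Lambda_2 \to \Lambda_2/\Lambda_1 \to 0$ — but you invoke the stronger PID hypothesis, while the paper deliberately avoids it. The paper's proof observes only that $R$ is a Dedekind domain, so the torsion-free quotient $\Lambda_2/\Lambda_1$ is projective, hence $\mathrm{Ext}^1_R(\Lambda_2/\Lambda_1, R) = 0$, and surjectivity of the restriction map follows from the dual long exact sequence. That route buys generality: the paper explicitly remarks that this lemma ``holds for $R$ the ring of integers of any imaginary quadratic field,'' not just the nine Heegner fields where $R$ is a PID — a generality that matters because the lemma is cited in later arguments (e.g.\ Lemma 5.8) that are also intended to apply in the non-PID setting. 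Your version buys concreteness: you get an explicit direct-sum decomposition $\Lambda_2 \cong \Lambda_1 \oplus C$ and an explicit extension $\widetilde{\phi}$, with the $\sigma$-linearity check spelled out (which the Ext argument handles silently, since $\Lambda^*$ is just the ordinary $R$-linear dual of the conjugate module $\overline{\Lambda}$). Both are valid under the running hypotheses of the subsection; just be aware that your proof would not transfer to the more general Dedekind case the authors had in mind.
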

\begin{proof}
    This result holds for $R$ the ring of integers of any imaginary quadratic field. Since $\Lambda_1$ is saturated in $\Lambda_2$, the quotient module $\Lambda_2 / \Lambda_1$ is torsion-free. Since $R$ is a Dedekind domain, the module $\Lambda_2 / \Lambda_1$ is in fact projective. Therefore, $\mathrm{Ext}^1(\Lambda_2 / \Lambda_1, R) = 0$, so the short exact sequence $0 \to \Lambda_1 \to \Lambda_2 \to \Lambda_2 / \Lambda_1 \to 0$ yields a dual exact sequence $0 \to (\Lambda_2 / \Lambda_1)^* \to \Lambda_2^* \to \Lambda_1^* \to 0$.   
\end{proof}

We continue to let $p\ge q > 0$ be positive integers,
let $V$ be a $(p+q)$-dimensional vector space over $E$, and let $J:V \times V \to E$ be a skew-Hermitian form of signature $(p,q)$ and Witt index $q$. 
Since $J$ in nondegenerate, it induces an isomorphism $\iota_J : V \to V^*$ via  $v \mapsto J(v,-)$.
For every vector subspace $W\subset V$, we will let $W' \subset V$ denote a choice of subspace such that the composition
$$W' \xrightarrow{\quad \subset \quad} V \xrightarrow{\quad \iota_J \quad } V^* \xrightarrow{\quad \mathrm{res} \quad} W^*$$
is an isomorphism. If $W$ is isotropic with respect to $J$, we will always choose $W'$ to also be isotropic and consider a direct sum decomposition
\begin{equation} \label{eq:direct-sum-decomp}
    V = W \oplus W' \oplus U
\end{equation}
such that $U$ is orthogonal to both $W$ and $W'$ under $J$.
Such a choice is possible, see Subsection~\ref{subsub:type-A}.

Before we proceed, we prove a few technical lemmas that show our lattices behave well with respect to the decomposition~\eqref{eq:direct-sum-decomp}. 
Consider an $R$-lattice $\Lambda \subset V$ such that $J$ restricts to a form $\Lambda \times \Lambda \to R$; in particular, we have a natural inclusion $\iota_J(\Lambda) \subseteq \Lambda^*$ if we think of $\Lambda^*$ as a subset of $V^*$ via tensoring by $\Q$. We say that such a lattice $\Lambda$ is \emph{unimodular} if $\iota_J(\Lambda) = \Lambda^*.$

\begin{lemma}\label{lem:lattice-orthogonal-decomposition}
    Let $W \subseteq V$ be an isotropic subspace and $V = W \oplus W' \oplus U$ a direct sum decomposition as in~\eqref{eq:direct-sum-decomp}. Let $\Lambda \subseteq V$ be a full-rank unimodular $R$-lattice.
    Then
    $$
        \Lambda = (\Lambda \cap W) \oplus (\Lambda \cap W') \oplus (\Lambda \cap U). 
    $$
\end{lemma}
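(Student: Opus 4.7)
The plan is to prove the splitting by induction on $r := \dim_E W$, peeling off one hyperbolic plane at a time and exploiting the unimodularity of $\Lambda$.

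\textbf{Setup and reduction.} Set $\Lambda_W := \Lambda \cap W$, $\Lambda_{W'} := \Lambda \cap W'$, and $\Lambda_U := \Lambda \cap U$. Each is a saturated sublattice of $\Lambda$, and since $R$ is a PID, each is free over $R$ of rank equal to the $E$-dimension of the corresponding subspace. The direct sum $\Lambda_0 := \Lambda_W \oplus \Lambda_{W'} \oplus \Lambda_U$ injects into $\Lambda$ via~\eqref{eq:direct-sum-decomp} and has the same total rank, so the task reduces to establishing the reverse inclusion $\Lambda \subseteq \Lambda_0$. The base case $r = 0$ forces $W = W' = 0$ and $U = V$, in which case the claim is trivial.

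\textbf{Inductive step.} Assume the claim for isotropic subspaces of dimension less than $r$, with $r \geq 1$. Pick a primitive vector $e \in \Lambda_W$ (one extending to an $R$-basis of the free module $\Lambda_W$). Unimodularity of $\Lambda$ combined with Lemma~\ref{lem: surjective dual for saturated lattices} produces a surjection $\Lambda \twoheadrightarrow \Lambda_W^*$ given by $v \mapsto J(v, -)|_{\Lambda_W}$. The key step $(\star)$ is to produce $f \in \Lambda_{W'}$ with $J(f, e) = 1$. Granting $(\star)$, the plane $H := Ee \oplus Ef$ is hyperbolic and the sublattice $M := Re \oplus Rf \subset \Lambda$ is unimodular (Gram determinant a unit); a standard argument, using the unimodularity of both $\Lambda$ and $M$, yields the orthogonal splitting
\[ \Lambda = M \oplus (\Lambda \cap H^\perp), \]
and $\Lambda \cap H^\perp$ is unimodular for $J|_{H^\perp}$. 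The decomposition~\eqref{eq:direct-sum-decomp} restricts compatibly to $H^\perp = (W \cap H^\perp) \oplus (W' \cap H^\perp) \oplus U$, with $W \cap H^\perp$ isotropic of dimension $r-1$. Applying the inductive hypothesis to $(H^\perp, \Lambda \cap H^\perp, W \cap H^\perp)$ and reassembling with $M$ yields $\Lambda = \Lambda_0$.

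\textbf{Main obstacle: step $(\star)$.} The surjection above yields some $\tilde{f} \in \Lambda$ with $J(\tilde{f}, e) = 1$. Writing $\tilde{f} = w + w' + u$ in the given decomposition, isotropy of $W$ and orthogonality of $U$ with $W$ force $J(w', e) = 1$. Setting $f := w'$ would finish the step, but $w' \in W'$ need not a priori lie in $\Lambda$, and the naive attempt to correct this by subtracting $w$ and $u$ from $\tilde{f}$ begs the question, since those are precisely the lattice-membership assertions we want. Step $(\star)$ therefore reduces to showing that $\iota_J$ restricts to a surjection $\Lambda_{W'} \twoheadrightarrow \Lambda_W^*$—equivalently, that $\Lambda_{W'}$ pairs perfectly with $\Lambda_W$ under $J$. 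This perfect-pairing property is the essential compatibility between the decomposition~\eqref{eq:direct-sum-decomp} and the unimodular lattice $\Lambda$, and is the crux of the proof; once it is established, $f$ is the unique preimage in $\Lambda_{W'}$ of the functional $J(\tilde{f}, -)|_{\Lambda_W}$, and the induction proceeds.
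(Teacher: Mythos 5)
Your induction is set up reasonably, but you have left the essential step unproved and you say so yourself: step $(\star)$, that $\iota_J$ restricts to a surjection $\Lambda\cap W'\twoheadrightarrow(\Lambda\cap W)^*$, is "the crux," and nothing in your argument establishes it. This is a genuine gap, not a routine verification, for two reasons. First, in the paper's logical order this perfect-pairing statement is exactly Lemma~\ref{lem: image of dual of intersection of lattice and subspace}, whose proof \emph{uses} the present lemma; you cannot appeal to it without circularity, and any self-contained proof of $(\star)$ would essentially already be a proof of the lemma (once $\Lambda\cap(W\oplus W')=(\Lambda\cap W)\oplus(\Lambda\cap W')$ is known, the remaining bookkeeping is the easy part). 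Second, $(\star)$ does not follow formally from unimodularity of $\Lambda$ together with the conditions listed in~\eqref{eq:direct-sum-decomp} alone, because it is sensitive to how $W'$ sits relative to $\Lambda$: take $V=E^2$ with $J=\left(\begin{smallmatrix}0&1\\-1&0\end{smallmatrix}\right)$, $\Lambda=R^2$, $W=Ee_1$, and $W'=E(e_2+\tfrac12 e_1)$, which is isotropic and pairs nondegenerately with $W$. Then $\Lambda\cap W'=R(e_1+2e_2)$, the functional $J(e_1+2e_2,-)$ restricts on $Re_1$ to $2$ times a generator of $(\Lambda\cap W)^*$, and $(\Lambda\cap W)\oplus(\Lambda\cap W')=Re_1\oplus 2Re_2$ is a proper sublattice of $\Lambda$. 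So any correct argument must do more than manipulate the stated hypotheses; it has to exploit how an adapted $W'$, or an adapted hyperbolic basis, is actually produced.

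The paper's proof takes an entirely different and much shorter route: the statement is local, it holds at each completion $\Lambda_{\mathfrak{p}}$ by the proof of Shimura's Proposition 4.7 (which constructs a hyperbolic basis of the local lattice with the $x_i$ taken in $W$ and the $y_i$ in $W'$), and one concludes by the local-global principle for lattices over Dedekind domains. If you want to retain your global, inductive strategy, you must replace $(\star)$ by an honest construction of a vector $f\in\Lambda\cap W'$ with $J(f,e)=1$ — that construction is where all the content lives — or else adopt the localization argument.
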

\begin{proof}
    In fact, this proof carries through for $R$ the ring of integers of any imaginary quadratic field. The statement is true for the completion at a prime $\frakp$ from the proof of \cite[Proposition 4.7]{shimura} as one can choose the $x_i$ to be in $W$ and the $y_i$ to be in $W'$.
    By the local-global principle for lattices over Dedekind domains, the result follows.
\end{proof}

For $R$-modules or $E$-vector spaces $A \subset B$, we denote by $\mathrm{res}_{A} : B^* \to A^*$ the natural restriction map. 

\begin{lemma}\label{lem: image of dual of intersection of lattice and subspace}
    Let $\Lambda \subseteq V$ be a full-rank unimodular $R$-lattice. Let $W \subseteq V$ be an isotropic subspace and $W'$ as in~\eqref{eq:direct-sum-decomp}.
    Then
    \begin{equation}\label{eq: modular lattice dual subspace}
        \mathrm{res}_{\Lambda \cap W}( \iota_J(\Lambda \cap W')) = (\Lambda \cap W)^*.
    \end{equation} 
\end{lemma}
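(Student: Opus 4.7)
The plan is to prove the two inclusions separately, using Lemmas~\ref{lem: surjective dual for saturated lattices} and~\ref{lem:lattice-orthogonal-decomposition}, together with the unimodularity hypothesis $\iota_J(\Lambda) = \Lambda^*$.

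The inclusion $\subseteq$ is essentially formal. Given $\lambda' \in \Lambda \cap W'$, unimodularity gives $\iota_J(\lambda') \in \iota_J(\Lambda) = \Lambda^*$. Since $\Lambda \cap W \subseteq \Lambda$, restriction of any element of $\Lambda^*$ produces an element of $(\Lambda \cap W)^*$, hence $\mathrm{res}_{\Lambda \cap W}(\iota_J(\lambda')) \in (\Lambda \cap W)^*$.

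For the reverse inclusion $\supseteq$, first observe that $\Lambda \cap W$ is saturated in $\Lambda$: the quotient $\Lambda/(\Lambda \cap W)$ injects into $V/W$, which is torsion-free. Therefore, by Lemma~\ref{lem: surjective dual for saturated lattices}, the restriction map $\mathrm{res}_{\Lambda\cap W}\col \Lambda^* \to (\Lambda \cap W)^*$ is surjective. Combined with unimodularity $\Lambda^* = \iota_J(\Lambda)$, this gives $\mathrm{res}_{\Lambda \cap W}(\iota_J(\Lambda)) = (\Lambda \cap W)^*$. Now, by Lemma~\ref{lem:lattice-orthogonal-decomposition}, every $\lambda \in \Lambda$ decomposes uniquely as $\lambda = \lambda_W + \lambda_{W'} + \lambda_U$ with the three summands in $\Lambda \cap W$, $\Lambda \cap W'$, and $\Lambda \cap U$ respectively. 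For any $w \in \Lambda \cap W$ we have $J(\lambda_W, w) = 0$ (since $W$ is isotropic) and $J(\lambda_U, w) = 0$ (since $U \perp W$), so
\[
\mathrm{res}_{\Lambda \cap W}(\iota_J(\lambda)) = \mathrm{res}_{\Lambda \cap W}(\iota_J(\lambda_{W'})).
\]
This shows that $\mathrm{res}_{\Lambda \cap W}(\iota_J(\Lambda \cap W'))$ already exhausts $(\Lambda \cap W)^*$, proving $\supseteq$.

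The main conceptual point, and the only spot where the hypothesis of unimodularity is genuinely used, is in identifying $\Lambda^*$ with $\iota_J(\Lambda)$ so that we can parametrize elements of $(\Lambda \cap W)^*$ by lattice vectors and then exploit the isotropy/orthogonality of the decomposition~\eqref{eq:direct-sum-decomp}. Everything else is bookkeeping with saturation and restriction, and there is no serious obstacle to expect.
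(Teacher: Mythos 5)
Your proof is correct and follows essentially the same route as the paper's: saturation of $\Lambda \cap W$ in $\Lambda$ together with Lemma~\ref{lem: surjective dual for saturated lattices} gives surjectivity of the restriction map, unimodularity replaces $\Lambda^*$ by $\iota_J(\Lambda)$, and the decomposition of Lemma~\ref{lem:lattice-orthogonal-decomposition} together with isotropy of $W$ and orthogonality of $U$ reduces $\iota_J(\Lambda)$ to $\iota_J(\Lambda \cap W')$ upon restriction. The paper presents this as a single chain of equalities and leaves the last step ("follows from the lattice decomposition") terse, whereas you helpfully unpack it by writing $\lambda = \lambda_W + \lambda_{W'} + \lambda_U$ and observing $J(\lambda_W,w) = J(\lambda_U,w) = 0$ for $w \in W$; that is the intended content, and your $\subseteq$/$\supseteq$ split is just a small difference in presentation.
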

\begin{proof}
Since $\Lambda \cap W$ is saturated in $\Lambda$, Lemma~\ref{lem: surjective dual for saturated lattices} implies that the restriction map $\Lambda^* \to (\Lambda \cap W)^*$ is surjective.
Then, using the unimodularity of $\Lambda$, we obtain
$$(\Lambda \cap W)^* = \mathrm{res}_{\Lambda \cap W}(\Lambda^*) 
= \mathrm{res}_{\Lambda \cap W}(\iota_J(\Lambda)) =  \mathrm{res}_{\Lambda \cap W} (\iota_J(\Lambda \cap W')),$$
where the last equality follows from the lattice decomposition in Lemma~\ref{lem:lattice-orthogonal-decomposition}. 
\end{proof}

The Proposition below is standard, but we include a self-contained proof.

\begin{proposition}\label{prop:standard-skew-Herm-form}
     Given an $E$-vector space $V$ of dimension $2q$, a skew-Hermitian form $J$ of signature $(q,q)$ and Witt index $q$, and a full-rank unimodular lattice $\Lambda \subset V$, there exists a choice of basis $\Lambda \cong R^{2q}$ such that
    \begin{equation}\label{eq:standard-skew-Herm}
        J|_\Lambda = \begin{pmatrix}
        0 & 1_q\\
        -1_q & 0
    \end{pmatrix}.
    \end{equation}
    Moreover, if $W \subset V$ is an isotropic subspace of dimension $r \leq q$, we can pick this basis such that $W$ is the $E$-span of the first $r$ basis vectors.
\end{proposition}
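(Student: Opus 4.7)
The plan is to reduce the statement to a lattice-level analogue of the usual hyperbolic decomposition for a nondegenerate skew-Hermitian form, combining the PID hypothesis on $R$ with the structural Lemmas~\ref{lem:lattice-orthogonal-decomposition} and~\ref{lem: image of dual of intersection of lattice and subspace} just established.

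First, I would extend the given isotropic subspace $W$ (of dimension $r$) to a maximal isotropic $E$-subspace $\widetilde{W} \subset V$ of dimension $q$, which exists since $J$ has Witt index $q$. As $\dim_E V = 2q$ and $\widetilde{W}$ is maximally isotropic, $\widetilde{W}^\perp = \widetilde{W}$; standard Witt theory then produces a second maximal isotropic subspace $\widetilde{W}' \subset V$ with $V = \widetilde{W} \oplus \widetilde{W}'$, corresponding to the decomposition~\eqref{eq:direct-sum-decomp} with $U = 0$. By Lemma~\ref{lem:lattice-orthogonal-decomposition}, this induces a splitting $\Lambda = (\Lambda \cap \widetilde{W}) \oplus (\Lambda \cap \widetilde{W}')$ of $R$-modules, each summand being free of rank $q$ over the PID $R$.

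Next, I would choose an $R$-basis $e_1, \ldots, e_q$ of $\Lambda \cap \widetilde{W}$ such that $e_1, \ldots, e_r$ is an $R$-basis of $\Lambda \cap W$; this is possible because $\Lambda \cap W$ is saturated in $\Lambda \cap \widetilde{W}$ (as $W$ is an $E$-subspace of $\widetilde{W}$), so the quotient is torsion-free, hence free over $R$, and the relevant short exact sequence splits. The heart of the argument is then constructing a dual half-basis $f_1, \ldots, f_q$ of $\Lambda \cap \widetilde{W}'$ satisfying $J(e_i, f_j) = \delta_{ij}$. For this, consider the composition
\[
\Lambda \cap \widetilde{W}' \xrightarrow{\ \iota_J\ } \iota_J(\Lambda \cap \widetilde{W}') \xrightarrow{\ \mathrm{res}\ } (\Lambda \cap \widetilde{W})^*,
\]
which is surjective by Lemma~\ref{lem: image of dual of intersection of lattice and subspace}, and injective because any $v \in \Lambda \cap \widetilde{W}'$ with $J(v,-)$ vanishing on $\Lambda \cap \widetilde{W}$, hence on its $E$-span $\widetilde{W}$, lies in $\widetilde{W}^{\perp} \cap \widetilde{W}' = \widetilde{W} \cap \widetilde{W}' = 0$. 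Since source and target are both free of rank $q$, this composition is an isomorphism, so pulling back the basis of $(\Lambda \cap \widetilde{W})^*$ dual to $e_1, \ldots, e_q$ produces the required $f_1, \ldots, f_q$. The Gram matrix of $J$ in the basis $e_1, \ldots, e_q, f_1, \ldots, f_q$ of $\Lambda$ is then exactly~\eqref{eq:standard-skew-Herm}, using the skew-Hermitian identity $J(f_j, e_i) = -\overline{J(e_i, f_j)} = -\delta_{ij}$ and the isotropy of $\widetilde{W}$ and $\widetilde{W}'$.

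The main obstacle is the careful bookkeeping around dualization at the integral level: specifically, upgrading the restriction map $\Lambda \cap \widetilde{W}' \to (\Lambda \cap \widetilde{W})^*$ from a surjection to an isomorphism. This is precisely where the unimodularity of $\Lambda$ is used, via Lemma~\ref{lem: image of dual of intersection of lattice and subspace}. Once that point and the freeness provided by the PID hypothesis on $R$ are in hand, the construction reduces to the familiar manufacture of a hyperbolic basis over a field.
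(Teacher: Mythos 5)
Your proof is correct, and it takes a genuinely different route from the paper's. The paper's proof first handles the special case $r = q$ directly (decompose $\Lambda = (\Lambda\cap W)\oplus(\Lambda\cap W')$ and take a basis of $\Lambda\cap W$ with its conjugate-dual basis), and then reduces general $r < q$ to this case by applying it recursively to the orthogonal summand $\Lambda\cap U$, invoking Witt's cancellation theorem to check that $J|_U$ again has signature $(q-r,q-r)$ and Witt index $q-r$. You instead sidestep the orthogonal complement $U$ entirely by extending $W$ to a maximal isotropic subspace $\widetilde{W}$, at the cost of needing to extend an $R$-basis of $\Lambda\cap W$ to one of $\Lambda\cap\widetilde W$; this is where your saturation-plus-PID argument does real work, and where the paper's proof has no analogous step (the paper's recursion never requires extending a basis across a saturated sublattice). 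Both approaches rest on Lemmas~\ref{lem:lattice-orthogonal-decomposition} and~\ref{lem: image of dual of intersection of lattice and subspace}, and both lean on some version of Witt theory: the paper on Witt cancellation for the anisotropic reduction, you on Witt extension of an isotropic subspace to a maximal one. Your route is arguably more direct (one pass, no recursion), whereas the paper's is more modular, isolating the $r=q$ case as the structural core. One small sign-convention point worth noting in both write-ups: with $\iota_J(v)=J(v,-)$ and $f_j$ chosen so that $\iota_J(f_j)$ restricts to $e_j^*$, the resulting Gram matrix in the order $(e_1,\dots,e_q,f_1,\dots,f_q)$ is $\bigl(\begin{smallmatrix}0 & -1_q\\ 1_q & 0\end{smallmatrix}\bigr)$; one fixes this to match~\eqref{eq:standard-skew-Herm} by either taking $\iota_J(f_j)=-e_j^*$ or reversing which half of the basis is listed first.
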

\begin{proof}
    First, consider a $q$-dimensional isotropic subspace $W \subset V$. By Lemma~\ref{lem:lattice-orthogonal-decomposition}, we have a decomposition
    $\Lambda = (\Lambda\cap W) \oplus (\Lambda \cap W')$. Therefore, we can construct a desired basis for $\Lambda$ by considering a basis for the free rank $q$ module $\Lambda \cap W$, together with the ``conjugate dual'' basis for $\Lambda \cap W'$ obtained via Lemma~\ref{lem: image of dual of intersection of lattice and subspace}.

    Let now $W \subset V$ be any isotropic subspace, and let $r \leq q$ be its dimension. 
    Under the lattice decomposition from Lemma~\ref{lem:lattice-orthogonal-decomposition}, we can assume that 
    $$J|_\Lambda = \begin{pmatrix}
        0 & 1_r & 0\\
        -1_r & 0 & 0\\
        0 & 0 & J|_{\Lambda \cap U}
    \end{pmatrix}$$
    by considering any basis for $\Lambda \cap W$ and its ``conjugate dual'' basis for $\Lambda \cap W'$, see Lemma~\ref{lem: image of dual of intersection of lattice and subspace}. The conclusion then follows by applying the construction in the first paragraph of the proof to the full-rank unimodular lattice $\Lambda \cap U \subset U$, noting that the skew-Hermitian form $J|_{U}$ has signature $(q-r,q-r)$ and Witt index $q-r$ by Witt's cancellation theorem \cite[Corollary~7.9.2(i)]{scharlau}.
\end{proof}

The following Lemma will be useful for extending maps from an isotropic subspace. 

\begin{lemma}\label{lem: induced dual map on lattices}
    Let $\Lambda\subset V$ be a full-rank unimodular lattice and let $W_1, W_2 \subset V$ be isotropic subspaces with corresponding $W_1', W_2'$.
    Let $\phi : \Lambda \cap W_1 \to \Lambda \cap W_2$ be an isomorphism of $R$-modules, 
    and consider the isomorphism of vector spaces 
    $$\phi_{\Q}' : W_1' \xrightarrow{\mathrm{res}_{W_1} \circ \iota_J} W_1^* \xrightarrow{(\phi_{\Q}^*)^{-1}} W_2^* \xrightarrow{(\mathrm{res}_{W_2} \circ \iota_J)^{-1}} W_2'.$$
    Then 
    $\phi_{\Q}'(\Lambda \cap W_1') = \Lambda \cap W_2'.$
\end{lemma}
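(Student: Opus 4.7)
The plan is to show that, under the identification of $\Lambda \cap W_i'$ with $(\Lambda \cap W_i)^*$ supplied by Lemma~\ref{lem: image of dual of intersection of lattice and subspace}, the map $\phi_\Q'$ is carried to the inverse of the $R$-dual of $\phi$. Since $\phi$ is an isomorphism of $R$-modules, so is its $R$-dual, and the conclusion $\phi_\Q'(\Lambda \cap W_1') = \Lambda \cap W_2'$ follows.

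First, I would promote $\mathrm{res}_{W_i} \circ \iota_J$ to an isomorphism of $R$-modules $\Lambda \cap W_i' \xrightarrow{\,\cong\,} (\Lambda \cap W_i)^*$. Surjectivity of the restriction to lattices is exactly Lemma~\ref{lem: image of dual of intersection of lattice and subspace}, which uses the unimodularity of $\Lambda$. Injectivity is automatic since $\mathrm{res}_{W_i} \circ \iota_J \col W_i' \to W_i^*$ is already an isomorphism of $E$-vector spaces by the choice of $W_i'$ in \eqref{eq:direct-sum-decomp}, and $\Lambda \cap W_i'$ embeds in $W_i'$.

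Second, I would check that $\phi_\Q^* \col W_2^* \to W_1^*$ carries the sublattice $(\Lambda \cap W_2)^*$ isomorphically onto $(\Lambda \cap W_1)^*$, recovering the honest $R$-dual map $\phi^*$ of $\phi$. For any $f \in W_2^*$, the element $\phi_\Q^*(f) = f \circ \phi_\Q$ lies in $(\Lambda \cap W_1)^*$ if and only if $f(\phi(\Lambda \cap W_1)) \subseteq R$; since $\phi$ is an $R$-module isomorphism $\Lambda \cap W_1 \to \Lambda \cap W_2$, this condition is equivalent to $f \in (\Lambda \cap W_2)^*$. Therefore $\phi_\Q^*$ restricts to $\phi^*$, and its inverse $(\phi_\Q^*)^{-1}$ restricts to $(\phi^*)^{-1}$, which is an isomorphism of $R$-modules because $\phi$ is.

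Finally, I would assemble the three steps: by definition $\phi_\Q'$ is the composite of $\mathrm{res}_{W_1} \circ \iota_J$, then $(\phi_\Q^*)^{-1}$, then $(\mathrm{res}_{W_2} \circ \iota_J)^{-1}$. Each of these restricts to an $R$-module isomorphism between the relevant lattice and $R$-dual lattice by the preceding two steps, so the composite restricts to an $R$-module isomorphism $\Lambda \cap W_1' \xrightarrow{\,\cong\,} \Lambda \cap W_2'$, proving the claim.

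I do not anticipate any serious obstacle: the content of the lemma is a straightforward bookkeeping of how duality intertwines with intersection with a unimodular lattice, and the hard input (Lemma~\ref{lem: image of dual of intersection of lattice and subspace}) is already in place. The only thing requiring a moment of care is keeping track of conjugate-linearity versus $R$-linearity in the duality $\iota_J$; but since $J$ is $E$-bilinear in the first argument after the skew-Hermitian convention~\eqref{eq: definition of hermitian for J}, and the dualities throughout are taken in the conjugate-linear sense consistently, the compatibilities go through without change.
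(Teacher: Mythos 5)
Your proof is correct and takes essentially the same route as the paper's: both use Lemma~\ref{lem: image of dual of intersection of lattice and subspace} to identify $\mathrm{res}_{W_i}\circ\iota_J$ as an $R$-module isomorphism $\Lambda \cap W_i' \to (\Lambda\cap W_i)^*$, observe that $\phi_\Q^*$ restricts to the honest $R$-dual $\phi^*$, and compose. Your version simply spells out the three intermediate isomorphisms that the paper's two-line proof leaves implicit.
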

\begin{proof}
    Consider the induced $R$-module isomorphism $\phi^* : (\Lambda \cap W_2)^* \to (\Lambda \cap W_1)^*$. By Lemma~\ref{lem: image of dual of intersection of lattice and subspace}:  
    $$ \phi_{\Q}^*(\mathrm{res}_{\Lambda \cap W_2}(\iota_J(\Lambda \cap W_2'))) = \mathrm{res}_{\Lambda \cap W_1}(\iota_J(\Lambda \cap W_1')).$$ 
    The definition of $\phi_{\Q}'$ implies the result.
\end{proof}

The following Lemma and its proof have been communicated to us by Ryan Chen. 

\begin{lemma} \label{lem: single orbit for maximal isotropic unimodular}
    Assume $p = q$. If $\Lambda \subset V$ is a full-rank unimodular lattice and 
    $$g \in \mathrm{U}(\Lambda, J \vert_{\Lambda}) := \big\{X \in \GL(\Lambda) \mid J\big(X(\cdot), X(\cdot)\big) = J(\cdot, \cdot)\big\},$$
    then $\det(g) = u^2$ for some unit $u \in R^{\times}$.
\end{lemma}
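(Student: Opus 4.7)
The plan is as follows. Apply Proposition~\ref{prop:standard-skew-Herm-form} to choose a basis for $\Lambda$ in which $J|_\Lambda = \mattwo{0}{I_q}{-I_q}{0}$, and write $g = \mattwo{A}{B}{C}{D}$ in the corresponding $q\times q$ block form with $A,B,C,D \in \Mat_q(R)$. Expanding $g^T J \bar g = J$ directly yields the block identities $A^* C = C^* A$, $B^* D = D^* B$, and $A^* D - C^* B = I_q$, where $A^* := \bar A^T$. Taking determinants of the unitary condition immediately gives $\det(g)\,\overline{\det(g)} = 1$, so $\det(g) \in R^\times$ is a unit of norm one; the task is to promote this to $\det(g) \in (R^\times)^2$.

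First I would reduce to the case where $A$ is invertible over $E$. Since $g \in \GL_{2q}(E)$, the first $q$ columns of $g$ are $E$-linearly independent, so some $q\times q$ minor of the top half $\vectwo{A}{C}$ is nonzero. Multiplying $g$ on the left by a suitable $J$-preserving signed permutation, built from the Weyl element $w = \mattwo{0}{I_q}{-I_q}{0}$ and its ``partial'' variants that swap individual pairs $e_i \leftrightarrow e_{i+q}$ with the requisite sign, one can arrange that this nonzero minor is $\det(A)$ itself; each such signed permutation lies in $U(\Lambda, J|_\Lambda)$ and has determinant $1$, so the class of $\det(g)$ in $R^\times/(R^\times)^2$ is unchanged. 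With $A$ invertible, the relation $A^* C = C^* A$ rearranges to $(A^*)^{-1} C^* = C A^{-1}$, and then $A^* D - C^* B = I_q$ gives $D = (A^*)^{-1} + (A^*)^{-1} C^* B = (A^*)^{-1} + C A^{-1} B$, so that the Schur complement collapses to
\[
\det(g) \;=\; \det(A)\,\det(D - C A^{-1} B) \;=\; \det(A)\,\det((A^*)^{-1}) \;=\; \det(A)\big/\overline{\det(A)}.
\]

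The final step, and the main obstacle, is to show that $\alpha/\bar\alpha \in (R^\times)^2$ for $\alpha := \det(A) \in R$. A priori one knows only that $(\alpha) = (\bar\alpha)$ as $R$-ideals, so $\alpha$ differs from $\bar\alpha$ by a unit. In an imaginary quadratic PID, a self-conjugate principal ideal is generated by an element of the form $u \cdot m \cdot \pi^e$, where $u \in R^\times$, $m \in \Z$, $\pi$ is the (unique) ramified prime, and $e \in \{0,1\}$; since every unit of $R$ has norm one, a factorization with $e=0$ yields $\alpha/\bar\alpha = u/\bar u = u^2 \in (R^\times)^2$, while $e=1$ contributes the nonsquare factor $\pi/\bar\pi \in R^\times \setminus (R^\times)^2$. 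To rule out the $e=1$ case, I would exploit that $g \in \GL_{2q}(R)$ forces the full collection of $q \times q$ minors of $\vectwo{A}{C}$ to generate the unit ideal of $R$, because the first $q$ columns of $g$ extend to an $R$-basis of $\Lambda$. Combining this coprimality of minors with the Plücker-type identities satisfied by the maximal isotropic submodule $g \cdot \Lambda_W \subset \Lambda$ (where $\Lambda_W$ is spanned by $e_1,\dots,e_q$), which are imposed by conditions $A^* C = C^* A$ and its analogues on larger minors, should force the ramified-prime valuation of $\alpha$ to be even. The crux of the argument is this ideal-theoretic accounting at the ramified prime, which is where I expect the proof to be most delicate.
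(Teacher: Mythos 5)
Your proposal leaves a genuine gap in the last step, and it also takes a different route from the paper. The paper's proof is short and structural: it cites Ikeda's Proposition~12.5, which says that $\mathrm{U}(\Lambda, J|_\Lambda)$ is generated by the stabilizer $M \ltimes N$ of a maximal isotropic summand $\Lambda \cap W$ together with Weyl elements $w_j$. Each Weyl element and each unipotent $n \in N$ has determinant $1$, and each $m \in M$ decomposes as $\phi \oplus \phi'$ with $\phi \in \GL(\Lambda \cap W)$ and $\phi'$ its conjugate-dual (Lemma~\ref{lem: induced dual map on lattices}), so $\det(m) = \det(\phi)/\overline{\det(\phi)} = \det(\phi)^2$ since units of $R$ have norm $1$. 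The lemma follows at once.

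Your steps 1--5 are a valid elementary alternative to identifying the determinant: your Schur-complement computation correctly gives $\det(g) = \det(A)/\overline{\det(A)}$ once $A$ is invertible over $E$. (One subtlety you gloss over: the Weyl reduction replaces $\vectwo{A}{C}$ by a row rearrangement compatible with the pairs $\{i, i+q\}$, so you need a \emph{transversal} set of $q$ rows to be $E$-linearly independent, not merely some $q$-subset. For an arbitrary rank-$q$ matrix this can fail, but it does hold here because $g\Lambda_W$ is isotropic; still, this deserves justification.) The real gap is step 6. You correctly reduce to showing that $v_\pi(\det A)$ is even at the unique ramified prime $\pi$, since the split and inert contributions cancel in $\alpha/\bar\alpha$ and only the ramified parity survives as a nonsquare unit. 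But you do not prove this. ``Combining this coprimality of minors with the Pl\"ucker-type identities \ldots\ should force the ramified-prime valuation of $\alpha$ to be even'' is a plan, not an argument, and it is not at all clear that it goes through: the coprimality of all $q\times q$ minors of $\vectwo{A}{C}$ gives information about the gcd of \emph{all} the minors, while you need parity control of the single distinguished minor $\det(A)$ after the Weyl reduction. This is precisely the difficulty that the paper's citation of Ikeda's generation theorem is designed to avoid --- it replaces a delicate local-at-$\pi$ computation with a global statement about generators whose determinants are visibly squares. If you want a self-contained proof along your lines, you would need to either reprove a version of the integral Bruhat decomposition (which is essentially what Ikeda does) or give the local argument at $\pi$ in full, which I expect to be comparable in length to Ikeda's proof.
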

\begin{proof}
    By \cite[Proposition~12.5]{ikeda}, the group $\mathrm{U}(\Lambda, J \vert_{\Lambda})$ is generated by subgroups $M,N$ and Weyl elements $\{w_j\}$, where $\det(w_j) = 1$ for all $j$ (note that in loc.\ cit.\ there is a sign error in the description of the $w_j$), $N$ is unipotent so $\det(n) = 1$ for all $n \in N$, and $M$ is the stabilizer of $\Lambda \cap W$ for $W$ some fixed $q$-dimensional isotropic subspace. In particular, every $m \in M$ is of the form $m = \phi \oplus \phi'$, with $\phi \in \GL(\Lambda \cap W)$ and $\phi' \in \GL(\Lambda \cap W')$ the lattice isomorphism induced by Lemma~\ref{lem: induced dual map on lattices}. Since $\det(\phi)$ is a unit in $R$, hence it has norm $1$, we get that 
    $$\det(m) = \det(\phi)/\overline{\det(\phi)} = \det(\phi)^2,$$ 
    and the conclusion follows. 
\end{proof}

Let us now state the main result of this section.

\begin{theorem} \label{thm:type-A-unique-orbit-and-stabilizers}
    Let $E$ be an imaginary quadratic extension of $\QQ$ whose ring of integers $R$ is a PID. Consider an $E$-vector space $V$ of dimension $2q$, a skew-Hermitian form $J$ of signature $(q,q)$ and Witt index $q$, and a full-rank unimodular lattice $\Lambda \subset V$. Set $\Gamma = \SU(\Lambda, J|_\Lambda)$. 

    \begin{enumerate}[leftmargin=*]
        \item[(a)] There is a unique $\Gamma$-orbit of isotropic subspaces of $V$ in each dimension $0 \leq r \leq q$.

\smallskip
        
        \item[(b)] Let $W \subset V $ be an isotropic subspace and consider its stabilizer
\[\mathrm{Stab}(W) = \{g\in \SU(V, J): g\cdot W = W\}.\]
Let $\Gamma_W = \Gamma \cap \mathrm{Stab}(W) \to \GL(\Lambda_W)$ be the map induced by the natural action of $\Gamma_W$ on the lattice $\Lambda_W = \Lambda \cap W$. The image of this map is $\{ h \in \GL(\Lambda_W) \mid \det(h) = \pm 1\}$ if $\dim W = q$, and the entire $\GL(\Lambda_W)$ otherwise. 
    \end{enumerate}
\end{theorem}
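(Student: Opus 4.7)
My plan is to establish (a) using Proposition~\ref{prop:standard-skew-Herm-form} and Lemma~\ref{lem: single orbit for maximal isotropic unimodular} to put any two isotropic subspaces of the same dimension into the same $\Gamma$-orbit, and to establish (b) by building elements of $\Gamma_W$ from prescribed data on $\Lambda_W$ via the orthogonal decomposition of Lemma~\ref{lem:lattice-orthogonal-decomposition} and the conjugate-dual extension of Lemma~\ref{lem: induced dual map on lattices}.

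For part (a) with $r \geq 1$ (the case $r=0$ being trivial), given isotropic subspaces $W_1, W_2 \subset V$ of common dimension $r$, Proposition~\ref{prop:standard-skew-Herm-form} will supply for each $i$ a basis of $\Lambda$ in which $J|_\Lambda$ takes the standard form and $W_i$ is spanned by the first $r$ basis vectors. The change-of-basis map $g$ then lies in $\mathrm{U}(\Lambda, J|_\Lambda)$ and sends $W_1$ to $W_2$, and Lemma~\ref{lem: single orbit for maximal isotropic unimodular} gives $\det(g) = u^2$ for some $u \in R^\times$. I will construct $h \in \mathrm{U}(\Lambda, J|_\Lambda)$ stabilizing $W_1$ with $\det(h) = u^{-2}$ by scaling a single hyperbolic pair $(e_1, f_1)$ from the $W_1$-adapted basis (with $e_1 \in \Lambda_{W_1}$) both by $u^{-1}$, and fixing the remaining basis vectors. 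A direct check using $u \bar u = 1$ in $R$ shows $h$ preserves $J|_\Lambda$; then $gh \in \SU(\Lambda, J|_\Lambda) = \Gamma$ still sends $W_1$ to $W_2$.

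For part (b), fix a decomposition $V = W \oplus W' \oplus U$ as in~\eqref{eq:direct-sum-decomp}; Lemma~\ref{lem:lattice-orthogonal-decomposition} induces a compatible decomposition $\Lambda = \Lambda_W \oplus \Lambda_{W'} \oplus \Lambda_U$. Given $h \in \GL(\Lambda_W)$, Lemma~\ref{lem: induced dual map on lattices} provides an induced $h' \in \GL(\Lambda_{W'})$, and I will assemble $g \in \mathrm{U}(\Lambda, J|_\Lambda)$ stabilizing $W$ as $(h, h', g_U)$ on the three summands for an appropriate $g_U \in \mathrm{U}(\Lambda_U, J|_U)$. A brief computation shows $\det(h') = \det(h)$ (using $d \bar d = 1$ for any unit $d \in R^\times$), so $\det(g) = \det(h)^2 \det(g_U)$. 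When $\dim W = q$, we have $U = 0$, and writing any $g \in \Gamma_W$ in block form with respect to $W \oplus W'$ forces its upper-left block $A \in \GL(\Lambda_W)$ to satisfy $\det(A)^2 = \det(g) = 1$; conversely, taking $g_U$ vacuous realizes any $h$ with $\det(h) = \pm 1$. When $\dim W < q$, there is a hyperbolic pair $(e,f)$ in $\Lambda_U$, and scaling both $e$ and $f$ by $\alpha = \det(h)^{-1} \in R^\times$ defines $g_U \in \mathrm{U}(\Lambda_U, J|_U)$ with $\det(g_U) = \alpha^2 = \det(h)^{-2}$, so the resulting $g \in \Gamma$ realizes the prescribed $h$. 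I do not expect any serious obstacle; the main delicate point is the identity $\det(h') = \det(h)$, which hinges on the fact that the conjugate-inverse-dual of a unit of $R$ equals that unit, because units in imaginary quadratic rings of integers satisfy $u \bar u = 1$.
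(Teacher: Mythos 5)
Your proposal is correct and takes essentially the same approach as the paper: both parts rely on Proposition~\ref{prop:standard-skew-Herm-form}, the lattice decompositions of Lemmas~\ref{lem:lattice-orthogonal-decomposition} and \ref{lem: induced dual map on lattices}, and a determinant-fixing scaling of a hyperbolic pair, with the key arithmetic fact $u\bar u = 1$ for units $u \in R^\times$ making the scaling $J$-preserving and giving $\det(h') = \det(h)$. The only cosmetic differences are that you scale both vectors of the pair by $u^{-1}$ where the paper writes $u^{-1}$ and $\bar u$ (equal, since $\bar u = u^{-1}$), and in part (b) you compute $\det(g) = \det(h)^2$ directly rather than citing Lemma~\ref{lem: single orbit for maximal isotropic unimodular}.
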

\begin{proof}
    (a) By assumption, $V$ contains a $q$-dimensional isotropic subspace, so it suffices to prove that any two isotropic subspaces of the same dimension are in the same orbit under $\Gamma$.
    Given two such isotropic subspaces $W_1, W_2$ of dimension $r$, apply Proposition~\ref{prop:standard-skew-Herm-form} twice to obtain bases 
    $$\Lambda = Re_1 \oplus \cdots \oplus Re_q \oplus Re_1' \oplus \cdots \oplus Re_q' \quad\quad \text{ and } \quad\quad  \Lambda = Rf_1 \oplus \cdots \oplus Rf_q \oplus Rf_1' \oplus \cdots \oplus Rf_q'$$
    such that $J|_\Lambda$ has the form~\eqref{eq:standard-skew-Herm} with respect to both bases, and $W_1 = E\langle e_1,\dots,e_r\rangle$ and $W_2 = E\langle f_1,\dots, f_r\rangle$ inside $V = \Lambda \otimes_\Z \Q$. Consider then the $R$-linear automorphism $\gamma \in \GL(\Lambda)$ sending $e_i \mapsto f_i$ and $e_i' \mapsto f_i'$. This $\gamma$ clearly preserves $J|_\Lambda$ and sends $\gamma(W_1) = W_2$. Moreover, we can tweak $\gamma$ to have determinant $1$ as follows: by Lemma~\ref{lem: single orbit for maximal isotropic unimodular}, $\det(\gamma) = u^2$ for some unit $u \in R^\times$; precompose $\gamma$ by the automorphism of $\Lambda$ preserving $J|_\Lambda$ which sends $e_1 \mapsto u^{-1} \cdot e_1$, $e_1' \mapsto \ov{u} \cdot e_1'$, and acts as identity on $e_2,\dots,e_q, e_2', \dots, e_q'$. We thus obtain $\gamma \in \Gamma$ such that $\gamma(W_1) = W_2$.

    \medskip

    (b) Say first $\dim W = q$ and write $V = W \oplus W'$ as in \eqref{eq:direct-sum-decomp}. An element $g \in \Gamma \cap \mathrm{Stab}(W)$ restricts to $\phi \in \GL(\Lambda_W)$ if and only if
    $$g = \begin{pmatrix}
        \phi & \ast\\
        0 & \phi'
    \end{pmatrix}$$
    when written in block form with respect to the lattice decomposition $\Lambda = (\Lambda \cap W) \oplus (\Lambda \cap W')$ from Lemma~\ref{lem:lattice-orthogonal-decomposition}; here, $\phi' \in \GL(\Lambda \cap W')$ is the lattice automorphism given by Lemma~\ref{lem: induced dual map on lattices}. Since $\det(g) = \det(\phi)^2$ must equal $1$, we get $\det(\phi) \in \{\pm 1\}$. Conversely, given any $\phi \in \GL(\Lambda_W)$ with determinant $\pm 1$, we can construct $g \in \Gamma \cap \mathrm{Stab}(W)$ by taking $\ast = 0$ above, which restricts to $\phi$.

    Say now $\dim W < q$ and pick any element $\phi \in \GL(\Lambda_W)$. Consider
    $$g = \begin{pmatrix}
        \phi & 0 & 0\\
        0 & \phi' & 0\\
        0 & 0 & 1
    \end{pmatrix} \; \in \; \mathrm{U}(\Lambda,J|_\Lambda)$$
    written in block form with respect to the direct sum decomposition from Lemma~\ref{lem:lattice-orthogonal-decomposition}; here, $\phi'$ again comes from Lemma~\ref{lem: induced dual map on lattices}. By Lemma~\ref{lem: single orbit for maximal isotropic unimodular}, we know that $\det(g) = u^2$ for some $u \in R^\times$. Since $\Lambda \cap U \subset U$ is a full-rank unimodular lattice, and $J|_U$ has signature $(q-r, q-r)$ and Witt index $q-r$ by Witt's cancellation theorem, we can apply Proposition~\ref{prop:standard-skew-Herm-form} and obtain a basis
    $$\Lambda \cap U = Re_1 \oplus \cdots \oplus Re_{q-r} \oplus Re_1' \oplus \cdots \oplus Re_{q-r}'$$
    with respect to which $J|_{\Lambda \cap U}$ is the standard skew-Hermitian form~\eqref{eq:standard-skew-Herm}. Therefore, we can make $g$ have determinant $1$ by precomposing it with the automorphism of $\Lambda \cap U$ sending $e_1 \mapsto u^{-1} \cdot e_1$, $e_1' \mapsto \ov{u} \cdot e_1'$, and acting as identity on the other $e_i,e_i'$. We thus obtain an element $g \in \Gamma \cap \mathrm{Stab}(W)$ which restricts to $\phi \in \GL(\Lambda_W)$, finalizing the proof.
\end{proof}

\begin{remark}\label{rem:type-A-admissible-collection}
In the setting of Theorem~\ref{thm:type-A-unique-orbit-and-stabilizers}, if we further assume that $R^\times = \{\pm 1\}$, which is true for all values of $d$ in~\eqref{eq:values-d-such-that-R-PID} except for 1 and 3, the stabilizer $\Gamma_W$ induces the full $\GL(\Lambda_W)$-action for all isotropic subspaces $W$. 
In this case, the elements of $\GL(\Lambda_W)$ acting trivially on $\Herm(W^*_\C)$ are $\pm 1$, so a $\Gamma_\cW$-admissible collection consists of a $\GL(\Lambda_W)/\{\pm 1\}$-admissible decomposition of $\PD(W_\C^*)$ for each isotropic subspace $W \subset V$, satisfying the usual compatibility conditions from Definition~\ref{def:Gamma W admissible collection}.
Note however that the $\GL(\Lambda_W)$-action and the induced faithful $\GL(\Lambda_W)/\{\pm 1\}$-action on $\PD(W^*_\C)$ are equivalent at the level of admissible decompositions and topological realizations. We make the aesthetic choice of working with the full $\GL(\Lambda_W)$-action and $\GL(\Lambda_W)$-admissible decompositions throughout the rest of the section. We do this as working with $\GL$ instead of a quotient by a finite subgroup is slightly more natural (though equivalent) from the perspective of group cohomology.
\end{remark}

\subsection{The Hermitian perfect cone complex and the inflation subcomplex}
We next introduce the \emph{Hermitian perfect cone decomposition} of a Hermitian positive definite cone and prove in Theorem~\ref{thm:A_p,q-canonical-bijection}, using an admissible collection consisting of such decompositions, that the tropicalization $A_{q,q,J,\Lambda}^\trop$ depends only on the value $q$ and the underlying ring of integers $R$. We then describe the chain complex associated to a Hermitian perfect cone decomposition in Subsection~\ref{subsub:Hermitian-perfect-cone-complex}, which computes the Borel-Moore homology of our tropicalization (see Corollary~\ref{cor:comparison-BM-homology-Hermitian-complex}). We also construct an acyclic subcomplex called the \emph{inflation subcomplex} in Subsection~\ref{subsub:inflation-subcomplex}, which we will use in Section~\ref{sec:spectral-sequence} to prove the convergence of a spectral sequence involving the tropicalizations $A_{q,q,J,\Lambda}^\trop$.

\subsubsection{The Hermitian perfect cone decomposition}\label{subsub:Herm-perfect-cone-decomp} Suppose $E/\Q$ is an imaginary quadratic extension with ring of integers $R$.
Let $\V_R$ be a free $R$-module of rank $r$. Let $\V_E \coloneqq \V_R \otimes_\Z \Q$ and $\V_\C \coloneqq \V_R \otimes_\Z \R$ be the corresponding $r$-dimensional vector spaces over $E$ and $\C$, respectively. 
Let $\Herm(\V_R)$ and $\Herm(\V_\C)$ denote the sets of Hermitian forms $\V_R \times \V_R \to R$ and 
$\V_\C \times \V_\C \to \C$, respectively.
(We continue to use the convention from Section~\ref{sec:4-preliminaries} that a Hermitian form is linear in the first factor and conjugate-linear in the second one.)
Note that $\Herm(\V_\C)$ has the structure of an $r^2$-dimensional real vector space. Moreover, under the natural inclusion $\Herm(\V_R) \subset \Herm(\V_\C)$ given by extension of scalars, $\Herm(\V_R)$ forms a full-rank $\Z$-lattice inside $\Herm(\V_\C)$. Finally, let $\PD(\V_\C) \subset \Herm(\V_\C)$ denote the $\R_{>0}$-cone of positive definite Hermitian forms. 

The group $\GL(\V_R)$ acts on $\Herm(\V_\C)$ as follows: for $g \in \GL(\V_R) \subset \GL(\V_\C)$ and $\varphi \in \Herm(\V_\C)$, let $g\cdot \varphi : \V_\C \times \V_\C \to \C$ send $(z,w) \mapsto \varphi(gz, gw)$. This action preserves the lattice $\Herm(\V_R)$ and the positive definite cone $\PD(\V_\C)$. 

\smallskip

We now describe a specific $\GL(\V_R)$-admissible decomposition of $\PD(\V_\C)$, which is the Hermitian analogue of the perfect cone decomposition from Section~\ref{sec:type C}.  
Consider the dual $R$-module $\V_R^\vee \coloneqq \Hom_R(\V_R, R)$, which embeds naturally into the dual $\C$-vector space $\V_\C^\vee \coloneqq \Hom_\C(\V_\C,\C)$. Given some positive definite form $\varphi \in \Herm(\V_\C^\vee)$,
the set of \emph{minimal vectors} of $\varphi$ is
    $$M(\varphi) \coloneqq \big\{ v \in \V_R^\vee \setminus \{0\} \mid \varphi(v,v) \leq \varphi(w,w) \;\; \forall w \in \V_R^\vee \setminus \{0\} \big\} \; \; \subset \;\; \V_R^\vee.$$

Given $v \in \V_\C^\vee$, we let $vv^*$ denote the positive semidefinite Hermitian form $\V_\C \times \V_\C \to \C$ sending $(z,w) \mapsto v(z) \cdot \overline{v(w)}$. Note that, if $v \in \V_R^\vee$, the form $vv^*$ lies in the lattice $\Herm(\V_R) \subset \Herm(\V_\C)$.

\begin{definition}
Given a positive definite form $\varphi \in \Herm(\V_\C^\vee)$, the \emph{Hermitian perfect cone} associated to $\varphi$ is the rational polyhedral cone:
\[
\sigma(\varphi) \coloneqq \R_{\geq0}\left\langle vv^{*} \;\big| \; v \in M(\varphi)\right\rangle \; \subset \; \Herm(\V_\C).
\]
\end{definition}


Let $\Sigma(\V_\C)^{\HP} = \big\{\sigma(\varphi)\ |\ \varphi \in \PD(\V_\C^\vee)\big\}$ be the collection of all such Hermitian perfect cones. These cones form a $\GL(\V_R)$-admissible decomposition of the positive definite cone $\PD(\V_\C)$;  
this is \cite[Corollary II.5.23]{amrt} when taking the co-cores to be the perfect forms; see also \cite{dutour-sikiric-gangl-gunnells-hanke-schurmann-yasaki-cohomology}. 
We call 
$\big(\Herm(\V_R), \Sigma(\V_\C)^{\HP}\big)$
the \emph{Hermitian perfect cone decomposition} of $\PD(\V_\C)$.
Note that the $\GL(\V_R)$-action permutes the cones via $g\cdot \sigma(\varphi) = \sigma({}^tg\cdot \varphi)$. 

\begin{remark}\label{rem:E-rt-closure}
    We say that a Hermitian form on $\V_\C$ has \emph{$E$-rational kernel} if its kernel is of the form $K_E \otimes_\Q \R \subset \V_\C$ for some $E$-vector subspace $K_E \subset \V_E$. The support of the Hermitian perfect cone decomposition $\Sigma(\V_\C)^{\HP}$ equals the \emph{$E$-rational closure} of $\PD(\V_\C)$, which is defined to be the following $\R_{\geq 0}$-cone: 
    \[
    \PD(\V_\C)^{E\text{-rt}} \coloneqq \left\{ \varphi \in \Herm(\V_\C) \;\big|\; \begin{matrix} \text{$\varphi$ is pos. semidefinite} \\
    \text{with $E$-rational kernel}
    \end{matrix}\right\} 
    \; \subset \; \Herm(\V_\C).
    \]
\end{remark}

\begin{remark} \label{rem-Hermitian-cones-basis} \!\! (Coordinate representation.) 
    Let us fix an embedding $E \hookrightarrow \C$. A choice of basis $\V_R \cong R^r$ gives a basis for $\V_\C \cong \C^r$ and standard dual bases $\V_R^\vee \cong R^r$ and $\V_\C^\vee \cong \C^r$. Under such a choice of coordinates, the space $\Herm(\V_\C)$ of Hermitian forms becomes the space $\Herm_r(\C)$ of $r \times r$ complex Hermitian matrices. (We use different notation to emphasize that we have chosen coordinates.)
    A matrix $A \in \Herm_r(\C)$ corresponds to the form $\C^r \times \C^r \mapsto \C$ sending $(z,w) \mapsto {}^t z A \overline{w}$.
    The lattice $\Herm_r(R) \subset \Herm_r(\C)$ consists of those matrices with entries in $R$.
    As before, let $\PD^{\Herm}_r \subset \Herm_r(\C)$ denote the positive definite matrices, and  
    $M(A) \subset R^r$ the minimal vectors of some $A \in \PD^{\Herm}_r$. Its Hermitian perfect cone is
    $$\sigma(A) = \R_{\geq 0} \left\langle vv^{*} \;\big| \; v \in M(A)\right\rangle \; \subset \; \Herm_r(\C),$$
    where $vv^*$ is the $r\times r$ Hermitian matrix obtained by multiplying the vector $v \in R^r$ with its conjugate-transpose $v^*$.
    We let $\Sigma_r^{\HP}$ be the collection of all polyhedral cones 
    arising this way.
    
    The fan $(\Herm_r(R), \Sigma_r^{\HP})$ forms a $\GL_r(R)$-admissible decomposition of $\PD_r^{\Herm}$, where the group action $\GL_r(R) \curvearrowright \Herm_r(R)$ is the change-of-basis action $X\cdot A \mapsto {}^t X A \overline{X}.$ 
\end{remark}

\subsubsection{Back to the tropicalizaton $A_{q,q,J,\Lambda}^\trop$}
Using an admissible collection consisting of Hermitian perfect cone decompositions, we prove in Theorem~\ref{thm:A_p,q-canonical-bijection} that, in the setting of Remark~\ref{rem:type-A-admissible-collection}, the tropicalization $A_{q,q,J,\Lambda}^\trop$ depends only on the ring $R$ and the value $q$. 

Let us thus go back to the assumptions of Remark~\ref{rem:type-A-admissible-collection}, where the ring of integers $R$ is a PID with unit group $R^\times = \{\pm 1\}$, and $V$ is a $2q$-dimensional vector space over $E$ with a skew-Hermitian form $J$ of signature $(q,q)$ and Witt index $q$. We also have a full-rank unimodular lattice $\Lambda \subset V$ and the corresponding arithmetic group $\Gamma = \SU(\Lambda, J|_\Lambda)$.

Given an isotropic subspace $W \subset V$, we consider the underlying lattice $\Lambda_W = \Lambda \cap W$ and corresponding complex vector space $W_\C = W \otimes_\Q \R$, as well as their conjugate duals $\Lambda_W^*$ and $W_\C^*$. 
By Remark~\ref{rem:type-A-admissible-collection}, in order to describe a $\Gamma_\cW$-admissible collection, we need to provide a $\GL(\Lambda_W)$-admissible decomposition of the positive definite cone $\PD(W_\C^*) \subset \Herm(W_\C^*)$ for each isotropic $W \subset V$, subject to the compatibilities in Definition~\ref{def:Gamma W admissible collection}.

\begin{proposition}\label{prop:Herm-perfect-cone-decomps-give-admis-coll}
    The collection
    $$\Sigma_{q,q}^{\HP}:= \big\{ \big( \Herm(\Lambda_W^*), \Sigma(W_\C^*)^{\HP} \big)\big\}_{W},$$
    where as before $\Sigma(W_\C^*)^{\HP}$ denotes the Hermitian perfect cone decomposition of $\PD(W_\C^*)$, forms a $\Gamma_\cW$-admissible collection.
\end{proposition}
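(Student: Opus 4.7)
The plan is to verify the two requirements of Definition~\ref{def:Gamma W admissible collection}: first, that for each isotropic $W \subset V$ the pair $\big(\Herm(\Lambda_W^*), \Sigma(W_\C^*)^{\HP}\big)$ is a $\ov{\Gamma}_W$-admissible decomposition of $\PD(W_\C^*)$ in the sense of Definition~\ref{def:admissible-collection}; and second, that whenever $\gamma \cdot W \subset W'$ for some $\gamma \in \Gamma$, we have $\Sigma(W_\C^*)^{\HP} = \{h_\gamma^{-1}(\sigma) : \sigma \in \Sigma((W'_\C)^*)^{\HP}\}$, where $h_\gamma$ is the injection induced by the surjection $(W'_\C)^* \twoheadrightarrow W_\C^*$ dual to $\gamma\col W \hookrightarrow W'$.

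For the first condition, I would begin by recalling that the Hermitian perfect cone decomposition is classically known to be a $\GL(\Lambda_W)$-admissible decomposition of $\PD(W_\C^*)$; this goes back to Koecher and is the Hermitian analogue of the perfect-cone (first Voronoi) decomposition used in Section~\ref{sec:type C}, and a reference can be found in \cite{dutour-sikiric-gangl-gunnells-hanke-schurmann-yasaki-cohomology} or \cite[II.5]{amrt}. By Theorem~\ref{thm:type-A-unique-orbit-and-stabilizers}(b), the image of $\Gamma_W \to \GL(\Lambda_W)$ is either all of $\GL(\Lambda_W)$ (when $\dim W < q$) or the index-two subgroup of elements with determinant $\pm 1$ (when $\dim W = q$). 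In either case, this image, and hence its further quotient $\ov{\Gamma}_W$ by the (finite) kernel of the action on $\Herm(W_\C^*)$, has finite index in $\GL(\Lambda_W)$ modulo the same kernel. Since admissibility is preserved under passage to finite-index subgroups (an orbit of the bigger group decomposes into at most $[\GL(\Lambda_W) : \mathrm{image}]$ orbits of the smaller one, cf.\ the proof of Proposition~\ref{prop:perfect admissible collection}), condition (1) follows.

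For the second condition, the key is to identify $h_\gamma$ with the inclusion of a rational boundary component of the self-adjoint homogeneous cone $\PD((W'_\C)^*)$. Concretely, $h_\gamma$ sends $\varphi \in \Herm(W_\C^*)$ to the form $\varphi \circ (\pi \times \pi)$ on $(W'_\C)^*$, where $\pi\col (W'_\C)^* \twoheadrightarrow W_\C^*$ is the surjection dual to $\gamma$. The image of $h_\gamma$ is exactly the linear subspace of forms whose kernel contains $\ker \pi$, and under $h_\gamma$ the open cone $\PD(W_\C^*)$ is identified with a rational boundary component of $\PD((W'_\C)^*)^{E\text{-rt}}$ in the sense of Remark~\ref{rem-cone-gluings} and Remark~\ref{rem:E-rt-closure}. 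I would then argue directly at the level of minimal vectors: for $\psi \in \PD((W'_\C)^*)$ whose perfect cone $\sigma(\psi)$ meets the image of $h_\gamma$, the minimal vectors of $\psi$ lying on the supporting face $h_\gamma(\Herm(W_\C^*))$ are precisely those $v \in \Lambda_{W'}^\vee$ with $vv^*$ pulled back from $\Herm(\Lambda_W^*)$, which via the isomorphism $\Lambda_W^\vee \cong (\Lambda_{W'}^\vee)/(\ker\pi \cap \Lambda_{W'}^\vee)$ correspond to minimal vectors of the restricted form $\varphi \in \PD(W_\C^*)$ with $h_\gamma(\varphi) = \psi|_{\mathrm{image}}$. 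This gives $h_\gamma^{-1}(\sigma(\psi)) = \sigma(\varphi)$, and the surjectivity in the other direction (every $\sigma(\varphi) \in \Sigma(W_\C^*)^{\HP}$ arises this way) is the statement that any perfect form on $W_\C^*$ can be extended to a perfect form on $(W'_\C)^*$ whose minimal vectors include a lift of the minimal vectors of $\varphi$; this is standard, cf.\ the analogous statement for the symplectic perfect cone decomposition in \cite[Proposition III.5]{amrt}.

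The principal obstacle is the last step: making the minimal-vector bookkeeping rigorous. One must verify that the natural map $\Lambda_{W'}^\vee \to \Lambda_W^\vee$ (dual to $\Lambda_W \hookrightarrow \Lambda_{W'}$) is compatible with the saturation and duality statements in Lemma~\ref{lem: surjective dual for saturated lattices} and Lemma~\ref{lem: image of dual of intersection of lattice and subspace}, so that minimal vectors in $\Lambda_W^\vee$ lift to minimal vectors in $\Lambda_{W'}^\vee$ for an appropriately chosen extending form. Granting this, both conditions of Definition~\ref{def:Gamma W admissible collection} are satisfied, completing the proof.
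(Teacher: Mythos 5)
Your proposal is correct and takes essentially the same approach as the paper, whose proof is simply that the conditions of Definition~\ref{def:Gamma W admissible collection} can be checked directly from the description of the Hermitian perfect cone decomposition; you have spelled out that direct verification. One small inaccuracy worth noting: under the standing assumption $R^\times = \{\pm 1\}$ (Remark~\ref{rem:type-A-admissible-collection}), the image of $\Gamma_W \to \GL(\Lambda_W)$ in the maximal-isotropic case $\dim W = q$ is in fact all of $\GL(\Lambda_W)$, not an index-two subgroup, since every determinant of an element of $\GL(\Lambda_W)$ is a unit and hence $\pm 1$; so condition (1) follows directly from the cited admissibility result \cite[Corollary II.5.23]{amrt} without invoking the finite-index refinement argument.
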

\begin{proof}
  Definition~\ref{def:Gamma W admissible collection} can be checked directly using the description of the Hermitian perfect cone decomposition from Subsection~\ref{subsub:Herm-perfect-cone-decomp}.
\end{proof}

In light of Proposition~\ref{prop:Herm-perfect-cone-decomps-give-admis-coll}, the tropicalization $A_{q,q,J,\Lambda}^\trop$ is the colimit of 
    $$\cW \xrightarrow{\Sigma_{q,q}^{\HP}} \RPF \xrightarrow{\;\,|\,\cdot\, |\;\,} \mathsf{Top}.$$
Below, we prove that this topological space $A_{q,q,J,\Lambda}^\trop$ is independent of initial choices of form $J$ and unimodular lattice $\Lambda$; it depends only on the ring $R$ and the integer $q$.

\smallskip

For a positive integer $r$, consider the diagram category $\cI_r$ having a single object with an automorphism for every element $g \in \GL_r(R)$. We get a functor $ \cI_r \to \RPF$ sending the unique object of $\cI_r$ to the Hermitian perfect cone decomposition $\big(\Herm_r(R), \Sigma_r^{\HP}\big)$, and a morphism $g \in \GL_r(R)$ to the fan automorphism induced by the natural action of $g$ on the space of Hermitian forms on $R^r$. Consider the colimit of the composition of this functor with the geometric realization functor:
\begin{equation}\label{eq:B(V)}
    \PD_r^{E\text{-rt}}\!/\GL_r(R) \; := \; \colim \big( \cI_r \to \RPF \xrightarrow{\;\,|\,\cdot\, |\;\,}\mathsf{Top}\big).
\end{equation}
Note that this is the same space obtained by taking the quotient topology on the $E$-rational closure $\PD_r^{E\text{-rt}}$ of the Hermitian positive definite cone $\PD_r^{\Herm}$. Here, since $\PD_r^{E\text{-rt}}$ is the support of the Hermitian perfect cone decomposition $\Sigma_r^{\HP}$, we consider $\PD_r^{E\text{-rt}}$ having the finest topology such that the inclusions $\sigma \hookrightarrow \PD_r^{E\text{-rt}}$ are continuous for all Hermitian perfect cones $\sigma \in \Sigma_r^{\HP}$.

\begin{theorem}\label{thm:A_p,q-canonical-bijection}
    There is a canonical homeomorphism $A_{q,q,J,\Lambda}^\trop \cong  \PD_q^{E\text{-rt}}\!/\GL_q(R)$.
\end{theorem}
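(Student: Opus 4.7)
The plan is to reduce the colimit defining $A_{q,q,J,\Lambda}^\trop$ to a quotient of a single Hermitian perfect cone complex, using the unique-orbit and stabilizer results of Theorem~\ref{thm:type-A-unique-orbit-and-stabilizers}. First I would fix a maximal isotropic subspace $W_0 \subset V$ of dimension $q$, choose an $R$-basis $\Lambda_{W_0} \cong R^q$, and use it to identify the Hermitian perfect cone decomposition $\big(\Herm(\Lambda_{W_0}^*), \Sigma(W_{0,\C}^*)^{\HP}\big)$ with $\big(\Herm_q(R), \Sigma_q^{\HP}\big)$. The inclusion of $\{W_0\}$ into $\cW$ then produces a tautological continuous map $\PD_q^{E\text{-rt}} \to A^\trop_{q,q,J,\Lambda}$, since the support of the Hermitian perfect cone decomposition is $\PD_q^{E\text{-rt}}$ by Remark~\ref{rem:E-rt-closure}.

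Next I would observe that every isotropic subspace $W \subseteq W_0$ contributes, via the identity morphism in $\cW$, a sub-cone $\PD(W_\C^*) \subseteq \PD(W_{0,\C}^*)$ whose topology is already the one inherited from $\Sigma_q^{\HP}$. Consequently the portion of the colimit built from isotropic $W \subseteq W_0$ under inclusions alone recovers $\PD_q^{E\text{-rt}}$ on the nose. By Theorem~\ref{thm:type-A-unique-orbit-and-stabilizers}(a), every object of $\cW$ is isomorphic to some isotropic subspace of $W_0$, so no new points are produced by the remaining objects of $\cW$; this will show the map $\PD_q^{E\text{-rt}} \to A^\trop_{q,q,J,\Lambda}$ is surjective.

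The key step is then to compute exactly the identifications introduced by the non-identity morphisms of $\cW$. The automorphisms of $W_0$ in $\cW$ form $\Gamma_{W_0}$, and by Theorem~\ref{thm:type-A-unique-orbit-and-stabilizers}(b) together with the hypothesis $R^\times = \{\pm 1\}$, the image of $\Gamma_{W_0} \to \GL(\Lambda_{W_0})$ is precisely $\GL_q(R)$. Hence the tautological map factors as $\PD_q^{E\text{-rt}}\twoheadrightarrow \PD_q^{E\text{-rt}}/\GL_q(R) \to A^\trop_{q,q,J,\Lambda}$. For injectivity on the quotient, I would argue that any morphism $\gamma\col W_1 \to W_2$ in $\cW$, between subspaces $W_1,W_2 \subseteq W_0$, produces an $R$-linear isomorphism $\gamma|_{\Lambda_{W_1}}\col \Lambda_{W_1}\to \Lambda_{W_2}$ which extends to an element of $\GL(\Lambda_{W_0}) = \GL_q(R)$; by Theorem~\ref{thm:type-A-unique-orbit-and-stabilizers}(b) this extension lies in the image of $\Gamma_{W_0}$, so the induced identification on $\PD_q^{E\text{-rt}}$ is already present in the $\GL_q(R)$-quotient. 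Morphisms involving $W_i \not\subseteq W_0$ reduce to this case by composition with fixed witnesses $\gamma_{W_i} \in \Gamma$ produced by Theorem~\ref{thm:type-A-unique-orbit-and-stabilizers}(a). Continuity of the inverse is built into the colimit (quotient) topology on each side.

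The hard part is Step~5: carefully verifying that the equivalence relation generated by all morphisms of $\cW$ collapses, on $F(W_0) = \PD_q^{E\text{-rt}}$, to the single $\GL_q(R)$-orbit relation and nothing larger. This requires the extension-lifting argument above, and crucially uses both parts of Theorem~\ref{thm:type-A-unique-orbit-and-stabilizers} together with the hypothesis $R^\times = \{\pm 1\}$; if any of these were weakened one would only recover a finite-index subgroup action, and the identification with $\PD_q^{E\text{-rt}}/\GL_q(R)$ would fail.
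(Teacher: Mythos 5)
Your proposal is correct and takes essentially the same approach as the paper: fix a maximal isotropic subspace, use the unique-orbit result (Theorem~\ref{thm:type-A-unique-orbit-and-stabilizers}(a)) for surjectivity, use the full-stabilizer result (part (b)) and the extension-by-identity of automorphisms of $\Lambda_{W'}$ to $\Lambda_{W_0}$ for injectivity, and compare the colimit topologies for the homeomorphism. Your only slip is the claim that a morphism $\gamma \colon W_1 \to W_2$ in $\cW$ restricts to an isomorphism $\Lambda_{W_1} \to \Lambda_{W_2}$ — in general $\gamma(W_1) \subsetneq W_2$, so $\gamma$ restricts to an isomorphism onto $\Lambda_{\gamma(W_1)}$ and the remaining inclusion is already part of the face structure of $\PD_q^{E\text{-rt}}$ — but this is cosmetic and doesn't affect the argument.
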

\begin{proof}
    Let $W \subset V$ be a maximal, i.e.\ $q$-dimensional, isotropic subspace. As before, let $\Lambda_W = W \cap \Lambda$ be its underlying lattice, which is a free $R$-module of rank $q$. By Theorem~\ref{thm:type-A-unique-orbit-and-stabilizers}(b) and Remark~\ref{rem:type-A-admissible-collection}, $\Gamma$ induces the full $\GL(\Lambda_W)$-action on $W$, so the diagram category $\cI_q$ from \eqref{eq:B(V)} is naturally isomorphic to a subcategory $\{W\}$ of $\cW$. We thus get a canonical map on colimits:
    \begin{equation}\label{eq:colim-map}
         \PD_q^{E\text{-rt}}\!/\GL_q(R) \, \longrightarrow \, A_{q,q,J,\Lambda}^\trop.
    \end{equation}

    The fact that this map is bijective follows from the following two observations. First, by Theorem~\ref{thm:type-A-unique-orbit-and-stabilizers}(a), $V$ contains a unique $\Gamma$-orbit of isotropic subspaces in each dimension $0 \leq r \leq q$. 
    
    Secondly, say $W' \subset W$ are two isotropic subspaces of $V$. If $\sigma, \sigma' \in \Sigma((W_\C')^*)^{\HP}$ are in the same $\GL(\Lambda_{W'})$-orbit, then they are in the same $\GL(\Lambda_{W})$-orbit when viewed as Hermitian perfect cones in $\Sigma(W_\C^*)^{\HP}$.
    This is true because, if $g^* \cdot \sigma = \sigma'$ for some $g \in \GL(\Lambda_{W'})$, then we can consider a splitting of free $R$-modules $\Lambda_W = \Lambda_{W'} \oplus \Lambda''$ and write
    $$\left(\begin{array}{ c | c }
    g & 0\\
    \hline
    0 & 1_{\Lambda''}
  \end{array}\right)^* \cdot \sigma = \sigma' \quad\quad \text{ for } \;\; \left(\begin{array}{ c | c }
    g & 0\\
    \hline
    0 & 1_{\Lambda''}
  \end{array}\right) \; \in \; \GL\left( \Lambda_{W'} \oplus \Lambda''\right) = \GL\left(\Lambda_W\right).$$
  Note that both $g \in \GL(\Lambda_{W'})$ and its lift to $\GL(\Lambda_W)$ induce the same cone isomorphism $\sigma \cong \sigma'$.

    Finally, \eqref{eq:colim-map} is a homeomorphism because both $\PD_q^{E\text{-rt}}\!/\GL_q(R)$ and $A_{q,q,J,\Lambda}^\trop$ have the finest topology such that the maps $\sigma \to (\PD_q^{E\text{-rt}}\!/\GL_q(R) = A_{q,q,J,\Lambda}^\trop)$ are continuous for all Hermitian perfect cones $\sigma \in \Sigma(W_\C^*)^{\HP}$, where $W$ is the maximal isotropic subspace used to define~\eqref{eq:colim-map}.
\end{proof}

    Therefore, for the rest of this paper, we will use $A_{q,q}^\trop$ to denote the uniquely defined tropicalization from Theorem~\ref{thm:A_p,q-canonical-bijection}, implicitly fixing an imaginary quadratic number field $E$ with ring of integers $R$ satisfying the conditions in Remark~\ref{rem:type-A-admissible-collection}.

\subsubsection{The Hermitian perfect cone complex} \label{subsub:Hermitian-perfect-cone-complex}
The chain complex associated to 
    $A_{q,q}^\trop$
    is quasi-isomorphic to the chain complex of the Hermitian perfect cone decomposition associated to a single maximal, i.e.\ $q$-dimensional, isotropic subspace. 
    Let us now describe the latter chain complex, specializing the construction in Subsections~\ref{sec:chaincomplexes} and \ref{sec:diagramcomplex} to the Hermitian perfect cone decomposition $(\Herm(\V_R), \Sigma(\V_\C)^{\HP})$ associated to a free $R$-module $\V_R$ and $\C$-vector space $\V_\C = \V_R \otimes_\Z \R$.

\smallskip 

We say that a cone $\sigma \in \Sigma(\V_\C)^{\HP}$ is \emph{alternating} if every automorphism of (the $\R$-linear span of) $\sigma$ induced from the action of $\GL(\V_R)$ is orientation-preserving. Note that a cone $\sigma$ is alternating if and only if so is every cone in its $\GL(\V_R)$-orbit. 

We fix an orientation $\omega_\sigma$
for every alternating Hermitian perfect cone $\sigma$; 
for us, a choice of orientation for $\sigma$ means a choice of orientation for its $\R$-span inside $\Herm(\V_\C)$. 
We make these choices such that, if two cones $\sigma, \sigma'$ are in the same $\GL(\V_R)$-orbit, their orientations $\omega_\sigma, \omega_{\sigma'}$ are compatible under any isomorphism $\R\langle \sigma \rangle \cong \R\langle \sigma'\rangle$ induced by the $\GL(\V_R)$-action.

Let $\Gamma(\V_\C) \subset \Sigma(\V_\C)^{\HP}$ be a finite set containing one representative for each alternating $\GL(\V_R)$-orbit. Recall that the dimension of a polyhedral cone $\sigma$ equals $\dim \R\langle \sigma \rangle$.

\begin{definition}\label{def-Herm-cone-complex}
    The \emph{Hermitian perfect cone complex} $\left(P_\bullet(\V_\C), \partial_\bullet\right)$ is the chain complex with 
    $$P_n(\V_\C) \coloneqq \; \Q\big\langle e_\sigma \mid \sigma \in \Gamma(\V_\C) \text{ and } \dim (\sigma) = n \big\rangle$$
    and differential of degree $-1$ given by
    $$\partial(e_\sigma) \coloneqq \sum_{\tau \subset \sigma} \delta\big( \omega_\tau, \omega_\sigma|_\tau \big) \cdot e_{\tau'},$$
    where the sum is over all alternating facets of $\sigma$, $\delta\big(\omega_\tau, \omega_\sigma|_\tau\big)$ is $1$ if the fixed orientation on $\tau$ agrees with the one induced from $\sigma$, and $-1$ otherwise, and $\tau' \in \Gamma(\V_\C)$ is the unique representative in the same orbit as $\tau$.
\end{definition}

\begin{remark}
    Note that, up to quasi-isomorphism, the chain complex $\left(P_\bullet(\V_\C), \partial_\bullet\right)$ is independent of choices of orbit representatives and orientations. In particular, up to quasi-isomorphism, the chain complex is determined by the ring $R$ and the rank of the free $R$-module $\V_R$.
    
    Indeed, continuing Remark~\ref{rem-Hermitian-cones-basis}, we can fix some $\V_R \cong R^r$ and think of the Hermitian perfect cone complex as defined in terms of alternating $\GL_r(R)$-orbit representatives $\Gamma_r \subset \Sigma_r^{\HP}$. We do so for the remainder of the section and use the notation $\big(P_\bullet^{(r)}, \partial_\bullet \big)$ for emphasis. We note, however, that all definitions and results are independent of basis choice.
\end{remark}

The following statement is immediate from Corollary~\ref{cor:tropicalBM in terms of W} and Theorem~\ref{thm:A_p,q-canonical-bijection}.

\begin{corollary}\label{cor:comparison-BM-homology-Hermitian-complex}
    For $\Sigma_{q,q}^{\HP}$ the admissible collection in Proposition~\ref{prop:Herm-perfect-cone-decomps-give-admis-coll}, we have
    $$H_*^\mathrm{BM} (A_{q,q}^\trop; \Q) \; \cong \; H_*\big(C_*(\Sigma_{p,q}^{\HP}), d\big) \; \cong \; H_*\big(P_\bullet^{(q)}, \partial_\bullet)\,.$$
\end{corollary}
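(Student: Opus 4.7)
The first isomorphism is immediate from Corollary~\ref{cor:tropicalBM in terms of W}. By Proposition~\ref{prop:Herm-perfect-cone-decomps-give-admis-coll}, the collection $\Sigma_{q,q}^{\HP}$ is a $\Gamma_\cW$-admissible collection; and by construction the associated locally symmetric variety is $\Gamma\backslash \cH_{q,q}$ with tropicalization $A_{q,q,J,\Lambda}^{\trop} = A_{q,q}^{\trop}$ (using Theorem~\ref{thm:A_p,q-canonical-bijection} to get independence of $J$ and $\Lambda$). Plugging into the corollary yields $H_*^{\mathrm{BM}}(A_{q,q}^{\trop};\Q)\cong H_*(C_*(\Sigma_{q,q}^{\HP}),d)$.

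For the second isomorphism, the plan is to use Theorem~\ref{thm:A_p,q-canonical-bijection} to rewrite $A_{q,q}^{\trop}\cong \PD_q^{E\text{-rt}}\!/\GL_q(R)$ and then recognize the right-hand side as the geometric realization of the diagram $\cI_q\to \mathsf{RPF}$ of~\eqref{eq:B(V)}, which sends the unique object to $(\Herm_q(R),\Sigma_q^{\HP})$ and each morphism $g\in\GL_q(R)$ to the induced strict automorphism. Applying Proposition~\ref{prop:comparison} to this diagram yields
\[
H_*^{\mathrm{BM}}\bigl(\PD_q^{E\text{-rt}}\!/\GL_q(R);\Q\bigr)\cong H_*\bigl(C_*(\cI_q\to \mathsf{RPF}),d\bigr).
\]

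The remaining step is to identify $C_*(\cI_q \to \mathsf{RPF})$ with the Hermitian perfect cone complex $(P_\bullet^{(q)},\partial_\bullet)$ of Definition~\ref{def-Herm-cone-complex}. By Definition~\ref{def:geometric realization and cellular chain complex}, this chain complex is the colimit in $\mathsf{Ch}_\Q$ of $\cI_q \to \mathsf{RPF}\to\mathsf{Ch}_\Q$. This colimit identifies $(\sigma,\omega)$ with $(g\cdot\sigma,g_*\omega)$ for every $g\in \GL_q(R)$; combined with the relation $(\sigma,-\omega)=-(\sigma,\omega)$, a non-alternating orbit collapses to zero while an alternating orbit contributes precisely one generator $e_\sigma$ for each representative (with a coherent choice of orientation, as prescribed in the paragraph preceding Definition~\ref{def-Herm-cone-complex}). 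The face differential induced by the colimit then matches the signed face map of Definition~\ref{def-Herm-cone-complex} verbatim. Alternatively, this identification is a direct invocation of Proposition~\ref{prop:linear basis in W world}, which provides precisely such a basis for $C_*(\Sigma_{q,q}^{\HP})$ after using Theorem~\ref{thm:type-A-unique-orbit-and-stabilizers}(a) to reduce to a single orbit of maximal isotropic subspaces and the stratification of $\PD_q^{E\text{-rt}}$ by rank of the common $E$-rational kernel to match cones for smaller isotropic subspaces with cones in $\Sigma_q^{\HP}$ meeting lower-dimensional strata of $\PD_q^{E\text{-rt}}$.

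The main obstacle is purely bookkeeping: making sure the colimit-induced differential on $C_*(\cI_q\to\mathsf{RPF})$ agrees on the nose with the sign-convention-laden differential $\partial_\bullet$ of Definition~\ref{def-Herm-cone-complex}, which reduces to tracking orientations through the $\GL_q(R)$-orbit identifications.
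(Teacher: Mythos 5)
Your proposal is correct and matches the paper's approach: the paper's own justification is literally the one-line remark that the statement ``is immediate from Corollary~\ref{cor:tropicalBM in terms of W} and Theorem~\ref{thm:A_p,q-canonical-bijection},'' and you have simply unfolded what that invocation requires (the first isomorphism directly from Corollary~\ref{cor:tropicalBM in terms of W}, the second via the homeomorphism to $\PD_q^{E\text{-rt}}\!/\GL_q(R)$, Proposition~\ref{prop:comparison} applied to the diagram $\cI_q\to\mathsf{RPF}$, and the orbit/orientation bookkeeping via Proposition~\ref{prop:linear basis in W world} and Theorem~\ref{thm:type-A-unique-orbit-and-stabilizers}). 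The bookkeeping you flag at the end is genuinely the content that the paper leaves implicit, and your outline of how cones of intermediate rank in $\Sigma_q^{\HP}$ match cones from lower-dimensional isotropic subspaces is exactly the mechanism established in the proof of Theorem~\ref{thm:A_p,q-canonical-bijection}.
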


\subsubsection{The inflation subcomplex}\label{subsub:inflation-subcomplex}
We now define a subcomplex of the Hermitian perfect cone complex and prove that it is contractible in Theorem~\ref{thm:Ig-acyclic}, which will be used in Section~\ref{sec:spectral-sequence} to prove the convergence of a spectral sequence associated to the tropical locally symmetric variety $A_{q,q}^\trop$. The statements that follow are analogous to those made in \cite{bbcmmw-top} for quadratic forms and $R=\ZZ$. We thus refer to \cite{bbcmmw-top} for details and proofs, which go through in our Hermitian setting. We note the importance of the running assumption that the ring of integers $R$ is a PID since certain technical results in~\cite{bbcmmw-top} (for example, their Lemma 5.4) require that a basis of a saturated sublattice extends to a basis of the entire lattice. 

\smallskip

Let $\sigma = \sigma(A)$ be a Hermitian perfect cone in $\Sigma_r^{\HP}$. 
The set $M(A) \subset R^r$ of minimal vectors has the property that $v\in M(A)$ if and only if $\lambda v \in M(A)$ for all units $\lambda \in R$. We let  $M'(A) = \{v_1,\ldots,v_n\}$ denote a choice of one of $\{\lambda v\}$ for each minimal vector.

\begin{definition}\label{def:coloop}
Fix a finite set $S\subset R^r$.  We say that $v\in S$ is an {\em $R^r$-coloop} of $S$ if there is an $R$-basis $v, w_2, \ldots, w_r$ for $R^r$ such that any $w \in S \setminus\{v\}$ is in the $R$-linear span of $w_2, \ldots, w_r$.

If $\sigma = \sigma(A)$ is a Hermitian perfect cone in $\Sigma_r^{\HP}$, 
we say $v \in M'(A)$ is a \emph{coloop} of $\sigma$ if 
$v$ is an $R^r$-coloop of $M'(A)$. 
\end{definition}

Analogous to \cite[Corollary~5.6]{bbcmmw-top}, a Hermitian perfect cone with two or more coloops is not alternating. Therefore, the Hermitian perfect cone complex from Definition~\ref{def-Herm-cone-complex} is generated by cones with at most one coloop.
We define the \emph{rank} of a polyhedral cone $\sigma \subset \Herm_r(\C)$ to be the maximum rank over all Hermitian matrices $A \in \sigma$.

\begin{definition}\label{def:Ig}
We define the \emph{inflation subcomplex} $I^{(r)}_{\bullet}$ to be 
 be the subcomplex of the Hermitian perfect cone complex $P_\bullet^{(r)}$ generated in degree $n$ by the $n$-dimensional cones $\sigma \in \Gamma_r$ that either have rank $r$ and a coloop or have rank at most $r-1$.
\end{definition}

\begin{theorem}\label{thm:Ig-acyclic}
The chain complex $I^{(r)}_{\bullet}$ is acyclic. 
\end{theorem}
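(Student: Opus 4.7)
The proof would follow the strategy used for the analogous acyclicity result in the symmetric-form case by \cite{bbcmmw-top}, constructing an explicit nullhomotopy on $I^{(r)}_\bullet$. The standing assumption that $R$ is a PID (see Section~\ref{subsec:inflation}) ensures that saturated sublattices of $R^r$ admit complementary bases, which is what allows the $\ZZ$-coefficient constructions of \emph{loc.\ cit.} to carry over; the Hermitian structure enters only through replacing rank-one symmetric forms $vv^t$ by rank-one Hermitian forms $vv^*$.

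First I would verify the Hermitian analogues of the basic structural lemmas about coloops: an alternating Hermitian perfect cone has at most one coloop; deflation along the coloop $v$ produces a rank-$(r-1)$ cone in the boundary of $\PD_r^{\Herm}$ whose minimal vectors are exactly $M'(A) \setminus \{v\}$; and conversely, given any rank-$(r-1)$ perfect cone supported on a saturated rank-$(r-1)$ sublattice $\Lambda_0 \subset R^r$, inflation along a choice of primitive vector $v$ complementary to $\Lambda_0$ produces a rank-$r$ perfect cone whose unique coloop is $v$. These operations descend to $\GL_r(R)$-orbits, giving a bijection between isomorphism classes of rank-$r$ cones with a coloop and isomorphism classes of rank-$(r-1)$ cones (equivalently, an acyclic matching on the generators of $I^{(r)}_\bullet$).

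With this matching in hand, the plan is to define a chain homotopy $h \col I^{(r)}_n \to I^{(r)}_{n+1}$ by sending rank-$r$ cones with a coloop to zero and sending rank-$(\leq r-1)$ cones to the (appropriately oriented) inflated cone. Verifying $\partial h + h \partial = \id$ then reduces to a signed accounting of codimension-one faces: on a rank-$(\leq r-1)$ cone $\tau$, the boundary of its inflation contains $\tau$ itself, together with faces that cancel in pairs against the terms of $h \partial (\tau)$; on a rank-$r$ cone $\sigma$ with coloop $v$, the faces of $\sigma$ split into a single deflation face (whose inflation recovers $\sigma$) and a collection of lower-rank and other-coloop faces whose $h$-images cancel pairwise with boundary contributions coming from the matching.

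The main obstacle will be the careful tracking of orientations and signs in the presence of the conjugation $\sigma \col R \to R$. In particular, one must verify that the chosen orientations $\omega_\sigma$ on orbit representatives are compatible with inflation up to a controlled sign, and that complex conjugation does not introduce unexpected signs in the action of $\GL_r(R)$ on the rank-one form $vv^*$. I expect these to be manageable bookkeeping issues --- and indeed the paper explicitly notes that the argument from \cite{bbcmmw-top} transports --- but this sign accounting is where essentially all of the genuine work lies.
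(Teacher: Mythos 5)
Your setup---defining $\ifl$ and $\dfl$, observing that an alternating cone has at most one coloop, and descending to $\GL_r(R)$-orbits---is the right starting point, but the matching you describe is not the one the argument rests on, and as stated it is not a perfect matching on the generators of $I^{(r)}_\bullet$. Following \cite[Definitions 5.8, 5.11]{bbcmmw-top}, the matching pairs alternating cones of rank at most $r-1$ \emph{with no coloop} against alternating cones \emph{with a coloop} (of any rank $\le r$); these two classes together exhaust the generators of $I^{(r)}_\bullet$. Your proposed bijection between rank-$r$ cones with a coloop and rank-$(r-1)$ cones leaves every generator of rank at most $r-2$ unmatched, and it also fails to exclude rank-$(r-1)$ cones that themselves have a coloop; inflating such a cone would produce a cone with two coloops, hence one that is not alternating and not a generator at all. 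The ``no coloop'' hypothesis on the low-rank side is therefore essential, not a convenience.

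The second step is where you genuinely diverge from the paper. Neither this paper nor \cite{bbcmmw-top} constructs a global chain contraction $h$ and verifies $\partial h + h\partial = \id$. Instead the argument is successive Gaussian elimination: remove matched pairs $(\tau,\ifl(\tau))$ one at a time, in decreasing order of dimension, passing to a quasi-isomorphic quotient complex at each step, and observe that no generators remain. The naive formula you propose, $h(\tau)=\ifl(\tau)$ for $\tau$ without a coloop and $h(\sigma)=0$ for $\sigma$ with a coloop, has two problems. First, it does not specify $h$ on the generators of rank $\le r-1$ that \emph{do} have a coloop; these are not sent to zero by the matching and they have no inflation either. Second, even on the remaining generators, comparing $\partial\,\ifl(\tau)$ with $h\,\partial\tau$ requires that for each facet $\tau'$ of $\tau$, the orbit representative $\ifl(\tau')$ agree with the face $\tau' * R_v$ of $\ifl(\tau)=\tau*R_v$, up to $\GL_r(R)$-equivalence and a controlled sign; the construction of $\ifl$ on orbit representatives does not provide this compatibility of inflation vectors across all faces for free, and without it the alleged cancellation does not occur. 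In general one cannot promote a perfect matching to a chain contraction by the matching alone---the Morse-theoretic contraction sums over gradient zigzag paths---and the successive-cancellation argument is precisely what lets both papers sidestep the sign and orientation accounting you correctly identify as the crux.
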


\begin{proof}
The proof is analogous to \cite[Theorem 5.15]{bbcmmw-top}. Let us briefly summarize the argument. 
One can define inverse operations of ``inflation'' and ``deflation'' (denoted $\ifl$ and $\dfl$) between the set of alternating Hermitian perfect cones having rank at most $r - 1$ and no coloops, and the set of alternating Hermitian perfect cones having a coloop; see~\cite[Definitions 5.8 and 5.11]{bbcmmw-top}.
Moreover, these two operations descend to well-defined inverse operations between the $\GL_r(R)$-orbits inside the described sets.
We thus have a matching of the cones generating $I_\bullet^{(r)}$ via
$$\sigma \to \begin{cases}
    \ifl(\sigma)' & \text{ if } \sigma \text{ has no coloop}\\
    \dfl(\sigma)' & \text{ if } \sigma \text{ has a coloop}
\end{cases},$$
where $\tau'$ denotes the unique cone generator of $I_\bullet^{(r)}$ in the same $\GL_r(R)$-orbit as $\tau$.
We can then successively remove matched pairs from $I_\bullet^{(r)}$ in decreasing order of dimension, while maintaining quasi-isomorphic chain complexes at every step.
\end{proof}

\subsection{A spectral sequence} \label{sec:spectral-sequence}
We next construct a chain of natural inclusions involving the tropicalizations $A_{q,q}^\trop$ and use it to prove convergence results of two spectral sequences in Corollary~\ref{cor:convergence-truncated-tropical-ss} and Theorem~\ref{thm:converge-to-0}. We start by describing natural inclusions $A_{q,q}^\trop \hookrightarrow A_{q+1,q+1}^\trop$.

\smallskip

Fix a positive integer $q$. We continue to make the assumptions of Remark~\ref{rem:type-A-admissible-collection}, where $E/\Q$ is an imaginary quadratic field extension whose ring of integers $R$ is a PID with unit group $R^\times = \{\pm 1\}$, and $V$ is a $2q$-dimensional vector space over $E$ with a skew-Hermitian form $J$ of signature $(q,q)$ and Witt index $q$. We also have a full-rank unimodular lattice $\Lambda \subset V$ and the corresponding arithmetic group $\Gamma = \SU(\Lambda, J|_\Lambda)$.

Consider the $R$-lattice $\Lambda' = \Lambda \oplus R^2$ and corresponding $E$-vector space $V' = \Lambda' \otimes_\Z \Q = V \oplus E^2$.
We extend the form $J$ to a skew-Hermitian form $J'$ on $V'$ of signature $(q+1,q+1)$ and Witt index $q+1$, as follows: take the form on $\Lambda' = \Lambda \oplus R^2$ given by
\begin{equation}\label{eqn:extended-skew-Herm-form}
    \left(\begin{array}{ c | c }
    J|_\Lambda & 0\\
    \hline
    0 & \scalebox{0.7}{$\begin{matrix}
        0 & 1\\
        -1 & 0
    \end{matrix}$}
  \end{array}\right)
\end{equation}
and let $J'$ be the skew-Hermitian form on $V' = \Lambda' \otimes_\Z \Q$ obtained via extension of scalars. Note that $\Lambda'$ is unimodular with respect to $J'$ because $\Lambda$ was chosen to be unimodular with respect to~$J$. Let $\Gamma' = \SU(\Lambda', J'|_{\Lambda'})$.

\smallskip

Let $\cW, \cW'$ be the categories of isotropic subspaces of $V$ and $V'$, respectively. The tropicalizations $A_{q,q}^\trop$ and $A_{q+1,q+1}^\trop$ from Theorem~\ref{thm:A_p,q-canonical-bijection} can thus be expressed as the following colimits:
$$A_{q,q}^\trop = \varinjlim \big( \cW \xrightarrow{\Sigma_{q,q}^{\HP}} \RPF \xrightarrow{|\cdot|} \mathsf{Top}\big)
\quad \text{and}  \quad
A_{q+1,q+1}^\trop = \varinjlim \big(\cW' \xrightarrow{\Sigma_{q+1,q+1}^{\HP}} \RPF \xrightarrow{|\cdot|} \mathsf{Top}\big),$$
where $\Sigma_{q,q}^{\HP}$ and $\Sigma_{q+1,q+1}^{\HP}$ are the admissible collections consisting of Hermitian perfect cone decompositions from Proposition~\ref{prop:Herm-perfect-cone-decomps-give-admis-coll}.
Since $\cW$ is a subcategory of $\cW'$,
we get a canonical map
$$i_q: A_{q,q}^\trop \longrightarrow A_{q+1,q+1}^\trop.$$
This map is injective because of the following two observations. First, by Theorem~\ref{thm:type-A-unique-orbit-and-stabilizers}(a), both $\cW$ and $\cW'$ contain a single $\Gamma$-orbit, respectively $\Gamma'$-orbit, of isotropic subspaces in each dimension. Moreover, by Theorem~\ref{thm:type-A-unique-orbit-and-stabilizers}(b), both $\Gamma$ and $\Gamma'$ induce the full $\GL(\Lambda_W)$-action on the Hermitian perfect cone decomposition associated to each isotropic subspace $W$.

\begin{lemma}
    We have a natural homeomorphism
    $$A_{q+1,q+1}^\trop \setminus i_q(A_{q,q}^\trop) \cong \PD_{q+1}^{\Herm} / \GL_{q+1}(R).$$
\end{lemma}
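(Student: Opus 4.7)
My plan is to identify both spaces explicitly using Theorem~\ref{thm:A_p,q-canonical-bijection}, then chase classes through the colimit description.

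First I would invoke Theorem~\ref{thm:A_p,q-canonical-bijection} for $V'$ to obtain the homeomorphism $A_{q+1,q+1}^\trop \cong \PD_{q+1}^{E\text{-rt}}/\GL_{q+1}(R)$, realized by picking a maximal (i.e., $(q+1)$-dimensional) isotropic subspace $\tilde W \subset V'$ together with its unimodular lattice $\Lambda_{\tilde W}$ and the associated Hermitian perfect cone decomposition of $\PD(\tilde W_\C^*)$. Since $V \subset V'$ in a compatible way with the forms and lattices, we may arrange that $\tilde W = W^{\max} \oplus L$ where $W^{\max} \subset V$ is a maximal isotropic of dimension $q$ and $L$ is an isotropic line in $E^2$; then $W^{\max}$ becomes a proper isotropic subspace of $V'$ via this decomposition.

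Next I would identify the image of $i_q$. By Theorem~\ref{thm:type-A-unique-orbit-and-stabilizers}(a) applied to $\Gamma' = \SU(\Lambda',J'|_{\Lambda'})$, every isotropic subspace of $V'$ of dimension $\le q$ is $\Gamma'$-equivalent to an isotropic subspace of $V$, so the classes in $A_{q+1,q+1}^\trop$ represented by the cones attached to isotropic subspaces of $V$ are exactly the classes coming from isotropic subspaces of $V'$ of dimension at most $q$. Under the identification from the first paragraph, a class in $A_{q+1,q+1}^\trop$ is represented by a pair $(W,\varphi)$ with $W \subset V'$ isotropic and $\varphi \in \PD(W_\C^*)$, modulo the equivalence generated by the inclusions $\fW(W) \hookrightarrow \fW(\tilde W)$ of Definition~\ref{def:herm-functor}. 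Concretely, each such inclusion sends $\varphi$ on $W_\C^*$ to its pullback along the surjection $\tilde W_\C^* \twoheadrightarrow W_\C^*$, which is a positive semidefinite form on $\tilde W_\C^*$ whose kernel is the $E$-rational subspace $\ker(\tilde W^*\twoheadrightarrow W^*)_\C$ of dimension $(q+1)-\dim W$.

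It follows that a class in $\PD_{q+1}^{E\text{-rt}}/\GL_{q+1}(R)$ lies in $i_q(A_{q,q}^\trop)$ if and only if it admits a representative that is a positive semidefinite form of rank strictly less than $q+1$, i.e., one with nonzero $E$-rational kernel; equivalently, it lies in the image of $(\PD_{q+1}^{E\text{-rt}}\setminus \PD_{q+1}^{\Herm})/\GL_{q+1}(R)$. Conversely, the class of a strictly positive definite form $\varphi \in \PD(\tilde W_\C^*)$ is not equivalent to any $(W,\psi)$ with $W \subsetneq \tilde W$, because such a $\psi$ would pull back to a form with nontrivial kernel on $\tilde W_\C^*$. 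This gives the bijection on underlying sets
\[
A_{q+1,q+1}^\trop \setminus i_q(A_{q,q}^\trop) \;\longleftrightarrow\; \PD_{q+1}^{\Herm}/\GL_{q+1}(R).
\]

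Finally, to upgrade to a homeomorphism, I would observe that $\PD_{q+1}^{\Herm}$ is an open subset of $\PD_{q+1}^{E\text{-rt}}$, and on this open cone the topology coming from the support of $\Sigma_{q+1}^{\HP}$ agrees with the Euclidean topology (as recorded in Remark~\ref{rem:adm-decomp-support} and the proof of Theorem~\ref{thm:all purpose spectral sequence in F world}, using that $\Sigma_{q+1}^{\HP}$ is locally finite on $\PD_{q+1}^{\Herm}$). Since $\GL_{q+1}(R)$ acts properly on $\PD_{q+1}^{\Herm}$, both $A_{q+1,q+1}^\trop\setminus i_q(A_{q,q}^\trop)$ (with the subspace topology) and $\PD_{q+1}^{\Herm}/\GL_{q+1}(R)$ inherit the same quotient topology, giving the desired homeomorphism. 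The only subtle step is the paragraph above matching the image of $i_q$ with the locus of non-positive-definite forms; everything else is bookkeeping within the colimit description.
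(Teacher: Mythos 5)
Your proof is correct and follows essentially the same route as the paper's (which is much terser): both rest on Remark~\ref{rem:adm-decomp-support} to identify the support of the admissible decomposition as the $E$-rational closure, and on Theorem~\ref{thm:type-A-unique-orbit-and-stabilizers}(a) to pin down the image of $i_q$ as exactly the classes represented by proper isotropic subspaces of $V'$, i.e.\ forms of deficient rank. The extra bookkeeping you supply --- the explicit choice $\tilde W = W^{\max}\oplus L$, the description of the gluing relation, and the discussion of why the colimit topology on $\PD_{q+1}^{\Herm}$ agrees with the Euclidean/quotient topology via local finiteness --- is all accurate and simply makes explicit what the paper's two-sentence proof leaves to the reader.
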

\begin{proof}
    Translating Remark~\ref{rem:adm-decomp-support} into the language of isotropic subspaces, the support of our admissible decomposition of $\PD(W^*_\C)$, for $W$ an isotropic subspace of $V$ or $V'$, covers precisely $\PD(W^*_\C)$ and the images of the natural inclusions $\PD((W_\C')^*) \hookrightarrow \ov{\PD(W^*_\C)}$ for all (isotropic) subspaces $W' \subset W$.
    The statement then follows from the uniqueness of orbits in Theorem~\ref{thm:type-A-unique-orbit-and-stabilizers}(a).
\end{proof}

Successively applying the construction described above, we obtain canonical injections
\begin{equation}\label{eq:canonical-inclusions-A_pq}
\begin{tikzcd}[column sep = 3em, row sep = .5em]
    A_{0,0}^\trop \arrow[r,hook,"i_0"] &
    A_{1,1}^\trop \arrow[r,hook,"i_1"] & A_{2,2}^\trop \arrow[r,hook,"i_2"] & 
    \cdots 
\end{tikzcd}
\end{equation}
with $A_{q+1,q+1}^\trop \setminus i_q(A_{q,q}^\trop) \cong \PD_{q+1}^{\Herm} / \GL_{q+1}(R)$ for all $q\geq 0$. Here, $A_{0,0}^\trop$ is taken by convention to be a single point, including into $A_{1,1}^\trop$ as the unique cone point (that is, the common origin of all polyhedral cones). At the level of skew-Hermitian forms, we start with the ``empty'' form and add a $2\times 2$ block as in~\eqref{eqn:extended-skew-Herm-form} at each step.

\begin{corollary}\label{cor:convergence-truncated-tropical-ss}
Truncating the inclusions~\eqref{eq:canonical-inclusions-A_pq} at $A_{q,q}^\trop$, the associated relative homology spectral sequence has
\[E^1_{s,t} = \begin{cases} H^{\mathrm{BM}}_{s+t} (\PD^\mathrm{Herm}_s/\GL_s(R);\,\QQ) & \text{if }s \le q\\ 0 & \text{otherwise}\end{cases},\]
and converges to $H_{s+t}^{\mathrm{BM}} (A_{q,q}^\trop;\QQ).$ Equivalently, the dual cohomological spectral sequence has
\[E_1^{s,t} = \begin{cases} H_{c}^{s+t} (\PD^\mathrm{Herm}_s/\GL_s(R);\,\QQ) & \text{if }s \le q\\ 0 & \text{otherwise}\end{cases},\]
and converges to $H^{s+t}_{c} (A_{q,q}^\trop;\QQ).$
\end{corollary}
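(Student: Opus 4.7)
The plan is to apply the relative homology spectral sequence of a filtered space to a suitable filtration of the one-point compactification $(A_{q,q}^\trop)^+$, essentially mirroring the proof of Theorem~\ref{thm:all purpose spectral sequence in F world}. Concretely, define
\[\mathrm{Fil}_s\,(A_{q,q}^\trop)^+ \;\coloneqq\; \bigl(i_{q-1}\circ\cdots\circ i_s\bigr)\bigl(A_{s,s}^\trop\bigr)^+ \;\subset\; (A_{q,q}^\trop)^+\]
for $0 \le s \le q$, with $\mathrm{Fil}_{-1}$ taken to be the basepoint. Since $A_{q,q}^\trop$ is the geometric realization of an admissible collection built from Hermitian perfect cone decompositions (Proposition~\ref{prop:Herm-perfect-cone-decomps-give-admis-coll}, Theorem~\ref{thm:A_p,q-canonical-bijection}), its one-point compactification is naturally a CW complex, and each $\mathrm{Fil}_s(A_{q,q}^\trop)^+$ is a subcomplex (the inclusions $i_s$ being inclusions of subcomplexes is visible at the level of cones in the admissible collection).

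First I would identify the successive differences. By construction,
\[\mathrm{Fil}_s\,(A_{q,q}^\trop)^+ \setminus \mathrm{Fil}_{s-1}\,(A_{q,q}^\trop)^+ \;\cong\; \PD_s^{\Herm}/\GL_s(R),\]
which is exactly the content of the chain of inclusions displayed immediately before the corollary, combined with the convention that $A_{0,0}^\trop$ is a point and $\PD_0^{\Herm}/\GL_0(R)$ is a point. Then the relative homology spectral sequence of the filtered CW pair yields
\[E^1_{s,t} \;=\; H_{s+t}\bigl(\mathrm{Fil}_s\,(A_{q,q}^\trop)^+,\ \mathrm{Fil}_{s-1}\,(A_{q,q}^\trop)^+;\,\QQ\bigr) \;\Longrightarrow\; H_{s+t}\bigl((A_{q,q}^\trop)^+;\QQ\bigr).\]
Using the same identification of relative homology with Borel--Moore homology of the difference as in the proof of Theorem~\ref{thm:all purpose spectral sequence in F world}, together with the fact that reduced homology of the one-point compactification of a locally compact Hausdorff space computes Borel--Moore homology, this rewrites as
\[E^1_{s,t} \;\cong\; H^{\mathrm{BM}}_{s+t}\bigl(\PD_s^{\Herm}/\GL_s(R);\,\QQ\bigr) \;\Longrightarrow\; H^{\mathrm{BM}}_{s+t}(A_{q,q}^\trop;\QQ),\]
with the columns $s > q$ being zero since the filtration stabilizes at $s=q$. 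This gives the first (homological) assertion.

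For the cohomological version, I would simply dualize: Borel--Moore homology is (rationally) dual to compactly supported cohomology via $H^{\mathrm{BM}}_\bullet(-;\QQ) \cong H_c^\bullet(-;\QQ)^\vee$, which is exact, so the dual spectral sequence has $E_1^{s,t} = H_c^{s+t}(\PD_s^{\Herm}/\GL_s(R);\QQ)$ converging to $H_c^{s+t}(A_{q,q}^\trop;\QQ)$, as stated.

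The main (minor) obstacle will be bookkeeping: confirming that the filtration really is a CW filtration compatible with the one-point compactification, and that the successive quotient spaces, with the correct topology, are homeomorphic to $\PD_s^{\Herm}/\GL_s(R)$ equipped with the topology inherited from its admissible decomposition (and not a coarser quotient topology). This is handled exactly as in Theorem~\ref{thm:all purpose spectral sequence in F world}, where the same verification was carried out using that admissible decompositions are locally finite on the interior of each open cone, so the identification is automatic once phrased via the geometric realization functor; no additional input is needed here.
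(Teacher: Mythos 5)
Your proposal is correct and takes exactly the intended route: the paper states this as a corollary of the chain of inclusions~\eqref{eq:canonical-inclusions-A_pq} and the preceding lemma identifying the successive differences, with the relative homology spectral sequence applied to the filtered one-point compactification just as in the proof of Theorem~\ref{thm:all purpose spectral sequence in F world}. The dualization step for the cohomological version is standard, so nothing further is needed.
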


\begin{theorem}
    \label{thm:converge-to-0} 
The relative homology spectral sequence associated to \eqref{eq:canonical-inclusions-A_pq},
having
\[E^1_{s,t} = H^{\mathrm{BM}}_{s+t} \left(\PD^{\Herm}_s/\GL_s(R);\,\QQ \right),\]
converges to $0$ on $E^2$. Equivalently, the dual cohomological spectral sequence, having 
\[E_1^{s,t} = H_{c}^{s+t} \left(\PD^{\Herm}_s/\GL_s(R);\,\QQ \right),\]
converges to $0$ on $E_2$.
\end{theorem}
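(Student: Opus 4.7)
The plan is to identify the abutment of the spectral sequence with the homology of a colimit chain complex, and then to show that colimit is acyclic by invoking Theorem~\ref{thm:Ig-acyclic}.

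First, I would assemble the necessary identifications. For each $q$, Corollary~\ref{cor:comparison-BM-homology-Hermitian-complex} gives $H_*^{\mathrm{BM}}(A_{q,q}^\trop;\QQ) \cong H_*(P_\bullet^{(q)})$. The inclusion $i_q \col A_{q,q}^\trop \hookrightarrow A_{q+1,q+1}^\trop$, which comes from the saturated sublattice inclusion $R^q \hookrightarrow R^{q+1}$, sends each $\GL_q(R)$-orbit of cones in $\Sigma_q^{\HP}$ to the $\GL_{q+1}(R)$-orbit of the same cone viewed as a rank-$q$ cone in $\Sigma_{q+1}^{\HP}$. This realizes $P_\bullet^{(q)}$ as the subcomplex of $P_\bullet^{(q+1)}$ generated by cones of rank at most $q$. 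Thus the geometric filtration $F_s = A_{s,s}^\trop$ of $A_{\infty,\infty}^\trop := \varinjlim_q A_{q,q}^\trop$ corresponds to the rank filtration on $P_\bullet^{(\infty)} := \varinjlim_q P_\bullet^{(q)}$; since $F_s \setminus F_{s-1} \cong \PD^{\Herm}_s/\GL_s(R)$, the $E^1$ page is as claimed, with abutment $H_*(P_\bullet^{(\infty)})$.

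Second, I would make the crucial observation that, by Definition~\ref{def:Ig}, the inflation subcomplex $I_\bullet^{(q+1)} \subseteq P_\bullet^{(q+1)}$ contains by construction every cone of rank at most $q$. Hence these subcomplexes fit into sandwiches
\[
P_\bullet^{(q)} \;\subseteq\; I_\bullet^{(q+1)} \;\subseteq\; P_\bullet^{(q+1)}
\]
for every $q$. Consequently the filtered colimits coincide: $\varinjlim_q I_\bullet^{(q)} = \varinjlim_q P_\bullet^{(q)} = P_\bullet^{(\infty)}$. By Theorem~\ref{thm:Ig-acyclic}, each $I_\bullet^{(q)}$ is acyclic; and since taking homology of chain complexes commutes with filtered colimits,
\[
H_*(P_\bullet^{(\infty)}) \;=\; \varinjlim_q H_*(I_\bullet^{(q)}) \;=\; 0.
\]
The spectral sequence thus converges to $0$, and the dual cohomological statement follows by formal dualization.

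The heart of the argument is already embedded in Theorem~\ref{thm:Ig-acyclic}, proven via an explicit inflation/deflation matching adapted from~\cite{bbcmmw-top}. The remaining obstacle is spectral-sequence bookkeeping: I would need to verify carefully that the rank filtration on the colimit chain complex $P_\bullet^{(\infty)}$ induces the same spectral sequence as the geometric filtration $\{F_s\}$ on $A_{\infty,\infty}^\trop$. This amounts to identifying the successive quotients $P_\bullet^{(s)}/P_\bullet^{(s-1)}$ with the cellular Borel--Moore chain complex of the open stratum $\PD_s^{\Herm}/\GL_s(R) \cong F_s \setminus F_{s-1}$, which follows directly from the description of the Hermitian perfect cone decomposition recalled in Subsection~\ref{subsub:Herm-perfect-cone-decomp}.
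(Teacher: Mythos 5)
Your sandwich observation $P_\bullet^{(q)} \subseteq I_\bullet^{(q+1)} \subseteq P_\bullet^{(q+1)}$ is exactly the right key structural fact, and it is the same factorization the paper uses (stated there as $A_{q,q}^\trop \hookrightarrow |I_\bullet^{(q+1)}| \hookrightarrow A_{q+1,q+1}^\trop$). But you draw the wrong conclusion from it, and the gap is real. You conclude $H_*\big(P_\bullet^{(\infty)}\big) = 0$ and then assert that the spectral sequence ``thus converges to $0$.'' That argument only controls the abutment, hence at best shows $E^\infty = 0$. The theorem claims something strictly stronger: $E^2 = 0$. These are not the same. One can have a convergent spectral sequence with vanishing abutment whose $E^2$ page is nonzero, with the vanishing achieved only by higher differentials. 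In fact, this distinction matters materially downstream: the proof of Theorem~\ref{maintheorem:hopf} uses $E_2 = 0$ to conclude that every \emph{row} of the $E_1$ page is an acyclic cochain complex, which is what feeds the doubling isomorphism \eqref{eq:doubling-phenomenon}. Knowing only $E^\infty = 0$ would not give you acyclicity of the rows of $E_1$.

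The fix is short and uses the same sandwich. Because $H_*(I_\bullet^{(q+1)}) = 0$ by Theorem~\ref{thm:Ig-acyclic}, the inclusion-induced maps $H_*(P_\bullet^{(q)}) \to H_*(P_\bullet^{(q+1)})$ are \emph{all zero}. This is the statement you should extract, rather than the vanishing of the colimit. A general lemma (this is \cite[Proposition~2.8]{brown-chan-galatius-payne-hopf}, which the paper cites; it also has a direct proof) says that for the spectral sequence of an exhaustive filtration $\{F_q\}$, if every structure map $H_*(F_{q-1}) \to H_*(F_q)$ vanishes, then $E^2 = 0$. The mechanism is the long exact sequence of the pair $(F_s, F_{s-1})$: vanishing of $\iota_*$ splits it into short exact sequences $0 \to H_n(F_s) \to H_n(F_s, F_{s-1}) \xrightarrow{\partial} H_{n-1}(F_{s-1}) \to 0$, from which one checks directly that $\ker d^1 = \operatorname{im} d^1$ at every spot. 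So you should keep your sandwich, discard the colimit computation, and replace the final step by this zero-maps-implies-$E^2$-vanishes argument.
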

\begin{proof}
    Since each inclusion $i_q$ factors through the topological realization of the inflation subcomplex:
\[
\begin{tikzcd}[column sep = 3em, row sep = .5em]
    A_{q,q}^\trop \arrow[r,hook] &
    \big| I_\bullet^{(q+1)} \big| \arrow[r,hook] &
    A_{q+1,q+1}^\trop,
\end{tikzcd}
\]
the induced maps on Borel-Moore homology $H_*^\mathrm{BM}(A_{q,q}^\trop; \Q) \to H_*^\mathrm{BM}(A_{q+1,q+1}^\trop; \Q)$ are zero by Theorem~\ref{thm:Ig-acyclic}. The statement then follows from \cite[Proposition 2.8]{brown-chan-galatius-payne-hopf} applied for the filtration $F_q \, C_\bullet := C^\mathrm{BM}_\bullet\big(A_{q,q}^\trop\big)$ of $C_\bullet = \varinjlim_q \, C^{\mathrm{BM}}_\bullet\big(A_{q,q}^\trop\big)$, and the localization long exact sequence.
\end{proof}

\subsection{Hopf structure} \label{subsec:hopf}
In this section, we prove the existence of a Hopf algebra structure in the cohomology of the type A locally symmetric varieties $\cA_{g,g,\psi}$ from Remark~\ref{rem:A_p,q-Kottwitz}. 
We continue to make the assumptions of Remark~\ref{rem:type-A-admissible-collection}.
More precisely, suppose we are working over an imaginary quadratic number field whose ring of integers $R$ is a principal ideal domain with unit group $\{\pm 1\}$. Let $\psi = \psi_g \col R^{2g} \times R^{2g} \to R$ be a unimodular skew-Hermitian form with Witt index $g$. By Proposition~\ref{prop:standard-skew-Herm-form}, $\psi$ is unique up to isomorphism, so we denote $\cA_{g,g} = \cA_{g,g,\psi}$ the moduli space of abelian varieties with CM from Remark~\ref{rem:A_p,q-Kottwitz}.

\begin{manualtheorem}{\ref{maintheorem:hopf}}
    \label{thm:Hopf-type-A}
Let $E = \QQ(\sqrt{-d})$ for $d \in \{2,7,11,19,43,67,163\}$, and $R$ the ring of integers of $E$. 
The bigraded vector space 
\begin{equation}\label{eq:bigraded}
\bigoplus_{(g,k)} W_0 H^{g+k}_c(\cA_{g,g};\QQ),\end{equation}
admits the structure of a bigraded Hopf algebra, with graded-cocommutative coproduct.
\end{manualtheorem}

\begin{remark}
This Hopf algebra structure, combined with work in progress of the last two authors with F.~Brown on differential forms on bordifications of locally symmetric spaces for $\GL_n(\cO)$, is expected to imply dimension growth results on~\eqref{eq:bigraded}, in analogy with \cite{brown-bordifications} and \cite[Theorem~1.1]{brown-chan-galatius-payne-hopf}.
\end{remark}

Proving Theorem~\ref{thm:Hopf-type-A} relies on two key points. The first input is the convergence of the spectral sequence from Theorem~\ref{thm:converge-to-0}, which we shall henceforth call the \emph{tropical spectral sequence}.
The second key input is applying the recent construction of a Hopf algebra structure on the Quillen spectral sequence by \cite{ash-miller-patzt-hopf, brown-chan-galatius-payne-hopf}.   The relationship between the Quillen spectral sequence and the bigraded vector space~\eqref{eq:bigraded} is not immediate. It is established in the type C case in \cite{brown-chan-galatius-payne-hopf}, for the bigraded vector space~\eqref{eq:Hopf-Ag}, and the proofs of \cite[Theorem~1.1]{brown-chan-galatius-payne-hopf} go through in Type A.  However, the relevant proofs in op.~cit.~are interwoven with a number of other results, so we shall explain the minimal set of ideas needed for this proof below.

\begin{proof}[Proof of Theorem~\ref{thm:Hopf-type-A}]
We recall the input needed from algebraic $K$-theory.  Recall that associated to any ring $R$ is its $K$-theory space, $K(R)$, well-defined up to homotopy.  It is an infinite loop space, and $BK(R)$ shall denote its one-fold delooping.  The algebraic $K$-groups of $R$ are $K_i(R) = \pi_i(K(R)) = \pi_{i+1}(BK(R)).$  Waldhausen \cite{Waldhausen} provides one construction of $BK(R)$, the $S_\bu$-construction applied to a suitable Waldhausen category associated to $R$; 
see \cite[IV.8]{weibel-k-book}.  

Now suppose that $R$ is the ring of integers of a number field $E$.  In his proof that $K_i(R)$ are finitely generated abelian groups for such rings for all $i$, Quillen considered a filtration on the Waldhausen $S_\bu$-construction of $BK(R)$ and showed that the spectral sequence associated to the filtration has the following form.  Fix $n$, and let $\mathfrak{a}$ denote an element of an index set of representatives $P_\mathfrak{a}$ for isomorphism classes of rank $n$ projective $R$-modules.
Let $\St(P_\mathfrak{a} \otimes_R E) \cong \St_n(E)$ denote the {\em Steinberg module}: the unique nonzero reduced homology group of the geometric realization of the poset of proper, nonempty vector subspaces of $P_\mathfrak{a} \otimes_R E$, with its structure as a representation of $\GL(P_\mathfrak{a})$. 
Then the spectral sequence is
\begin{equation*}
\sseq{Q}^1_{n,k} \cong \bigoplus_\mathfrak{a} H_{k}(\GL(P_\mathfrak{a});\mathrm{St}(P_\mathfrak{a} \otimes_R E))\Rightarrow H_*(BK(R)),\end{equation*}
where, again, $\mathfrak{a}$ ranges over an index set of representatives $P_\mathfrak{a}$ for isomorphism classes of rank $n$ projective $R$-modules.   See \cite{quillen-higher}.
By \cite{ash-miller-patzt-hopf} and \cite[\S1.7 and Theorem 3.18]{brown-chan-galatius-payne-hopf}, the Quillen spectral sequence $\sseq{Q}^*_{*,*}$ admits the structure of a spectral sequence of Hopf algebras. We are interested in the case when $R$ is a PID, so we have a unique isomorphism class of projective (equivalently free) $R$-modules in each rank, hence
\begin{equation}\label{eq:ss-quillen} 
    \sseq{Q}^1_{n,k} \cong H_{k}(\GL_n(R);\mathrm{St}_n(E)).
\end{equation}

Suppose $R$ is the ring of integers of a number field $E$ having $r_1$ real embeddings and $2r_2$ complex embeddings. Borel--Serre \cite{borel-serre-corners} show that $\GL_n(R)$ is a virtual duality group, with virtual cohomological dimension
$r_1 \binom{n+1}{2} + r_2n^2 - n$.  Moreover, the virtual dualizing module of $\GL_n(R)$ is $\mathrm{St}_n(E) \otimes \ZZ_{\mathrm{or}}$, where $\ZZ_{\mathrm{or}}$ denotes the orientation module for the symmetric space associated to $\GL_n(R)$.  
Now suppose, as we do in this theorem, that $R$ is the ring of integers of an imaginary quadratic extension $E$ of $\QQ$, so $(r_1,r_2) = (0,1)$.  Then the symmetric space of $\GL_n(R)$ is $\PD_n^{\Herm}\!/\RR_{>0}$, the cone of positive definite $n\times n$ Hermitian matrices, up to positive real scaling. The orientation module $\ZZ_{\mathrm{or}}$ associated to the action of $\GL_n(R)$ on $\PD_n^{\Herm}\!/\RR_{>0}$ is trivial, because $\GL_n(R)$ is a subgroup of the connected group $\GL_n(\CC)$, so must act on its symmetric space preserving orientations.  Therefore the virtual dualizing module for $\GL_n(R)$ is $\mathrm{St}_n(E)$ itself.  See \cite{putnam-studenmund-dualizing}, especially Theorem C of op.~cit., which also contains a helpful summary of the facts recalled above. 
Thus
\begin{equation}\label{eq:dualities}\begin{split}
    H_{k}(\GL_n(R);\mathrm{St}_n(E)\otimes \QQ) &\cong H^{n^2-n-k}(\GL_n(R);\QQ) \\ &\cong H^{\mathrm{BM}}_{n+k} (\PD_n^{\Herm}\!/ \GL_n(R);\QQ),\end{split}
\end{equation}
where the first isomorphism is virtual duality, and the second one is Poincar\'e duality, noting that the dimension of the positive definite Hermitian cone is $n^2$. Since~\eqref{eq:dualities} are precisely the terms of~\eqref{eq:ss-quillen} tensored by $\Q$,
the bigraded vector space
\begin{equation*}
\bigoplus_{n,k} H^{\mathrm{BM}}_{n+k}(\PD^{\Herm}_n\!/\GL_n(R);\QQ) 
\end{equation*}
is a bigraded Hopf algebra, obtained from the Hopf algebra in the Quillen spectral sequence $\sseq{Q}^1_{*,*}$ via tensoring with $\Q$. Since Borel--Moore homology and compactly-supported cohomology are dual, and the dual of a Hopf algebra carries the structure of a Hopf algebra, the bigraded vector space
\begin{equation}\label{eq:E1-of-tropical}
    \bigoplus_{n,k} H_c^{n+k}(\PD^{\Herm}_n\!/\GL_n(R);\QQ)
\end{equation}
is also a bigraded Hopf algebra.

Now, we have that~\eqref{eq:E1-of-tropical} is, forgetting differentials, isomorphic as a bigraded vector space to the $E_1$ page of the cohomological tropical spectral sequence $\sseq{T}_*^{*,*}$ from Theorem~\ref{thm:converge-to-0}, which converges to 0 on $E_2$. Thus, each row of $\sseq{T}_1^{*,*}$ is an acyclic cochain complex.

The category of cochain complexes over a field $k$ is equivalent to the category of graded $k[x]/(x^2)$-modules, where $\deg x = +1$.  We pause to recall a general fact about truncations of acyclic chain complexes.  Let $(C^*,d)$ be a cochain complex over $k$, and for each integer $s$, let $(C^*_{\le s},d)$ denote the truncation of $C^*$ which has \[C^k_{\le s} = \begin{cases} C^k &\text{if }k\le s,\\0& \text{else.}\end{cases}\] Then recall the following general observation (see \cite[Lemma 2.9]{brown-chan-galatius-payne-hopf}): if $C^*$ is acyclic, then 
\begin{equation} \label{eq:acyclic-cochain-complexes}
    (C^*,d) \cong \bigoplus_s H^s (C^*_{\le s}, d) \otimes k[x]/(x^2)
\end{equation}
as graded $k[x]/(x^2)$-modules, with $\deg x = 1$.

To apply this observation to the $E_1$ page of the cohomological tropical spectral sequence, we note that, by Corollary~\ref{cor:convergence-truncated-tropical-ss}, the truncation at column $g$ of $\sseq{T}_1^{*,*}$ converges to the compactly supported cohomology of $A_{g,g}^\trop$.
Therefore, we have an isomorphism of bigraded vector spaces 
\begin{equation}\label{eq:doubling-phenomenon}
    \bigoplus_{g,k} H^{g+k}_c (\PD^{\Herm}_g\!/\GL_g(R);\QQ) \cong \left(\bigoplus_{g,k} W_0 H^{g+k}_c(\cA_{g,g};\QQ)\right) \otimes \QQ[x]/(x^2),
\end{equation}
where $x$ has bidegree $(1,0)$. 
The left hand side was shown above to admit the structure of a bigraded Hopf algebra.  Moreover, let $e$ denote the image of $x$ under the isomorphism above. Then $e$ generates a Hopf ideal inside that Hopf algebra, just as noted in \cite[Proof of Theorem 1.1]{brown-chan-galatius-payne-hopf}. Indeed, since $\deg e = (1,0)$, it automatically follows that $e$ is primitive, so the ideal it generates is a Hopf ideal. The quotient by this Hopf ideal therefore has the structure of a Hopf algebra, and is isomorphic to $\bigoplus_{g,k} W_0 H^{g+k}_c(\cA_{g,g};\QQ)$.
\end{proof}

\begin{remark}\label{rem:general-doubling}
    We note that the doubling phenomenon in~\eqref{eq:doubling-phenomenon} is interesting even if we are not in the setting of Remark~\ref{rem:type-A-admissible-collection}, and hence don't have a natural chain of inclusions~\eqref{eq:canonical-inclusions-A_pq} of tropicalizations. Namely, for every imaginary quadratic number field $E$ whose ring of integers $R$ is a principal ideal domain, we obtain an analogous doubling phenomenon:
    \begin{equation} \label{eq:general-doubling-phenomenon}
    \bigoplus_{g,k} H^{g+k}_c \big(\PD^{\Herm}_g\!/\GL_g(R);\QQ\big) \cong \left(\bigoplus_{g,k} H^{g+k}_c(\PD_g^{E\text{-rt}}\!/\GL_g(R);\QQ)\right) \otimes \QQ[x]/(x^2).
\end{equation}
See~\eqref{eq:B(V)} for a discussion about the topology on
$\PD_g^{E\text{-rt}}\!/\GL_g(R)$. 
By considering the sequence of inclusions
\begin{equation} \label{eq:inclusion-PDg-to-PDg+1}
    \PD_g^{E\text{-rt}}\!/\GL_g(R) \; \lhook\!\longrightarrow \; \PD_{g+1}^{E\text{-rt}}\!/\GL_{g+1}(R)
\end{equation}
given by padding a $g \times g$ matrix by a row and column of zeros, which factor through the inflation subcomplex, we obtain analogous convergence results to Theorem~\ref{thm:converge-to-0} and Corollary~\ref{cor:convergence-truncated-tropical-ss}.
More precisely, the cohomological spectral sequence associated to the infinite chain of inclusions~\eqref{eq:inclusion-PDg-to-PDg+1}, having
$$E_1^{g,k} = H_{c}^{g+k} \left(\PD^{\Herm}_g \!/\GL_g(R);\,\QQ \right),$$
converges to $0$ on $E_2$. Moreover, truncating the sequence of inclusions~\eqref{eq:inclusion-PDg-to-PDg+1} at a finite step $s$, the associated cohomological spectral sequence has
$$E_1^{g,k} = \begin{cases} H_{c}^{g+k} (\PD^\mathrm{Herm}_g\!/\GL_g(R);\,\QQ) & \text{if }g \le s\\ 0 & \text{otherwise}\end{cases}$$
and converges to $H^{g+k}_{c} (\PD_s^{E\text{-rt}}\!/\GL_s(R);\QQ).$
Then, analogously to the proof of Theorem~\ref{thm:Hopf-type-A}, the doubling phenomenon~\eqref{eq:general-doubling-phenomenon} follows from the general observation~\eqref{eq:acyclic-cochain-complexes} about acyclic cochain complexes.
In Section~\ref{subsec:computations-larger-n}, we will see examples of this doubling phenomenon.
\end{remark}

\subsection{Computations} 
\label{subsec:computations-larger-n}
Take $E = \Q(i)$ with ring of integers $R = \mathbb{Z}[i]$. We make use of computations in \cite{staffeldt}, in which Staffeldt computes the group homology of $\SL_n(\ZZ[i])$ and $\GL_n(\ZZ[i])$ for $n= 2,3$.  

\begin{proposition}\label{prop:n=2 d=1}
The Borel-Moore homology of $\PD_{2}^{E\text{-rt}} / \GL_2(\mathbb{Z}[i])$ with rational coefficients is
$$
H_4^\mathrm{BM}(\PD_{2}^{E\text{-rt}} / \GL_2(\mathbb{Z}[i]);\QQ) = \mathbb{Q}
$$
and is 0 otherwise.
\end{proposition}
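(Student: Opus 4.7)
The plan is to apply the doubling phenomenon of Remark~\ref{rem:general-doubling}, which holds for any imaginary quadratic number field whose ring of integers is a principal ideal domain, in particular for $R = \ZZ[i]$. Specialized in each bidegree at $g = 2$, it yields
\[
H^{2+k}_c\bigl(\PD_2^{\Herm}/\GL_2(\ZZ[i]); \QQ\bigr) \;\cong\; H^{2+k}_c\bigl(\PD_2^{E\text{-rt}}/\GL_2(\ZZ[i]); \QQ\bigr) \,\oplus\, H^{1+k}_c\bigl(\PD_1^{E\text{-rt}}/\GL_1(\ZZ[i]); \QQ\bigr).
\]

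First I would dispose of the right-most summand. Since $\PD_1^{E\text{-rt}}/\GL_1(\ZZ[i]) \cong \RR_{\geq 0}$ is contractible with one end, its compactly supported rational cohomology vanishes in every degree; equivalently, applying the doubling phenomenon at $g = 1$ already shows that the $g=0$ and $g=1$ contributions cancel. The displayed identity therefore reduces to
\[
H^{2+k}_c\bigl(\PD_2^{E\text{-rt}}/\GL_2(\ZZ[i]); \QQ\bigr) \;\cong\; H^{2+k}_c\bigl(\PD_2^{\Herm}/\GL_2(\ZZ[i]); \QQ\bigr).
\]

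Next I would invoke the Poincaré duality identification already used in the proof of Theorem~\ref{thm:Hopf-type-A}: because $\GL_2(\ZZ[i])$ lies in the connected group $\GL_2(\CC)$, it acts on the contractible $4$-dimensional open cone $\PD_2^{\Herm}$ by orientation-preserving diffeomorphisms with finite stabilizers, yielding
\[
H^{2+k}_c\bigl(\PD_2^{\Herm}/\GL_2(\ZZ[i]); \QQ\bigr) \;\cong\; H^{2-k}\bigl(\GL_2(\ZZ[i]); \QQ\bigr).
\]
I would then feed in Staffeldt's reduction-theoretic computation \cite{staffeldt}, which gives $H^0(\GL_2(\ZZ[i]); \QQ) = \QQ$ and $H^j(\GL_2(\ZZ[i]); \QQ) = 0$ for $j \geq 1$ (the vanishing for $j > 2$ being automatic from the virtual cohomological dimension). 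Chaining the three identifications shows $H^j_c(\PD_2^{E\text{-rt}}/\GL_2(\ZZ[i]); \QQ) = \QQ$ for $j = 4$ and is zero otherwise; dualizing via the standard isomorphism $H^{\mathrm{BM}}_j \cong (H^j_c)^{\vee}$ available for locally compact CW complexes then delivers the stated Borel--Moore homology.

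The main obstacle is the classical input from Staffeldt: the vanishing of $H^1$ and $H^2$ of $\GL_2(\ZZ[i])$ with rational coefficients. This is a substantive computation that essentially reflects the absence of both cuspidal and Eisenstein cohomology for the Picard group at the full level; it is not a general soft statement but rather the key arithmetic fact that powers the proposition.
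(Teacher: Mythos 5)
Your argument is correct, and it takes a genuinely different route from the paper's own proof. The paper proves Proposition~\ref{prop:n=2 d=1} directly, by identifying the Hermitian perfect cone complex for $\GL_2(\ZZ[i])$: using Staffeldt's reduction theory, it shows that there is a unique $\GL_2(\ZZ[i])$-orbit of cones in each dimension $0 \le i \le 4$, determines that only $\sigma_0$, $\sigma_1$, $\sigma_4$ are alternating, and writes down the resulting five-term chain complex whose only nonzero homology is $H_4 = \QQ$. You instead bypass the cellular structure entirely: you feed the doubling phenomenon~\eqref{eq:general-doubling-phenomenon} in bidegree $(2,k)$, peel off the contractible $g=1$ contribution (correctly observing that $\PD_1^{E\text{-rt}}\!/\GL_1(\ZZ[i]) \cong \RR_{\geq 0}$ has vanishing compactly supported cohomology), apply the Poincar\'e/virtual-duality chain~\eqref{eq:dualities} (which does apply here, since $\GL_2(\ZZ[i]) \subset \GL_2(\CC)$ acts orientation-preservingly on the contractible $4$-dimensional cone $\PD_2^{\Herm}$ with finite stabilizers), and then invoke Staffeldt's computation $H^*(\GL_2(\ZZ[i]);\QQ) = \QQ$ concentrated in degree~$0$. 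What the two approaches buy is complementary: the paper's direct computation is self-contained at the level of the cone stratification and also exhibits explicitly \emph{which} cone carries the generator, whereas your argument is shorter and structurally cleaner but imports the vanishing of $H^1$ and $H^2$ of $\GL_2(\ZZ[i])$ as a black box from Staffeldt. Interestingly, the paper also notes in a remark immediately after its proof that the proposition can alternatively be deduced from the $n=3$ calculations plus doubling; your route is yet a third variant, using doubling at $g=2$ rather than at $g=3$, which has the advantage of requiring only rank-$2$ input.

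One small caveat worth noting: the use of Remark~\ref{rem:general-doubling} for $R = \ZZ[i]$ is legitimate because $R$ is a PID, even though $R^\times \neq \{\pm 1\}$; the inflation-subcomplex argument underlying the acyclicity needed for doubling (Theorem~\ref{thm:Ig-acyclic}) only requires the PID hypothesis, as the paper emphasizes, so there is no gap in applying it here.
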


\begin{proof}
We first determine the number of Hermitian perfect cones of each dimension, up to $\GL_2(\mathbb{Z}[i])$-equivalence. 
There is exactly one maximal dimensional cone
$\sigma_4$ up to the action of $\GL_2(\mathbb{Z}[i])$ \cite{staffeldt}. Staffeldt gives two explicit elements of $\GL_2(\mathbb{Z}[i])$ which fix this cone. These two automorphisms identify all like-dimensional faces of this cone. Thus, up to the action of $\GL_2(\mathbb{Z}[i])$, there is only one perfect cone $\sigma_i$ of each dimension $i = 0,\ldots, 4$.

We now determine which of these cones are alternating.
The cone $\sigma_4$ is alternating, by the explicit description of $\Aut(\sigma_4)$ given in \cite{staffeldt}.
The cone $\sigma_3$ is not alternating because it is possible (via a straightforward calculation) to find an element of $\GL_2(\mathbb{Z}[i])$ which stabilizes $\sigma_3$, fixes one of its rays, and swaps the other two.
The cone $\sigma_2$ is not alternating, because the second automorphism described in \cite{staffeldt} is orientation-reversing on this cone.
The cones $\sigma_1$ and $\sigma_0$ are alternating.

Then, the Hermitian perfect cone complex is:
\[
\begin{tikzcd}[row sep =.2em, column sep = 3em]
    0 \arrow[r] & \QQ \arrow[r] & 0 \arrow[r] & 0 \arrow[r] & \QQ \arrow[r, "\cong"] & \QQ \arrow[r] & 0 \\
    & P_{4}^{(2)} & P_{3}^{(2)} & P_{2}^{(2)} & P_{1}^{(2)} & P_{0}^{(2)} & 
\end{tikzcd}
\]
and so the homology vanishes everywhere except for $H_4(P_\bullet^{(2)}) = \mathbb{Q}.$
\end{proof}
\noindent Alternatively, Proposition~\ref{prop:n=2 d=1} can be deduced from the calculations for $n=3$ below, together with~\eqref{eq:doubling-phenomenon}.

\medskip
 
We now make use of the computations in \cite{dutour-sikiric-gangl-gunnells-hanke-schurmann-yasaki-cohomology}, in which the authors compute the group homology of $\GL_n(\Z[\zeta_3])$ and $\GL_n(\Z[i])$ for $n = 3,4$. From \cite[Tables 1,2,11,12]{dutour-sikiric-gangl-gunnells-hanke-schurmann-yasaki-cohomology} we compute the $E^1$ page of the spectral sequence for $\GL_*(\Z[\zeta_3])$ and $\GL_*(\Z[i])$ to obtain
the data in columns $0\le s\le 4$ of Table~\ref{table: spectral sequence disc -4}.

\begin{table}[!htb]
    \begin{minipage}{.51\linewidth}
      \centering
\begin{tabular}{c|cccccc}
12 & 0& 0 & 0   &    0   & $\Q$ & $\Q^{\ge 1}$\\
11 & 0& 0 & 0   &    0   & 0  &\\
10 & 0& 0& 0   &    0   & 0  &\\
9  & 0& 0& 0   &    0   & $\Q$ & $\Q^{\ge 1}$ \\
8  & 0& 0& 0   &    0   & 0  &\\
7  & 0& 0& 0   &    0   & $\Q$ & $\Q^{\ge 1}$\\
6  & 0& 0& 0   &    $\Q$   & $\Q$ & \\
5  & 0& 0& 0   &    0   & 0 & \\
4  & 0& 0& 0  &    0   & $\Q^2$ & $\Q^{\ge 2}$\\
3  & 0& 0& 0   &    $\Q$   & $\Q$ & \\
2  & 0& 0& $\Q$   &    $\Q$   & 0 & \\
1  & 0& 0 & 0   &    0   & 0  &\\
0  & $\Q$& $\Q$ & 0   &    0   & 0 & \\
\hline
\slashbox{$t$}{$s$} & 0 & 1 & 2  & 3 & 4&5
\end{tabular}
\caption{First page of the spectral sequence of group homology $E_{s,t}^1 = H_t(\GL_s(\Z[i]); \St_s(\Q(i))\otimes \Q)$ or  $H_t(\GL_s(\Z[\zeta_3]); \St_s(\Q(\zeta_3))\otimes \Q)$.}
\label{table: spectral sequence disc -4} 
    \end{minipage}%
    \begin{minipage}{.49\linewidth}
      \centering
\begin{tabular}{c|ccccc}
12 & 0& 0 & 0   &    0   & $\Q$  \\
11 & 0& 0 & 0   &    0   & 0  \\
10 & 0& 0& 0   &    0   & 0  \\
9  & 0& 0& 0   &    0   & $\Q$  \\
8  & 0& 0& 0   &    0   & 0  \\
7  & 0& 0& 0   &    0   & $\Q$  \\
6  & 0& 0& 0   &    $\Q$   & 0  \\
5  & 0& 0& 0   &    0   & 0  \\
4  & 0& 0& 0  &    0   & $\Q^2$  \\
3  & 0& 0& 0   &    $\Q$   &  0  \\
2  & 0& 0& $\Q$   &    0   & 0  \\
1  & 0& 0 & 0   &    0   & 0  \\
0  & $\Q$& 0 & 0   &    0   & 0  \\
\hline
\slashbox{$t$}{$s$} & 0 & 1 & 2 & 3 & 4
\end{tabular}
\caption{The undoubled complex of Borel-Moore homology $H_{s+t}^{\mathrm{BM}}(\PD_s^{\Q(i)\text{-rt}}/\GL_s(\Z[i]);\Q)$ or $H_{s+t}^{\mathrm{BM}}(\PD_s^{\Q(\zeta_3)\text{-rt}}/\GL_s(\Z[\zeta_3]);\Q)$.} 
\label{table: spectral sequence disc -4 undoubled} 
    \end{minipage} 
\end{table}

The doubling phenomenon then yields the data in Table~\ref{table: spectral sequence disc -4 undoubled}, as well as the following lower bounds on the dimensions of the homology groups, which are illustrated in the $s=5$ column of Table~\ref{table: spectral sequence disc -4}.
\begin{proposition}\label{prop:1 and 3}
    The homologies of $\GL_5(\Z[i])$ and $\GL_5(\Z[\zeta_3])$ satisfy
    $$
    \dim H_{4}(\GL_{5}(\Z[i]); \St_5(\Q(i)) \otimes \Q) \ge 2, \quad
     \dim H_{4}(\GL_{5}(\Z[\zeta_3]); \St_5(\Q(\zeta_3)) \otimes \Q) \ge 2.
    $$
    For $k \in \{7,9,12\}$, they satisfy
    $$
    \dim H_{k}(\GL_{5}(\Z[i]); \St_5(\Q(i)) \otimes \Q) \ge 1, \quad
    \dim H_{k}(\GL_{5}(\Z[\zeta_3]); \St_5(\Q(\zeta_3)) \otimes \Q) \ge 1.
    $$
\end{proposition}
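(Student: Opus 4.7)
My plan is to deduce the proposition directly from the doubling phenomenon~\eqref{eq:general-doubling-phenomenon} together with the virtual duality identification~\eqref{eq:dualities} already used in the proof of Theorem~\ref{thm:Hopf-type-A}, applied to the data in column $s = 4$ of Table~\ref{table: spectral sequence disc -4 undoubled}. Since both ingredients are established earlier in the paper, the argument will be short, and I do not anticipate a substantive obstacle.

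First I will invoke~\eqref{eq:dualities} to rewrite each target homology group as a Borel-Moore homology group:
$$H_k(\GL_n(R); \St_n(E) \otimes \Q) \;\cong\; H_{n+k}^{\mathrm{BM}}(\PD_n^{\Herm}/\GL_n(R); \Q).$$
I will then dualise the doubling isomorphism~\eqref{eq:general-doubling-phenomenon} from compactly supported cohomology to Borel-Moore homology. Since $x \in \Q[x]/(x^2)$ has bidegree $(1,0)$, in each bidegree $(g,k)$ the resulting isomorphism reads
$$H_{g+k}^{\mathrm{BM}}\!\bigl(\PD_g^{\Herm}/\GL_g(R); \Q\bigr) \;\cong\; H_{g+k}^{\mathrm{BM}}\!\bigl(\PD_g^{E\text{-rt}}/\GL_g(R); \Q\bigr) \;\oplus\; H_{(g-1)+k}^{\mathrm{BM}}\!\bigl(\PD_{g-1}^{E\text{-rt}}/\GL_{g-1}(R); \Q\bigr).$$

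Next, I will specialise to $g = 5$ and project onto the second summand, obtaining the inequality
$$\dim H_k(\GL_5(R); \St_5(E) \otimes \Q) \;\geq\; \dim H_{4+k}^{\mathrm{BM}}\!\bigl(\PD_4^{E\text{-rt}}/\GL_4(R); \Q\bigr)$$
for each of $R = \ZZ[i]$ and $R = \ZZ[\zeta_3]$. I will then read the right-hand side off column $s = 4$ of Table~\ref{table: spectral sequence disc -4 undoubled}: in both cases, the entries at $t = 4, 7, 9, 12$ are $\Q^2, \Q, \Q, \Q$, producing the four claimed lower bounds.

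The only point requiring care is that Table~\ref{table: spectral sequence disc -4 undoubled} is not itself a direct output of \cite{dutour-sikiric-gangl-gunnells-hanke-schurmann-yasaki-cohomology}; it is reconstructed inductively in $s$ from Table~\ref{table: spectral sequence disc -4} via the same doubling relation. This is the closest thing to an obstacle, but it is pure bookkeeping: the data of \cite{dutour-sikiric-gangl-gunnells-hanke-schurmann-yasaki-cohomology} furnish Table~\ref{table: spectral sequence disc -4} up through $s = 4$, which is all that is needed to fill in column $s = 4$ of Table~\ref{table: spectral sequence disc -4 undoubled} and thereby complete the argument.
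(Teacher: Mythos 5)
Your proposal is correct and takes essentially the same approach as the paper: dualise the doubling isomorphism~\eqref{eq:general-doubling-phenomenon} to Borel--Moore homology via~\eqref{eq:dualities}, project the $(5,k)$ piece onto the second summand, and read off the entries in column $s=4$ of Table~\ref{table: spectral sequence disc -4 undoubled}. The paper packages this as a general observation (if $H_k(\GL_{n-1};\St_{n-1})=0$ then $\dim H_k(\GL_{n+1};\St_{n+1})\ge\dim H_k(\GL_n;\St_n)$, applied with $n=4$), while you reconstruct the undoubled column and read it off directly; since the $s=3$ entries vanish at $t\in\{4,7,9,12\}$, the two formulations agree, and your remark that the undoubled table is itself built inductively from Table~\ref{table: spectral sequence disc -4} is a useful clarification.
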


\begin{proof}
Let $R$ be either $\Z[i]$ or $\Z[\zeta_3]$, and $E$ its field of fractions. Then
from (the dual of) ~\eqref{eq:general-doubling-phenomenon}  
it follows that 
$$
  \bigoplus_{n,k} H_{k} (\GL_n(R); \St_n(E) \otimes \QQ)
  \cong 
  \left(
  \bigoplus_{n,k} H_{n+k}^{\mathrm{BM}} (\PD_n^{E\text{-rt}} \!/ \GL_n(R);\QQ)
  \right) \otimes \QQ[x]/(x^2).
$$
In particular, if $H_{k}(\GL_{n-1}(R); \St_{n-1}(E) \otimes \Q) = 0$ and $\dim H_{k}(\GL_{n}(R); \St_{n}(E) \otimes \Q) = d$, then $H_{n-1+k}^{\mathrm{BM}}(\PD_n^{E\text{-rt}}/\GL_n(R);\Q) = 0$ and $\dim H_{n+k}^{\mathrm{BM}}(\PD_n^{E\text{-rt}}/\GL_n(R);\Q) = d$.

This implies that $\dim H_{k+1}(\GL_{n}(R); \St_{n}(E) \otimes \Q) \ge d$. Applying this observation to $k \in \{4,7,9,12\}$, with the results from Table~\ref{table: spectral sequence disc -4}, the result follows.
\end{proof}

Similarly, columns $0\le s \le 3$ of Table~\ref{table: spectral sequence disc -7} describes the $E^1$ page of the spectral sequence of rational homology with Steinberg coefficients for $\GL_n(R)$ when $R$ is the ring of integers of one fo the fields $\Q(\sqrt{-2})$, $\Q(\sqrt{-7})$, or $\Q(\sqrt{-11})$, based on the computations of the homology for $n = 3$ in \cite[Tables 3, 4, 5]{dutour-sikiric-gangl-gunnells-hanke-schurmann-yasaki-cohomology}.

Again, the doubling phenomenon yields the data in Table~\ref{table: spectral sequence disc -7 undoubled}, as well as the following lower bounds on the dimensions of the homology groups which are illustrated in column $s=4$ of Table~\ref{table: spectral sequence disc -7}.

\begin{cor}\label{cor:2 7 or 11}
    Let $E$ be one of $\Q(\sqrt{-2})$, $Q(\sqrt{-7})$ or $\Q(\sqrt{-11})$, and let $R$ be its ring of integers.
    The homology of $\GL_4(R)$ satisfies
    $$
    \dim H_{3}(\GL_{4}(R); \St_4(E) \otimes \Q) \ge 2,
    $$
    and
    $$
    \dim H_{6}(\GL_{4}(R); \St_4(E) \otimes \Q) \ge 1.
    $$
\end{cor}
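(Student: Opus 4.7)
The plan is to mimic exactly the argument used in Proposition~\ref{prop:1 and 3}, substituting data from Table~\ref{table: spectral sequence disc -7} in place of Table~\ref{table: spectral sequence disc -4}. The key input is the general doubling phenomenon~\eqref{eq:general-doubling-phenomenon} of Remark~\ref{rem:general-doubling}, which, combined with the virtual-duality chain of isomorphisms $H_k(\GL_n(R); \St_n(E)\otimes\Q) \cong H^{BM}_{n+k}(\PD_n^{\Herm}/\GL_n(R);\Q)$ recalled during the proof of Theorem~\ref{thm:Hopf-type-A}, yields at each bidegree $(n,k)$ the decomposition
\[
H_k(\GL_n(R); \St_n(E)\otimes\Q) \;\cong\; H^{BM}_{n+k}(\PD_n^{E\text{-rt}}/\GL_n(R);\Q) \;\oplus\; H^{BM}_{(n-1)+k}(\PD_{n-1}^{E\text{-rt}}/\GL_{n-1}(R);\Q).
\]
In particular, whenever $H_k(\GL_{n-1}(R); \St_{n-1}(E)\otimes\Q) = 0$ and $\dim H_k(\GL_n(R); \St_n(E)\otimes\Q) = d$, one reads off that $\dim H^{BM}_{n+k}(\PD_n^{E\text{-rt}}/\GL_n(R);\Q) = d$, and applying the decomposition once more at level $n+1$ then produces $\dim H_k(\GL_{n+1}(R); \St_{n+1}(E)\otimes\Q) \ge d$.

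With this observation in hand, it remains only to verify the vanishing/dimension hypotheses at $n = 3$ for $k \in \{3, 6\}$ and the three rings of interest. For $k = 6$: by the Borel--Serre formula recalled in the proof of Theorem~\ref{thm:Hopf-type-A}, the virtual cohomological dimension of $\GL_2(R)$ equals $2$ and that of $\GL_3(R)$ equals $6$, so $H_6(\GL_2(R); \St_2(E)\otimes\Q) = 0$ automatically; meanwhile virtual duality (with trivial orientation module, since $\GL_3(R) \subset \GL_3(\C)$ acts preserving orientations) gives $\dim H_6(\GL_3(R); \St_3(E)\otimes\Q) = \dim H^0(\GL_3(R);\Q) = 1$. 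For $k = 3$: one reads directly from Table~\ref{table: spectral sequence disc -7} (at entries $(s, t) = (2, 3)$ and $(3, 3)$) that $H_3(\GL_2(R); \St_2(E)\otimes\Q) = 0$ and $\dim H_3(\GL_3(R); \St_3(E)\otimes\Q) = 2$. Applying the boxed implication above with $n = 3$ and $k \in \{3, 6\}$ then gives the two inequalities claimed.

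The main obstacle is not mathematical but bibliographic: confirming that the entries at $(s,t) = (2,3)$ and $(3,3)$ of Table~\ref{table: spectral sequence disc -7} do indeed have the required values, via the explicit cell-complex computations of \cite{dutour-sikiric-gangl-gunnells-hanke-schurmann-yasaki-cohomology} for the fields $\Q(\sqrt{-2})$, $\Q(\sqrt{-7})$, and $\Q(\sqrt{-11})$. Once those entries are accepted, the corollary follows by an entirely formal application of the doubling argument from Proposition~\ref{prop:1 and 3}, with no new geometric or algebraic input required.
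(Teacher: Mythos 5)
Your proof is correct and takes essentially the same approach as the paper, which treats this corollary as an immediate analogue of Proposition~\ref{prop:1 and 3} via the doubling phenomenon~\eqref{eq:general-doubling-phenomenon} applied to Table~\ref{table: spectral sequence disc -7}. Your boxed implication is the correct form of the observation in the proof of Proposition~\ref{prop:1 and 3} (the paper's phrase ``$\dim H_{k+1}(\GL_n(R);\ldots) \ge d$'' there appears to be a typo for ``$\dim H_k(\GL_{n+1}(R);\ldots) \ge d$'', and you have the right version), and your independent verification of the $k=6$ entries via virtual cohomological dimension is a sound cross-check against the cited tables.
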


\begin{table}[!htb]
    \begin{minipage}{.5\linewidth}
      \centering
\begin{tabular}{c|ccccc}
6  & 0 & 0 & 0   &    $\Q$  & $\Q^{\ge 1}$ \\
5  & 0 & 0 & 0   &    0  &   \\
4  & 0 & 0 & 0  &    0  &  \\
3  & 0 & 0 & 0   &    $\Q^2$ & $\Q^{\ge 2}$ \\
2  & 0 & 0 & $\Q$   &    $\Q$ & \\
1  & 0 & 0 & 0 &    0   & \\
0  & $\Q$ & $\Q$ & 0   &    0   &  \\
\hline
\slashbox{$t$}{$s$} & 0 & 1 &  2  & 3 &4
\end{tabular}
\caption{First page of the spectral sequence of group homology $E_{g,k}^1 = H_k(\GL_g(R); \St_g(E) \otimes \Q)$ for $E = \Q(\sqrt{-2}), \Q(\sqrt{-7}), \Q(\sqrt{-11})$.}
\label{table: spectral sequence disc -7}
    \end{minipage}%
    \begin{minipage}{.52\linewidth}
      \centering
\begin{tabular}{c|cccc}
6  & 0 & 0 & 0   &    $\Q$    \\
5  & 0 & 0 & 0   &    0     \\
4  & 0 & 0 & 0  &    0    \\
3  & 0 & 0 & 0   &    $\Q^2$   \\
2  & 0 & 0 & $\Q$   &   0  \\
1  & 0 & 0 & 0   &    0    \\
0  & $\Q$ & 0 & 0   &    0     \\
\hline
\slashbox{$t$}{$s$} & 0 & 1 &  2  & 3 
\end{tabular}
\caption{The undoubled complex of Borel-Moore homology $H_{g+k}^{\mathrm{BM}}(\PD_g^{E\text{-rt}}/\GL_g(R);\Q) = \Gr_{2(g^2-g)}^{W}H^{2(g^2-g)-g-k}(\cA_{g,g,\psi};\Q)$ for $E$ one of the fields $\Q(\sqrt{-2})$, $\Q(\sqrt{-7})$, or $\Q(\sqrt{-11})$.}
\label{table: spectral sequence disc -7 undoubled}

    \end{minipage} 
\end{table}

Moreover, we have the following proposition, with a similar proof.
\begin{prop}\label{prop: to the right}
Let $E = \QQ(\sqrt{-d})$ for $d\in\left\{
1, \, 2, \, 3, \, 7, \, 11, \, 19, \, 43, \, 67, \,163
\right\},$ and let $R$ be the ring of integers of $E$.  Then the differentials on $E^1$ of the spectral sequence in Theorem~\ref{thm:converge-to-0}
\[H_k(\GL_{n}(R),\St_{n}(E)\otimes\QQ) \longleftarrow H_k(\GL_{n+1}(R),\St_{n+1}(E)\otimes\QQ),\]
are surjections when $k>(n-1)^2-(n-1)$.  

Equivalently, 
when $0 \le k < 2n-2$, 
the maps
\[H^{k}(\GL_n(R);\QQ) \longrightarrow H^{2n+k}(\GL_{n+1}(R);\QQ),\]
dual to the differential on $E^1$ of the spectral sequence in Theorem~\ref{thm:converge-to-0}, are injections.
\end{prop}
\begin{proof}The statement follows from Theorem~\ref{thm:converge-to-0} together with the observation that \[H_k(\GL_{n-1}(R);\St_{n-1}(E) \otimes \QQ)  = 0\] if $k > (n-1)^2 - (n-1)$, since this last number is the virtual cohomological dimension of $\GL_{n-1}(R)$. 
\end{proof}
\begin{example} \label{ex:calculations}  
Proposition~\ref{prop: to the right} yields infinitely many new unstable classes in $H^*(\GL_n(R);\QQ)$, namely images of stable classes under those injective maps. 
Some small examples are as follows. For any $R$ as in Proposition~\ref{prop: to the right}, we have
\[\dim H^{10}(\GL_6(R);\QQ) \ge 1 \quad \text{and}\quad  \dim H^{13}(\GL_6(R);\QQ) \ge 1.\] 
Indeed, apply Proposition~\ref{prop: to the right} and the fact that $H^0(\GL_5(R);\QQ)$ and $H^3(\GL_5(R);\QQ)$ are 1-dimensional; in fact, these degrees fall in the stable cohomology of $\GL_5(R)$ \cite{liSun19}, computed in \cite{borel-stable}.  Similarly,
\[\dim H^{12}(\GL_7(R);\QQ) \ge 1 \quad \text{and}\quad \dim H^{15}(\GL_7(R);\QQ) \ge 1.\]
\end{example}

\section{Cohomology near the middle degree for \texorpdfstring{$\cA_g$}{Ag} with level structure}\label{sec:type C}

\subsection{Moduli of abelian varieties with level structure}

Fix an integer $g\ge 1$. We recall the construction of the locally symmetric variety $\cA_g[m]$.  As in Section \ref{subsub:type-C}, let $V_\QQ$ be a vector space of dimension $2g$ over $\QQ$, and let 
$J\col V_\QQ \times V_\QQ \to \QQ$ be a nondegenerate alternating bilinear form.  
Let \[\G(\QQ) = \Sp_{2g}(V_\QQ)  = \{A \in \SL(V_\QQ)~:~J(A(\cdot),A(\cdot)) = J(\cdot,\cdot)\},\] the group of automorphisms of $V_\Q$ preserving $J$; these necessarily have determinant $1$.

Let $m\ge 1$ be an integer. We now define the arithmetic group $\Gamma[m]$ giving rise to the variety $\cA_g[m]$.  
Let $\Lambda$ be a $2g$-dimensional lattice in $V_\QQ$ such that restricting $J$ to $\Lambda$  gives an alternating bilinear form $J_\Lambda\col \Lambda \times \Lambda \to \ZZ$ of determinant $1$. Then $J_\Lambda$ descends to an alternating $\ZZ/m\ZZ$-bilinear form $J_{\Lambda/m\Lambda}\col \Lambda/m\Lambda\times \Lambda/m\Lambda \to \ZZ/m\ZZ$. 
Denoting by $\Sp(\Lambda/m\Lambda)$ the group of automorphisms of $\Lambda/m\Lambda$ that preserve $J_{\Lambda/m\Lambda}$, we set \[\Gamma[m] := \Sp(\Lambda)[m]=\ker(\Sp(\Lambda) \to \Sp(\Lambda/m\Lambda)).\]
Now $G = \GG(\RR) = \GG(\RR)^\circ$ acts on the Siegel half-space $\cH_g$, as in Example~\ref{ex: Ag}.  Set \[\cA_g[m]=\Gamma[m]\backslash \cH_g.\] This is the {\em moduli space of principally polarzied abelian varieties of dimension $g$ with level $m$ structure.}

Now we construct the tropicalization of $\cA_g[m]$.  Recall from Example~\ref{ex:type-AC-isotropic-subspaces} that $\fW \col  \cW \to \Vect_\QQ$ sends an isotropic subspace $W_\QQ \subset V_\QQ$ to the space of symmetric forms $\Sym^2(W_\QQ^\vee)$ on the dual space $W_\QQ^\vee = \Hom_\Q(W_\Q, \Q)$. 
As in Section \ref{subsub:type-C}, $C(W_\QQ)\subset \Sym^2(W_\RR^\vee)$ is the open cone of symmetric bilinear forms 
that are strictly positive definite. 

For each isotropic subspace $W_\QQ$ of $V_\QQ$, let  $\Lambda_{W_\QQ} := \Lambda \cap W_\QQ$, a lattice inside $W_\QQ$.  Recall the arithmetic group \[\Gamma[m]_{W_\QQ} := \Gamma[m] \cap \mathrm{Stab}(W_\QQ).\]  The action of $\Gamma[m]_{W_\QQ}$ on $\Sym^2(W_\R^\vee)$  factors through the left action of $\GL(\Lambda_{W_\QQ})$ on $\Hermeps{-\varepsilon}(W_\RR^\vee)$ given by \[g \cdot h = g  \cdot h \cdot {}^tg. \]
Recall that $\ov{\Gamma[m]}_{W_\QQ}$ is defined to be the quotient of $\Gamma[m]_{W_\QQ}$ by the kernel of this action.  

\begin{proposition} \label{prop:end-up-with-glgzm-action}The image of the natural map  
$\Gamma[m]_{W_\QQ} \to \GL(\Lambda_{W_\QQ})$
is \[\GL(\Lambda_{W_\QQ})[m] \coloneqq \ker (\GL(\Lambda_{W_\QQ}) \to \GL(\Lambda_{W_\QQ} / m \Lambda _{W_\QQ})).\] Therefore, if $m>2$, we have 
\[\ov{\Gamma[m]}_{W_\QQ} \cong \GL(\Lambda_{W_\QQ})[m].\]
If $m\le 2$, then $\ov{\Gamma[m]}_{W_\QQ} \cong \GL(\Lambda_{W_\QQ})[m]/\{\pm1\}$.
\end{proposition}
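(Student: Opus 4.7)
The plan is to prove the two assertions separately, first surjectivity onto $\GL(\Lambda_{W_\QQ})[m]$ and then identification of the kernel of the induced action on $U(F) \cong \Sym^2(W_\RR^\vee)$.

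First, I would check that the image of $\Gamma[m]_{W_\QQ} \to \GL(\Lambda_{W_\QQ})$ lies in $\GL(\Lambda_{W_\QQ})[m]$. This is essentially by definition: any $\gamma \in \Gamma[m] = \Sp(\Lambda)[m]$ reduces to the identity on $\Lambda / m\Lambda$, so its restriction to the sublattice $\Lambda_{W_\QQ} \subset \Lambda$ reduces to the identity on $\Lambda_{W_\QQ}/m\Lambda_{W_\QQ}$.

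For surjectivity, I would use the explicit symplectic block description already recalled in Example~\ref{ex: Ag} and Section~\ref{subsub:type-C}. Choose a symplectic basis $\{e_1,\ldots,e_g,f_1,\ldots,f_g\}$ of $\Lambda$ with $\Lambda_{W_\QQ} = \ZZ\langle e_1,\ldots,e_r\rangle$, so that the stabilizer of $W_\QQ$ in $\Sp(\Lambda)$ consists of matrices in the $(r,g-r,r,g-r)$ block form of Example~\ref{ex: Ag}. Given any $A \in \GL(\Lambda_{W_\QQ})[m] = \GL_r(\ZZ)[m]$, the block matrix
\[
\begin{pmatrix} A & 0 & 0 & 0 \\ 0 & 1_{g-r} & 0 & 0 \\ 0 & 0 & \tp A^{-1} & 0 \\ 0 & 0 & 0 & 1_{g-r} \end{pmatrix}
\]
lies in $\Sp(\Lambda)$ and stabilizes $W_\QQ$. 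Since $A \equiv 1_r \pmod m$ forces $\tp A^{-1} \equiv 1_r \pmod m$ (from $A \cdot A^{-1} = 1_r$), this matrix lies in $\Sp(\Lambda)[m] = \Gamma[m]$, producing a preimage in $\Gamma[m]_{W_\QQ}$ with image $A$.

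For the second assertion, I would identify the kernel of the $\GL(\Lambda_{W_\QQ})$-action on $\Sym^2(W_\RR^\vee)$, where $g$ acts by $h \mapsto g h \tp g$. A short calculation shows this kernel is $\{\pm 1_r\}$: if $g h \tp g = h$ for all symmetric $h$, specializing to $h = 1_r$ gives $g \tp g = 1_r$, and then $g h g^{-1} = h$ for all symmetric $h$ forces $g = \pm 1_r$ by Schur's lemma. Therefore $\ov{\Gamma[m]}_{W_\QQ}$, which by definition is the quotient of $\Gamma[m]_{W_\QQ}$ by the kernel of its action on $U(F)$ and equals the image of the composition $\Gamma[m]_{W_\QQ} \to \GL(\Lambda_{W_\QQ})[m] \to \Aut(\Sym^2 W_\RR^\vee)$, is isomorphic to $\GL(\Lambda_{W_\QQ})[m]\big/\bigl(\{\pm 1_r\} \cap \GL(\Lambda_{W_\QQ})[m]\bigr)$. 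The conclusion follows from the observation that $-1_r \equiv 1_r \pmod m$ exactly when $m \leq 2$, so $-1_r \in \GL(\Lambda_{W_\QQ})[m]$ precisely in that range.

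The main technical point is the explicit construction of the symplectic lift, and in particular the verification that $\tp A^{-1} \equiv 1_r \pmod m$; the rest is essentially bookkeeping with the block description of $N(F)$ and an elementary centralizer computation for the $\GL_r$-action on symmetric matrices.
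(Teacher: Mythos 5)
Your proof is correct and takes essentially the same route as the paper: reducing to coordinates via a symplectic basis with $\Lambda_{W_\QQ}$ spanned by the first $g-i$ basis vectors, lifting $A\in\GL_{g-i}(\ZZ)[m]$ to a block-diagonal symplectic matrix $\mathrm{diag}(A,1,{}^t\!A^{-1},1)$, and then quotienting by the kernel $\{\pm 1\}$ of the action on $\Sym^2(W_\RR^\vee)$. You supply a few details the paper leaves implicit (the check that ${}^t\!A^{-1}\equiv 1\pmod m$, and the Schur-lemma computation of the kernel), but the argument is the same.
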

\begin{proof}
We may assume without loss of generality that $\Lambda=\ZZ^{2g}$, that $J$ is the standard symplectic form 
with respect to a basis $e_1,\ldots,e_g,f_1,\ldots,f_g$ of $\Lambda$, and that $e_1,\ldots,e_{g-i}$ is a basis for $\Lambda_{W_\QQ}$.  This identifies $\Sp(\Lambda) \cong \Sp_{2g}(\ZZ)$ and $\GL(\Lambda_{W_\QQ}) \cong \GL_{g-i}(\ZZ)$.  Under the former identification, $\Gamma[m]_{W_\QQ}$ is identified with the subgroup
\begin{equation}\label{eq:*000} \left\{\begin{pmatrix}
    * & * & * & *\\
    0 & * & * & *\\
    0 & * & * & *\\
    0 & * & * & * 
\end{pmatrix} \right\} \subset  \Sp_{2g}(\ZZ)[m], \end{equation}
where the block sizes are $g-i$, $i$, $g-i$, and $i$ respectively.  The map $\Gamma[m]_{W_\QQ}\to \GL(\Lambda_{W_\QQ})$ amounts to forgetting all but the upper left block. Therefore the image of this map is evidently contained in $\GL(\Lambda_{W_\QQ})[m]$. Conversely, if $A\in \GL_{g-i}(\ZZ)[m]$, then 
\[ \begin{pmatrix}
    A & 0 & 0 & 0\\
    0 & 1 & 0 & 0\\
    0 & 0 & {}^tA^{-1} & 0\\
    0 & 0 & 0 & 1 
\end{pmatrix} \in \Sp_{2g}(\ZZ)[m] \]
is of the form demanded in~\eqref{eq:*000}. This proves the first statement in the Proposition.
The second statement then follows from the observation that the action of $\GL(\Lambda_{W_\QQ})$ on $\Sym^2(W_\RR^\vee)$ has kernel $\{\pm \id\}$, which intersects $\GL(\Lambda_{W_\QQ})$ trivially if and only if $m >2$. 
\end{proof}

There are many possible choices of admissible collection with respect to the data specified thus far, and the results in Section~\ref{sec:generalities} apply equally well to all. Here, we shall choose the {\em perfect cone} or {\em first Voronoi} admissible collection, whose definition we now recall. For each $W_\QQ \in \mathrm{Ob}(\cW)$, the perfect cone decomposition of $C(W_\QQ)$, denoted $\Sigma_{W_\QQ}$, is defined as follows. For any strictly positive definite quadratic form $Q \in \Sym^2(W_\RR)$, define the set $M(Q)$ of {\em minimal vectors} of $Q$ as the set of nonzero elements of $\Lambda_{W_\QQ}$ where $Q$ attains its minimum:
$$M(Q)\coloneqq\left\{\xi\in \Lambda_{W_\QQ}\!\setminus\!\{0\}~|~ Q(\xi)\leq Q(\zeta) \text{ for all }\zeta\in  \Lambda_{W_\QQ}\!\setminus\!\{0\} \right\}.$$
Let $\sigma_Q$ denote the rational polyhedral cone in $\Sym^2(W_\RR^\vee)$ generated by the rank one forms  
\[\{\xi \cdot {}^t\xi ~|~ \xi \in M(Q)\},\]
where ${}^t\xi \col W_\RR^\vee \to \RR$ denotes evaluation at $\xi$.  
Let 
\[\Sigma_{W_\QQ} \coloneqq \{ \sigma_Q \: :\:  Q \in C(W_\QQ) \}, \]
which is a collection of polyhedral cones that are rational with respect to the lattice $\Sym^2(\Lambda_{W_\QQ}).$  

\begin{definition}\label{def: perfect for type C} Let 
\[\Sigma_\cW = \big\{
\big(\Sym^2(\Lambda_{W_\QQ}^\vee),\;\Sigma_{W_\QQ}\big)
\big\}_{W_\QQ},\]
the {\em perfect cone} or {\em first Voronoi} admissible collection.
\end{definition}

The following is well-known.
\begin{proposition}\label{prop:perfect admissible collection}\mbox{}
\begin{enumerate}
    \item For each $W_\QQ$ in $\cW$, $\Sigma_{W_\QQ}$ is a $\GL(\Lambda_{W_\QQ}^\vee)$-admissible decomposition of $C(W_\QQ)$.
    \item The collection $\Sigma_\cW$ is a 
$\Gamma[m]_\cW$-admissible collection of polyhedral cones (see Definition~\ref{def:Gamma W admissible collection}). 
\end{enumerate}
\end{proposition}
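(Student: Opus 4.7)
\textbf{Plan for the proof of Proposition~\ref{prop:perfect admissible collection}.}

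\emph{Part (1).} This is essentially classical Voronoi theory in the language of \cite[Ch.~II, \S4]{amrt}. I would verify the three conditions of Definition~\ref{def:admissible-collection} in turn. First, for the support condition, every $Q \in C(W_\QQ)$ lies in the relative interior of $\sigma_Q$ (the minimal vectors determine $Q$ up to the relative interior of a unique perfect cone), while every generator $\xi \cdot {}^t\xi$ is positive semi-definite of rank $1$, so $\sigma_Q \subset \overline{C(W_\QQ)}$. Second, equivariance under $g \in \GL(\Lambda_{W_\QQ}^\vee)$ follows from the identity $M(g \cdot Q) = g(M(Q))$ and the resulting $g \cdot \sigma_Q = \sigma_{g \cdot Q}$. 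Third, finiteness of $\GL(\Lambda_{W_\QQ}^\vee)$-orbits of top-dimensional cones is exactly Voronoi's classification of perfect forms; the face relation then yields finitely many orbits in all dimensions.

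\emph{Part (2).} Fix $\gamma \in \Gamma[m]$ with $\gamma \cdot W_\QQ \subset W'_\QQ$. As spelled out in Section~\ref{subsub:type-C}, the induced injection $h_\gamma \col \Sym^2(W_\QQ^\vee) \to \Sym^2((W'_\QQ)^\vee)$ sends a form $A$ to its extension by zero via the surjection $(W'_\QQ)^\vee \twoheadrightarrow (\gamma \cdot W_\QQ)^\vee$ dual to $\gamma|_{W_\QQ}$. Consequently, the image of $h_\gamma$ is exactly the set of symmetric forms on $(W'_\QQ)^\vee$ whose kernel contains the rational subspace complementary to $(\gamma \cdot W_\QQ)^\vee$, i.e.\ a rational face of $\overline{C(W'_\QQ)}$ corresponding to a rational boundary component of the cone. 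The core observation, which I would state as a lemma, is that for any $Q \in C(W_\QQ)$, the minimal vectors of $h_\gamma(Q)$ with respect to the lattice $\Lambda_{W'_\QQ}$ are precisely the images under $\gamma$ of the minimal vectors $M(Q) \subset \Lambda_{W_\QQ}$; this uses that $\gamma \in \Gamma[m] \subset \Sp(\Lambda)$ preserves the lattice, so $\gamma(\Lambda_{W_\QQ}) = \Lambda_{\gamma \cdot W_\QQ} \subset \Lambda_{W'_\QQ}$, and that any lattice vector in $\Lambda_{W'_\QQ}$ not lying in $\gamma(\Lambda_{W_\QQ})$ has strictly larger value under $h_\gamma(Q)$ by a direct computation (it pairs nontrivially with the orthogonal direction where $h_\gamma(Q)$ is $0$, but on which $Q$ does not extend). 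Hence $\sigma_{h_\gamma(Q)} = h_\gamma(\sigma_Q)$, giving a bijection between the cones of $\Sigma_{W_\QQ}$ and the cones of $\Sigma_{W'_\QQ}$ contained in the image of $h_\gamma$.

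To finish, I would invoke Remark~\ref{rem-cone-gluings}, which reduces the compatibility required by Definition~\ref{def:Gamma W admissible collection} to showing that $\{h_\gamma^{-1}(\sigma) : \sigma \in \Sigma_{W'_\QQ}, \sigma \subset \mathrm{im}(h_\gamma)\} = \Sigma_{W_\QQ}$. This is exactly the content of the bijection just established.

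\emph{Main obstacle.} The only non-routine step is the lemma describing minimal vectors under $h_\gamma$. One must rule out that the extension-by-zero $h_\gamma(Q)$ acquires spurious minimal vectors in $\Lambda_{W'_\QQ} \setminus \gamma(\Lambda_{W_\QQ})$. This will require unpacking the definition of $\Lambda_{W'_\QQ}$ and the fact that $h_\gamma(Q)$ is strictly positive exactly on $(\gamma \cdot W_\QQ)^\vee$ and vanishes on a rational complement, together with the positivity of $Q$ on $\Lambda_{W_\QQ} \setminus \{0\}$. I expect this to be a short but careful linear-algebraic argument; the compatibility of perfect cone decompositions along face inclusions is known in the $m=1$ case (see, e.g., \cite{bmv}), and the level-$m$ case reduces to it since $\Gamma[m] \subset \Sp(\Lambda)$.
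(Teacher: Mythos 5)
Your part (1) matches the paper's approach, which simply cites Voronoi; spelling out the three conditions of Definition~\ref{def:admissible-collection} is fine.

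Part (2) has two substantive problems.

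\emph{The finite-index step is missing.} Definition~\ref{def:Gamma W admissible collection} requires that each $\Sigma_{W_\QQ}$ be a $\ov{\Gamma[m]}_{W_\QQ}$-admissible decomposition of $C(W_\QQ)$, and by Proposition~\ref{prop:end-up-with-glgzm-action} this group is, for $m>1$, a proper finite-index subgroup of $\GL(\Lambda_{W_\QQ})/\{\pm 1\}$. Admissibility for the full group does not immediately hand you admissibility for a subgroup: one has to check that the number of orbits of cones stays finite, and the paper spends most of its proof on exactly this (each orbit splits into at most $[\GL(\Lambda_{W_\QQ}):\GL(\Lambda_{W_\QQ})[m]]$ orbits, citing Faltings--Chai). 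Your closing sentence, ``the level-$m$ case reduces to it since $\Gamma[m]\subset\Sp(\Lambda)$,'' waves at this point but does not carry it.

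\emph{The ``core lemma'' is ill-posed and the sketched justification is false.} When $\gamma\cdot W_\QQ\subsetneq W'_\QQ$, the form $h_\gamma(Q)$ is a degenerate positive semidefinite form with positive-dimensional rational kernel. Its minimum over $\Lambda_{W'_\QQ}\setminus\{0\}$ is therefore $0$, attained at every nonzero lattice vector in the kernel; ``the minimal vectors of $h_\gamma(Q)$'' are not $\gamma\cdot M(Q)$. Your parenthetical --- that a lattice vector outside $\gamma(\Lambda_{W_\QQ})$ ``has strictly larger value under $h_\gamma(Q)$'' --- is the opposite of what happens for anything in the kernel. The argument must instead be run in the other direction: take a positive definite $Q'\in C(W'_\QQ)$ whose perfect cone $\sigma_{Q'}$ has a face lying in the image of $h_\gamma$, observe this forces $M(Q')\cap\gamma\Lambda_{W_\QQ}\neq\emptyset$, and then show that the restriction of $Q'$ to $\gamma W_\QQ$ is positive definite with minimal vectors exactly $M(Q')\cap\gamma\Lambda_{W_\QQ}$. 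This uses that $\gamma\Lambda_{W_\QQ}=\Lambda_{W'_\QQ}\cap\gamma W_\QQ$ is a saturated sublattice and that the minimum of $Q'$ over $\gamma\Lambda_{W_\QQ}\setminus\{0\}$ coincides with the minimum over all of $\Lambda_{W'_\QQ}\setminus\{0\}$ once that intersection is nonempty. One then identifies the relevant face of $\sigma_{Q'}$ with $h_\gamma(\sigma_{Q'|_{\gamma W_\QQ}})$, which is what Remark~\ref{rem-cone-gluings} requires. Your plan is in the right neighborhood, but the lemma needs to be reformulated before it can be proved; as written it has no correct proof because the statement is wrong.
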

\begin{proof}
The fact that $\Sigma_{W_\QQ}$ is a $\GL(\Lambda_{W_\QQ})$-admissible decomposition of $C(W_\QQ)$ is long-known, due to Voronoi \cite{voronoi-nouvelles}.  By Proposition~\ref{prop:end-up-with-glgzm-action}, this shows (1) when $m=1$.  It is also well-known that admissible decompositions remain admissible under passing to a finite-index subgroup. See, e.g., \cite[p.~126]{faltings-chai-degenerations}. (The only thing to check is that the number of orbits of cones remains finite when passing to the subgroup, but each orbit can break into only a finite number of new orbits, bounded by the index of the subgroup.)  So (1) holds for all $m$.  
Statement (2) is also well-known, and can be checked directly from Definition~\ref{def:Gamma W admissible collection}.
\end{proof}

As in Definition \ref{def:tropicalization}, we let \[\Agtropm = (\Gamma[m]\backslash \cH_g)^{\Sigma,\trop}\] for $\Sigma = \Sigma_\cW$ as in Proposition~\ref{prop:perfect admissible collection}; here $\cH_g$ is the Siegel space of dimension $g$.  The space $\Agtropm$ is the {\em moduli space of tropical abelian varieties with level $m$ structure.} We interpret it as a moduli space of tropical objects in a sequel to this paper.  

By Theorem \ref{thm:weight-0-comparison}, there is a canonical isomorphism
\begin{equation}\label{eq:comparison for Agm} H_i^{\mathrm{BM}}(\Agtropm;\QQ)\cong \Gr_{2d}^W H^{2d-i}(\Ag[m];\QQ)\end{equation}
for $d = \dim \Ag[m] = g(g+1)/2$.
\bigskip

\subsection{A spectral sequence for \texorpdfstring{$\Agtropm$}{Ag-trop[m]}}

 Let $d=\binom{g+1}{2}$ be the complex dimension of $\cA_{g}[m]$.
In this section, we show that at and just above middle cohomological dimension $d$, the top-weight cohomology $\Gr^W_{2d} H^*(\cA_{g}[m];\QQ)$ is determined by the cohomology of $\GL_g(\ZZ)[m]$ and the number of $\Gamma$-orbits isotropic subspaces of dimension $g$. This allows us to fully compute $\Gr^W_{2d} H^*(\cA_{g}[m];\QQ)$ in the range $d \leq k \leq d+g-2$.  This proves and extends Miyazaki's theorem calculating $\dim \Gr^W_{2d} H^{d}(\cA_g[m];\CC)$ \cite[Main Theorem (i)]{miyazaki-mixed}.  We thank S.~Mundy, P.~Sarnak, and C.~Skinner for making us aware of Miyazaki's work.

For any integers $g\ge 2$ and $m\geq1$, and for $0\le p \le g$, we let $\pi_{g,p,m}$ denote the number of $\Gamma$-orbits of isotropic subspaces of dimension $p$ in $V_{\QQ}\cong \QQ^{2g}$.

\begin{proposition}\label{prop:Ag-GL}
    Let $m\geq1$ and $g\geq2$ be integers.  If $0\leq k\leq g-2$, then
    \[
    \Gr^W_{2d} H^{d+k} 
    \left(\cA_{g}[m];\QQ\right)\cong  H^{k}\left(\GL_{g}(\ZZ)[m];\QQ_{\mathrm{or}}\right)^{\oplus \pi_{g,g,m}}.
   \]
    Further, if $m\geq3$ or $g$ is odd then $\QQ_{\mathrm{or}}\cong \QQ$. 
\end{proposition}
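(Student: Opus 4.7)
The plan is to specialize the spectral sequence of Proposition~\ref{prop:all purpose spectral sequence in W world} to $X = \cA_g[m]$ and show that, in the bidegrees relevant to the stated range $0 \le k \le g-2$, it is concentrated in a single column on the $E^1$-page. The first step is to rewrite the $E^1$-terms in group-theoretic form. For an isotropic subspace $W_\QQ \subset \QQ^{2g}$ of dimension $p$, the cone $C(W_\QQ) \subset \Sym^2(W_\RR^\vee)$ has real dimension $\binom{p+1}{2}$ and is contractible, with $\overline{\Gamma[m]}_{W_\QQ}$ acting with finite stabilizers. Applying Poincar\'e duality (as in Remark~\ref{rem:spectral-to-group}) together with Proposition~\ref{prop:end-up-with-glgzm-action}, and observing that the central $\{\pm 1\}$-quotient appearing when $m \le 2$ is invisible rationally, yields
\[
E^1_{p,q} \cong H^{\binom{p+1}{2}-(p+q)}\bigl(\GL_p(\ZZ)[m];\QQ_{\mathrm{or}}\bigr)^{\oplus \pi_{g,p,m}},
\]
and via Theorem~\ref{thm:weight-0-comparison} this converges to $\Gr^W_{2d} H^{2d-(p+q)}(\cA_g[m];\QQ)$.

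To extract $\Gr^W_{2d} H^{d+k}$ one examines the antidiagonal $p+q = d-k$. The key step is to show that every entry with $p \le g-1$ vanishes: at such an entry the cohomological degree is
\[
\binom{p+1}{2} - (d-k) \;\le\; \binom{g}{2} - \binom{g+1}{2} + k \;=\; -g + k \;\le\; -2,
\]
using $p \le g-1$ and $k \le g-2$. So the entire antidiagonal is concentrated in the column $p = g$, where the surviving entry $E^1_{g,\,d-k-g}$ lies in cohomological degree $k$ and equals $H^k(\GL_g(\ZZ)[m];\QQ_{\mathrm{or}})^{\oplus \pi_{g,g,m}}$.

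Next I would verify that no nonzero differentials touch this entry. Any incoming differential originates in a column $p > g$, but isotropic subspaces of $\QQ^{2g}$ have dimension at most $g$, so $S\cW_p = \emptyset$ for $p > g$ and these columns are zero. Any outgoing differential lands in total degree $d-k-1$ in some column $p < g$; re-running the estimate with $k$ replaced by $k+1 \le g-1$ again produces a strictly negative cohomological degree, so those targets vanish as well. Consequently $E^\infty_{g,\,d-k-g} = E^1_{g,\,d-k-g}$ furnishes the claimed isomorphism.

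The final statement requires identifying $\QQ_{\mathrm{or}}$. The $\GL_p(\ZZ)$-action on $\Sym^2(W_\RR^\vee)$ sends $Q \mapsto A Q A^t$ and has Jacobian determinant $(\det A)^{p+1}$, so the orientation character is $A \mapsto \mathrm{sgn}(\det A)^{p+1}$. At $p=g$: if $g$ is odd then $g+1$ is even and this character is trivial; if $m \ge 3$ then $\det A \equiv 1 \pmod m$ together with $\det A = \pm 1$ forces $\det A = 1$, so again the character is trivial. In either case $\QQ_{\mathrm{or}} \cong \QQ$. The main subtlety, rather than an obstacle per se, is that the range $0 \le k \le g-2$ is dictated precisely by the pair of inequalities needed to kill both the columns $p < g$ in total degree $d-k$ and the outgoing differentials into total degree $d-k-1$; outside this range, lower-rank boundary contributions enter and the $E^1$-collapse is no longer automatic.
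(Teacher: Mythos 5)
Your argument is correct and, in its essentials, the same as the paper's: both specialize the spectral sequence of Theorem~\ref{thm:thmb}/Proposition~\ref{prop:all purpose spectral sequence in W world}, rewrite $E^1_{p,q}$ as $H^{\binom{p+1}{2}-(p+q)}(\GL_p(\ZZ)[m];\QQ_{\mathrm{or}})^{\oplus \pi_{g,p,m}}$, and observe that on the antidiagonal $p+q=d-k$ (and the one just below it, for outgoing differentials) every column with $p<g$ lies above the parabola $q>\binom{p}{2}$ while every column with $p>g$ is empty, so the $p=g$ entry survives to $E^\infty$. Your phrasing via cohomological-degree estimates along antidiagonals is a clean equivalent of the paper's description by regions northwest and southeast of the $(g,q)$ entries.

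The one place you diverge is the identification of $\QQ_{\mathrm{or}}$: you compute the orientation character directly, noting that $Q\mapsto AQA^t$ on $\Sym^2(\RR^g)$ has determinant $(\det A)^{g+1}$, so the character is $\mathrm{sgn}(\det A)^{g+1}$, which is trivial when $g$ is odd and also when $m\ge 3$ (since then $\det A\equiv 1\pmod m$ and $\det A=\pm1$ force $\det A=1$). This is correct and more self-contained than the paper's route through Lemma~\ref{lem:glm-orientation}, which defers the Jacobian computation to Elbaz-Vincent--Gangl--Soul\'e \cite[Lemma~7.2]{elbaz-vincent-gangl-soule-perfect}; the two approaches buy the same conclusion. (Your argument also silently gives the correct negative case: for $m\le 2$ and $g$ even the character is nontrivial, matching the paper.) One small presentational note: it is worth saying explicitly, as you do implicitly via Proposition~\ref{prop:end-up-with-glgzm-action}, that passing from $\ov{\Gamma[m]}_W$ to $\GL_g(\ZZ)[m]$ only changes things by the central subgroup $\{\pm1\}$, which acts trivially on $\PD_g$ and hence is invisible in rational (co)homology.
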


\begin{proof}
Let $d_p=\binom{p+1}{2}$.
Immediate from Proposition~\ref{prop:all purpose spectral sequence in W world} and the discussion in Remark~\ref{rem:spectral-to-group}, the spectral sequence 
\begin{equation}\label{eq:spectral-sequence}
  E^1_{p,q} = 
  \begin{cases}\displaystyle H^{d_p-(p+q)} (\GL_p(\ZZ)[m];\QQ_{\mathrm{or}})^{\oplus \pi_{g,p,m}} & \text{if }p \le g,\\ 0 & \text{else}\end{cases}  
\end{equation}
converges to $H_{p+q}^{\mathrm{BM}} (\Agtropm;\QQ)\cong \Gr^W_{2d} H^{2d-(p+q)}(\cA_g[m];\QQ).$ If $q> d_p-p=\binom{p}{2}$ then $d_p-(p+q)<0$ and so $E^1_{p,q}=0$. In particular, $E^1_{p,q}=0$ above the parabola $q>\binom{p}{2}$. Further, by definition of  the spectral sequence \eqref{eq:spectral-sequence} when $p>g$ we have $E^1_{p,q}=0$.

Let us consider the $p=g$ column on the $E^1$-page. By our discussion above every entry to the east of this column is zero. Further,  since $E^1_{p,q}=0$ above the parabola $q>\binom{p}{2}$, every entry including and northwest of $p=g-1$ and $q=\binom{g-1}{2}+1=d-(g-2)$. Therefore in the $p=g$ column for $d-g+2 \le q \le d$, the $E_{g,q}^1$ entries survive unchanged to  $E^{\infty}$-page, since all entries to the northwest, as well as to the southeast, of these are zero.   
(See Figure~\ref{fig:spec-seq}).  So for those values of $q$, we have
\[
E_{g,q}^{1}=H^{d-(g+q)} (\GL_g(\ZZ)[m];\QQ_{\mathrm{or}})^{\oplus \pi_{g,g,m}}\cong H_{p+q}^{\mathrm{BM}} (\Agtropm;\QQ)\cong \Gr^W_{2d} H^{2d-(g+q)}(\cA_g[m];\QQ).
\]
Making the substitution $q=d-k$ gives the claimed isomorphism as stated in the proposition.

The final remark that for $m\geq3$ or $g$ odd the the orientation bundle $\QQ_{\mathrm{or}}\cong\QQ$ follows from the fact that in theses the action of $\GL_{g}(\ZZ)[m]$ on the (rational closure) of the cones of positive definite $g\times g$ matrices is orientation preserving. (See Lemma~\ref{lem:glm-orientation}.)
\end{proof}

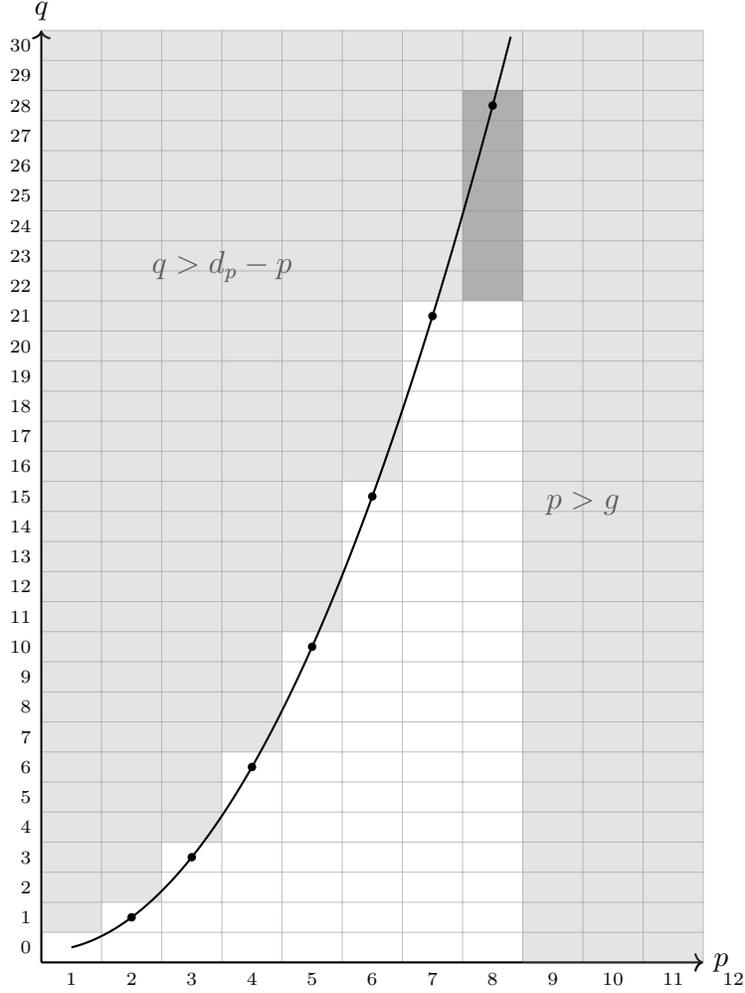
\begin{figure}[ht!]
\centering
\begin{tikzpicture}[scale=.8]
    \draw[help lines,xstep=1cm,ystep=0.5cm,gray!50] (0,0) grid (11,15.5);
    \draw[thick,->] (0,0) -- (11,0) node[right] {$p$};
    \draw[thick,->] (0,0) -- (0,15.5) node[above] {$q$};
    \path[fill=Gray,fill opacity=0.25] (0,.5)--(0,15.5)--(8,15.5)--(8,14.5)--(7,14.5)--(7,11)--(6,11)
    --(6,8)--(5,8)--(5,5.5)--(4,5.5)--(4,3.5)--(3,3.5)--(3,2)--(2,2)--(2,1)--(1,1)--(1,.5)--(0,.5);
    \path[fill=Gray, fill opacity=.25] (8,0)--(8,15.5)--(11,15.5)--(11,0)--(8,0);
    \path[fill=Gray,fill opacity=.75] (7,11)--(7,14.5)--(8,14.5)--(8,11)--(7,11);
    \draw (3,12) node[below,Black!80] {\large $q>d_p-p$};
    \draw (9,8) node[below,Black!80] {\large $p>g$};
    \foreach \x [count=\xi] in {1,...,12}
        \draw ({\x-.5},0) node[below] {\tiny \x};
        \foreach \y [count=\xi] in {0,...,30}
        \draw (0,{.5*\y+.25}) node[left] {\tiny \y};
    \draw[domain=1:8.3,samples=100,smooth,thick] plot (
        {\x-.5}, {0.25*\x*(\x-1)+.25});
    \foreach \x [count=\xi] in {2,...,8}
    \fill   ({\x-.5},{0.25*\x*(\x-1)+.25}) circle[radius=2pt];
\end{tikzpicture}
\caption{The $E^{1}$-page of the spectral sequence \eqref{eq:spectral-sequence} appearing in the proof of Proposition~\ref{prop:Ag-GL} in the case when $g=8$. In the light gray area, given by the region above the parabola $q>d_{p}-p$, we have that $E_{p,q}^{1}=0$. In the light gray region, given by $p>8$, we know that $E_{p,q}^{1}=0$ by definition. The darker gray region, with $p=8$ and $22 \leq q \leq 28$, survives unchanged to the $E^{\infty}$ page since the quadrants to its left and right are zero. This is the region where 
$\Gr^W_{2d} H^{2d-(p+q)}(\cA_g[m];\QQ)
\cong W_0 H^{p+q}_{c}(\cA_{g}[m];\QQ)^{\vee}$ 
is determined by $H^{d-(g+q)}(\GL_{g}(\ZZ)[m];\QQ_{\mathrm{or}})^{\oplus \pi_{g,g,m}}$.}
\label{fig:spec-seq}
\end{figure}

\begin{lemma}\label{lem:glm-orientation}
    The action of $\GL_{s}(\ZZ)[m]$ on $\PD_s$ is orientation preserving if and only if either i) $m\geq 3$ or ii) $m=1,2$ and $s$ is odd. Further, $\QQ_{\mathrm{or}}$ 
    is the determinantal representation of the action of $\GL_{s}(\ZZ)[m]$ on $\PD_s \subset \RR^{(s+1)s/2}$, which is trivial if this action is orientation preserving. 
\end{lemma}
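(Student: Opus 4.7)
The plan is to compute the Jacobian determinant of the linear action of $g \in \GL_s(\ZZ)[m]$ on the ambient vector space $\Sym^2(\RR^s) \cong \RR^{(s+1)s/2}$ containing $\PD_s$. Because the action $A \mapsto g A g^{\tp}$ is linear, this Jacobian is constant and equals the determinant of the symmetric square representation of $g$. I would then carry out the eigenvalue bookkeeping: writing the complex eigenvalues of $g$ as $\lambda_1, \dots, \lambda_s$, the induced action on $\Sym^2(\RR^s)$ has eigenvalues $\lambda_i \lambda_j$ for $1 \le i \le j \le s$. A quick count shows that each $\lambda_k$ appears exactly $s+1$ times across these $\binom{s+1}{2}$ products (the total count $s(s+1)$ is split evenly among the $s$ eigenvalues by symmetry), yielding
$$\det\bigl(g \mid \Sym^2(\RR^s)\bigr) = (\det g)^{s+1}.$$
Therefore $g$ preserves orientation on $\PD_s$ if and only if $(\det g)^{s+1} > 0$, equivalently, $\det g = 1$ or $s$ is odd.

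Next I would analyze the image of the determinant character $\det \col \GL_s(\ZZ)[m] \to \{\pm 1\}$ on the principal congruence subgroup. For $m \ge 3$, the congruence $g \equiv I \pmod{m}$ forces $\det g \equiv 1 \pmod{m}$, so $\det g = 1$ automatically; hence the action preserves orientation for every $s$. For $m \in \{1,2\}$ and $s \ge 2$, the matrix $\on{diag}(-1,1,\dots,1)$ lies in $\GL_s(\ZZ)[m]$ (since $-1 \equiv 1 \pmod 2$) and has determinant $-1$, so $\det$ is surjective onto $\{\pm 1\}$ and the action preserves orientation iff $(-1)^{s+1} = 1$, i.e., iff $s$ is odd. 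The small case $s = 1$ is handled by inspection: $\GL_1(\ZZ)[m]$ is either trivial (for $m=2$) or $\{\pm 1\}$ (for $m=1$) acting trivially on $\PD_1 \cong \RR_{>0}$, agreeing with the claim since $s=1$ is odd. Combining these cases yields the stated equivalence.

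For the final assertion, $\QQ_{\mathrm{or}}$ is by definition the one-dimensional $\GL_s(\ZZ)[m]$-representation on which $g$ acts by the sign of the Jacobian determinant of its action on $\PD_s \subset \RR^{(s+1)s/2}$. By the computation above this sign equals $(\det g)^{s+1}$, i.e., the $(s+1)$-st power of the determinantal character; this is trivial precisely when the action preserves orientation, matching the case analysis above.

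The main (minor) obstacle is the eigenvalue bookkeeping that pins down the exponent $s+1$ in $(\det g)^{s+1}$; everything else reduces to a direct inspection of the image of $\det$ on principal congruence subgroups in the three regimes $m \ge 3$, $m \in \{1,2\}$ with $s \ge 2$, and $s = 1$.
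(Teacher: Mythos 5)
Your proof is correct and reaches the same conclusion as the paper, but by a genuinely different route. The paper delegates the key linear-algebra fact to a citation of \cite[Lemma~7.2]{elbaz-vincent-gangl-soule-perfect} (which records that the orientation local system is the determinant of $\GL_s(\RR) \to \GL_{(s+1)s/2}(\RR)$, and that this is trivial iff $\det > 0$ or $s$ is odd); you instead derive this directly via the eigenvalue computation showing $\det(g \mid \Sym^2(\RR^s)) = (\det g)^{s+1}$. Your counting argument is valid: the exponent of $\lambda_k$ in $\prod_{i \le j} \lambda_i\lambda_j$ is indeed $s+1$, either by your symmetry observation or by direct enumeration (the term $\lambda_k^2$ contributes $2$, the $s-k$ terms $\lambda_k\lambda_j$ with $j>k$ contribute $1$ each, and the $k-1$ terms $\lambda_i\lambda_k$ with $i<k$ contribute $1$ each, totaling $s+1$). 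This makes your argument self-contained where the paper relies on a reference. Both approaches then dispatch the congruence-subgroup analysis identically: $m \ge 3$ forces $\det g = 1$, while for $m \le 2$ the diagonal matrix $\on{diag}(-1,1,\ldots,1)$ lies in the group and exhibits a determinant $-1$ element.

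One minor slip worth flagging: you claim $\GL_1(\ZZ)[2]$ is trivial, but $-1 \equiv 1 \pmod{2}$, so $\GL_1(\ZZ)[2] = \{\pm 1\}$ exactly as for $m = 1$. This does not affect the conclusion, since $\pm 1$ acts trivially on $\PD_1 \cong \RR_{>0}$ in any case (consistent with $(\det g)^{s+1} = (\pm 1)^2 = 1$ for $s=1$), but the parenthetical remark should be corrected.
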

\begin{proof}
    It is shown in \cite[Lemma~7.2]{elbaz-vincent-gangl-soule-perfect} that if $A\in \GL_{s}(\RR)$ and $\det(A)>0$ then the action of $A$ on $\PD_s$ is orientation preserving. If $m\geq3$ then every element of $\GL_s(\ZZ)[m]$ has determinant 1, and so acts in an orientation preserving way. 
    
    On the other hand, \cite[Lemma~7.2]{elbaz-vincent-gangl-soule-perfect} show that the action of $\GL_{s}(\RR)$ (and hence $\GL_{s}(\ZZ)=\GL_{s}(\ZZ)[1]$) on $\PD_s$ is orientation preserving if and only if $s$ is odd. Thus, all that remains is the case $m=2$. Immediate from the mentioned results, when $s$ is odd the action of $\GL_{s}(\ZZ)[2]$ on $\PD_s$ is orientation preserving. When $m=2$ and $s$ is even, consider the $s\times s$ diagonal matrix $\epsilon=\mathrm{diag}(-1,1,1,\ldots,1)$. As $m=2$ we know that $\epsilon\in \GL_{s}(\ZZ)[2]$, and a computation -- done in \cite[Lemma~7.2]{elbaz-vincent-gangl-soule-perfect} -- verifies the action of $\epsilon$ on $\PD_s$ is orientation reversing if and only if $s$ is even.

    The last claim follows in a similar fashion from \cite[Lemma~7.2]{elbaz-vincent-gangl-soule-perfect}, where it is shown that the orientation local system of $\GL_{s}(\RR)$ is given by the determinantal representation via $\GL_s(\RR) \to \GL_{(s+1)s/2}(\RR)$.
\end{proof}

Moreover, we are able to fully compute $W_0 H^*_c(\cA_{g}[m];\QQ)$ in the degrees in which Proposition~\ref{prop:Ag-GL} holds. Following the notation in \cite{brown-bordifications}, let
\[
\Omega_{\mathrm{can}}^{\bullet}\coloneqq \bigwedge \QQ\left\langle \omega_{5},\omega_{9},\cdots,\omega_{4j+1},\cdots \right\rangle
\]
be the $\ZZ$-graded exterior algebra with one generator $\omega_{4j+1}$ in degree $4j+1$ for each integer $j\geq1$. For an integer $k\in \ZZ$ we write $\Omega^k_{\mathrm{can}}$ for the degree $k$ piece of $\Omega_{\mathrm{can}}^{\bullet}$.

\begin{proposition}\label{prop:agm-middle-comps}
Let $m\geq1$ and $g\geq2$ be integers. If $0\leq k\leq g-2$, then
    \[
    \Gr^W_{2d} H^{d+k} 
    \left(\cA_{g}[m];\RR\right)\cong  \RR_{\mathrm{or}}^{\GL_g(\ZZ)[m]} \otimes \left(\Omega^k_{\mathrm{can}}\right)^{\oplus \pi_{g,g,m}}.
   \]
    Further, if $m\geq3$ or $g$ is odd then $\RR_{\mathrm{or}}^{\GL_{g}(\ZZ)[m]}\cong \RR$. 
\end{proposition}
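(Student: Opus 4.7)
The plan is to combine Proposition~\ref{prop:Ag-GL}, which reduces the computation to a cohomology calculation for $\GL_g(\ZZ)[m]$ with twisted coefficients, with Borel's theorem on the stable cohomology of arithmetic subgroups of reductive groups. Concretely, Proposition~\ref{prop:Ag-GL} gives
\[
\Gr^W_{2d} H^{d+k}(\cA_g[m]; \RR) \cong H^k(\GL_g(\ZZ)[m]; \RR_{\mathrm{or}})^{\oplus \pi_{g,g,m}} \qquad (0\le k\le g-2),
\]
so the proposition reduces to the isomorphism
\[
H^k(\GL_g(\ZZ)[m]; \RR_{\mathrm{or}}) \cong \RR_{\mathrm{or}}^{\GL_g(\ZZ)[m]} \otimes \Omega^k_{\mathrm{can}}
\]
in the same range.

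In the orientation-preserving case (i.e., $m\ge 3$, or $m\in\{1,2\}$ with $g$ odd), Lemma~\ref{lem:glm-orientation} identifies $\RR_{\mathrm{or}}$ with the trivial $\GL_g(\ZZ)[m]$-module. Applying Borel's theorem to the arithmetic group $\GL_g(\ZZ)[m]\subset \GL_g(\RR)$ identifies $H^k(\GL_g(\ZZ)[m];\RR)$ with the $\GL_g(\RR)$-invariant forms on the symmetric space $\GL_g(\RR)/O(g)$, which is the free exterior algebra $\Omega^\bu_{\mathrm{can}}$, and the identification is valid in the stable range $k\le g-2$. Combined with $\RR_{\mathrm{or}}^{\GL_g(\ZZ)[m]}=\RR$, this gives the desired isomorphism.

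In the orientation-reversing case ($m\in\{1,2\}$ and $g$ even), the invariants $\RR_{\mathrm{or}}^{\GL_g(\ZZ)[m]}$ vanish, and we must show $H^k(\GL_g(\ZZ)[m];\RR_{\mathrm{or}})=0$ for $0\le k\le g-2$. Let $H\subset \GL_g(\ZZ)[m]$ denote the index-two kernel of the sign-of-determinant character underlying $\RR_{\mathrm{or}}$. The Lyndon--Hochschild--Serre spectral sequence of the extension $1\to H\to \GL_g(\ZZ)[m]\to \ZZ/2\to 1$ collapses in characteristic zero, yielding
\[
H^k(\GL_g(\ZZ)[m];\RR_{\mathrm{or}}) \cong H^k(H;\RR)^{\ZZ/2},
\]
where a coset representative $\sigma\in \GL_g(\ZZ)[m]\smallsetminus H$ acts on $H^k(H;\RR)$ as the composite of conjugation on $H$ and multiplication by $-1$ on $\RR_{\mathrm{or}}|_H\cong\RR$. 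Borel's theorem applied to the arithmetic group $H$ identifies $H^k(H;\RR)$ with $\Omega^k_{\mathrm{can}}$ in the same stable range. Because Borel's classes arise from $\GL_g(\RR)$-invariant differential forms on $\GL_g(\RR)/O(g)$, conjugation by any element of $\GL_g(\RR)$ preserves them; the twisted $\ZZ/2$-action is thus scalar multiplication by $-1$ on $\Omega^k_{\mathrm{can}}$, and the invariants vanish.

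The main technical point to verify carefully is the triviality of the conjugation action on Borel classes in the orientation-reversing case, which is what forces the vanishing of the twisted invariants; this is a consequence of the canonicity of the Borel classes as pullbacks of $G$-invariant forms on the symmetric space.
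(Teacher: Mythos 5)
Your proof is correct, but the route through the orientation-reversing case differs from the paper's. The paper simply invokes the twisted-coefficients version of Borel's stability theorem (the Li--Sun improvement on the range), i.e., the statement that for any congruence subgroup $\Gamma\subset\GL_g(\QQ)$ and irreducible finite-dimensional $F$, one has $H^k(\Gamma;F)\cong F^\Gamma\otimes\Omega^k_{\mathrm{can}}$ for $k\le g-2$. Setting $F=\RR_{\mathrm{or}}$ handles both the orientation-preserving and orientation-reversing cases in one step: the factor $F^\Gamma$ supplies the invariants $\RR_{\mathrm{or}}^{\GL_g(\ZZ)[m]}$, which is $\RR$ or $0$ according to Lemma~\ref{lem:glm-orientation}. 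You avoid citing the twisted version by passing to the index-two kernel $H$ of the orientation character (namely $\SL_g(\ZZ)[m]$ when $m\le 2$ and $g$ is even), applying the untwisted Borel theorem to $H$, and then arguing via the Lyndon--Hochschild--Serre spectral sequence that the $\ZZ/2$-invariants in the twisted action vanish. The crux of your argument — that conjugation by a negative-determinant coset representative acts trivially on the Borel classes because these are pullbacks of $\GL_g(\RR)$-invariant (not merely $\GL_g(\RR)^{\circ}$-invariant) forms on the symmetric space, so the twisted $\ZZ/2$-action is multiplication by $-1$ — is correct and is exactly the content that the citation to the twisted Borel theorem would package for you. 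Your argument buys a more elementary and self-contained treatment of the twisted case; the paper's buys brevity. One small caution: the range $k\le g-2$ is not the range in Borel's original papers (which give a smaller slope); you should cite the improved stability range (e.g.\ Li--Sun, as the paper does) rather than attributing it to ``Borel's theorem'' unqualified.
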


\begin{proof}
    By Proposition~\ref{prop:Ag-GL} for $k$ in the stated range
    \[
    \Gr^W_{2d} H^{d+k} 
    \left(\cA_{g}[m];\QQ\right)\cong  H^{k}\left(\GL_{g}(\ZZ)[m];\QQ_{\mathrm{or}}\right)^{\oplus \pi_{g,g,m}},
   \]
   and so it is enough to compute $H^{k}\left(\GL_{g}(\ZZ)[m];\QQ_{\mathrm{or}}\right)$. Work of Borel \cite[Theorem 7.5]{borel-stable} with improvements by \cite[Example 1.10]{liSun19} shows that for any congruence subgroup $\Gamma$ of $\GL_{g}(\QQ)$, with $g\geq2$, 
\begin{equation}\label{eq:stable-result}
H^k\left(\Gamma; F\right) \cong F^{\Gamma}\otimes H^k_{\text{ct}}\left(^{\circ}GL_{\infty}(\R); \QQ\right) \cong F^{\Gamma} \otimes \Omega^k_{\mathrm{can}} \quad \quad \text{for $0\leq k \leq g-2$,}
\end{equation}
and any irreducible finite dimension representation $F$ of $\GL_{g}(\RR)$. Note the bound in \eqref{eq:stable-result} is optimal \cite[Section 5.3]{kuperMillerPatzt22}. Applying this result when $n=g$, $\Gamma=\GL_{g}(\ZZ)[m] \subset \GL_{g}(\ZZ)$, and $F = \RR_{\mathrm{or}}$ gives the claimed result. 
\end{proof}

\begin{manualtheorem}{\ref{mainthm-agm-spsec}}
    Let $m\geq1$ and $g\geq2$ be integers. If $0\leq k\leq g-2$ then
    \[
    \Gr^W_{2d} H^{d+k} \left(\cA_{g}[m]; \RR \right)\cong  \begin{cases}
        \left(\Omega^k_{\mathrm{can}}\right)^{\oplus \pi_{g,g,m}} \otimes_\QQ \RR & \text{ if $m\geq 3$ or $m=1,2$ and $g$ is odd} \\
        0 & \text{ else}.
    \end{cases}
    \]
\end{manualtheorem}
\begin{proof}
Since $\QQ_{\mathrm{or}}$ is 1-dimensional $\QQ_{\mathrm{or}}^{\GL_g(\ZZ)[m]}$ isomorphic to $\QQ$ when the action of $\GL_{g}(\ZZ)[m]$ on $\PD_g$ is orientation preserving and zero otherwise. The claim now follows immediately from Proposition~\ref{prop:agm-middle-comps} together with the characterization of when the action of $\GL_{g}(\ZZ)[m]$ on $\PD_g$ is orientation preserving given in Lemma~\ref{lem:glm-orientation}.
\end{proof}

\begin{remark}\label{rem: green boxes}
    In fact, Theorem~\ref{mainthm-agm-spsec} also holds for an arbitrary congruence subgroup $\Gamma \subseteq \Sp_{2g}(\Q)$. If we let $\pi_{g,\Gamma}$ denote the number of $\Gamma$-orbits of $g$-dimensional isotropic subspaces, then by the same argument
    \[
     \Gr^W_{2d} H^{d+k} \left(\Gamma \backslash \cH_g; \RR \right)\cong  \begin{cases}
        \left(\Omega^k_{\mathrm{can}}\right)^{\oplus \pi_{g,\Gamma}} \otimes_\QQ \RR & \text{ if $\Gamma$ is orientation preserving} \\
        0 & \text{ else}.
         \end{cases}
    \]
    A similar statement can be proven in full generality, for $(\mathbb{G},D)$ a connected Shimura datum and any $\Gamma \subset G\cap\GG(\QQ)$.  Let $\pi$ denote the number of $\Gamma$-orbits of rational boundary components of rank equal to the real rank of $G$.  Then by an argument as above, we have
    \begin{equation}\label{eq: green boxes} \Gr^W_{2d} H^{d+k} (\Gamma\backslash D;\QQ) \cong H^k (\overline{\Gamma}_{F_0}, \QQ_{\mathrm{or}})^{\oplus \pi},\end{equation}
    for all indices $0\le k\le \dim(C(F_0)) - \dim(C(F_1)) - 1$. Here $\QQ_{\mathrm{or}}$ denotes the action of $\ov{\Gamma}_{F_0}$ on the orientations of $C(F_{0})$.
\end{remark}

We can determine the number of $\Gamma$-orbit representatives of isotropic $g$-dimensional subspaces of $V = \QQ^{2g}$, giving the numbers $\pi_{g,g,m}$ in Proposition \ref{prop:agm-middle-comps}. 

\propnow{\label{prop:boundary-orbits-count} 
Let $m \geq 1$ be an integer. 
Let $\Lambda$ be a free abelian group of rank $2g$, and let
$J\col \Lambda\times\Lambda\to \ZZ$ be an alternating $\ZZ$-bilinear form on $\Lambda$, and set $V = \Lambda\otimes\QQ$.
The number of $p$-dimensional $\Sp(\Lambda)[m]$-orbits of isotropic subspaces $W$ of $V$ is
$$
\frac{1}{2} \phi(m) \cdot |\{\text{$p$-dimensional isotropic subspaces of $(\mathbb{Z}/m\mathbb{Z})^{2g}$}\}|,
$$
where the factor of $\frac{1}{2}$ should be omitted if $m=1,2$.
}
\proofnow{ We prove that there are
 $\frac{1}{2} \phi(m) |\Sp_{2g}(\Z / m \Z) / P(\Z / m \Z) |$
orbits of boundary components of dimension $p$ under the action of $\Sp_{2g}(\ZZ)[m]$, where the factor of $\frac{1}{2}$ should be omitted if $m=1,2$, and where $P$ is a parabolic subgroup, the stabilizer of a totally isotropic subspace of dimension $g-p$. The proposition then follows from the fact that isotropic subspaces of $V$ are in bijection with rational boundary components of $\mathcal{H}_g^*$.  
In general, if $N,H$ are subgroups of $G$ and $N$ is of finite index in $G$, then $|N\backslash G/H| = |G:N|/|H:N\cap H|$.  Recall $\Gamma = \Gamma[1] = \Sp_{2g}(\ZZ)$ acts transitively on the $p$-dimensional boundary components of $\cH_g^*$, with stabilizer $P(\QQ) \cap \Gamma=P(\ZZ)$.  So the orbits under $\Gamma[m]$ of boundary components of dimension $p$ are in bijection with the set of double cosets
\[\Gamma[m] \backslash \Gamma / P(\QQ)\cap \Gamma\]
which has cardinality
\[ |\Gamma:\Gamma[m]|\,/\,|P(\ZZ):P(\ZZ) \cap \Gamma[m]|. \]
The numerator is $|\Sp_{2g}(\ZZ/m\ZZ)|$, since by \cite[Theorem 1]{newman-smart}
the map $\Sp_{2g}(\ZZ)\to \Sp_{2g}(\ZZ/m\ZZ)$ is surjective.
The denominator is the cardinality of the image of $P(\ZZ)\to P(\ZZ/m\ZZ)$.  

Finally, the index of the image of this map in $P(\ZZ/m\ZZ)$ is $\frac{1}{2}\phi(m)$, from which the Proposition follows.
}

\begin{lemma}
\label{lem:subspacecount}
    Let $m>2$. The number of $g$-dimensional isotropic subspaces of $(\ZZ/m\ZZ)^{2g}$ is $$m^{g(g+1)/2} \prod_{p \mid m} \prod_{i=1}^g (1 + p^{-i}).$$
\end{lemma}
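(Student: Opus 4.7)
The plan is to reduce to the prime-power case via the Chinese Remainder Theorem and then count by reduction modulo $p$. Writing $m=\prod_{p\mid m} p^{e_p}$, the CRT gives an isomorphism $(\ZZ/m\ZZ)^{2g}\cong \bigoplus_{p\mid m}(\ZZ/p^{e_p}\ZZ)^{2g}$ that is orthogonal for the standard symplectic form, and every $g$-dimensional isotropic submodule decomposes uniquely into its prime-power components. So it suffices to show that the number of $g$-dimensional isotropic submodules of $V_e := (\ZZ/p^e\ZZ)^{2g}$ equals $p^{eg(g+1)/2}\prod_{i=1}^g(1+p^{-i})$; taking the product over $p\mid m$ then yields the claim. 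Here ``$g$-dimensional'' means free of rank $g$ over $\ZZ/p^e\ZZ$, equivalently Lagrangian in the sense $L=L^\perp$, and a short argument using $\abs{L}\abs{L^\perp}=\abs{V_e}$ together with the fact that $L\cap V_e[p]$ is isotropic in $V_e[p]\cong \FF_p^{2g}$ shows that any self-orthogonal submodule of maximal possible order $p^{eg}$ is automatically free of rank $g$.

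I would next establish that every such Lagrangian $L\subset V_e$ is a direct summand of $V_e$. The symplectic form yields a canonical isomorphism $V_e/L = V_e/L^\perp \cong \Hom_{\ZZ/p^e\ZZ}(L,\ZZ/p^e\ZZ)$, which is free of rank $g$; hence $0\to L\to V_e\to V_e/L\to 0$ splits. In particular, $L/pL$ injects into $V_e/pV_e$, and the reduction $\bar L$ of $L$ modulo $p$ is a Lagrangian in $V_e/pV_e\cong \FF_p^{2g}$ with the induced symplectic form.

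I would then count the fibers of $L\mapsto \bar L$. Fix a Lagrangian $\bar L\subset V_e/pV_e$. By Witt's theorem there is a complementary Lagrangian $\bar L'$, and a standard Hensel-type lift produces Lagrangian complements $V_0, V_0'\subset V_e$ reducing to $\bar L, \bar L'$. If $L\subset V_e$ is any Lagrangian reducing to $\bar L$, then Nakayama's lemma applied to the projection $L\to V_0$ along $V_0'$ shows the projection is surjective, and since both sides are free of rank $g$ over the local ring $\ZZ/p^e\ZZ$, it is an isomorphism. So $L$ is the graph $\{v+\phi(v):v\in V_0\}$ of a unique $\ZZ/p^e\ZZ$-linear map $\phi\col V_0\to V_0'$ with $\phi(V_0)\subset pV_0'$. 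In symplectic bases $e_i$ of $V_0$ and $f_i$ of $V_0'$ with $\omega(e_i,f_j)=\delta_{ij}$, writing $\phi(e_i)=\sum_j \phi_{ij}f_j$, the isotropy condition $\omega(e_i+\phi(e_i),e_j+\phi(e_j))=0$ simplifies to $\phi_{ij}=\phi_{ji}$. Therefore the lifts of $\bar L$ are in bijection with symmetric $g\times g$ matrices over $p\ZZ/p^e\ZZ$, a set of cardinality $p^{(e-1)g(g+1)/2}$.

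Combining the classical count $\prod_{i=1}^g(p^i+1)$ of Lagrangians in $\FF_p^{2g}$ with $p^{(e-1)g(g+1)/2}$ lifts each, the total number of Lagrangians in $V_e$ is
$$p^{(e-1)g(g+1)/2}\prod_{i=1}^g(p^i+1) = p^{eg(g+1)/2}\prod_{i=1}^g(1+p^{-i}),$$
as required. The main obstacle is establishing the structural facts that every Lagrangian in $V_e$ is free of rank $g$ and is a direct summand; once these are in hand, the reduction modulo $p$ produces a Lagrangian in $\FF_p^{2g}$ and the counting of lifts via ``graphs of symmetric matrices'' is routine.
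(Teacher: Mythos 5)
Your route is genuinely different from the paper's, and the core lifting argument is sound. The paper uses orbit--stabilizer: it writes the count as $|\Sp_{2g}(\ZZ/m\ZZ)|/|P(\ZZ/m\ZZ)|$ for $P$ the Siegel parabolic, computes $|P|$ from its block structure, and then gets both group orders via CRT and the surjectivity of $G(\ZZ/p^e\ZZ)\to G(\ZZ/p^{e-1}\ZZ)$ with kernel $\mathfrak{g}(\FF_p)$. You instead count directly: CRT, then reduction modulo $p$ to a Lagrangian in $\FF_p^{2g}$, then the fiber over each $\overline{L}$ is identified with symmetric $g\times g$ matrices over $p\ZZ/p^e\ZZ$ using a graph-of-a-map argument. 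Your approach avoids needing to know transitivity of the $\Sp_{2g}(\ZZ/m\ZZ)$-action (a Witt-type fact over $\ZZ/m\ZZ$, implicit in the orbit--stabilizer step), while the paper avoids any lifting of Lagrangians.

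There is, however, one genuine error in your setup. You claim that $\lvert L\rvert\cdot\lvert L^\perp\rvert=\lvert V_e\rvert$ plus an argument about $L\cap V_e[p]$ shows that any self-orthogonal submodule of maximal order $p^{eg}$ is automatically free of rank $g$. This is false. Take $g=1$, $p=2$, $e=2$, $V_e=(\ZZ/4\ZZ)^2$ with the standard form, and $L=\langle (2,0),(0,2)\rangle$. Then $L$ is isotropic, $\lvert L\rvert=4=p^{eg}$, and indeed $L=L^\perp$, but $L\cong(\ZZ/2\ZZ)^2$ is not free of rank $1$. So ``self-orthogonal of maximal order'' is strictly weaker than ``free of rank $g$'', and the reduction $\overline{L}$ of such a non-free $L$ is not a Lagrangian in $\FF_p^{2g}$ (in the example $\overline{L}=0$), so your fiber count silently drops these. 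The resolution is that ``$g$-dimensional isotropic subspace'' should be defined directly as a free rank-$g$ isotropic submodule --- equivalently a rank-$g$ isotropic direct summand, which is the orbit of the standard Lagrangian that the paper counts. Once you take that as the definition rather than trying to derive it from $L=L^\perp$, the rest of your argument (free $\Rightarrow$ injective $\Rightarrow$ direct summand over $\ZZ/p^e\ZZ$; $\overline{L}$ is Lagrangian; Nakayama gives the graph description; isotropy forces symmetry of $\phi$; and the final count) is correct and gives the stated formula.
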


\begin{proof}
     This is because, by orbit-stabilizer, this is $|\Sp((\Z / m \Z)^{2g})| / | P(\Z / m \Z) |$ where $P$ is the parabolic of upper triangular matrices. If we write
     $$P = \mattwo{A}{B}{0}{D},$$ then $D = (A^{-1})^T$, and writing $E = D^T B$, we must have $E^T = E$. Conversely, given such $A$ and $E$ we have $B = AE$. Therefore the size of $P$ is $m^{g(g+1)/2} |\GL((\Z / m \Z)^g)|$. To find the sizes of both groups $G = \GL((\Z / m \Z)^g)$ and $G = \Sp((\Z / m \Z)^{2g})$ we use the same method - use CRT to reduce to the case $m = p^e$, and then use the fact that the maps $G(\Z / p^e \Z) \to G(\Z / p^{e-1} \Z)$ are surjective, with kernel $\frakg(\F_p)$, i.e., the $\F_p$ points of the corresponding Lie algebra. This yields $|\Sp((\Z / m \Z)^{2g})| = m^{2g^2+g} \prod_{p \mid m} \prod_{i=1}^g (1 - p^{-2i})$ and $|\GL((\Z / m \Z)^g)| = m^{g^2} \prod_{p \mid m} \prod_{i=1}^g (1 - p^{-i})$, hence the result. 
\end{proof}

\begin{remark}
    It is not difficult to do an analogous calculation for counting isotropic subspaces of fixed dimension $p<g$.  One needs to consider a general maximal parabolic subgroup, rather than just a Siegel parabolic subgroup, in this case.  These counts appear in~\eqref{eq:spectral-sequence}.
\end{remark}

\bibliographystyle{alpha}
\bibliography{my}

\end{document}